\newtheorem{prop}{Proposition}[section]
\newtheorem{lemma}{Lemma}[section]
\newtheorem{Lemma}{Lemma}[section]
\newtheorem{theorem}{Theorem}[section]
\newtheorem{remark}{Remark}[section]
\newtheorem{assumption}{Assumption}[section]
\newtheorem*{acknow*}{Acknowledgments}
\newcommand{\ignore}[1]{}
\newcommand{\R}{\mathbb{R}}
\newcommand{\on}[1]{\operatorname{#1}}
\newcommand{\norm}[1]{\left\lVert #1 \right\rVert}
\newcommand{\abs}[1]{\left\vert #1 \right\rvert}
\newcommand{ \parbar}[1]{ { \left( #1 \right)} }
\author{S. Bourguin, and K. Spiliopoulos}
\address{Boston University, Department of Mathematics and Statistics\\ 665 Commonwealth Ave, Boston, MA 02215, USA}
\email[Solesne Bourguin]{bourguin@math.bu.edu}
\email[Konstantinos Spiliopoulos]{kspiliop@math.bu.edu}
\thanks{S.B was partially supported by the Simons Foundation Award
  635136.}
\thanks{K.S. was partially supported by the National Science
  Foundation DMS 2107856, 2311500 and Simons Foundation Award
  672441.}
\thanks{The authors would like to thank Shivam Singh Dhama for spotting a few typos at
 an earlier version of the article.}
\title[Quantitative fluctuation analysis of multiscale
diffusions]{Quantitative fluctuation analysis of multiscale diffusion systems via Malliavin calculus}
\date{\today}
\begin{document}
\begin{abstract}
We study fluctuations of small noise multiscale diffusions around their homogenized deterministic limit. We derive quantitative rates of convergence of the fluctuation processes to their Gaussian limits in the appropriate Wasserstein metric requiring detailed estimates of the first and second order Malliavin derivative of the slow component. We study a fully coupled system and the derivation of the quantitative rates of convergence depends on a very careful decomposition of the first and second Malliavin derivatives of the slow and fast component to terms that have different rates of convergence depending on the strength of the noise and timescale separation parameter.


\end{abstract}	
\bibliographystyle{amsalpha}

	\maketitle

\section{Introduction}\label{S:Introduction}

The goal of this paper is to develop quantitative convergence results
for the fluctuations of multiscale diffusion systems. In particular, we consider two-dimensional multiscale systems of the form
 \begin{equation}
   \label{slowfastsystem}
   \begin{cases}
     \displaystyle dX_t^{\varepsilon} = c \left(
       X_t^{\varepsilon},Y_t^{\eta} \right) dt + \sqrt{\varepsilon}
     \sigma \left( X_t^{\varepsilon},Y_t^{\eta} \right)dW_t^1, \quad X^{\varepsilon}_{0}=x_{0} \\
     dY_t^{\eta} = \frac{1}{\eta} f \left(X_t^{\varepsilon},Y_t^{\eta}  \right) dt +\frac{1}{\sqrt{\eta}}
     \tau \left( X_t^{\varepsilon},Y_t^{\eta} \right)dW_t^2, \quad Y^{\eta}_{0}=y_{0}
   \end{cases}.
 \end{equation}
Here $\eta=\eta(\varepsilon)\downarrow0$ as $\varepsilon\downarrow0$ and $(W^1, W^2)$ is a $2$-dimensional standard Wiener process. Assumptions on the coefficients $c(x,y),\sigma(x,y), f(x,y)$ and $\tau(x,y)$ are given in Condition \ref{A:Assumption1}. Here $X^{\varepsilon}$ and $Y^{\eta}$ can be viewed as the slow and fast component, respectively.

One question that has naturally been posed and answered in the literature is in regards to the limiting behavior of the slow component $X^{\varepsilon}$ as $\varepsilon,\eta\downarrow 0$. As it is standard by now, under various assumptions on the coefficients, see for example \cite{DupuisSpiliopoulos,Freidlin1978, FS, Guillin,LiptserPaper,PardouxVeretennikov1, PardouxVeretennikov2,RocknerAveraging2021a,RocknerAveraging2021b,Spiliopoulos_CLT_Multiscale_2014} there is a well defined limit  $\bar{X}=\lim_{\varepsilon,\eta\rightarrow 0} X^{\varepsilon}$ oftentimes in probability (recalled in Section \ref{S:MainResults}). The next order of convergence, i.e., fluctuations of $X^{\varepsilon}$ around its limit $\bar{X}$ were studied in \cite{Spiliopoulos_CLT_Multiscale_2014}. In particular, it was established in \cite{Spiliopoulos_CLT_Multiscale_2014} that under the proper assumptions, the fluctuations process
\begin{equation}
\theta^{\varepsilon}_t = \frac{1}{\sqrt{\varepsilon}} \left(
   X_t^{\varepsilon} - \bar{X}_t \right),\label{Eq:FluctuationsProcess}
\end{equation}
converges weakly in the space of continuous functions $C([0,T];\mathbb{R})$ to a process $\theta_{\cdot}$, which at a given time $t$ is distributed according to a Gaussian random variable $ \mathscr{N} \left(\mu_t,\sigma_t^2   \right)$ for appropriate mean and variance $\mu_t,\sigma_t^2$ respectively. We recall in detail this result in  Section \ref{S:MainResults}.

The fluctuation results of \cite{Spiliopoulos_CLT_Multiscale_2014} were qualitative. The goal of this paper is to prove quantitative results of convergence of the law of $\theta^{\varepsilon}_t$ to the law of its limit $\theta_t$ (with $ \textrm{Law}(\theta_t)=\mathscr{N} \left(\mu_t,\sigma_t^2   \right)$) in the appropriate Wasserstein metric. To accomplish this, we use certain bounds involving Malliavin derivatives, see \cite{nourdin_second_2009,nualart_malliavin_2006}. The methodology we follow and the main result are presented in Section \ref{S:MainResults}.

The contribution of this paper is two-fold. Firstly, we quantitatively characterize the convergence of fluctuations of fully coupled multiscale systems with precise rates of convergence in the Wasserstein metric. Secondly, we accomplish this goal exploiting a deep connection with Malliavin calculus \cite{nourdin_second_2009} that is generic enough to be of use in other settings where one is interested in quantitative convergence of fluctuations when the limiting fluctuations are of Gaussian nature.

In order to accomplish this, we need to derive tight bounds for
quantities involving the first and second order Malliavin derivatives of the slow and fast components $X^{\varepsilon}, Y^{\eta}$ with respect to the driving noises $W^{1},W^{2}$. This is done in the technical sections \ref{S:FirstOrderMalliavinDer}-\ref{S:SecondOrderMalliavinDer}. These sections contain the technical heart of what we do in this paper. Both the first and the second order Malliavin derivatives satisfy affine stochastic differential equations and can be solved explicitly. However, we remark here that even though the Malliavin derivatives are given in closed form,  the bounds have to be done very carefully in order to extract the best possible bounds. As a matter of fact, as we will see in Sections \ref{S:FirstOrderMalliavinDer}-\ref{S:SecondOrderMalliavinDer}, the key is to realize that different terms contribute differently in the final rate and thus one needs to form the proper decompositions and study the different terms differently in order to extract the best possible rate in terms of $\varepsilon$ and $\eta$.

In particular, the derivation of the bound for the first and second Malliavin derivatives of the fast component $Y^{\eta}$ with respect to $\varepsilon$ and $\eta$ is particularly complex. Naive bounds quickly lead to insufficient control of the Wasserstein metric characterizing the convergence of the law of the fluctuation process to the limit Gaussian law. Different components behave differently with respect to the size of the noise $\varepsilon$, but more importantly with respect to the fast oscillating  parameter $\eta$. We decompose the Malliavin derivatives appropriately obtaining the best possible bounds for each one of the terms and then we collect the bounds together to form the final bound. An added benefit of the analysis is that one sees directly which terms contributes what to the final bound.

It is also worth noting that our analysis constitutes an example of
error bounds obtained via Malliavin calculus for stochastic
differential equations (and hence for functionals of Gaussian fields
with infinite chaos expansions). Indeed, as the
action of the Malliavin operators is easier to characterize for
individual chaos elements (multiple Wiener integrals), it is not
surprising that a lot of work has been done for elements of a single
or finitely many Wiener chaoses. In our work, we
deal with solutions of fully coupled SDE systems directly without having to go through a
chaos decomposition and analyze the contribution of each chaos. This way of proceeding is in our opinion
of independent interest as it is general enough to apply in many other situations.

The only other result in the literature that studies related
quantitative rates of convergence of fluctuations for multiscale
systems that we are aware of is that of
\cite{RocknerAveraging2021a}. The authors in
\cite{RocknerAveraging2021a} study a system analogous to
(\ref{slowfastsystem}) with (in our notation) $\sigma(x,y)=\sigma(x)$ with $\varepsilon=1$
and they obtain bounds for quantities of the form
$\sup_{t\in[0,T]}\left|\mathbb{E}[\phi(\theta^{\eta}_{t})]-\mathbb{E}[\phi(\theta_{t})]\right|$
for test functions $\phi\in C^{4}_{b}(\mathbb{R})$. Their proof relies
on viewing the functions $\mathbb{E}[\phi(\theta^{\eta}_{t})],
\mathbb{E}[\phi(\theta_{t})]$ (functions of the initial point and time
$t$) as solutions to the appropriate Cauchy problems and then
carefully studying their difference via the PDEs that they satisfy and
getting bounds for the required terms based on analyzing appropriate
Poisson equations via It\^{o}'s lemma. Our work is different: we study the fully coupled case, we obtain rates of convergence in the Wasserstein metric directly and we use tools from Malliavin analysis.  Instead of studying the related Cauchy problems, we study the first and second order Malliavin derivatives of slow and fast motion with respect to the driving noises. It is worth pointing out that considering the fully coupled case results in the need to appropriately bound a number of Malliavin derivative related terms that would have been absent otherwise. Our method of proof relies on  generic objects (i.e., Malliavin derivatives) and can be useful in generic settings in which limit fluctuations are Gaussian and Malliavin derivatives exist.

Lastly, we mention that we present the results of this paper when both slow and fast component are in dimension one. The proofs make it clear that the results are valid in any finite dimension. However, we have chosen to present the result in the $1+1$ dimension in order to avoid unnecessary complicated notation as far as the technical results of Sections \ref{S:FirstOrderMalliavinDer}-\ref{S:SecondOrderMalliavinDer} are concerned. Hopefully, this allows the reader to focus on the essence of the arguments.

The rest of the paper is organized as follows. In Section \ref{S:MainResults}, we present our main assumptions, present the related results from Malliavin analysis that are relevant for us, and then state our main result, Theorem \ref{T:MainQuantitative}. In Section \ref{S:ProofMainResult}, we present the proof of our main Theorem \ref{T:MainQuantitative} using the delicate technical bounds proven in the later sections. In Section \ref{S:PreliminaryCLTresults}, we recall the fluctuations results from \cite{Spiliopoulos_CLT_Multiscale_2014} and we use those results to prove some of the bounds needed for Theorem \ref{T:MainQuantitative}. Sections \ref{S:FirstOrderMalliavinDer}-\ref{S:SecondOrderMalliavinDer} contain the main technical work of this section that is composed of precise bounds in terms of $\varepsilon$ and $\eta$ of the first and second order Malliavin derivatives of $X^{\varepsilon}, Y^{\eta}$ with respect to the driving noises $W^{1},W^{2}$ respectively. Section \ref{S:AuxiliaryBounds} contains auxiliary bounds that are used throughout the results of Sections \ref{S:FirstOrderMalliavinDer}-\ref{S:SecondOrderMalliavinDer}.

\section{Assumptions, methodology and main results}\label{S:MainResults}

Before presenting the methodology and the main results of this paper, let us first establish some notation and pose the assumptions on the coefficients of the model (\ref{slowfastsystem}).

The functions $c,f,\sigma$ and $\tau$ satisfy the following conditions.
\begin{assumption}
\label{A:Assumption1}
\begin{enumerate}[(i)]
\item The diffusion coefficient $\tau^{2}$ is uniformly nondegenerate.
\item  We assume that $c,\sigma\in C^{2,2}_{b}(\mathbb{R}\times\mathbb{R})$, i.e., they are uniformly bounded with bounded mixed derivatives up to order two.
  \item{The function $f(x,y)$ has two bounded derivatives in $x$ and two derivatives in $y$. The function $\tau(x,y)$ has two bounded derivatives in $x$ and $y$. For both functions $f,\tau$, all partial derivatives are H\"{o}lder continuous, with exponent $\alpha$, with respect to $y$, uniformly in $x$. Given that the following combination appears many times later on, we let $0<M<\infty$ be the constant such that
    \[
    \sup_{x,y}\left\{   |\partial_{1}f(x,y)|+ 6
    |\partial_{1}\tau(x,y)|^2 + |\partial_{2}\tau(x,y)|^2\right\}<M.
    \]}
\item{There is a uniform constant $0<K<\infty$ such that
\[\sup_{x,y}\left\{   \left[ 3|\partial_1 f|+6
  |\partial_1 \tau|^{2}+12
  |\partial_2 \tau|^{2} +4\partial_2 f\right]\left(x,y\right)\right\}\leq -K<0.    \]
}
\end{enumerate}
\end{assumption}

A few comments in regards to Assumption \ref{A:Assumption1} are in order.
 Assumption \ref{A:Assumption1} part $(iv)$ implies that there exists $K^{*} >0$ such that, for any $x,y \in \R$ we have  $\partial_2f \leq -K^{*}<0$. Consequently this immediately implies that
\[
\lim_{|y|\rightarrow\infty}\sup_{x\in\mathbb{R}}f(x,y)\cdot y=-\infty,
\]
which together with Assumption \ref{A:Assumption1} part $(i)$
guarantees the existence of a unique invariant measure associated to
the fast process $Y^{\eta}$. The stronger Assumption \ref{A:Assumption1} part $(iv)$ that requires $\partial_2 f$ to
be negative enough, allows us to obtain
appropriate bounds for the Malliavin derivatives of  $X^{\varepsilon},
Y^{\eta}$ with respect to the driving noises $W^{1},W^{2}$
respectively in Sections
\ref{S:FirstOrderMalliavinDer}-\ref{S:SecondOrderMalliavinDer}. 

The order in which $\varepsilon,\eta$ go to zero affect the results. For this reason we have the following assumption.
\begin{assumption}\label{A:Assumption3} We assume that $\eta=\eta(\varepsilon)\rightarrow 0$ as $\varepsilon\rightarrow 0$ such that
\begin{enumerate}[(i)]
\item{ $\lim_{\varepsilon\downarrow0}\sqrt{\frac{\varepsilon}{\eta}}=\gamma\in(0,\infty]$, and
}
\item{$\lim_{\varepsilon\downarrow0}\frac{\sqrt{\varepsilon}}{\sqrt{\frac{\eta}{\varepsilon}}\mathds{1}_{\gamma=\infty}+\left(\sqrt{\frac{\varepsilon}{\eta}}-\gamma\right)\mathds{1}_{\gamma\in(0,\infty)}}>0 $.
}
\end{enumerate}
\end{assumption}

We remark here that Assumption \ref{A:Assumption3}(ii) is posed in order for the fluctuations result of \cite{Spiliopoulos_CLT_Multiscale_2014} to hold with $1/\sqrt{\varepsilon}$ normalization, i.e., for $\theta^{\varepsilon}_{t}=\frac{X^{\varepsilon}_{t}-\bar{X}_{t}}{\sqrt{\varepsilon}}$ to have a non-trivial stochastic limit. If on the other hand, the limit in Assumption \ref{A:Assumption3}(ii) is zero, then in order to obtain a non-trvial fluctuations limit one normalizes the difference $X^{\varepsilon}_{t}-\bar{X}_{t}$ with the term $1/\left(\sqrt{\frac{\eta}{\varepsilon}}\mathds{1}_{\gamma=\infty}+\left(\sqrt{\frac{\varepsilon}{\eta}}-\gamma\right)\mathds{1}_{\gamma\in(0,\infty)}\right)$.
In this paper we focus for simplicity in the case considered in Assumption \ref{A:Assumption3}(ii).

Let $\mu(dy|x)$ be the invariant measure associated to the $Y^{\eta}$ process. Due to Assumption \ref{A:Assumption1} this is well defined and together with Theorem 2 of \cite{PardouxVeretennikov2} we get that it is also once continuously differentiable with respect to $x$. By known results, see for instance Theorem 2.8 in \cite{Spiliopoulos_CLT_Multiscale_2014} among others, we then have that
 \[
 X^{\varepsilon}(\cdot)\rightarrow \bar{X}_{\cdot} \text{ in } L^{2}(\Omega\times [0,T]) \text{ as }\varepsilon,\eta\downarrow 0,
 \]
 where
 \begin{align}
 d \bar{X}_{t}&=\bar{c}(\bar{X}_{t})dt, \bar{X}_{0}=x_{0}, \text{ with } \bar{c}(x)=\int_{\mathbb{R}} c(x,y)\mu(dy|x)\label{Eq:LimitingODE}
 \end{align}
is the ODE governing the limiting dynamics. Notice that by our assumptions $\bar{c}\in C^{1}(\mathbb{R})$ and thus (\ref{Eq:LimitingODE}) has a well defined unique solution.

Next, in order to control fluctuations, we need to introduce an auxiliary Poisson PDE. Let $\mathcal{L}_{y}$ be the infinitesimal generator associated with the $Y$ process. Consider the Poisson equation
 \begin{align}
 &\mathcal{L}_{y} \Phi(x,y)=c(x,y)-\bar{c}(x), \quad \int_{\mathbb{R}} \Phi(x,y)\mu(dy|x)=0\nonumber\\
 &\Phi \text{ grows at most polynomially in }y\text{ as }|y|\rightarrow\infty.\label{Eq:PoissonPDEFluctuations0}
 \end{align}
By Fredholm alternative, this PDE has a unique solution.

 Let us now review the fluctuation result for (\ref{slowfastsystem}) as presented in \cite{Spiliopoulos_CLT_Multiscale_2014}. The parameter $\delta$ in \cite{Spiliopoulos_CLT_Multiscale_2014} corresponds $\delta=\sqrt{\eta\varepsilon}$ in our present notation. It turns out that the fluctuations behavior may depend on the order in which $\varepsilon$ and $\eta$ go to zero.  The main result of  \cite{Spiliopoulos_CLT_Multiscale_2014} reads as follows in our case. We set
  \begin{align*}
 q(x,y)&= \left[\sigma^{2} +\frac{1}{\gamma^{2}}\left(\frac{\partial\Phi}{\partial y}\tau\right)^{2}\right](x,y). \nonumber
 \end{align*}
 Notice that if $\gamma=\infty$, then $q(x,y)=\sigma^{2} (x,y)$. Recall that $\bar{q}(x)=\int_{\mathbb{R}} q(x,y) \mu(d y|x)$. With these definitions at hand, we have the following fluctuations result.
 \begin{theorem}[Theorem 3.1 in  \cite{Spiliopoulos_CLT_Multiscale_2014}]\label{T:CLT2}
 Let $T>0$.  Assume that Assumptions \ref{A:Assumption1} and \ref{A:Assumption3} hold.
 The process
 $$\theta^{\varepsilon}_{t}=\frac{X^{\varepsilon}_{t}-\bar{X}_{t}}{\sqrt{\varepsilon}}$$ converges weakly in the space of continuous functions in $\mathcal{C}\left([0,T];\mathbb{R}\right)$
 to the solution of the Ornstein-Uhlenbeck type process
 \begin{eqnarray}
  d\theta_{t}&=&\bar{c}^{\prime}(\bar{X}_{t}(x_{0}))\theta_{t}dt +\bar{q}^{1/2}(\bar{X}_{t}(x_{0}))d\tilde{W}_{t}\nonumber\\
 \theta_{0}&=&0.\label{Eq:LimitingProcess0}
 \end{eqnarray}
 where $\tilde{W}$ is a one dimensional standard Wiener process.
 \end{theorem}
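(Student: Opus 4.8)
\emph{Proof strategy.} I would use the classical Poisson-equation (corrector) method together with a martingale central limit theorem, carried out directly on the coupled system. Subtracting \eqref{Eq:LimitingODE} from the $X$-equation in \eqref{slowfastsystem} and dividing by $\sqrt\varepsilon$ gives the exact identity
\[
\theta^\varepsilon_t=\frac{1}{\sqrt\varepsilon}\int_0^t\left(c(X^\varepsilon_s,Y^\eta_s)-\bar c(\bar X_s)\right)ds+\int_0^t\sigma(X^\varepsilon_s,Y^\eta_s)\,dW^1_s.
\]
I would split the drift integrand as $c(X^\varepsilon_s,Y^\eta_s)-\bar c(\bar X_s)=(c(X^\varepsilon_s,Y^\eta_s)-\bar c(X^\varepsilon_s))+(\bar c(X^\varepsilon_s)-\bar c(\bar X_s))$. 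For the second term a first order Taylor expansion gives $\bar c(X^\varepsilon_s)-\bar c(\bar X_s)=\sqrt\varepsilon\,\bar c'(\bar X_s)\,\theta^\varepsilon_s+O(\varepsilon|\theta^\varepsilon_s|^2)$, with a remainder that is negligible after dividing by $\sqrt\varepsilon$ once one has uniform-in-$\varepsilon$ second moment bounds on $\theta^\varepsilon$. For the first term I would use the solution $\Phi$ of the Poisson equation \eqref{Eq:PoissonPDEFluctuations0}, so that $c(x,y)-\bar c(x)=\mathcal L_y\Phi(x,y)$, and apply It\^o's formula to $\Phi(X^\varepsilon_t,Y^\eta_t)$ (licit by the regularity of $\Phi$). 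Since the generator of $Y^\eta$ carries the factor $1/\eta$ in front of $\mathcal L_y$, and since $W^1,W^2$ are independent (so there is no mixed second order term), this yields
\[
\frac{1}{\sqrt\varepsilon}\int_0^t\mathcal L_y\Phi(X^\varepsilon_s,Y^\eta_s)\,ds=\frac{\eta}{\sqrt\varepsilon}\,R^\varepsilon_t-\eta\int_0^t(\partial_x\Phi\,\sigma)(X^\varepsilon_s,Y^\eta_s)\,dW^1_s-\frac{1}{\sqrt{\varepsilon/\eta}}\int_0^t(\partial_y\Phi\,\tau)(X^\varepsilon_s,Y^\eta_s)\,dW^2_s,
\]
where $R^\varepsilon_t=\Phi(X^\varepsilon_t,Y^\eta_t)-\Phi(x_0,y_0)-\int_0^t(\partial_x\Phi\,c+\tfrac{\varepsilon}{2}\partial_{xx}\Phi\,\sigma^2)(X^\varepsilon_s,Y^\eta_s)\,ds$ collects the boundary and slow-drift contributions.

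Next I would show that the genuinely small pieces vanish. Assumption \ref{A:Assumption3}(i) forces $\eta/\sqrt\varepsilon\to0$ (regardless of whether $\gamma$ is finite or infinite), so, using the polynomial growth of $\Phi$ and its derivatives in $y$ together with uniform-in-$\eta$ polynomial moment bounds for $Y^\eta$ (a consequence of the ergodicity built into Assumption \ref{A:Assumption1}), the terms $\tfrac{\eta}{\sqrt\varepsilon}R^\varepsilon_t$ and $\eta\int_0^t(\partial_x\Phi\,\sigma)\,dW^1_s$, along with the Taylor remainder, all tend to $0$ in probability uniformly on $[0,T]$; Assumption \ref{A:Assumption3}(ii) is precisely what ensures that what remains is genuinely of order one, i.e.\ that $1/\sqrt\varepsilon$ is the right normalization. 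Collecting terms, I arrive at the approximate closed equation
\[
\theta^\varepsilon_t=\int_0^t\bar c'(\bar X_s)\,\theta^\varepsilon_s\,ds+M^\varepsilon_t+o_{\mathbb{P}}(1),\qquad M^\varepsilon_t=\int_0^t\sigma(X^\varepsilon_s,Y^\eta_s)\,dW^1_s-\frac{1}{\sqrt{\varepsilon/\eta}}\int_0^t(\partial_y\Phi\,\tau)(X^\varepsilon_s,Y^\eta_s)\,dW^2_s.
\]

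From this equation and Gronwall's inequality one gets $\sup_\varepsilon\sup_{t\le T}\E{|\theta^\varepsilon_t|^2}<\infty$, and moment estimates on the increments of $M^\varepsilon$ together with Kolmogorov's criterion then give tightness of $\{\theta^\varepsilon\}$ in $C([0,T];\R)$. For the limit, $M^\varepsilon$ is a continuous square-integrable martingale with (again by independence of $W^1,W^2$)
\[
\langle M^\varepsilon\rangle_t=\int_0^t\sigma^2(X^\varepsilon_s,Y^\eta_s)\,ds+\frac{\eta}{\varepsilon}\int_0^t(\partial_y\Phi\,\tau)^2(X^\varepsilon_s,Y^\eta_s)\,ds.
\]
Invoking the averaging principle for the fast process — $\int_0^t g(X^\varepsilon_s,Y^\eta_s)\,ds\to\int_0^t\bar g(\bar X_s)\,ds$ in probability for continuous $g$ of polynomial growth, with $\bar g(x)=\int g(x,y)\mu(dy|x)$, itself proved via an auxiliary Poisson equation together with $X^\varepsilon\to\bar X$ — and using $\eta/\varepsilon\to1/\gamma^2$, one obtains $\langle M^\varepsilon\rangle_t\to\int_0^t\bar q(\bar X_s)\,ds$ with $q=\sigma^2+\gamma^{-2}(\partial_y\Phi\,\tau)^2$; if $\gamma=\infty$ the second martingale disappears and $q=\sigma^2$, matching the statement. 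The martingale central limit theorem then yields $M^\varepsilon\Rightarrow\int_0^\cdot\bar q^{1/2}(\bar X_s)\,d\tilde W_s$ for a standard Brownian motion $\tilde W$, jointly with $X^\varepsilon\to\bar X$; passing to the limit along any convergent subsequence in the approximate equation, every limit point $\theta$ of $\theta^\varepsilon$ solves the linear SDE \eqref{Eq:LimitingProcess0}, which has a unique solution, so the whole family converges weakly to it.

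The main obstacle — and the place where the structural hypotheses are genuinely used — is the coupled averaging step: controlling $\int_0^t g(X^\varepsilon_s,Y^\eta_s)\,ds$ when the slow variable $X^\varepsilon$ is itself moving, and, relatedly, checking that the correction martingale $\tfrac{1}{\sqrt{\varepsilon/\eta}}\int_0^t(\partial_y\Phi\,\tau)\,dW^2_s$ contributes exactly the $\gamma^{-2}(\partial_y\Phi\,\tau)^2$ piece of the limiting variance rather than a degenerate or divergent one. Both rest on the regularity of $\Phi$ (and of the auxiliary Poisson solutions) and on the uniform-in-$\eta$ moment estimates for $Y^\eta$, which are exactly what Assumption \ref{A:Assumption1} provides.
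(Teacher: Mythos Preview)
The paper does not actually prove Theorem~\ref{T:CLT2}; it is quoted from \cite{Spiliopoulos_CLT_Multiscale_2014} and used as input. What the paper does contain (Section~\ref{S:PreliminaryCLTresults}) is precisely the algebraic part of your argument: the same splitting $c-\bar c(\bar X)=(c-\bar c(X^\varepsilon))+(\bar c(X^\varepsilon)-\bar c(\bar X))$, the same It\^o/Poisson-equation identity for the first piece producing the $\sqrt{\eta/\varepsilon}\,(\partial_y\Phi\,\tau)\,dW^2$ martingale and the $\eta/\sqrt\varepsilon$ remainder $R^\varepsilon$, and Taylor's theorem for the second piece, after which the paper solves the resulting linear equation by Duhamel's formula to obtain~\eqref{Eq:PrelimitFluctuationsSolution}. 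Your sketch matches this decomposition exactly and then supplies the weak-convergence closing argument (tightness, convergence of $\langle M^\varepsilon\rangle$ via averaging, martingale CLT, uniqueness of the limiting linear SDE), which is indeed the strategy of the cited reference. So your proposal is correct and aligned with the approach underlying the result; there is nothing to compare against in the present paper beyond the shared decomposition.
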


 Notice now that it is not difficult to solve  the SDE (\ref{Eq:LimitingProcess0}) explicitly. In particular, letting for $x \in \R$, $\Psi_{x}$  be the linearization of $\bar X$ along the orbit of $x$:
 \begin{equation*}
  \frac{d}{dt}\Psi_{x}(t)=\bar{c}^{\prime}( \bar X_t )\Psi_{x}(t), \text{  } \Psi_{x}(0)=x
 \end{equation*}
  and we shall have
 \begin{align}
 \theta_{t} &= \Psi_{x_{0}} (t) \int_{0}^{t} \left[\Psi_{x_{0}} (s)\right]^{-1} \bar{q}^{1/2}( \bar X_s )  d\tilde{W}_{s} , \quad t \geq 0. \label{eqn: thetadef_00}
 \end{align}

Thus, we obtain that the limit fluctuations are Gaussian with zero mean (i.e. $\mu_{t}=0$) in this model. Since the fluctuations are Gaussian, the idea is to make use of the following result about the Wasserstein distance between an
 element of $\mathbb{D}^{2,4}$ and the Gaussian distribution. At this point, let us recall the definition of the Wasserstein distance $d_W$ between the laws of two random variables, say $U$ and $Z$,
 \[
  d_W(U,Z)=\sup_{f\colon \|f\|_{\on{Lip}}\leq 1}|\mathbb{E}[f(U)]-\mathbb{E}[f(Z)]|.
 \]

 \begin{theorem}[Corollary 4.2 in \cite{nourdin_second_2009}]
   \label{secondorderpoincare}
 Let $F \in \mathbb{D}^{2,4}$ with $\mathbb{E}(F)=\mu$ and
 $\on{Var}(F)=\sigma^2$. Assume that $N \sim \mathscr{N}(\mu,\sigma^2)$. Then,
 \begin{equation*}
 d_W(F,N) \leq \frac{\sqrt{10}}{2\sigma^2}\mathbb{E}\left[ \norm{D^2 F
     \otimes_1 D^2 F}_{\frak{H}^{\otimes 2}}^2 \right]^{\frac{1}{4}}\mathbb{E}\left[ \norm{D F}_{\frak{H}}^4 \right]^{\frac{1}{4}}.
 \end{equation*}
 \end{theorem}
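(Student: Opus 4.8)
This is a second-order Gaussian Poincar\'e--type estimate; the plan is to combine Stein's method for the one-dimensional normal law with two successive Malliavin integrations by parts --- the first bringing in the first Malliavin derivative $DF$, the second the second Malliavin derivative $D^{2}F$ --- and then to recognize the outcome in terms of the announced contraction norm.

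\emph{First step: Stein's method and one integration by parts.} For $N\sim\mathscr{N}(\mu,\sigma^{2})$ and a Lipschitz function $h$, let $g=g_{h}$ solve the Stein equation $\sigma^{2}g'(x)-(x-\mu)g(x)=h(x)-\E{h(N)}$, so that $\E{h(F)}-\E{h(N)}=\E{\sigma^{2}g'(F)-(F-\mu)g(F)}$ while $\norm{g'}_{\infty}$ is controlled by an explicit $\sigma$-dependent constant. Writing $F-\mu=\delta(-DL^{-1}F)$, where $L$ is the generator of the Ornstein--Uhlenbeck semigroup $(P_{t})_{t\ge 0}$ and $\delta$ the divergence operator, the Malliavin integration by parts formula $\E{(F-\mu)g(F)}=\E{g'(F)\langle DF,-DL^{-1}F\rangle_{\mathfrak{H}}}$ gives
\[
\E{h(F)}-\E{h(N)}=\E{g'(F)\bigl(\sigma^{2}-\langle DF,-DL^{-1}F\rangle_{\mathfrak{H}}\bigr)}.
\]
Because $\E{\langle DF,-DL^{-1}F\rangle_{\mathfrak{H}}}=\E{F(F-\mu)}=\sigma^{2}$, the bracket is centered; bounding by $\norm{g'}_{\infty}$, applying Cauchy--Schwarz, and taking the supremum over $h$ bounds $d_W(F,N)$ by a $\sigma$-dependent constant times $\sqrt{\V{\langle DF,-DL^{-1}F\rangle_{\mathfrak{H}}}}$.

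\emph{Second step: the Poincar\'e inequality and a second integration by parts.} To estimate $\V{G}$ with $G:=\langle DF,-DL^{-1}F\rangle_{\mathfrak{H}}$, use the Poincar\'e inequality on Wiener space, $\V{G}\le\E{\norm{DG}_{\mathfrak{H}}^{2}}$, and differentiate with the product rule:
\[
D_{a}G=\langle D^{2}F(a,\cdot),-DL^{-1}F\rangle_{\mathfrak{H}}+\langle DF,-D^{2}L^{-1}F(a,\cdot)\rangle_{\mathfrak{H}},
\]
a term contracting $D^{2}F$ against $-DL^{-1}F$ and one contracting $-D^{2}L^{-1}F$ against $DF$. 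Via Mehler's formula $-DL^{-1}F=\int_{0}^{\infty}e^{-t}P_{t}(DF)\,dt$ and $-D^{2}L^{-1}F=\int_{0}^{\infty}e^{-2t}P_{t}(D^{2}F)\,dt$ together with the $L^{2}$-contractivity of $P_{t}$, both terms are dominated, up to explicit numerical factors, by the corresponding expressions with $-DL^{-1}F$ and $-D^{2}L^{-1}F$ replaced by $DF$ and $D^{2}F$. The pointwise identity $\norm{\langle D^{2}F,u\rangle_{\mathfrak{H}}}_{\mathfrak{H}}^{2}=\langle D^{2}F\otimes_{1}D^{2}F,\,u\otimes u\rangle_{\mathfrak{H}^{\otimes 2}}$ (for $u\in\mathfrak{H}$, using the symmetry of $D^{2}F$) then converts each term into an inner product against $D^{2}F\otimes_{1}D^{2}F$; Cauchy--Schwarz in $\mathfrak{H}^{\otimes 2}$ produces a factor $\norm{D^{2}F\otimes_{1}D^{2}F}_{\mathfrak{H}^{\otimes 2}}\,\norm{DF}_{\mathfrak{H}}^{2}$, and a final Cauchy--Schwarz in $L^{2}(\Omega)$ splits the expectation into $\E{\norm{D^{2}F\otimes_{1}D^{2}F}_{\mathfrak{H}^{\otimes 2}}^{2}}^{1/2}$ and $\E{\norm{DF}_{\mathfrak{H}}^{4}}^{1/2}$. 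Inserting this into the first step and bookkeeping all constants --- the Stein factor, a factor $2$ from $\norm{a+b}^{2}\le 2\norm{a}^{2}+2\norm{b}^{2}$, and the integrals $\int_{0}^{\infty}e^{-t}\,dt$, $\int_{0}^{\infty}e^{-2t}\,dt$ --- produces the constant $\sqrt{10}/2$ and the asserted inequality.

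\emph{Main obstacle.} The delicate point is the pair of estimates in the second step: one wants to land on the \emph{contraction} norm $\norm{D^{2}F\otimes_{1}D^{2}F}_{\mathfrak{H}^{\otimes 2}}$, which can be strictly --- and in applications substantially --- smaller than the crude Hilbert--Schmidt quantity $\norm{D^{2}F}_{\mathfrak{H}^{\otimes 2}}^{2}$, and this sharpening is precisely what makes the criterion useful. Obtaining it requires the exact algebraic identity above rather than a plain operator-norm bound, together with a careful treatment of the operators $-DL^{-1}$ and $-D^{2}L^{-1}$ (for instance through their action on the Wiener chaos decomposition, the chaos projections being orthogonal contractions on $L^{2}$) so that the numerical constant is exactly the one stated. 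Everything else is routine Malliavin calculus.
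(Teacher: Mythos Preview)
The paper does not prove this statement: it is quoted verbatim as Corollary~4.2 of \cite{nourdin_second_2009} and used as a black box, so there is no ``paper's own proof'' to compare against. Your sketch follows the standard Nourdin--Peccati--Reinert argument from that reference (Stein's method combined with two Malliavin integrations by parts and Mehler's formula), which is indeed how the cited result is established there.
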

 \noindent In the above inequality, for any $f,g \in \frak{H}^2$, $f \otimes_1 g$ is defined as
 \begin{equation}
   \label{deffirstcontraction}
 [f \otimes_1 g] (x,y) = \int_0^1 f(x,z)g(y,z)dz.
 \end{equation}
Note that $DF$ and $D^{2}F$ are the vector and matrix, respectively,
of the first and second Malliavin derivatives of $F$ with respect to
the Brownian motion $\left( W^{1},W^{2} \right)$. We recall the definitions of these objects (e.g.,  $ \mathbb{D}^{2,4}$, $\frak{H}^2$ and the Malliavin derivatives) in more detail in Section \ref{S:ProofMainResult}.
\\~\\
 The idea is to apply the above theorem to
 \begin{equation*}
 \theta^{\varepsilon}_t = \frac{1}{\sqrt{\varepsilon}} \left(
   X_t^{\varepsilon} - \bar{X}_t \right),
 \end{equation*}
 where the underlying (two dimensional) system of stochastic differential
 equations is given by (\ref{slowfastsystem}).  As $\theta^{\varepsilon}_t$ does not have the same expectation and
 variance as the limiting Gaussian distribution (as is required in
 Theorem \ref{secondorderpoincare}), we introduce
 \begin{equation*}
 \tilde{\theta}^{\varepsilon}_t = \frac{\sigma_t}{\sqrt{\on{Var}\left(
       \theta^{\varepsilon}_t \right)}} \left[\theta^{\varepsilon}_t -
   \mathbb{E}\left( \theta^{\varepsilon}_t \right) + \mu_t  \right],
 \end{equation*}
 where $\mu_t$ and $\sigma_t^2$ are the expectation and variance of the
 limiting Gaussian distribution. Note that
 $\mathbb{E}(\tilde{\theta}^{\varepsilon}_t) = \mu_t$ and
 $\on{Var}(\tilde{\theta}^{\varepsilon}_t) = \sigma_t^2$. We can then
 write (assuming that $\theta^{\varepsilon}_t \in \mathbb{D}^{2,4}$)
 \begin{equation}
 d_W \left(\theta^{\varepsilon}_t, \mathscr{N} \left(\mu_t,\sigma_t^2
   \right)  \right) \leq d_W \left(\theta^{\varepsilon}_t, \tilde{\theta}^{\varepsilon}_t  \right) + d_W \left(\tilde{\theta}^{\varepsilon}_t, \mathscr{N} \left(\mu_t,\sigma_t^2
   \right)  \right).\label{Eq:WassensteinMetricDecomp}
 \end{equation}

Notice that in our case we have by (\ref{eqn: thetadef_00}) that $\mu_t=0$ and the limiting variance is
 \begin{align}
 \sigma_{t}^{2}&= \int_{0}^{t} e^{\int_{s}^{t}2 \bar{c}^{\prime}(\bar{X}_{u})du} \bar{q}( \bar X_s )ds\label{Eq:LimitingVariance0}
 \end{align}

By carefully analyzing the right hand side of (\ref{Eq:WassensteinMetricDecomp}) we can then establish the following result which is also the main Theorem of this work.
\begin{theorem}\label{T:MainQuantitative}
 Let $0<T<\infty$.  Assume that Assumptions \ref{A:Assumption1} and \ref{A:Assumption3} hold. Then, there is a finite constant $C<\infty$ that depends on $T$  such that for all $0<\zeta<1/2$ and for $\varepsilon,\eta$ sufficiently small one has
\begin{align}
\sup_{t\in(0,T]}d_W \left(\theta^{\varepsilon}_t, \mathscr{N} \left(\mu_t,\sigma_t^2
  \right)  \right) &\leq
                     C\left(\eta^{1/4}+\varepsilon^{1/4}+\left(\frac{\eta}{\varepsilon}-\frac{1}{\gamma^{2}}\right)^{1/2}+
                     \left( \frac{\eta}{\varepsilon} \right)^{1/2}\eta^{1/2-\zeta}+\varepsilon^{1/2-\zeta}\right)\nonumber\\
  & \quad+C\left(\left( \frac{\eta}{\varepsilon} \right)^{1/2}\eta^{1/4}+\left( \frac{\eta}{\varepsilon} \right)\eta^{1/4}+ \left(1+\frac{\eta}{\varepsilon}\right)e^{-\frac{K}{16\eta}T}\right).\nonumber
\end{align}
\end{theorem}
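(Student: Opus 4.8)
The plan is to bound the right-hand side of the decomposition \eqref{Eq:WassensteinMetricDecomp}, treating the two terms separately. For the second term $d_W(\tilde\theta^\varepsilon_t,\mathscr N(\mu_t,\sigma_t^2))$, the strategy is to apply Theorem \ref{secondorderpoincare} to $F=\tilde\theta^\varepsilon_t$. Since $\tilde\theta^\varepsilon_t$ is an affine transformation of $\theta^\varepsilon_t$, its Malliavin derivatives are those of $\theta^\varepsilon_t=\varepsilon^{-1/2}(X^\varepsilon_t-\bar X_t)$ rescaled by $\sigma_t/\sqrt{\on{Var}(\theta^\varepsilon_t)}$; and since $\bar X_t$ is deterministic, $D\theta^\varepsilon_t=\varepsilon^{-1/2}DX^\varepsilon_t$ and $D^2\theta^\varepsilon_t=\varepsilon^{-1/2}D^2X^\varepsilon_t$. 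So the required inputs are: (a) a lower bound on $\on{Var}(\theta^\varepsilon_t)$ bounded away from zero uniformly in small $\varepsilon,\eta$ and $t\in(\delta,T]$ (this should follow by comparing $\on{Var}(\theta^\varepsilon_t)$ to the limiting $\sigma_t^2$ of \eqref{Eq:LimitingVariance0}, which the qualitative CLT of Theorem \ref{T:CLT2} combined with uniform integrability already gives, with an error I would track quantitatively — this is where a term like $\eta^{1/4}+\varepsilon^{1/4}+(\eta/\varepsilon-1/\gamma^2)^{1/2}$ enters); (b) the bound $\mathbb E\|D^2\theta^\varepsilon_t\otimes_1 D^2\theta^\varepsilon_t\|^2_{\mathfrak H^{\otimes 2}}\le (\cdots)$ and (c) the bound $\mathbb E\|D\theta^\varepsilon_t\|^4_{\mathfrak H}\le(\cdots)$, both coming from the technical Sections \ref{S:FirstOrderMalliavinDer}--\ref{S:SecondOrderMalliavinDer}. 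Plugging (a)--(c) into Theorem \ref{secondorderpoincare} and taking the fourth roots yields the bulk of the claimed rate.

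The key is the structure of $DX^\varepsilon_t$ and $D^2 X^\varepsilon_t$. Differentiating the system \eqref{slowfastsystem} in the Malliavin sense, $D_r^j X^\varepsilon_t$ (the derivative with respect to $W^j$, $j=1,2$, at time $r$) solves an affine SDE driven by the linearizations of $c,\sigma$ and coupled to $D_r^j Y^\eta_t$, which itself solves an affine SDE with the fast $1/\eta$, $1/\sqrt\eta$ scaling. These can be solved explicitly via the stochastic exponential of the coefficient process. The point made in the introduction is that a naive bound on the fast-component Malliavin derivatives is lossy; instead one decomposes $D^j_r Y^\eta_t$ (and $D^{2}$ analogously) into pieces with different $\varepsilon,\eta$ behavior — a "fast-relaxing" piece controlled by the negativity of $\partial_2 f$ from Assumption \ref{A:Assumption1}(iv) (yielding the exponential $e^{-K t/(C\eta)}$ decay, hence the $e^{-KT/(16\eta)}$ boundary terms), a piece of order $\sqrt{\eta/\varepsilon}$ from the $1/\sqrt\eta$ diffusion against $\sqrt\varepsilon$ in the slow equation, and higher-order remainders of order $\eta^{1/2-\zeta}$ arising from Hölder-continuity trade-offs in freezing the slow variable. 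Assembling $L^p$ bounds for each piece, using Assumptions \ref{A:Assumption1}(iii)--(iv) to get the constants $M,K$ into the exponents, and then combining gives the polynomial-in-$(\varepsilon,\eta)$ contributions $(\eta/\varepsilon)^{1/2}\eta^{1/2-\zeta}$, $(\eta/\varepsilon)^{1/2}\eta^{1/4}$, $(\eta/\varepsilon)\eta^{1/4}$, $\varepsilon^{1/2-\zeta}$ etc. For the first term $d_W(\theta^\varepsilon_t,\tilde\theta^\varepsilon_t)$ in \eqref{Eq:WassensteinMetricDecomp}, since $\tilde\theta^\varepsilon_t=a_t\theta^\varepsilon_t+b_t$ with $a_t=\sigma_t/\sqrt{\on{Var}(\theta^\varepsilon_t)}$ and $b_t=\mu_t-a_t\mathbb E\theta^\varepsilon_t$, one has for $1$-Lipschitz $f$ that $|\mathbb E f(\theta^\varepsilon_t)-\mathbb E f(\tilde\theta^\varepsilon_t)|\le |a_t-1|\,\mathbb E|\theta^\varepsilon_t|+|b_t|$; using $\mathbb E|\theta^\varepsilon_t|^2\le C$, $\mu_t=0$, $|\mathbb E\theta^\varepsilon_t|\to 0$ and the variance comparison from (a), this term is again controlled by $\eta^{1/4}+\varepsilon^{1/4}+(\eta/\varepsilon-1/\gamma^2)^{1/2}$ up to the quantitative rate at which the mean and variance of $\theta^\varepsilon_t$ converge.

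Finally, since all the bounds above are uniform over $t$ in a fixed interval once we are bounded away from $0$ (and near $t=0$ one uses $\sigma_t^2\to 0$ but also $\on{Var}(\theta^\varepsilon_t)\to 0$ at comparable rates, or simply restricts to $t\in(0,T]$ via the $e^{-KT/(16\eta)}$ being replaced by $e^{-Kt/(16\eta)}$ which is absorbed), one takes the supremum over $t\in(0,T]$ and collects the worst constants into a single $C=C(T)$, arriving at the stated inequality. I expect the main obstacle to be step (b): controlling $\mathbb E\|D^2\theta^\varepsilon_t\otimes_1 D^2\theta^\varepsilon_t\|^2_{\mathfrak H^{\otimes 2}}$ requires the sharp decomposition of the \emph{second} Malliavin derivative of the fast component $Y^\eta$, which involves products of first derivatives and genuinely new coupling terms (absent in the decoupled case $\sigma=\sigma(x)$ of \cite{RocknerAveraging2021a}); keeping track of which of the many resulting terms carries which power of $\eta/\varepsilon$ and $\eta$, so that no term degrades the overall rate, is the delicate bookkeeping that occupies Sections \ref{S:FirstOrderMalliavinDer}--\ref{S:SecondOrderMalliavinDer} and ultimately dictates the precise exponents appearing in the theorem.
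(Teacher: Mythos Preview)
Your proposal is correct and follows essentially the same route as the paper: the triangle inequality \eqref{Eq:WassensteinMetricDecomp}, Theorem \ref{secondorderpoincare} applied to $\tilde\theta^\varepsilon_t$, the reduction $D\tilde\theta^\varepsilon_t=\frac{\sigma_t}{\sqrt{\on{Var}(\theta^\varepsilon_t)}\sqrt{\varepsilon}}DX^\varepsilon_t$, and the sharp first- and second-order Malliavin derivative estimates of Sections \ref{S:FirstOrderMalliavinDer}--\ref{S:SecondOrderMalliavinDer} (your steps (a)--(c)), combined with the quantitative mean/variance convergence. One small misattribution: the $\zeta$-dependent contributions $(\eta/\varepsilon)^{1/2}\eta^{1/2-\zeta}$ and $\varepsilon^{1/2-\zeta}$ do not come from H\"older trade-offs in the Malliavin analysis (those bounds, Lemmas \ref{L:FirstDerivativeMomentBound} and \ref{L:SecondDerivativeMomentBound}, are $\zeta$-free) but from the mean and variance estimates of Section \ref{S:PreliminaryCLTresults}, specifically the Poisson-equation remainder $R^\varepsilon(t;\Psi)$ and the Taylor remainder $Q^\varepsilon(t;\Psi)$ in \eqref{Eq:Rbound0}--\eqref{Eq:Qbound0}.
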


It is easy to see that under Assumption \ref{A:Assumption3}(i), the latter goes to zero as $\varepsilon,\eta$ at a rate that ultimately depends on Assumption \ref{A:Assumption3}(i).

The overview of the proof of Theorem \ref{T:MainQuantitative} is in Section \ref{S:ProofMainResult} which is using the main estimates that are then being derived in Sections \ref{S:FirstOrderMalliavinDer}, \ref{S:SecondOrderMalliavinDer} and \ref{S:AuxiliaryBounds}.

\section{Proof of main result}\label{S:ProofMainResult}

In this section we present the overview of the proof of Theorem \ref{T:MainQuantitative} referring to the detailed estimates established in Sections \ref{S:FirstOrderMalliavinDer}, \ref{S:SecondOrderMalliavinDer} and \ref{S:AuxiliaryBounds} as needed.

\subsection{Preparation and necessary notions from Malliavin analysis}
~\\
\noindent We outline here the main tools of Malliavin calculus needed in this
paper. For a complete treatment of this topic, we refer the reader to
\cite{nualart_malliavin_2006}.
\\~\\
Let $\mu$ be the product measure
between the Lebesgue measure on $\left[ 0,T \right]$ and the uniform
measure on $\left\{ 1,2 \right\}$ (which gives mass one to each of the
points $1,2$). Then, define
\begin{equation*}
\frak{H} = L^2
 \left([0,T]\times \left\{ 1,2 \right\},\mu  \right) \cong L^2
 \left([0,T];\R^2  \right)
\end{equation*}
and consider the isonormal Gaussian
 process $\left\{ W(h)
   \colon h \in \frak{H} \right\}$, that is, the collection of centered
 Gaussian random variables with covariance given by
\begin{equation*}
\mathbb{E}\left( W(h)W(g) \right) = \left\langle h,g \right\rangle_{\frak{H}}.
\end{equation*}
In this situation, we have that
\begin{equation*}
\left\{W_t = \left( W^1_t,W^2_t
  \right) = \left(
  W(\mathds{1}_{\left[ 0,t \right]}\otimes \mathds{1}_{\left\{ 1
    \right\}}), W(\mathds{1}_{\left[ 0,t \right]}\otimes \mathds{1}_{\left\{ 2
    \right\}}) \right)
  \colon t \in [0,T] \right\}
\end{equation*}
is a standard 2-dimensional Brownian motion on $[0,T]$, which is the
process we work with in this paper.
\\~\\
Denote by $\mathcal{S}$ the set of smooth cylindrical random variables
of the form $F = f
\left( W(\varphi_1), \cdots , W(\varphi_n) \right)$, where $n \geq 1$, $\{\varphi_i\}^n_{i=1} \subset
\mathfrak{H}$, and $f \in C_b^{\infty} \left(
  \mathbb{R}^n \right)$ ($f$ and all of its partial derivatives of all
orders are bounded functions). The Malliavin derivative of such a smooth cylindrical random variable
$F$ is defined as the $\mathfrak{H}$-valued random variable given by
\begin{equation*}
DF = \sum_{i=1}^n \frac{\partial f}{\partial x_i} \left( W^H(\varphi_1),
  \cdots, W^H(\varphi_n) \right)\varphi_i.
\end{equation*}
The derivative operator $D$ is closable from $L^2(\Omega)$
into $L^2(\Omega ; \mathfrak{H})$, and we continue to denote by $D$
its closure, the domain of which we denote by $\mathbb{D}^{1,2}$, and which is a Hilbert space in the Sobolev-type norm
\begin{equation*}
\left\lVert F \right\rVert_{1,2}^2 = E (F^2) + E \left( \left\lVert DF \right\rVert_{\mathfrak{H}}^2 \right).
\end{equation*}
Similarly, one can obtain a derivative operator $D
\colon\mathbb{D}^{1, 2}(\mathfrak{H}) \to L^2(\Omega; \mathfrak{H}
\otimes \mathfrak{H})$ as the closure of $D \colon L^2(\Omega;
\mathfrak{H}) \to L^2(\Omega; \mathfrak{H} \otimes \mathfrak{H})$. We
then set $D^2F = D(DF)$. Note that more generally with $p > 1$ one can
analogously obtain $\mathbb{D}^{1, p}$ as Banach spaces of Sobolev
type by working with $L^p(\Omega)$.
\\~\\
With this notation, one should view the Malliavin derivative of a
random variable $F$ which is measurable with respect to the
$\sigma$-field generated by our 2-dimensional Brownian motion $W$ as the two
 dimensional vector
 \begin{equation*}
 DF = \left( D^{W^1} F, D^{W^2} F \right),
 \end{equation*}
 where $D^{W^1}$ denotes the Malliavin derivative with respect to $W^1$ and
 $D^{W^2}$ denotes the Malliavin derivative with respect to $W^2$. The
 second order Malliavin derivative $D^2 F$ (when it exists) is then an
 element of $L^2
 \left( \Omega ; \frak{H}^{\otimes 2} \right) \cong L^2
 \left( \Omega ; L^2
 \left([0,T]^2; M_2(\R) \right) \right)$, (usually identified with the space
 of random Hilbert-Schmidt operators on $\frak{H} = L^2
 \left([0,T];\R^2  \right)$) given by
 \begin{equation}
   \label{secondorderMallder}
 D^2 F= \begin{pmatrix}
 D^{W^1,W^1}F & D^{W^1,W^2}F \\
 D^{W^2,W^1}F & D^{W^2,W^2}F
 \end{pmatrix},
 \end{equation}
 where the action of the second order Malliavin derivatives
 $D^{W^i,W^j}$ can be identified with that of the iterated first order
 derivatives $D^{W^i}D^{W^j}$. We also write $\mathbb{D}^{k,p}$ for the
 Sobolev space of random variables in $L^p(\Omega)$ that are $k$ times
 Malliavin differentiable.
\\~\\
Now we go back to (\ref{Eq:WassensteinMetricDecomp}).  Starting with the first term on the right hand side of (\ref{Eq:WassensteinMetricDecomp}), we have
 \begin{align*}
 d_W \left(\theta^{\varepsilon}_t, \tilde{\theta}^{\varepsilon}_t
 \right) &\leq \mathbb{E} \left( \abs{\theta^{\varepsilon}_t -
           \tilde{\theta}^{\varepsilon}_t} \right)\\
   &= \mathbb{E} \left( \abs{\frac{\sigma_t}{\sqrt{\on{Var}\left(
       \theta^{\varepsilon}_t \right)}} \theta^{\varepsilon}_t -
     \theta^{\varepsilon}_t +  \frac{\sigma_t}{\sqrt{\on{Var}\left(
       \theta^{\varepsilon}_t \right)}}\left( \mu_t - \mathbb{E}\left(
     \theta^{\varepsilon}_t \right) \right)} \right)\\
   &\leq \mathbb{E}\left(\abs{
     \theta^{\varepsilon}_t} \right) \abs{1 - \frac{\sigma_t}{\sqrt{\on{Var}\left(
       \theta^{\varepsilon}_t \right)}}} + \frac{\sigma_t}{\sqrt{\on{Var}\left(
       \theta^{\varepsilon}_t \right)}} \abs{\mu_t - \mathbb{E}\left(
     \theta^{\varepsilon}_t \right)}.
 \end{align*}
 Continuing with the second term, we have by Theorem
 \ref{secondorderpoincare} that
 \begin{equation*}
 d_W \left(\tilde{\theta}^{\varepsilon}_t, \mathscr{N} \left(\mu_t,\sigma_t^2
   \right)  \right) \leq \frac{\sqrt{10}}{2\sigma_t^2}\mathbb{E}\left[ \norm{D^2 \tilde{\theta}^{\varepsilon}_t
     \otimes_1 D^2 \tilde{\theta}^{\varepsilon}_t}_{\frak{H}^{\otimes 2}}^2 \right]^{\frac{1}{4}}\mathbb{E}\left[ \norm{D \tilde{\theta}^{\varepsilon}_t}_{\frak{H}}^4 \right]^{\frac{1}{4}}.
 \end{equation*}
 Note that by the definition of $\tilde{\theta}^{\varepsilon}_t$, it
 holds that
 \begin{equation*}
 D \tilde{\theta}^{\varepsilon}_t = \frac{\sigma_t}{\sqrt{\on{Var}\left(
       \theta^{\varepsilon}_t \right)}\sqrt{\varepsilon}} D
 X_t^{\varepsilon}\quad \mbox{and}\quad D^2 \tilde{\theta}^{\varepsilon}_t = \frac{\sigma_t}{\sqrt{\on{Var}\left(
       \theta^{\varepsilon}_t \right)}\sqrt{\varepsilon}} D^2
 X_t^{\varepsilon},
 \end{equation*}
 so that, in total, we get
 \begin{align*}
 d_W \left(\theta^{\varepsilon}_t, \mathscr{N} \left(\mu_t,\sigma_t^2
   \right)  \right) &\leq \mathbb{E}\left(\abs{
     \theta^{\varepsilon}_t} \right) \abs{1 - \frac{\sigma_t}{\sqrt{\on{Var}\left(
       \theta^{\varepsilon}_t \right)}}} + \frac{\sigma_t}{\sqrt{\on{Var}\left(
       \theta^{\varepsilon}_t \right)}} \abs{\mu_t - \mathbb{E}\left(
                      \theta^{\varepsilon}_t \right)}\\
   & + \frac{\sqrt{10}}{2 \varepsilon \on{Var}\left(
       \theta^{\varepsilon}_t \right)}\mathbb{E}\left[ \norm{D^2 X_t^{\varepsilon}
     \otimes_1 D^2 X_t^{\varepsilon}}_{\frak{H}^{\otimes 2}}^2 \right]^{\frac{1}{4}}\mathbb{E}\left[ \norm{D X_t^{\varepsilon}}_{\frak{H}}^4 \right]^{\frac{1}{4}}.
 \end{align*}

Summarizing:
\begin{enumerate}[(i)]
\item{We need good bounds on the convergence of mean and variance of the prelimit process to the limit ones. Namely, we need to bound $\abs{1 - \frac{\sigma_t}{\sqrt{\on{Var}\left(
       \theta^{\varepsilon}_t \right)}}}$ and $\abs{\mu_t - \mathbb{E}\left(
                      \theta^{\varepsilon}_t \right)}$. This is the content of Subsection \ref{SS:BoundsMeanVariance}.}
\item{We need good bounds for the first and second order Malliavin derivatives. Namely, we need to bound the term $\mathbb{E}\left[ \norm{D^2 X_t^{\varepsilon}
     \otimes_1 D^2 X_t^{\varepsilon}}_{\frak{H}^{\otimes 2}}^2 \right]^{\frac{1}{4}}\mathbb{E}\left[ \norm{D X_t^{\varepsilon}}_{\frak{H}}^4 \right]^{\frac{1}{4}}$. This is the content of Subsection \ref{SS:BoundsMalliavinDer}.}
\item{Lastly, we need to combine the different bounds to bound $d_W \left(\theta^{\varepsilon}_t, \mathscr{N} \left(\mu_t,\sigma_t^2
   \right)  \right)$. This is the content of Subsection \ref{SS:BoundsCompilation}.}
\end{enumerate}

 \subsection{Bounds associated to convergence of the pre-limit mean and variance}\label{SS:BoundsMeanVariance}

By the expression (\ref{Eq:PrelimitFluctuationsSolution}), the bound (\ref{Eq:Rbound0}) and Remark \ref{R:BoundQterm}, we obtain that there is a constant $C<\infty$ such that for any $\zeta>0$
 \begin{align}
 \sup_{t\in[0,T]}|\mathbb{E}\left(  \theta^{\varepsilon}_{t}\right)-
 \mu_{t}|&=\sup_{t\in[0,T]}|\mathbb{E}\left(\theta^{\varepsilon}_{t}\right)|\leq C \left( \frac{\eta^{1-\zeta}}{\sqrt{\varepsilon}}+\varepsilon^{1/2-\zeta}\right).\label{Eq:FirstMomentBound0}
 \end{align}

 Next, we want to get an estimate on the difference of the prelimit and limit variance of $\theta^{\varepsilon}_{t}$.
 By (\ref{eqn: thetadef_00}), the limiting variance is given by (\ref{Eq:LimitingVariance0}), i.e.,
  $\sigma_{t}^{2}= \int_{0}^{t} e^{\int_{s}^{t}2 \bar{c}^{\prime}(\bar{X}_{u})du} \bar{q}( \bar X_s )ds$.  Next, let us compute the prelimit variance of $\theta^{\varepsilon}_{t}$. For the second moment, (\ref{Eq:PrelimitFluctuationsSolution}) gives
 \begin{align}
 \mathbb{E}\left(|\theta^{\varepsilon}_{t}|^{2}\right)&=\mathbb{E}\bigg(\left|\Psi_{x_{0}} (t) \int_{0}^{t} \left[\Psi_{x_{0}} (s)\right]^{-1} \sigma(X^{\varepsilon}_{s}, Y^{\eta}_{s})  dW^1_{s}-\sqrt{\frac{\eta}{\varepsilon}}\Psi_{x_{0}} (t)\int_0^{t}\left[\Psi_{x_{0}} (s)\right]^{-1}\left(\partial_{y}\Phi\tau\right)(X^{\varepsilon}_s,Y^{\eta}_s)dW^{2}_{s}\right.\nonumber\\
 &\qquad\left.+\Psi_{x_{0}} (t)\int_0^t \left[\Psi_{x_{0}} (s)\right]^{-1}\frac{1}{\sqrt{\varepsilon}}\Lambda[c]\parbar{ \bar X_s,X^{\varepsilon}_s}ds+R^{\varepsilon}(t;\Psi)\right|^{2}\bigg)
 \nonumber\\
 &\leq 2 \mathbb{E}\bigg(\left|\Psi_{x_{0}} (t) \int_{0}^{t} \left[\Psi_{x_{0}} (s)\right]^{-1} \sigma(X^{\varepsilon}_{s}, Y^{\eta}_{s})  dW^1_{s}-\sqrt{\frac{\eta}{\varepsilon}}\Psi_{x_{0}} (t)\int_0^{t}\left[\Psi_{x_{0}} (s)\right]^{-1}\left(\partial_{y}\Phi\tau\right)(X^{\varepsilon}_s,Y^{\eta}_s)dW^{2}_{s}\right|^{2}\nonumber\\
 &\qquad+2\mathbb{E}\left|\Psi_{x_{0}} (t)\int_0^t \left[\Psi_{x_{0}} (s)\right]^{-1}\frac{1}{\sqrt{\varepsilon}}\Lambda[c]\parbar{ \bar X_s,X^{\varepsilon}_s}ds+R^{\varepsilon}(t;\Psi)\right|^{2}\bigg).\nonumber
 \end{align}
Now, using the bounds from (\ref{Eq:Rbound0}) and (\ref{Eq:Qbound0}), we obtain that there is some finite constant $C<\infty$ such that for $\zeta>0$
 \begin{align}
 \mathbb{E}\left(|\theta^{\varepsilon}_{t}|^{2}\right)&\leq \mathbb{E}\left( \int_{0}^{t} e^{\int_{s}^{t}2\bar{c}^{\prime}(\bar{X}_{u})du} q( X^{\varepsilon}_s, Y^{\eta}_{s} )ds\right)\nonumber\\
 &\quad+\left(\frac{\eta}{\varepsilon}-\frac{1}{\gamma^{2}}\right)\mathbb{E}\left( \int_{0}^{t} e^{\int_{s}^{t}2\bar{c}^{\prime}(\bar{X}_{u})du} (\partial_{y}\Phi\tau)^{2}( X^{\varepsilon}_s, Y^{\eta}_{s} )ds\right)\nonumber\\
 &\quad + C \left( \frac{\eta^{2-\zeta}}{\varepsilon}+\varepsilon^{1-\zeta}\right).\nonumber
 \end{align}
 Hence, using (\ref{Eq:FirstMomentBound0})  and (\ref{Eq:LimitingVariance0}) we obtain
 \begin{align}
 \on{Var}\left(\theta^{\varepsilon}_t \right)-\sigma^{2}_{t}
 &=\mathbb{E}\left(|\theta^{\varepsilon}_{t}|^{2}\right)-|\mathbb{E}\left(\theta^{\varepsilon}_{t}\right)|^{2}-\sigma^{2}_{t}\nonumber\\
 &\leq\mathbb{E}\left( \int_{0}^{t} e^{\int_{s}^{t}2\bar{c}^{\prime}(\bar{X}_{u})du} q( X^{\varepsilon}_s, Y^{\eta}_{s} )ds\right)-\int_{0}^{t} e^{\int_{s}^{t}2\bar{c}^{\prime}(\bar{X}_{u})du} \bar{q}( \bar X_s )ds\nonumber\\
 &\quad+\left(\frac{\eta}{\varepsilon}-\frac{1}{\gamma^{2}}\right)\mathbb{E}\left( \int_{0}^{t} e^{\int_{s}^{t}2\bar{c}^{\prime}(\bar{X}_{u})du} (\partial_{y}\Phi\tau)^{2}( X^{\varepsilon}_s, Y^{\eta}_{s} )ds\right)\nonumber\\
 &\quad + C \left( \frac{\eta^{2-\zeta}}{\varepsilon}+\varepsilon^{1-\zeta}\right).\nonumber
 \end{align}
 Now, by Lemma 10 in \cite{GS17} we have that
 \begin{align*}
 \mathbb{E}\left( \sup_{t\leq T}\left|\int_{0}^{t} e^{\int_{s}^{t}2\bar{c}^{\prime}(\bar{X}_{u})du} q( X^{\varepsilon}_s, Y^{\eta}_{s} )ds-\int_{0}^{t} e^{\int_{s}^{t}2\bar{c}^{\prime}(\bar{X}_{u})du} \bar{q}( \bar X_s )ds\right|\right)&\leq C(\sqrt{\eta}+\sqrt{\varepsilon}).
 \end{align*}
 Therefore, all in all, we get the estimate
 \begin{align*}
 \on{Var}\left(\theta^{\varepsilon}_t \right)-\sigma^{2}_{t}
 &\leq C \left( \sqrt{\eta}+\sqrt{\varepsilon}+\left(\frac{\eta}{\varepsilon}-\frac{1}{\gamma^{2}}\right)+ \frac{\eta^{2-\zeta}}{\varepsilon}+\varepsilon^{1-\zeta}\right).
 \end{align*}
 And, since we want to compare standard deviations, we have that for $\varepsilon,\eta$ small enough
 \begin{align}
 \sup_{t\in[0,T]}\left|\sqrt{\on{Var}\left(\theta^{\varepsilon}_t \right)}-\sqrt{\sigma^{2}_{t}}\right|
 &\leq C \sqrt{ \sqrt{\eta}+\sqrt{\varepsilon}+ \left(\frac{\eta}{\varepsilon}-\frac{1}{\gamma^{2}}\right)+ \frac{\eta^{2-\zeta}}{\varepsilon}+\varepsilon^{1-\zeta}}.\label{Eq:DiffereceLim_PreLimSD}
 \end{align}

\subsection{Compiling the bounds associated to the Malliavin derivatives}\label{SS:BoundsMalliavinDer}

We begin establishing explicit forms of the quantities $\mathbb{E}\left[ \norm{D
     X_t^{\varepsilon}}_{\frak{H}}^4 \right]$ and $\mathbb{E}\left[ \norm{D^2 X_t^{\varepsilon}
     \otimes_1 D^2 X_t^{\varepsilon}}_{\frak{H}^{\otimes 2}}^2
 \right]$ that we need to control. For the first term, we have
 \begin{equation}
     \label{firsttermtocontrol}
 \mathbb{E}\left[ \norm{D
     X_t^{\varepsilon}}_{\frak{H}}^4 \right] = \mathbb{E}\left[ \left(  \int_0^T
   \abs{D_u X_t^{\varepsilon}  }^2 du \right)^2 \right] =
   \int_{[0,T]^2}^{} \mathbb{E}\left[ \abs{D_u X_t^{\varepsilon}
   }^2 \abs{D_v X_t^{\varepsilon}  }^2 \right]du dv.
 \end{equation}
 Now, using the definition of the
 contraction operator $\otimes_1$ given in \eqref{deffirstcontraction},
 we have
 \begin{equation*}
 D^2 X_t^{\varepsilon}
     \otimes_1 D^2 X_t^{\varepsilon} = \int_0^T
     D_{u,v}^2 X_t^{\varepsilon} D_{u,w}^2 X_t^{\varepsilon} du,
 \end{equation*}
 where, by the fact that the second order Malliavin derivative is
 given by \eqref{secondorderMallder}, $D_{u,v}^2 X_t^{\varepsilon} D_{u,w}^2 X_t^{\varepsilon}$ is the
 $2 \times 2$ matrix given by
 \begin{align*}
 D_{u,v}^2 X_t^{\varepsilon} D_{u,w}^2 X_t^{\varepsilon} &= \left(
   \sum_{k=1}^2 D_{u,v}^{W^i,W^k}X_t^{\varepsilon}
   D_{u,w}^{W^k,W^j}X_t^{\varepsilon} \colon 1 \leq i,j \leq 2
                                                 \right)\\
   &= \begin{pmatrix}
 \displaystyle \sum_{k=1}^2 D_{u,v}^{W^1,W^k}X_t^{\varepsilon}
   D_{u,w}^{W^k,W^1}X_t^{\varepsilon} & \displaystyle \sum_{k=1}^2 D_{u,v}^{W^1,W^k}X_t^{\varepsilon}
   D_{u,w}^{W^k,W^2}X_t^{\varepsilon} \\
 \displaystyle \sum_{k=1}^2 D_{u,v}^{W^2,W^k}X_t^{\varepsilon}
   D_{u,w}^{W^k,W^1}X_t^{\varepsilon} & \displaystyle \sum_{k=1}^2 D_{u,v}^{W^2,W^k}X_t^{\varepsilon}
   D_{u,w}^{W^k,W^2}X_t^{\varepsilon}
 \end{pmatrix},
 \end{align*}
 so that, in total, $D^2 X_t^{\varepsilon}
     \otimes_1 D^2 X_t^{\varepsilon}$ is the
 $2 \times 2$ matrix given by
 \begin{align*}
 D^2 X_t^{\varepsilon}
     \otimes_1 D^2 X_t^{\varepsilon} &= \left(
   \sum_{k=1}^2 \int_0^T D_{u,v}^{W^i,W^k}X_t^{\varepsilon}
   D_{u,w}^{W^k,W^j}X_t^{\varepsilon} du \colon 1 \leq i,j \leq 2
                                                 \right)\\
   &= \begin{pmatrix}
 \displaystyle \sum_{k=1}^2 \int_0^T D_{u,v}^{W^1,W^k}X_t^{\varepsilon}
   D_{u,w}^{W^k,W^1}X_t^{\varepsilon}du & \displaystyle \sum_{k=1}^2 \int_0^T D_{u,v}^{W^1,W^k}X_t^{\varepsilon}
   D_{u,w}^{W^k,W^2}X_t^{\varepsilon}du \\
 \displaystyle \sum_{k=1}^2 \int_0^T D_{u,v}^{W^2,W^k}X_t^{\varepsilon}
   D_{u,w}^{W^k,W^1}X_t^{\varepsilon}du & \displaystyle \sum_{k=1}^2 \int_0^T D_{u,v}^{W^2,W^k}X_t^{\varepsilon}
   D_{u,w}^{W^k,W^2}X_t^{\varepsilon}du
 \end{pmatrix}.
 \end{align*}
 In view of this, we can finally write
 \begin{align}
   \label{secondtermtocontrol}
 & \mathbb{E}\left[ \norm{D^2 X_t^{\varepsilon}
     \otimes_1 D^2 X_t^{\varepsilon}}_{\frak{H}^{\otimes 2}}^2
 \right] = \mathbb{E}\left[   \int_{[0,T]^2}^{} \sum_{i,j=1}^2 \left(\sum_{k=1}^2 \int_0^T D_{u,v}^{W^i,W^k}X_t^{\varepsilon}
           D_{u,w}^{W^k,W^j}X_t^{\varepsilon} du  \right)^2dv dw\right]
   \nonumber \\
   &\qquad\qquad = \sum_{i,j,k,p=1}^2
     \int_{[0,T]^4}^{}\mathbb{E}\left(   D_{u,v}^{W^i,W^k}X_t^{\varepsilon}
           D_{u,w}^{W^k,W^j}X_t^{\varepsilon} D_{s,v}^{W^i,W^p}X_t^{\varepsilon}
           D_{s,w}^{W^p,W^j}X_t^{\varepsilon}\right)du ds dv dw.
 \end{align}
 We can now focus on computing $D X_t^{\varepsilon}$ and $D^2
 X_t^{\varepsilon}$, which are the quantities that appear in the two
 terms \eqref{firsttermtocontrol} and \eqref{secondtermtocontrol} that
 we need to control.
 \begin{remark}
 Under Assumption \ref{A:Assumption1}, the coefficients of the system \eqref{slowfastsystem}
 satisfy the assumptions of \cite[Corollary 3.5]{IRS19}, which
 guarantees that
 \begin{equation*}
 \sup_{0 \leq r \leq T} \mathbb{E}\left( \sup_{r \leq t \leq T}
   \abs{D_r^{W^j}X_t^{\varepsilon} }^p \right) +  \sup_{0 \leq r \leq T} \mathbb{E}\left( \sup_{r \leq t \leq T}
   \abs{D_r^{W^j}Y_t^{\eta} }^p \right) < \infty
 \end{equation*}
 and
 \begin{equation*}
 \sup_{0 \leq r_1,r_2 \leq T} \mathbb{E}\left( \sup_{r_1 \vee r_2 \leq t \leq T}
   \abs{D_{r_1,r_2}^{W^{j_1},W^{j_2}}X_t^{\varepsilon} }^p \right) +  \sup_{0 \leq r_1,r_2 \leq T} \mathbb{E}\left( \sup_{r_1 \vee r_2 \leq t \leq T}
   \abs{D_{r_1,r_2}^{W^{j_1},W^{j_2}}Y_t^{\eta} }^p \right) < \infty
 \end{equation*}
 for all $p \geq 1$.
\end{remark}
~\\
We can now state the required bounds in Lemmas \ref{L:FirstDerivativeMomentBound} and \ref{L:SecondDerivativeMomentBound}.
\begin{lemma}\label{L:FirstDerivativeMomentBound}
Let Assumptions \ref{A:Assumption1} and \ref{A:Assumption3} hold and let $T<\infty$ be given.  Then, we have that there is a finite constant $C<\infty$ (which depends on $T$) such that
\begin{align}
    \label{firsttermtocontrol1}
\sup_{t\in[0,T]}\mathbb{E}\left[ \norm{D
    X_t^{\varepsilon}}_{\frak{H}}^4 \right]  &
  \leq C\left(\varepsilon^{2}+\eta^{2}\right).
\end{align}

\end{lemma}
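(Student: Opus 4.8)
The plan is to reduce the quartic $\frak{H}$-norm moment $\mathbb{E}[\norm{DX^{\varepsilon}_t}_{\frak{H}}^4]$ to pointwise fourth-moment bounds on the two scalar Malliavin derivatives $D^{W^1}_uX^{\varepsilon}_t$ and $D^{W^2}_uX^{\varepsilon}_t$, and then to extract those from the affine stochastic differential equations that these derivatives solve. Concretely, starting from the identity \eqref{firsttermtocontrol} and the Cauchy--Schwarz inequality,
\begin{equation*}
\mathbb{E}\left[\norm{DX^{\varepsilon}_t}_{\frak{H}}^4\right]=\int_{[0,T]^2}\mathbb{E}\left[\abs{D_uX^{\varepsilon}_t}^2\abs{D_vX^{\varepsilon}_t}^2\right]du\,dv\le\left(\int_0^T\mathbb{E}\left[\abs{D_uX^{\varepsilon}_t}^4\right]^{1/2}du\right)^2,
\end{equation*}
and since $D_uX^{\varepsilon}_t=(D^{W^1}_uX^{\varepsilon}_t,D^{W^2}_uX^{\varepsilon}_t)$ vanishes for $u>t$ by adaptedness while $\abs{D_uX^{\varepsilon}_t}^4\le2\abs{D^{W^1}_uX^{\varepsilon}_t}^4+2\abs{D^{W^2}_uX^{\varepsilon}_t}^4$, the assertion follows once one shows, for some $C<\infty$ depending on $T$,
\begin{equation*}
\sup_{0\le u\le t\le T}\mathbb{E}\left[\abs{D^{W^1}_uX^{\varepsilon}_t}^4\right]\le C\varepsilon^2\qquad\text{and}\qquad\sup_{0\le u\le t\le T}\mathbb{E}\left[\abs{D^{W^2}_uX^{\varepsilon}_t}^4\right]\le C\eta^2,
\end{equation*}
since inserting these two bounds into the preceding display yields $\mathbb{E}[\norm{DX^{\varepsilon}_t}_{\frak{H}}^4]\le CT^2(\varepsilon^2+\eta^2)$ uniformly in $t\in[0,T]$.

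To obtain the two scalar bounds I would differentiate \eqref{slowfastsystem} with respect to $W^j$ at a time $u$, which produces an affine system for the pair $(D^{W^j}_uX^{\varepsilon}_t,D^{W^j}_uY^{\eta}_t)$ whose only inhomogeneous term is the initial value: this is $\sqrt{\varepsilon}\,\sigma(X^{\varepsilon}_u,Y^{\eta}_u)$ in the $X$-equation when $j=1$ (and $0$ in the $Y$-equation), and $\eta^{-1/2}\tau(X^{\varepsilon}_u,Y^{\eta}_u)$ in the $Y$-equation when $j=2$ (and $0$ in the $X$-equation). For $j=1$ one runs It\^{o}'s formula on $\abs{D^{W^1}_uX^{\varepsilon}_s}^4$ and $\abs{D^{W^1}_uY^{\eta}_s}^4$, using that the strong dissipativity of Assumption \ref{A:Assumption1}(iv) (in particular $\partial_2f\le-K^*<0$) dominates the destabilizing $1/\eta$ drift in the $Y$-equation, together with Gr\"onwall's inequality and the a priori moment bounds of \cite[Corollary 3.5]{IRS19}; this keeps both $D^{W^1}_uX^{\varepsilon}$ and $D^{W^1}_uY^{\eta}$ of size $\sqrt{\varepsilon}$, giving $\mathbb{E}[\abs{D^{W^1}_uX^{\varepsilon}_t}^4]\le C\varepsilon^2$.

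The delicate point --- and what I expect to be the main obstacle --- is the $j=2$ bound. The forcing $\eta^{-1/2}\tau$ makes $D^{W^2}_uY^{\eta}_s$ of size $\eta^{-1/2}$, and naively inserting this into the $\partial_2c\,D^{W^2}_uY^{\eta}_s$ feedback term of the $X$-equation would only give $D^{W^2}_uX^{\varepsilon}_t=O(\eta^{-1/2})$, far weaker than the required $O(\eta^{1/2})$. The resolution is that the mean reversion of rate $\sim K^*/\eta$ coming from $\partial_2f$ forces $D^{W^2}_uY^{\eta}_s$ to decay on the $\eta$-timescale (morally $\abs{D^{W^2}_uY^{\eta}_s}\lesssim\eta^{-1/2}e^{-cK^*(s-u)/\eta}$ in $L^4$), so that $\int_u^t\partial_2c\,D^{W^2}_uY^{\eta}_s\,ds$ is only of order $\eta^{-1/2}\cdot\eta=\eta^{1/2}$ and hence $\mathbb{E}[\abs{D^{W^2}_uX^{\varepsilon}_t}^4]\le C\eta^2$. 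Turning this heuristic into a rigorous estimate requires decomposing $D^{W^2}_uY^{\eta}$ into a fast exponentially-decaying contribution and lower-order remainders and bounding each piece separately via It\^{o}'s formula / variation of constants and the dissipativity estimates; this is precisely the technical work to be carried out in Section \ref{S:FirstOrderMalliavinDer}, relying on the auxiliary bounds of Section \ref{S:AuxiliaryBounds}. Once those bounds are established, the reduction above closes the proof.
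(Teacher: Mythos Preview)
Your approach is essentially the paper's: the same Cauchy--Schwarz reduction of \eqref{firsttermtocontrol} to pointwise fourth moments, followed by the same It\^{o}/dissipativity/Gr\"onwall analysis of the affine SDEs for $D^{W^j}X^\varepsilon$ and $D^{W^j}Y^\eta$ (carried out in Propositions \ref{boundonEDW1Xs2p} and \ref{boundonEDW2Xs2p}), including the decomposition of $D^{W^2}Y^\eta$ into an exponentially decaying piece and a remainder. One small correction: your claimed bound $\mathbb{E}[\abs{D^{W^2}_uX^\varepsilon_t}^4]\le C\eta^2$ is slightly too optimistic, because the stochastic integral $\sqrt{\varepsilon}\int_u^t\partial_2\sigma\,D^{W^2}_uY^\eta_s\,dW^1_s$ contributes a term of order $\varepsilon^2$ (the quadratic-in-time integral $\int(D^{W^2}_uY^\eta_s)^2ds$ is only $O(1)$ in $L^p$, not $O(\eta)$), so the paper obtains $C(\varepsilon^2+\eta^2)$ for this derivative as well; this does not affect the lemma's conclusion.
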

\begin{proof}[Proof of Lemma \ref{L:FirstDerivativeMomentBound}]

Let us define
\begin{align*}
m_{r}(t)&=\left|D^{W^{1}}_{r}X^{\varepsilon}_{t}\right|^{2}+\left|D^{W^{2}}_{r}X^{\varepsilon}_{t}\right|^{2}.
\end{align*}
By Lemmas \ref{boundonEDW1Xs2p} and \ref{boundonEDW2Xs2p} we have that
\begin{align*}
\mathbb{E}\left( \sup_{0\leq r\leq t\leq T} m_{r}(t)\right)&\leq C\left(\varepsilon+\eta\right).
\end{align*}
Finally, going back to (\ref{firsttermtocontrol}), we have by the
Cauchy-Schwarz inequality,
\begin{align*}
\mathbb{E}\left[ \norm{D
    X_t^{\varepsilon}}_{\frak{H}}^4 \right]  &=
  \int_{[0,T]^2}^{} \mathbb{E}\left[ m_u (t)m_{v}(t)   \right]du dv
  \leq C\left(\varepsilon^{2}+\eta^{2}\right),
\end{align*}
hence concluding the proof of the lemma.
\end{proof}
~\\
Let us now turn our attention to the bound for the second order Malliavin derivatives associated to the term $\mathbb{E}\left[  \norm{D^2 X_t^{\varepsilon}\otimes_1 D^2 X_t^{\varepsilon}}_{\frak{H}^{\otimes 2}}^2\right] $.

\begin{lemma}\label{L:SecondDerivativeMomentBound}
Let Assumptions \ref{A:Assumption1} and \ref{A:Assumption3} hold and let $T<\infty$ be given.  Then, we have that there is a finite constant $C<\infty$ (depending on $T$) such that
\begin{align}
    \label{secondtermtocontrol1}
\sup_{t\in[0,T]}\mathbb{E}\left[  \norm{D^2 X_t^{\varepsilon}
    \otimes_1 D^2 X_t^{\varepsilon}}_{\frak{H}^{\otimes 2}}^2  \right] &
  \leq C\left(\varepsilon^{4}+\eta^{3}+\varepsilon^{2}\eta+ (\varepsilon^{2}+\eta^{2})e^{-\frac{K}{4\eta}T} \right).
\end{align}
\end{lemma}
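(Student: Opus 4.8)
The plan is to reduce the fourth-order expectation appearing in \eqref{secondtermtocontrol} to a bound on a single scalar quantity that aggregates all the entries of the second Malliavin derivative matrix, much as in the proof of Lemma \ref{L:FirstDerivativeMomentBound}. Concretely, I would set
\[
n_{r_1,r_2}(t)=\sum_{j_1,j_2=1}^{2}\bigl|D^{W^{j_1},W^{j_2}}_{r_1,r_2}X^{\varepsilon}_t\bigr|^{2},
\]
and observe that, after applying the Cauchy--Schwarz inequality to the product of four second derivatives inside each summand of \eqref{secondtermtocontrol} and then summing over the indices $i,j,k,p$, one obtains a bound of the form
\[
\mathbb{E}\left[\norm{D^2 X_t^{\varepsilon}\otimes_1 D^2 X_t^{\varepsilon}}_{\frak{H}^{\otimes 2}}^2\right]
\leq C\int_{[0,T]^4}\mathbb{E}\bigl[n_{u,v}(t)\,n_{u,w}(t)\,n_{s,v}(t)\,n_{s,w}(t)\bigr]\,du\,ds\,dv\,dw .
\]
Using Cauchy--Schwarz twice more (or Hölder with exponent $4$) in the probability space and in the $(u,v,w,s)$ variables, this is controlled by $T^4\sup_{r_1,r_2,t}\mathbb{E}\bigl[n_{r_1,r_2}(t)^2\bigr]$, or more precisely by a quantity of the form $\left(\sup_{0\le r_1,r_2\le t\le T}\mathbb{E}\bigl[n_{r_1,r_2}(t)^2\bigr]\right)$ up to a constant depending on $T$. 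Hence the lemma will follow once we establish
\[
\sup_{0\le r_1,r_2\le t\le T}\mathbb{E}\bigl[n_{r_1,r_2}(t)^2\bigr]\le C\left(\varepsilon^{4}+\eta^{3}+\varepsilon^{2}\eta+(\varepsilon^{2}+\eta^{2})e^{-\frac{K}{4\eta}T}\right).
\]

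To get this last estimate, I would invoke the detailed second-order Malliavin derivative bounds established in Section \ref{S:SecondOrderMalliavinDer}. The point, emphasized repeatedly in the introduction, is that the four matrix entries $D^{W^{j_1},W^{j_2}}_{r_1,r_2}X^{\varepsilon}_t$ are \emph{not} all of the same order in $\varepsilon,\eta$: the entry involving two derivatives with respect to $W^1$ carries the full $\sqrt{\varepsilon}\,\sigma$ prefactor, the mixed $W^1,W^2$ entry picks up a $\sqrt{\varepsilon/\eta}$ from the fast-slow coupling, and the pure $W^2,W^2$ entry — which feeds back into $X^{\varepsilon}$ only through the drift $c(X^{\varepsilon},Y^{\eta})$ and the diffusion $\sigma(X^{\varepsilon},Y^{\eta})$ integrated against $dt$ and $\sqrt{\varepsilon}\,dW^1$ — contributes a term of order $\eta$ together with the exponentially small transient $e^{-KT/(4\eta)}$ coming from the strong dissipativity in Assumption \ref{A:Assumption1}(iv). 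Taking the square of $n_{r_1,r_2}(t)$, expanding, and using the elementary inequality $(a+b+c)^2\le 3(a^2+b^2+c^2)$, each resulting term is bounded by the corresponding square of the individual entry bound; collecting and using $\mathbb{E}[n^2]\le C(\mathbb{E}[(D^{W^1,W^1}X)^4]+\mathbb{E}[(D^{W^1,W^2}X)^4]+\mathbb{E}[(D^{W^2,W^2}X)^4])$ and the symmetry $D^{W^1,W^2}=D^{W^2,W^1}$, the powers $\varepsilon^4$, $\varepsilon^2\eta$, $\eta^3$ and the exponential term appear exactly as claimed. The finiteness of all these moments is guaranteed by the Remark following \eqref{secondtermtocontrol} (via \cite[Corollary 3.5]{IRS19}), so all manipulations are legitimate.

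The main obstacle is not the combinatorial reduction above — that is routine Cauchy--Schwarz bookkeeping — but rather the input it relies on, namely the sharp moment bounds on the individual second-order Malliavin derivatives of $X^{\varepsilon}$ and $Y^{\eta}$ proved in Section \ref{S:SecondOrderMalliavinDer}. These require writing the (affine) SDEs satisfied by $D^2 X^{\varepsilon}$ and $D^2 Y^{\eta}$, solving them explicitly via the corresponding fundamental solutions, and then decomposing the variation-of-constants formula into pieces that scale differently in $\varepsilon$ and $\eta$; in particular the $Y^{\eta}$ terms must be handled with the dissipativity estimate to produce the $e^{-KT/(4\eta)}$ factor and to prevent the $1/\eta$ and $1/\sqrt{\eta}$ prefactors in the fast dynamics from destroying the bound. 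Since those estimates are precisely the content of the technical sections we are allowed to cite, the proof of the present lemma consists of assembling them: substitute the bounds for $\mathbb{E}\bigl[|D^{W^{j_1},W^{j_2}}_{r_1,r_2}X^{\varepsilon}_t|^{4}\bigr]$ into the aggregate $\sup\mathbb{E}[n^2]$ estimate, then into the $T^4$-type bound on \eqref{secondtermtocontrol}, take the supremum over $t\in[0,T]$, and read off \eqref{secondtermtocontrol1}.
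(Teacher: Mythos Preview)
Your reduction has a genuine gap at the step where you pass from the four-fold integral to a supremum. You write that the integral over $[0,T]^4$ is controlled by $T^4\sup_{r_1,r_2,t}\mathbb{E}[n_{r_1,r_2}(t)^2]$ and then claim it suffices to show
\[
\sup_{0\le r_1,r_2\le t\le T}\mathbb{E}\bigl[n_{r_1,r_2}(t)^2\bigr]\le C\Bigl(\varepsilon^{4}+\eta^{3}+\varepsilon^{2}\eta+(\varepsilon^{2}+\eta^{2})e^{-\frac{K}{4\eta}T}\Bigr).
\]
But this supremum bound is false. From Proposition \ref{P:BoundSecondMalliavinDW2DW2X} with $p=2$, the bound on $\mathbb{E}\bigl[|D^{W^2,W^2}_{r_1,r_2}X_t^{\varepsilon}|^4\bigr]$ contains the term $\bigl(1+(\varepsilon/\eta)^2\bigr)e^{-\frac{K}{2\eta}|r_1-r_2|}$, which at $r_1=r_2$ equals $1+(\varepsilon/\eta)^2$. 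This is not an artifact of overestimation: the initial condition for $D^{W^2,W^2}_{r,r}Y^{\eta}_t$ involves $\frac{1}{\sqrt{\eta}}\partial_2\tau\cdot D^{W^2}_r Y^{\eta}_r\sim \eta^{-1}$, and after the $ds$-integration that feeds into $X^{\varepsilon}$, one genuinely has $D^{W^2,W^2}_{r,r}X_t^{\varepsilon}$ of order one. Hence $\sup_{r_1,r_2}\mathbb{E}[n_{r_1,r_2}(t)^2]$ is at least of order one and does not vanish with $\varepsilon,\eta$.

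What the paper does instead is \emph{retain} the $(r_1,r_2)$-dependence in the pointwise bounds and then integrate the exponential factors over $[0,T]^4$. After applying H\"older with exponent $4$ to \eqref{secondtermtocontrol}, the worst summand $i=j=k=p=2$ contains a product of four factors of the type $e^{-\frac{K}{8\eta}|u-v|}$, and Lemma \ref{L:BoundIntergalTermFinalSecondMalliavinTerm} shows that
\[
\int_{[0,T]^4} e^{-\frac{K}{8\eta}|u-v|}e^{-\frac{K}{8\eta}|u-w|}e^{-\frac{K}{8\eta}|s-v|}e^{-\frac{K}{8\eta}|s-w|}\,du\,dv\,ds\,dw \le C\bigl(\eta^{3}+\eta^{2}e^{-\frac{K}{4\eta}T}\bigr).
\]
It is precisely this integration step --- exploiting that the large values of $D^{W^2,W^2}X$ are concentrated near the diagonal $\{r_1=r_2\}$ --- that produces the $\eta^{3}$ and $\varepsilon^{2}\eta$ in the final bound. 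Your supremum-then-integrate approach discards this concentration and therefore cannot recover the stated estimate; you need to keep the exponential dependence on $|r_1-r_2|$ through the integration.
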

\begin{proof}[Proof of Lemma \ref{L:SecondDerivativeMomentBound}]
By the general expression \eqref{secondtermtocontrol} and Cauchy-Schwarz we have
\begin{align}
&\mathbb{E}\left[   \norm{D^2 X_t^{\varepsilon}
    \otimes_1 D^2 X_t^{\varepsilon}}_{\frak{H}^{\otimes 2}}^2\right]=\nonumber\\
& = \sum_{i,j,k,p=1}^2
    \int_{[0,T]^4}^{}\mathbb{E}\left(   D_{u,v}^{W^i,W^k}X_t^{\varepsilon}
          D_{u,w}^{W^k,W^j}X_t^{\varepsilon} D_{s,v}^{W^i,W^p}X_t^{\varepsilon}
          D_{s,w}^{W^p,W^j}X_t^{\varepsilon}\right)du ds dv dw\nonumber\\
&\leq
\sum_{i,j,k,p=1}^2
    \int_{[0,T]^4}\left(\mathbb{E}\left| D_{u,v}^{W^i,W^k}X_t^{\varepsilon}\right|^{4}\right)^{1/4}
          \left(\mathbb{E}\left|D_{u,w}^{W^k,W^j}X_t^{\varepsilon}\right|^{4}\right)^{1/4}\left(\mathbb{E}\left|D_{s,v}^{W^i,W^p}X_t^{\varepsilon}\right|^{4}\right)^{1/4}\nonumber\\
          &\qquad\qquad\qquad\qquad\qquad\qquad\qquad\qquad\qquad\qquad\qquad\qquad
          \left(\mathbb{E}\left|D_{s,w}^{W^p,W^j}X_t^{\varepsilon}\right|^{4}\right)^{1/4}du ds dv dw.\label{Eq:BoundSecondDer1}
\end{align}
Let us now investigate how the upper bounds of (\ref{Eq:BoundSecondDer1}) looks like in terms of $\varepsilon$ and $\eta$. By Proposition \ref{boundonEDW1W1Xs2p}, we have with $p=2$, for some
constant $C >0$,
\begin{equation*}
\mathbb{E}\left( \abs{D^{W^{1},W^{1}}_{r_1,r_2} X_t^{\varepsilon}}^{4}
\right) \leq C \varepsilon^{4}.
\end{equation*}
By Proposition \ref{P:BoundSecondMalliavinDW1DW2X}, we have with $p=2$, for some
constant $C >0$,
\begin{align*}
\mathbb{E}\left( \abs{D^{W^{1},W^{2}}_{r_1,r_2} X_t^{\varepsilon}}^{4}\right)&\leq C \left[\varepsilon^{4}+\varepsilon^{2}\eta^{2}+\left(\frac{\varepsilon}{\eta}\right)^{2}
 e^{-\frac{K}{\eta}(r_1-r_2)}\mathds{1}_{\{r_1\geq r_2\}}\right].
\end{align*}
By Proposition \ref{P:BoundSecondMalliavinDW2DW2X}, with $p=2$ we have for some constant $C >0$,
\begin{align*}
\mathbb{E}\left( \abs{D^{W^{2},W^{2}}_{r_1,r_2} X_t^{\varepsilon}}^{4}\right)
&\leq C \bigg[\varepsilon^{4}+\eta^{4}+\varepsilon^{2}\eta^{2}+
                \left(1+\left(\frac{\varepsilon}{\eta}\right)^{2}\right)e^{-\frac{K}{2\eta}(r_1
                 \vee r_2 - r_1 \wedge r_2)}\bigg].
\end{align*}
By inspecting these bounds for the second Malliavin derivatives, it
becomes clear that the worst term, i.e. the one that would yield the
largest bound in terms of $\varepsilon,\eta$ for
(\ref{Eq:BoundSecondDer1}) is the term that corresponds to the choice
$i=j=k=p=2$. That term is
\begin{align*}
  &\int_{[0,T]^4}\left(\mathbb{E}\left| D_{u,v}^{W^2,W^2}X_t^{\varepsilon}\right|^{4}\right)^{1/4}
          \left(\mathbb{E}\left|D_{u,w}^{W^2,W^2}X_t^{\varepsilon}\right|^{4}\right)^{1/4}
    \left(\mathbb{E}\left|D_{s,v}^{W^2,W^2}X_t^{\varepsilon}\right|^{4}\right)^{1/4}\\
  &\qquad\qquad\qquad\qquad\qquad\qquad\qquad\qquad\qquad\qquad\qquad\qquad
          \left(\mathbb{E}\left|D_{s,w}^{W^2,W^2}X_t^{\varepsilon}\right|^{4}\right)^{1/4}du ds dv dw.
\end{align*}
We use now Lemma \ref{L:BoundIntergalTermFinalSecondMalliavinTerm} with $k=\frac{K}{8\eta}$, which for $\eta$ small enough gives
\begin{align}
&\int_{[0,T]^4} e^{-\frac{K}{8\eta}(u\vee v - u \wedge v)}
                e^{-\frac{K}{8\eta}(u\vee w - u \wedge
                w)}e^{-\frac{K}{8\eta}(s\vee v - s \wedge
                v)}e^{-\frac{K}{8\eta}(s\vee w - s \wedge w)}dw ds dv
                du\nonumber\\
  &\qquad\qquad\qquad\qquad\qquad\qquad\qquad\qquad\qquad\qquad\qquad\qquad\qquad\leq C\left(\eta^{3}+\eta^{2}e^{-\frac{K}{4\eta}T}\right).
\end{align}
Thus, we get for $\varepsilon,\eta<1$
\begin{align}
&\int_{[0,T]^4}\left(\mathbb{E}\left| D_{u,v}^{W^2,W^2}X_t^{\varepsilon}\right|^{4}\right)^{1/4}
          \left(\mathbb{E}\left|D_{u,w}^{W^2,W^2}X_t^{\varepsilon}\right|^{4}\right)^{1/4}
                \left(\mathbb{E}\left|D_{s,v}^{W^2,W^2}X_t^{\varepsilon}\right|^{4}\right)^{1/4}\nonumber
\\
  &\qquad\qquad\qquad\qquad\qquad\qquad\qquad\qquad\qquad\qquad\qquad\qquad
          \left(\mathbb{E}\left|D_{s,w}^{W^2,W^2}X_t^{\varepsilon}\right|^{4}\right)^{1/4}du ds dv dw\nonumber\\
&\leq C\left[\varepsilon^{4}+\eta^{4}+\varepsilon^{2}\eta^{2}+ \left(1+\left(\frac{\varepsilon}{\eta}\right)^{2}\right)\left(\eta^{3}+\eta^{2}e^{-\frac{K}{4\eta}}\right)\right]\nonumber\\
&\leq C\left[\varepsilon^{4}+\eta^{3}+\varepsilon^{2}\eta+ (\varepsilon^{2}+\eta^{2})e^{-\frac{K}{4\eta}T} \right].
\end{align}
Finally, we have that for some $C<\infty$
\begin{align}
\sup_{t\in[0,T]}\mathbb{E}\left[   \norm{D^2 X_t^{\varepsilon}
    \otimes_1 D^2 X_t^{\varepsilon}}_{\frak{H}^{\otimes 2}}^2\right]
&  \leq C\left(\varepsilon^{4}+\eta^{3}+\varepsilon^{2}\eta+ (\varepsilon^{2}+\eta^{2})e^{-\frac{K}{4\eta}T} \right),\label{Eq:BoundSecondDer2}
\end{align}
hence concluding the proof of the lemma.
\end{proof}

\subsection{Calculation of the final bound and completion of the proof of Theorem \ref{T:MainQuantitative}}\label{SS:BoundsCompilation}
 Let us recall the bound
 \begin{align*}
d_W \left(\theta^{\varepsilon}_t, \mathscr{N} \left(\mu_t,\sigma_t^2
  \right)  \right) &\leq \mathbb{E}\left(\abs{
    \theta^{\varepsilon}_t} \right) \abs{1 - \frac{\sigma_t}{\sqrt{\on{Var}\left(
      \theta^{\varepsilon}_t \right)}}} + \frac{\sigma_t}{\sqrt{\on{Var}\left(
      \theta^{\varepsilon}_t \right)}} \abs{\mu_t - \mathbb{E}\left(
                     \theta^{\varepsilon}_t \right)}\\
  & + \frac{\sqrt{10}}{2 \varepsilon \on{Var}\left(
      \theta^{\varepsilon}_t \right)}\mathbb{E}\left[ \norm{D^2 X_t^{\varepsilon}
    \otimes_1 D^2 X_t^{\varepsilon}}_{\frak{H}^{\otimes 2}}^2 \right]^{\frac{1}{4}}\mathbb{E}\left[ \norm{D X_t^{\varepsilon}}_{\frak{H}}^4 \right]^{\frac{1}{4}}.
\end{align*}
By Lemma \ref{L:FirstDerivativeMomentBound} and \ref{L:SecondDerivativeMomentBound} we have for $\varepsilon,\eta<1$
\begin{align}
&\sup_{t\in[0,T]}\mathbb{E}\left[ \norm{D^2 X_t^{\varepsilon}
    \otimes_1 D^2 X_t^{\varepsilon}}_{\frak{H}^{\otimes 2}}^2 \right]^{\frac{1}{4}}\mathbb{E}\left[ \norm{D X_t^{\varepsilon}}_{\frak{H}}^4 \right]^{\frac{1}{4}}\leq\nonumber\\
    &\qquad\leq
C\left(\varepsilon+\eta^{3/4}+\varepsilon^{1/2}\eta^{1/4}+ (\sqrt{\varepsilon}+\sqrt{\eta})e^{-\frac{K}{16\eta}T}\right)\left(\sqrt{\varepsilon}+\sqrt{\eta}\right)\nonumber\\
    &\qquad\leq
C\left(\varepsilon^{3/2}+\varepsilon\eta^{1/4}+\varepsilon^{1/2}\eta^{3/4}+\eta^{5/4}+ (\varepsilon+\eta)e^{-\frac{K}{16\eta}T}\right).\nonumber
    \end{align}
The previous calculations give for the last term that
\begin{align}
&\frac{\sqrt{10}}{2 \varepsilon \on{Var}\left(
      \theta^{\varepsilon}_t \right)}\mathbb{E}\left[ \norm{D^2 X_t^{\varepsilon}
    \otimes_1 D^2 X_t^{\varepsilon}}_{\frak{H}^{\otimes 2}}^2 \right]^{\frac{1}{4}}\mathbb{E}\left[ \norm{D X_t^{\varepsilon}}_{\frak{H}}^4 \right]^{\frac{1}{4}}\nonumber\\
    &\qquad\qquad\qquad\qquad\leq\frac{C}{ \varepsilon \on{Var}\left(
      \theta^{\varepsilon}_t \right)}\left(\varepsilon^{3/2}+\varepsilon\eta^{1/4}+\varepsilon^{1/2}\eta^{3/4}+\eta^{5/4}+ (\varepsilon+\eta)e^{-\frac{K}{16\eta}T}\right)\nonumber\\
         &\qquad\qquad\qquad\qquad\leq\frac{C}{\on{Var}\left(
      \theta^{\varepsilon}_t \right)}\left(\varepsilon^{1/2}+\eta^{1/4}+\varepsilon^{-1/2}\eta^{3/4}+\varepsilon^{-1}\eta^{5/4}+ \left(1+\frac{\eta}{\varepsilon}\right)e^{-\frac{K}{16\eta}T}\right).\nonumber
\end{align}
In order to get the final bound, we need to study the terms $\abs{\mu_t - \mathbb{E}\left(
                     \theta^{\varepsilon}_t \right)}$ and $\abs{1 - \frac{\sigma_t}{\sqrt{\on{Var}\left(
      \theta^{\varepsilon}_t \right)}}}$. In order the obtain bounds
for these two terms, we appeal to \eqref{Eq:FirstMomentBound0} and
\eqref{Eq:DiffereceLim_PreLimSD}, respectively. Therefore, putting everything together, we finally obtain the error bound
\begin{align*}
d_W \left(\theta^{\varepsilon}_t, \mathscr{N} \left(\mu_t,\sigma_t^2
  \right)  \right) &\leq \mathbb{E}\left(\abs{
    \theta^{\varepsilon}_t} \right) \abs{1 - \frac{\sigma_t}{\sqrt{\on{Var}\left(
      \theta^{\varepsilon}_t \right)}}} + \frac{\sigma_t}{\sqrt{\on{Var}\left(
      \theta^{\varepsilon}_t \right)}} \abs{\mu_t - \mathbb{E}\left(
                     \theta^{\varepsilon}_t \right)}\\
  & + \frac{\sqrt{10}}{2 \varepsilon \on{Var}\left(
      \theta^{\varepsilon}_t \right)}\mathbb{E}\left[ \norm{D^2 X_t^{\varepsilon}
    \otimes_1 D^2 X_t^{\varepsilon}}_{\frak{H}^{\otimes 2}}^2 \right]^{\frac{1}{4}}\mathbb{E}\left[ \norm{D X_t^{\varepsilon}}_{\frak{H}}^4 \right]^{\frac{1}{4}}.\nonumber\\
&\leq  C \frac{\mathbb{E}\left(\abs{
    \theta^{\varepsilon}_t} \right)}{\sqrt{\on{Var}\left(
      \theta^{\varepsilon}_t \right)}} \sqrt{ \sqrt{\eta}+\sqrt{\varepsilon}+ \left(\frac{\eta}{\varepsilon}-\frac{1}{\gamma^{2}}\right)+ \frac{\eta^{2-\zeta}}{\varepsilon}+\varepsilon^{1-\zeta}}\nonumber\\
      &+C \frac{\sigma_t}{\sqrt{\on{Var}\left(
      \theta^{\varepsilon}_t \right)}}  \left( \frac{\eta^{1-\zeta}}{\sqrt{\varepsilon}}+\varepsilon^{1/2-\zeta}\right)\\
  & + \frac{C}{ \on{Var}\left(
      \theta^{\varepsilon}_t \right)}\left(\varepsilon^{1/2}+\eta^{1/2}+\varepsilon^{-1/2}\eta^{3/4}+\varepsilon^{-1}\eta^{5/4}+ \left(1+\frac{\eta}{\varepsilon}\right)e^{-\frac{K}{16\eta}T}\right).\nonumber
\end{align*}
Hence, there exists a constant $C$ such that for all $\zeta>0$ and for $\varepsilon,\eta$ sufficiently small one has
\begin{align}
\sup_{t\in(0,T]}d_W \left(\theta^{\varepsilon}_t, \mathscr{N} \left(\mu_t,\sigma_t^2
  \right)  \right) &\leq
                     C\left(\eta^{1/4}+\varepsilon^{1/4}+\left(\frac{\eta}{\varepsilon}-\frac{1}{\gamma^{2}}\right)^{1/2}+
                     \left( \frac{\eta}{\varepsilon} \right)^{1/2}\eta^{1/2-\zeta}+\varepsilon^{1/2-\zeta}\right)\nonumber\\
  & \quad+C\left(\left( \frac{\eta}{\varepsilon} \right)^{1/2}\eta^{1/4}+\left( \frac{\eta}{\varepsilon} \right)\eta^{1/4}+ \left(1+\frac{\eta}{\varepsilon}\right)e^{-\frac{K}{16\eta}T}\right).\nonumber
\end{align}
It is easy to see that under Assumption \ref{A:Assumption3}(i), the latter goes to zero as $\varepsilon,\eta$ at a rate that ultimately depends on Assumption \ref{A:Assumption3}(i). This completes the proof of Theorem \ref{T:MainQuantitative}.

\section{Preliminary results related to averaging and fluctuations}\label{S:PreliminaryCLTresults}

In this section, we present a quantitative result controlling the rate at which the error terms associated to the prelimit fluctuation process vanishes. This is a key preliminary result for the main result of this paper.

Let us re-express $\theta^{\varepsilon}_{t}=\frac{X^{\varepsilon}_{t}-\bar{X}_{t}}{\sqrt{\varepsilon}}$. We have by It\^{o}'s formula on the function $\Phi$ satisfying the Poisson equation (\ref{Eq:PoissonPDEFluctuations0})
 \begin{align}
 \theta^{\varepsilon}_{t}&=\frac{1}{\sqrt{\varepsilon}}\left(\int_{0}^{t}c(X^{\varepsilon}_s,Y^{\eta}_s)ds-\int_{0}^{t}\bar{c}(\bar{X}_{s})ds\right)+\int_{0}^{t}\sigma(X^{\varepsilon}_{s},Y^{\eta}_s)dW^{1}_{s}\nonumber\\
 &=\frac{1}{\sqrt{\varepsilon}}\left(\int_{0}^{t}c(X^{\varepsilon}_s,Y^{\eta}_s)ds-\int_{0}^{t}\bar{c}(X^{\varepsilon}_{s})ds\right)+\frac{1}{\sqrt{\varepsilon}}\int_{0}^{t}\left(\bar{c}(X^{\varepsilon}_s)ds-\bar{c}(\bar{X}_{s})\right)ds+\int_{0}^{t}\sigma(X^{\varepsilon}_{s},Y^{\eta}_s)dW^{1}_{s}\nonumber\\
 &=\frac{\eta}{\sqrt{\varepsilon}}\left(\Phi(X^{\varepsilon}_t,Y^{\eta}_t)-\Phi(X^{\varepsilon}_{0},Y^{\eta}_{0})\right)-\sqrt{\frac{\eta}{\varepsilon}}\int_0^{t}\partial_{y}\Phi(X^{\varepsilon}_s,Y^{\eta}_s)\tau(Y^{\eta}_{s})dW^{2}_{s}\nonumber\\
 &-\frac{\eta}{\sqrt{\varepsilon}}\int_{0}^{t}\left(c(X^{\varepsilon}_s,Y^{\eta}_s)\partial_{x}\Phi(X^{\varepsilon}_s,Y^{\eta}_s)+\frac{\varepsilon}{2}\left[\sigma^{2} \partial^{(2)}_{x}\Phi\right](X^{\varepsilon}_s,Y^{\eta}_s)\right)ds\nonumber\\
 &+\int_{0}^{t}\left(1-\eta\partial_{x}\Phi(X^{\varepsilon}_s,Y^{\eta}_s)\right)\sigma(X^{\varepsilon}_{s})dW^{1}_{s}+\frac{1}{\sqrt{\varepsilon}}\int_{0}^{t}\left(\bar{c}(X^{\varepsilon}_s)ds-\bar{c}(\bar{X}_{s})\right)ds\nonumber
 \end{align}

 Let us focus on the last term for a minute. Smoothness of $\bar{c}$ implies via Taylor's theorem that
 \[
 \bar{c}(x)=  \bar{c}(y) + \bar{c}^{\prime}(y) (x-y) + \Lambda[\bar{c}](x,y), \quad x,y \in \R^m,
 \]
 for some function $\Lambda[\bar{c}]$ such that $|x - y|^{-2}\Lambda[\bar{c}] (x,y)$ is locally bounded. Therefore, we obtain
 that $\theta^{\varepsilon}_{t}$ satisfies
 \begin{align}
 \theta^\varepsilon_{t} &=  \int_{0}^{t} \bar{c}^{\prime} ( \bar X_s) \theta^\varepsilon_{s} ds+ \int_0^t \frac{1}{\sqrt{\varepsilon}}\Lambda[\bar{c}]\parbar{ \bar X_s,X^{\varepsilon}_s}ds\nonumber\\
 &+\frac{\eta}{\sqrt{\varepsilon}}\left(\Phi(X^{\varepsilon}_t,Y^{\eta}_t)-\Phi(X^{\varepsilon}_{0},Y^{\eta}_{0})\right)\nonumber\\
 &-\frac{\eta}{\sqrt{\varepsilon}}\int_{0}^{t}\left(c(X^{\varepsilon}_s,Y^{\eta}_s)\partial_{x}\Phi(X^{\varepsilon}_s,Y^{\eta}_s)+\frac{\varepsilon}{2}\left[\sigma^{2} \partial^{(2)}_{y}\Phi\right](X^{\varepsilon}_s,Y^{\eta}_s)\right)ds\nonumber\\
 &+\int_{0}^{t}\left(1-\eta\partial_{x}\Phi(X^{\varepsilon}_s,Y^{\eta}_s)\right)\sigma(X^{\varepsilon}_{s})dW^{1}_{s}-\sqrt{\frac{\eta}{\varepsilon}}\int_0^{t}\partial_{y}\Phi(X^{\varepsilon}_s,Y^{\eta}_s)\tau(Y^{\eta}_{s})dW^{2}_{s}.\nonumber
   \end{align}

 By Duhamel's principle we can write
 \begin{align}
 \theta^\varepsilon_{t}&=\Psi_{x_{0}} (t) \int_{0}^{t} \left[\Psi_{x_{0}} (s)\right]^{-1} \sigma(X^{\varepsilon}_{s})  dW^1_{s}-\sqrt{\frac{\eta}{\varepsilon}}\Psi_{x_{0}} (t)\int_0^{t}\left[\Psi_{x_{0}} (s)\right]^{-1}\partial_{y}\Phi(X^{\varepsilon}_s,Y^{\eta}_s)\tau(Y^{\eta}_{s})dW^{2}_{s}\nonumber\\
 &\quad+\Psi_{x_{0}} (t)\int_0^t \left[\Psi_{x_{0}} (s)\right]^{-1}\frac{1}{\sqrt{\varepsilon}}\Lambda[\bar{c}]\parbar{ \bar X_s,X^{\varepsilon}_s}ds+R^{\varepsilon}(t;\Psi),\label{Eq:PrelimitFluctuationsSolution}
 \end{align}
 where
 \begin{align}
 R^{\varepsilon}(t;\Psi)&=\frac{\eta}{\sqrt{\varepsilon}}\Psi_{x_{0}} (t)\left(\Phi(X^{\varepsilon}_t,Y^{\eta}_t)-\Phi(X^{\varepsilon}_{0},Y^{\eta}_{0})\right)\nonumber\\
 &-\frac{\eta}{\sqrt{\varepsilon}}\Psi_{x_{0}} (t)\int_{0}^{t}\left[\Psi_{x_{0}} (s)\right]^{-1}\left(c(X^{\varepsilon}_s,Y^{\eta}_s)\partial_{x}\Phi(X^{\varepsilon}_s,Y^{\eta}_s)+\frac{\varepsilon}{2}\left[\sigma^{2} \partial^{(2)}_{y}\Phi\right](X^{\varepsilon}_s,Y^{\eta}_s)\right)ds\nonumber\\
 &-\eta \Psi_{x_{0}} (t)\int_{0}^{t}\left[\Psi_{x_{0}} (s)\right]^{-1}\left(\partial_{x}\Phi(X^{\varepsilon}_s,Y^{\eta}_s)\right)\sigma(X^{\varepsilon}_{s})dW^{1}_{s}.\nonumber
 \end{align}

Using Lemma 6 of \cite{BGS21}  to control the term
 $\mathbb{E}\left(\sup_{t\in[0,T]}|\Phi(X^{\varepsilon}_t,Y^{\eta}_t)|^{2}\right)$,  we get that for some constant $C<\infty$ and for $0<\zeta<1$
 \begin{align}
 \mathbb{E}\left(\sup_{t\in[0,T]}\left|R^{\varepsilon}(t;\Psi)\right|^{2}\right)&\leq C \frac{\eta^{2(1-\zeta)}}{\varepsilon}.\label{Eq:Rbound0}
 \end{align}

 Next,
 we need to control the term $Q^{\varepsilon}(t;\Psi)=\Psi_{x_{0}}
 (t)\int_0^t \left[\Psi_{x_{0}}
   (s)\right]^{-1}\frac{1}{\sqrt{\varepsilon}}\Lambda[\bar{c}]\parbar{ \bar
   X_s,X^{\varepsilon}_s}ds$. As it is shown in
 \cite{Spiliopoulos_CLT_Multiscale_2014}, one can prove that for any
 $T<\infty$, $\sup_{t\in[0,T]}\left|Q^{\varepsilon}(t;\Psi)\right|\rightarrow 0$ in probability as $\varepsilon\rightarrow 0$. However, in order to get quantitative rates of convergence, we need
 the rate at which this term goes to zero in $L^{2}\left( \Omega \right)$, which is the
 object of the following lemma.
 \begin{Lemma}
 Let $p \geq 1$ be a natural number and assume that
 \begin{equation*}
 \mathbb{E} \left( \sup_{t\in[0,T]}\abs{ \frac{X^{\varepsilon}_t -
     \bar{X}_t}{\sqrt{\varepsilon}}}^{8 \vee p}\right) < \infty.
 \end{equation*}
  Then, it holds that for any
 $\zeta >0$, we have
 \begin{align}
 \mathbb{E} \left( \sup_{t\in[0,T]}\left| Q^{\varepsilon}(t;\Psi) \right|^2 \right)&\leq C \left( \varepsilon^{1-\zeta} + \varepsilon^{1+\frac{p}{8}(1-\zeta)}  \right).\label{Eq:Qbound0}
 \end{align}
 \end{Lemma}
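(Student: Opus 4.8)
The plan is to reduce the estimate to a single time integral and then split an expectation according to whether $\theta^{\varepsilon}_s$ has typical size or not. Since $\bar X$ stays in the compact set $\mathcal{K}=\{\bar X_s:s\in[0,T]\}$ and $\bar c\in C^{1}$, the multiplier $\Psi_{x_0}(t)=\exp\bigl(\int_0^t\bar c'(\bar X_u)\,du\bigr)$ and its reciprocal are bounded above and below on $[0,T]$, so that
\[
\sup_{t\in[0,T]}\bigl|Q^{\varepsilon}(t;\Psi)\bigr|\le C\int_0^T\frac{1}{\sqrt{\varepsilon}}\bigl|\Lambda[\bar c]\bigl(\bar X_s,X^{\varepsilon}_s\bigr)\bigr|\,ds ,
\]
and hence, by the Cauchy--Schwarz inequality in $s$,
\[
\mathbb{E}\Bigl(\sup_{t\in[0,T]}\bigl|Q^{\varepsilon}(t;\Psi)\bigr|^{2}\Bigr)\le C\,T\int_0^{T}\frac{1}{\varepsilon}\,\mathbb{E}\Bigl(\bigl|\Lambda[\bar c]\bigl(\bar X_s,X^{\varepsilon}_s\bigr)\bigr|^{2}\Bigr)\,ds .
\]
It thus suffices to bound $\varepsilon^{-1}\,\mathbb{E}\bigl(|\Lambda[\bar c](\bar X_s,X^{\varepsilon}_s)|^{2}\bigr)$ uniformly in $s\in[0,T]$, recalling $X^{\varepsilon}_s-\bar X_s=\sqrt{\varepsilon}\,\theta^{\varepsilon}_s$.

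I would then use two complementary bounds on the Taylor remainder. Globally, since $\bar c$ is bounded (because $c$ is) and $\bar c'$ is bounded on $\mathcal{K}$, one has $|\Lambda[\bar c](\bar X_s,X^{\varepsilon}_s)|\le C\bigl(1+\sqrt{\varepsilon}\,|\theta^{\varepsilon}_s|\bigr)$. Locally, because $(x,y)\mapsto|x-y|^{-2}\Lambda[\bar c](x,y)$ is locally bounded, on the event $\{|X^{\varepsilon}_s|\le R\}$ (for a fixed $R$ large enough that $\mathcal{K}$ lies well inside $\{|x|\le R\}$) one has the quadratic bound $|\Lambda[\bar c](\bar X_s,X^{\varepsilon}_s)|\le C_R\,\varepsilon\,|\theta^{\varepsilon}_s|^{2}$. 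Introducing a truncation level $L=L(\varepsilon)$ (a negative power of $\varepsilon$, to be optimized), I would split
\[
\mathbb{E}\bigl(|\Lambda[\bar c](\bar X_s,X^{\varepsilon}_s)|^{2}\bigr)=\mathbb{E}\bigl(|\Lambda[\bar c](\bar X_s,X^{\varepsilon}_s)|^{2}\,\mathds{1}_{\{|\theta^{\varepsilon}_s|\le L\}}\bigr)+\mathbb{E}\bigl(|\Lambda[\bar c](\bar X_s,X^{\varepsilon}_s)|^{2}\,\mathds{1}_{\{|\theta^{\varepsilon}_s|>L\}}\bigr),
\]
noting that for $\varepsilon$ small the event $\{|\theta^{\varepsilon}_s|\le L\}$ forces $X^{\varepsilon}_s$ into $\{|x|\le R\}$, so the quadratic bound applies on it.

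On the first (typical) event, the quadratic bound gives $\varepsilon^{-1}|\Lambda[\bar c](\bar X_s,X^{\varepsilon}_s)|^{2}\mathds{1}_{\{|\theta^{\varepsilon}_s|\le L\}}\le C\,\varepsilon\,|\theta^{\varepsilon}_s|^{4}\mathds{1}_{\{|\theta^{\varepsilon}_s|\le L\}}\le C\,\varepsilon\,L^{2}|\theta^{\varepsilon}_s|^{2}$; taking expectations and using the (uniform in $\varepsilon$) second-moment bound for $\theta^{\varepsilon}_s$ produces a contribution of order $\varepsilon L^{2}$, which becomes $\varepsilon^{1-\zeta}$ for a suitable choice of $L$. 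On the second (atypical) event, using the global at most linear bound together with $\sqrt{\varepsilon}\,|\theta^{\varepsilon}_s|>\sqrt{\varepsilon}\,L\ge 1$ there, one gets $\varepsilon^{-1}|\Lambda[\bar c](\bar X_s,X^{\varepsilon}_s)|^{2}\mathds{1}_{\{|\theta^{\varepsilon}_s|>L\}}\le C\,|\theta^{\varepsilon}_s|^{2}\mathds{1}_{\{|\theta^{\varepsilon}_s|>L\}}$, and then the hypothesis $\mathbb{E}\bigl(\sup_{t\in[0,T]}|\theta^{\varepsilon}_t|^{8\vee p}\bigr)<\infty$ enters through a Markov/Hölder estimate of the form $\mathbb{E}\bigl(|\theta^{\varepsilon}_s|^{2}\mathds{1}_{\{|\theta^{\varepsilon}_s|>L\}}\bigr)\le L^{-(8\vee p-2)}\,\mathbb{E}\bigl(|\theta^{\varepsilon}_s|^{8\vee p}\bigr)$, giving a tail contribution of order $L^{-(8\vee p-2)}$. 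Optimizing $L$ so as to balance $\varepsilon L^{2}$ against $L^{-(8\vee p-2)}$, and absorbing the ensuing polynomial losses into the free exponent $\zeta$, yields the second error term $\varepsilon^{1+\frac{p}{8}(1-\zeta)}$; feeding both contributions into the integral inequality above completes the proof.

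The main obstacle is exactly the atypical regime. One cannot use the quadratic Taylor bound once $X^{\varepsilon}_s$ escapes the compact set on which $|x-y|^{-2}\Lambda[\bar c](x,y)$ is controlled, and invoking the global at most linear bound there naively costs a full factor $\varepsilon^{-1/2}$. Reconciling the two requires both the high-moment hypothesis and a careful calibration of the truncation level $L=L(\varepsilon)$; it is this balancing, rather than the routine estimates on the typical event, that forces the two-term form $C\bigl(\varepsilon^{1-\zeta}+\varepsilon^{1+\frac{p}{8}(1-\zeta)}\bigr)$ and, in particular, the appearance of the exponent $\tfrac{p}{8}$ and of the loss $\zeta$.
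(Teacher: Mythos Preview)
Your splitting idea is the right one in spirit, but the atypical-event step contains a genuine gap.  You write that on $\{|\theta^{\varepsilon}_s|>L\}$ one may use the global linear bound together with $\sqrt{\varepsilon}\,|\theta^{\varepsilon}_s|>\sqrt{\varepsilon}\,L\ge 1$ to obtain $\varepsilon^{-1}|\Lambda[\bar c]|^{2}\le C|\theta^{\varepsilon}_s|^{2}$.  That inequality indeed needs $\sqrt{\varepsilon}\,L\ge 1$, i.e.\ $L\ge \varepsilon^{-1/2}$; but then your typical-event contribution $\varepsilon L^{2}$ is at least $1$ and does not vanish.  Conversely, for any $L=\varepsilon^{-\alpha}$ with $\alpha<1/2$ (which is what makes $\varepsilon L^{2}\to 0$), on $\{|\theta^{\varepsilon}_s|>L\}$ you only have $|\Lambda|\le C(1+\sqrt{\varepsilon}|\theta^{\varepsilon}_s|)$, so $\varepsilon^{-1}|\Lambda|^{2}$ picks up a stray $\varepsilon^{-1}$ that Markov's inequality on $\mathbb{P}(|\theta^{\varepsilon}_s|>L)$ cannot absorb for arbitrarily small $\zeta$.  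No single choice of $L$ simultaneously validates both halves of your argument, and the final ``optimizing $L$'' sentence does not repair this.

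The paper avoids the difficulty by two changes.  First, it applies the quadratic bound for $\Lambda[\bar c]$ on both events (so no linear global bound is ever needed), writing directly $\abs{Q^{\varepsilon}(t;\Psi)}^{2}\le C\varepsilon\bigl(\int_0^t|\theta^{\varepsilon}_s|^{2}\,ds\bigr)^{2}$.  Second, it does \emph{not} reduce to a pointwise-in-$s$ estimate: it introduces the stopping time $\tau^{\varepsilon}=\inf\{t:|X^{\varepsilon}_t-\bar X_t|>\varepsilon^{\rho}\}$ with $\rho\in(1/4,1/2)$ and splits on $\{\tau^{\varepsilon}>T\}$ versus $\{\tau^{\varepsilon}<T\}$.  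On the first event the entire integrand is uniformly small and one reads off $\varepsilon^{4\rho-1}=\varepsilon^{1-\zeta}$.  On the second event one applies Cauchy--Schwarz in $\omega$ (not in $s$), separating the square of the path integral, controlled via $\mathbb{E}\bigl(\sup_t|\theta^{\varepsilon}_t|^{8}\bigr)$, from $\mathbb{P}(\tau^{\varepsilon}<T)$, controlled by Markov's inequality with the $p$th moment; this produces the factor $\varepsilon^{1+\frac{p}{2}(\frac12-\rho)}$.  Keeping the time integral intact on the bad event is precisely what allows the $\varepsilon$ in front to survive, whereas your pointwise reduction discards it.
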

 \begin{proof}
 Recall that
 \begin{equation*}
 Q^{\varepsilon}(t;\Psi)=\Psi_{x_{0}} (t)\int_0^t \left[\Psi_{x_{0}} (s)\right]^{-1}\frac{1}{\sqrt{\varepsilon}}\Lambda[\bar{c}]\parbar{ \bar X_s,X^{\varepsilon}_s}ds.
 \end{equation*}
 The quadratic decay of $\Lambda[c]$ implies that
 \begin{align}
   \label{quaddecayoflambdaimplies0}
 \mathbb{E} \left( \sup_{t\in[0,T]}\left| Q^{\varepsilon}(t;\Psi) \right|^2 \right) \leq \mathbb{E}\left(\sup_{t\in[0,T]}\left|\Psi_{x_{0}} (t)\int_0^t
   \left[\Psi_{x_{0}}
   (s)\right]^{-1}\sqrt{\varepsilon}\abs{ \frac{X^{\varepsilon}_s - \bar{X}_s}{\sqrt{\varepsilon}}}^2ds\right|^2\right).
 \end{align}
 For any $1/4< \rho < 1/2$, let us introduce the stopping time $\tau^{\varepsilon}$ defined by
 \begin{equation*}
 \tau^{\varepsilon} = \inf \left\{ t > 0 \colon \abs{X^{\varepsilon}_t - \bar{X}_t}
 > \varepsilon^{\rho}\right\}.
 \end{equation*}
 The term on the right-hand side of
 \eqref{quaddecayoflambdaimplies0} can then be partitioned as
 \begin{align}
   \label{partitionedterm0}
 & \mathbb{E}\left(\sup_{t\in[0,T]}\left|\Psi_{x_{0}} (t)\int_0^t
   \left[\Psi_{x_{0}}
   (s)\right]^{-1}\sqrt{\varepsilon}\abs{ \frac{X^{\varepsilon}_s -
                  \bar{X}_s}{\sqrt{\varepsilon}}}^2ds\right|^2
                  \mathds{1}_{\left\{ \tau^{\varepsilon} > T
                 \right\}}\right) \nonumber \\
   &\qquad\quad  + \mathbb{E}\left(\sup_{t\in[0,T]}\left|\Psi_{x_{0}} (t)\int_0^t
   \left[\Psi_{x_{0}}
   (s)\right]^{-1}\sqrt{\varepsilon}\abs{ \frac{X^{\varepsilon}_s -
     \bar{X}_s}{\sqrt{\varepsilon}}}^2ds\right|^2 \mathds{1}_{\left\{
     \tau^{\varepsilon}<T \right\}}\right) = A^{\varepsilon}_1 + A^{\varepsilon}_2.
 \end{align}
 The condition $\tau^{\varepsilon}
 > T$ appearing in the term $A^{\varepsilon}_1$ implies that $\abs{X^{\varepsilon}_s -
     \bar{X}_s} \leq \varepsilon^{\rho}$, so that
   \begin{align}
       \label{firstpartofpartitionning0}
 A^{\varepsilon}_1 &\leq \sup_{t\in[0,T]}\left|\Psi_{x_{0}} (t)\int_0^t
   \left[\Psi_{x_{0}}
                     (s)\right]^{-1}ds\right|^2\varepsilon^{4\rho -
                     1}\nonumber \\
   &= \sup_{t\in[0,T]}\left|\Psi_{x_{0}} (t)\int_0^t
   \left[\Psi_{x_{0}}
   (s)\right]^{-1}ds\right|^2\varepsilon^{1-\zeta},
 \end{align}
 for $0<\zeta<1$. For the term $A^{\varepsilon}_2$ appearing in
 \eqref{partitionedterm0}, one can use Cauchy-Schwarz in order to get
 \begin{align*}
 A^{\varepsilon}_2 \leq \sqrt{\mathbb{E}\left( \left[ \sup_{t\in[0,T]}\left|\Psi_{x_{0}} (t)\int_0^t
   \left[\Psi_{x_{0}}
   (s)\right]^{-1}\sqrt{\varepsilon}\abs{ \frac{X^{\varepsilon}_s -
     \bar{X}_s}{\sqrt{\varepsilon}}}^2ds\right|^2
   \right]^2\right)}\sqrt{\mathbb{P}\left( \tau^{\varepsilon} < T
   \right)}.
 \end{align*}
 Note that by bounding $\abs{X^{\varepsilon}_s - \bar{X}_s}$ from above by its supremum over
 $t \in [0,T]$, one can write
 \begin{align}
   \label{estimateonothercspart0}
 & \sqrt{\mathbb{E}\left( \left[ \sup_{t\in[0,T]}\left|\Psi_{x_{0}} (t)\int_0^t
   \left[\Psi_{x_{0}}
   (s)\right]^{-1}\sqrt{\varepsilon}\abs{ \frac{X^{\varepsilon}_s -
     \bar{X}_s}{\sqrt{\varepsilon}}}^2ds\right|^2
   \right]^2\right)} \nonumber \\
   &\qquad\qquad\qquad\qquad\qquad\quad \leq \sup_{t\in[0,T]}\left|\Psi_{x_{0}} (t)\int_0^t
   \left[\Psi_{x_{0}}
   (s)\right]^{-1}ds\right|^2 \sqrt{\mathbb{E} \left( \sup_{t\in[0,T]}\abs{ \frac{X^{\varepsilon}_t -
     \bar{X}_t}{\sqrt{\varepsilon}}}^8\right)} \varepsilon.
 \end{align}
 Furthermore, using the definition of $\tau^{\varepsilon}$ and Markov's
 inequality yields
 \begin{align}
   \label{estimateonprobatau0}
 \mathbb{P}\left( \tau^{\varepsilon} < T
   \right) = \mathbb{P} \left( \sup_{t\in[0,T]}\abs{X^{\varepsilon}_t -
   \bar{X}_t} > \varepsilon^{\rho} \right)&\leq \frac{\mathbb{E} \left( \sup_{t\in[0,T]}\abs{X^{\varepsilon}_t -
                                            \bar{X}_t}^p
                                            \right)}{\varepsilon^{\rho
                                            p}} \nonumber \\
   &= \mathbb{E} \left( \sup_{t\in[0,T]}\abs{ \frac{X^{\varepsilon}_t -
     \bar{X}_t}{\sqrt{\varepsilon}}}^p\right)\varepsilon^{p \left( \frac{1}{2}-\rho \right)}.
 \end{align}
 Combining the estimates \eqref{estimateonothercspart0} and
 \eqref{estimateonprobatau0} yields
 \begin{align}
   \label{secondpartofpartitionning0}
 A^{\varepsilon}_2 & \leq \sup_{t\in[0,T]}\left|\Psi_{x_{0}} (t)\int_0^t
   \left[\Psi_{x_{0}}
   (s)\right]^{-1}ds\right|^2 \sqrt{\mathbb{E} \left( \sup_{t\in[0,T]}\abs{ \frac{X^{\varepsilon}_t -
   \bar{X}_t}{\sqrt{\varepsilon}}}^{8}\right)} \nonumber \\
   & \qquad\qquad\qquad\qquad\qquad\qquad\qquad\qquad\qquad\qquad \sqrt{\mathbb{E} \left( \sup_{t\in[0,T]}\abs{ \frac{X^{\varepsilon}_t -
     \bar{X}_t}{\sqrt{\varepsilon}}}^{p}\right)}
     \varepsilon^{1+\frac{p}{2} \left( \frac{1}{2}-\rho \right)}
     \nonumber \\
   & \leq \sup_{t\in[0,T]}\left|\Psi_{x_{0}} (t)\int_0^t
   \left[\Psi_{x_{0}}
   (s)\right]^{-1}ds\right|^2 \mathbb{E} \left( \sup_{t\in[0,T]}\abs{ \frac{X^{\varepsilon}_t -
   \bar{X}_t}{\sqrt{\varepsilon}}}^{8\vee p}\right)
     \varepsilon^{1+\frac{p}{2} \left( \frac{1}{2}-\rho \right)} \nonumber \\
   & \leq C \varepsilon^{1+\frac{p}{8}(1-\zeta)},
 \end{align}
 for $0 < \zeta < 1$. The combination of
 \eqref{firstpartofpartitionning0} and \eqref{secondpartofpartitionning0}
 finally yields \eqref{Eq:Qbound0}.
 \end{proof}
 \begin{remark}\label{R:BoundQterm}
   Note that when taking $p = 8$ in the above lemma (we are allowed to do so by the bounds of \cite{Spiliopoulos_CLT_Multiscale_2014}), one gets
   \begin{align*}
 \mathbb{E} \left( \sup_{t\in[0,T]}\left| Q^{\varepsilon}(t;\Psi) \right|^2
     \right)&\leq C \left( \varepsilon^{1-\zeta} +
              \varepsilon^{2-\zeta}  \right) \leq C \varepsilon^{1-\zeta},
   \end{align*}
   where the last inequality holds whenever $\varepsilon < 1$. 
 \end{remark}

\section{First order Malliavin derivatives}\label{S:FirstOrderMalliavinDer}

\noindent In this section, we derive the necessary bounds for the first order Malliavin derivatives. For $j=1,2$ we have that
\begin{align}
    \label{geneqforDX}
D_r^{W^j} X_t^{\varepsilon} &= \mathds{1}_{\left\{ j=1 \right\}}
  \sqrt{\varepsilon} \sigma\left(
      X_r^{\varepsilon},Y_r^{\eta} \right) + \int_r^t \left[
  \partial_1 c\left(X_s^{\varepsilon},Y_s^{\eta}
  \right)D_r^{W^j}X_s^{\varepsilon} + \partial_2
  c\left(X_s^{\varepsilon},Y_s^{\eta} \right)D_r^{W^j}Y_s^{\eta}
                              \right]ds \nonumber \\
  &\quad + \sqrt{\varepsilon}\int_r^t \left[ \partial_1 \sigma\left(X_s^{\varepsilon},Y_s^{\eta}
  \right)D_r^{W^j}X_s^{\varepsilon} + \partial_2
  \sigma\left(X_s^{\varepsilon},Y_s^{\eta} \right)D_r^{W^j}Y_s^{\eta} \right]dW_s^1,
\end{align}
and
\begin{align}
  \label{geneqforDY}
D_r^{W^j} Y_t^{\eta} &= \mathds{1}_{\left\{ j=2 \right\}}\frac{1}{\sqrt{\eta}}
   \tau\left(
      X_r^{\varepsilon},Y_r^{\eta} \right) + \frac{1}{\eta}\int_r^t \left[
  \partial_1 f\left(X_s^{\varepsilon},Y_s^{\eta}
  \right)D_r^{W^j}X_s^{\varepsilon} + \partial_2
  f\left(X_s^{\varepsilon},Y_s^{\eta} \right)D_r^{W^j}Y_s^{\eta}
                              \right]ds \nonumber \\
  &\quad + \frac{1}{\sqrt{\eta}}\int_r^t \left[ \partial_1 \tau\left(X_s^{\varepsilon},Y_s^{\eta}
  \right)D_r^{W^j}X_s^{\varepsilon} + \partial_2
  \tau\left(X_s^{\varepsilon},Y_s^{\eta} \right)D_r^{W^j}Y_s^{\eta} \right]dW_s^2,
\end{align}
Let us introduce some notation related to the term $D_r^{W^2}
Y_t^{\eta}$ that we will need to decompose in multiple locations in the sequel. Note that due to the affine structure of \eqref{geneqforDY},
one has
\begin{align*}
D_r^{W^2} Y_t^{\eta} &= Z_{r,2}(t) \Bigg[   \frac{1}{\sqrt{\eta}}
   \tau\left(
      X_r^{\varepsilon},Y_r^{\eta} \right) \\
  &\qquad\qquad\quad + \frac{1}{\eta}\int_r^t Z_{r,2}^{-1}(s) D_r^{W^2}X_s^{\varepsilon}\left[ \partial_1
  f\left(X_s^{\varepsilon},Y_s^{\eta} \right) -
                               \partial_1
  \tau\left(X_s^{\varepsilon},Y_s^{\eta} \right)\partial_2
    \tau\left(X_s^{\varepsilon},Y_s^{\eta} \right) \right]ds \\
  &\qquad\qquad\quad  +\frac{1}{\sqrt{\eta}}\int_r^t Z_{r,2}^{-1}(s) D_r^{W^2}X_s^{\varepsilon}\partial_1
    \tau\left(X_s^{\varepsilon},Y_s^{\eta} \right)dW_s^2 \Bigg],
\end{align*}
where
\begin{align}
  \label{expressionofZr2t}
  Z_{r,2}(t) =  e^{\frac{1}{\eta}\int_r^t \partial_2 f\left(X_s^{\varepsilon},Y_s^{\eta}
  \right)ds+ \frac{1}{\sqrt{\eta}} \int_r^t \partial_2 \tau\left(X_s^{\varepsilon},Y_s^{\eta}
  \right)dW_s^2 -\frac{1}{2\eta}\int_r^t \partial_2 \tau\left(X_s^{\varepsilon},Y_s^{\eta}
  \right)^2 ds}.
\end{align}
We can then decompose $D_r^{W^2} Y_t^{\eta}$ into
\begin{equation}
  \label{decompositionofDW2Yt}
D_r^{W^2}Y_t^{\eta}=Q^{\eta}_{r,1}(t)+Q^{\eta}_{r,2}(t),
\end{equation}
where
\begin{align}
  \label{defQ1}
Q^{\eta}_{r,1}(t)&= Z_{r,2}(t)\frac{1}{\sqrt{\eta}}
                   \tau\left(X_r^{\varepsilon},Y_r^{\eta} \right)
\end{align}
and
\begin{align}
    \label{defQ2}
 Q^{\eta}_{r,2}(t)&= Z_{r,2}(t) \Bigg[\frac{1}{\eta}\int_r^t Z_{r,2}^{-1}(s) D_r^{W^2}X_s^{\varepsilon}\left[ \partial_1
  f\left(X_s^{\varepsilon},Y_s^{\eta} \right) -
                               \partial_1
  \tau\left(X_s^{\varepsilon},Y_s^{\eta} \right)\partial_2
    \tau\left(X_s^{\varepsilon},Y_s^{\eta} \right) \right]ds\nonumber \\
  &\qquad\qquad\quad  +\frac{1}{\sqrt{\eta}}\int_r^t Z_{r,2}^{-1}(s) D_r^{W^2}X_s^{\varepsilon}\partial_1
    \tau\left(X_s^{\varepsilon},Y_s^{\eta} \right)dW_s^2 \Bigg].
\end{align}
Observe that $Q^{\eta}_{r,2}(t)$ can also be realized as the solution
to the affine stochastic differential equation
\begin{align}
  \label{Eq:SDE_Q2D2Y}
Q^{\eta}_{r,2}(t) &=  \frac{1}{\eta}\int_r^t \left[
  \partial_1 f\left(X_s^{\varepsilon},Y_s^{\eta}
  \right)D_r^{W^2}X_s^{\varepsilon} + \partial_2
  f\left(X_s^{\varepsilon},Y_s^{\eta} \right)Q^{\eta}_{r,2}(s)
                              \right]ds \nonumber\\
  &\quad + \frac{1}{\sqrt{\eta}}\int_r^t \left[ \partial_1 \tau\left(X_s^{\varepsilon},Y_s^{\eta}
  \right)D_r^{W^2}X_s^{\varepsilon} + \partial_2
  \tau\left(X_s^{\varepsilon},Y_s^{\eta} \right)Q^{\eta}_{r,2}(s) \right]dW_s^2.
\end{align}
We will, from now on, work with assumptions on the functions
$c,f,\sigma$ and $\tau$ that are applicable in wider setting than that of this
paper. We assume the following on the coefficients $c,f,\sigma$ and $\tau$.
\begin{assumption}
\label{A:Assumptiongeneral}
\begin{enumerate}[(i)]
\item The diffusion coefficient $\tau^{2}$ is uniformly nondegenerate.
\item  We assume that $c,\sigma\in C^{2,2}_{b}(\mathbb{R}\times\mathbb{R})$, i.e., they are uniformly bounded with bounded mixed derivatives up to order two.
\item{The function $f(x,y)$ has two bounded derivatives in $x$ and two derivatives in $y$. The function $\tau(x,y)$ has two bounded derivatives in $x$ and $y$. For both functions $f,\tau$, all partial derivatives are H\"{o}lder continuous, with exponent $\alpha$, with respect to $y$, uniformly in $x$. Given that the following combination appears many times later on, for a given natural number $p\geq 1$, we let $0<M<\infty$ be the constant such that
    \[
    \sup_{x,y}\left\{   |\partial_{1}f(x,y)|+ 2(2p-1)
    |\partial_{1}\tau(x,y)|^2 + |\partial_{2}\tau(x,y)|^2\right\}<M.
    \]}
\item{for a given natural number $p\geq 1$, there is a uniform constant $0<K<\infty$ such that
\[\sup_{x,y}\left\{   \left[ (2p-1)|\partial_1 f|+(2p-1)(2p-2)
  |\partial_1 \tau|^{2}+2p(2p-1)
  |\partial_2 \tau|^{2} +2p\partial_2 f\right]\left(x,y\right)\right\}\leq -K<0.    \]
}
\end{enumerate}
\end{assumption}
\begin{remark}
Note that Assumption \ref{A:Assumption1} is a particular case of
Assumption \ref{A:Assumptiongeneral} for $p=2$ in parts $(iii)$ and
$(iv)$. We derive all of our results in this increased generality as
we believe that they are of independent interest in terms of the
analysis of the Malliavin derivatives (and their moments) of systems of coupled SDEs.
\end{remark}
\begin{prop}
  \label{propDW1YleqDW1X}
Let $p \geq 1$ be a natural number. Then, it holds that for some
constant $C >0$,
\begin{equation*}
  \label{Eq:DW1Y_integralbound}
\mathbb{E}\left(\int_{r}^{t}\abs{D_r^{W^1}Y_s^{\eta}}^{2p}ds\right) \leq C\mathbb{E}\left(\int_{r}^{t}\abs{D_r^{W^1}X_s^{\varepsilon}}^{2p}ds\right).
\end{equation*}
\end{prop}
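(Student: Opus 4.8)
The key is to exploit the affine structure of equation \eqref{geneqforDY} for $j=1$, which has no initial condition term (since $\mathds{1}_{\{j=2\}}=0$). Writing $D_r^{W^1}Y_t^{\eta}$ via the variation-of-constants formula analogous to the decomposition \eqref{decompositionofDW2Yt}, but now without any $Q^{\eta}_{r,1}$-type term, we have
\begin{equation*}
D_r^{W^1}Y_t^{\eta} = Z_{r,2}(t)\Bigg[\frac{1}{\eta}\int_r^t Z_{r,2}^{-1}(s)D_r^{W^1}X_s^{\varepsilon}\big[\partial_1 f - \partial_1\tau\,\partial_2\tau\big](X_s^{\varepsilon},Y_s^{\eta})ds + \frac{1}{\sqrt{\eta}}\int_r^t Z_{r,2}^{-1}(s)D_r^{W^1}X_s^{\varepsilon}\,\partial_1\tau(X_s^{\varepsilon},Y_s^{\eta})dW_s^2\Bigg],
\end{equation*}
so that $D_r^{W^1}Y_t^{\eta}$ plays exactly the role of $Q^{\eta}_{r,2}(t)$ but driven by $D_r^{W^1}X_s^{\varepsilon}$ instead of $D_r^{W^2}X_s^{\varepsilon}$; equivalently it satisfies the affine SDE \eqref{Eq:SDE_Q2D2Y} with $D_r^{W^2}X_s^{\varepsilon}$ replaced by $D_r^{W^1}X_s^{\varepsilon}$ and zero initial condition.

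**Main step.** The plan is to apply It\^o's formula to $\abs{D_r^{W^1}Y_s^{\eta}}^{2p}$ and integrate. Differentiating, the drift produces the terms $\tfrac{2p}{\eta}\,\partial_2 f\,\abs{D_r^{W^1}Y_s^{\eta}}^{2p} + \tfrac{p(2p-1)}{\eta}(\partial_2\tau)^2\abs{D_r^{W^1}Y_s^{\eta}}^{2p}$ coming from the self-interaction, plus cross terms of the form $\tfrac{2p}{\eta}\,\partial_1 f\,D_r^{W^1}X_s^{\varepsilon}\,\abs{D_r^{W^1}Y_s^{\eta}}^{2p-2}D_r^{W^1}Y_s^{\eta}$ and $\tfrac{p(2p-1)}{\eta}\big[(\partial_1\tau)^2\abs{D_r^{W^1}X_s^{\varepsilon}}^2 + 2\partial_1\tau\,\partial_2\tau\,D_r^{W^1}X_s^{\varepsilon}D_r^{W^1}Y_s^{\eta}\big]\abs{D_r^{W^1}Y_s^{\eta}}^{2p-2}$ from the quadratic variation. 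Using Young's inequality to split the cross terms (e.g.\ $ab^{2p-1}\leq \tfrac{1}{2p}a^{2p}+\tfrac{2p-1}{2p}b^{2p}$ applied with appropriate weights), all the $\abs{D_r^{W^1}Y_s^{\eta}}^{2p}$ contributions combine, up to a factor $1/\eta$, into the bracket appearing in Assumption \ref{A:Assumptiongeneral}(iv), which is bounded above by $-K<0$; the leftover terms are all of the form $\tfrac{C}{\eta}\abs{D_r^{W^1}X_s^{\varepsilon}}^{2p}$. The stochastic integral is a true martingale by the moment bounds recalled in the Remark following \eqref{secondtermtocontrol}. Taking expectations and using that the boundary term at $s=r$ vanishes gives
\begin{equation*}
\mathbb{E}\abs{D_r^{W^1}Y_t^{\eta}}^{2p} \leq -\frac{K}{\eta}\int_r^t \mathbb{E}\abs{D_r^{W^1}Y_s^{\eta}}^{2p}ds + \frac{C}{\eta}\int_r^t \mathbb{E}\abs{D_r^{W^1}X_s^{\varepsilon}}^{2p}ds.
\end{equation*}
Since $\mathbb{E}\abs{D_r^{W^1}Y_t^{\eta}}^{2p}\geq 0$, rearranging immediately yields $\tfrac{K}{\eta}\int_r^t \mathbb{E}\abs{D_r^{W^1}Y_s^{\eta}}^{2p}ds \leq \tfrac{C}{\eta}\int_r^t \mathbb{E}\abs{D_r^{W^1}X_s^{\varepsilon}}^{2p}ds$, and multiplying through by $\eta/K$ gives the claim with constant $C/K$.

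**Main obstacle.** The delicate point is the bookkeeping in the Young-inequality splitting: one must choose the weights so that the coefficients of $\abs{D_r^{W^1}Y_s^{\eta}}^{2p}$ assemble into \emph{exactly} the combination $(2p-1)\abs{\partial_1 f}+(2p-1)(2p-2)\abs{\partial_1\tau}^2+2p(2p-1)\abs{\partial_2\tau}^2+2p\,\partial_2 f$ of Assumption \ref{A:Assumptiongeneral}(iv), rather than something with a worse constant that fails to be negative. This is precisely why Assumption \ref{A:Assumptiongeneral}(iv) is stated with those particular coefficients. A secondary technical point is justifying that the stochastic integral has zero expectation and that It\^o's formula applies, both of which follow from the finiteness of all Malliavin-derivative moments recalled earlier. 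Everything else is routine.
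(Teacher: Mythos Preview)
Your proposal is correct and follows essentially the same approach as the paper: apply It\^o's formula to $\abs{D_r^{W^1}Y_t^{\eta}}^{2p}$, take expectations, use Young's inequality on the cross terms so that the coefficients of $\abs{D_r^{W^1}Y_s^{\eta}}^{2p}$ assemble into the combination of Assumption~\ref{A:Assumptiongeneral}(iv), and then rearrange using nonnegativity of the left-hand side to obtain the bound with constant $M/K$. The variation-of-constants representation you give in the proof plan is correct but not actually used; the paper works directly from the SDE \eqref{geneqforDY}.
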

\begin{proof}
As $2p$ is an even number, we have $\abs{D_r^{W^1}Y_t^{\eta}}^{2p} =\left( D_r^{W^1}Y_s^{\eta} \right)^{2p}$ and It\^o's formula yields
\begin{align*}
&\mathbb{E}\left( \abs{D_r^{W^1}Y_t^{\eta}}^{2p} \right) =
                 \frac{2p}{\eta}\mathbb{E}\left(\int_r^t \left( D_r^{W^1}Y_s^{\eta} \right)^{2p-1}\left[ \partial_1 f\left(X_s^{\varepsilon},Y_s^{\eta}
  \right)D_r^{W^1}X_s^{\varepsilon} + \partial_2
  f\left(X_s^{\varepsilon},Y_s^{\eta} \right)D_r^{W^1}Y_s^{\eta}\right]ds\right)\\
  &\qquad+\frac{2p(2p-1)}{\eta}\mathbb{E}\left(\int_r^t \left( D_r^{W^1}Y_s^{\eta} \right)^{2p-2}\left[\partial_1 \tau\left(X_s^{\varepsilon},Y_s^{\eta}
  \right)D_r^{W^1}X_s^{\varepsilon} + \partial_2
  \tau\left(X_s^{\varepsilon},Y_s^{\eta}
    \right)D_r^{W^1}Y_s^{\eta}\right]^2ds\right).
\end{align*}
applying Young's inequality for products to
$\abs{D_r^{W^1}Y_s^{\eta}}^{2p-1}\abs{D_r^{W^1}X_s^{\varepsilon}}$ and
$\abs{D_r^{W^1}Y_s^{\eta}}^{2p-2}\abs{D_r^{W^1}X_s^{\varepsilon}}^2$
yields
\begin{align*}
&\mathbb{E}\left( \abs{D_r^{W^1}Y_t^{\eta}}^{2p} \right) \leq
                 \frac{1}{\eta}\mathbb{E}\Bigg(\int_r^t \Bigg[\abs{\partial_1
                 f\left(X_s^{\varepsilon},Y_s^{\eta} \right)} \\
  &\qquad\qquad\qquad\qquad\qquad\qquad\qquad + 2(2p-1)\abs{\partial_1 \tau\left(X_s^{\varepsilon},Y_s^{\eta}
  \right)}^2  \Bigg]\abs{D_r^{W^1}X_s^{\varepsilon}}^{2p}ds\Bigg)\\
  &\qquad+\frac{1}{\eta}\mathbb{E}\Bigg(\int_r^t\Bigg[ (2p-1)\abs{\partial_1
  f\left(X_s^{\varepsilon},Y_s^{\eta} \right)} + 2p\partial_2
    f\left(X_s^{\varepsilon},Y_s^{\eta} \right)\\
  &\qquad + (2p-1)(2p-2)
   \abs{\partial_1 \tau\left(X_s^{\varepsilon},Y_s^{\eta}
  \right)}^2 + 2p(2p-1)\abs{\partial_2 \tau\left(X_s^{\varepsilon},Y_s^{\eta}
  \right)}^2 \Bigg] \abs{D_r^{W^1}Y_s^{\eta}}^{2p}ds\Bigg).
\end{align*}
Assumption \ref{A:Assumptiongeneral} then implies that
\begin{align*}
\mathbb{E} \left( \abs{D_r^{W^1}Y_t^{\eta}}^{2p} \right)   &\leq
  \frac{M}{\eta}\mathbb{E}\int_r^t \abs{D_r^{W^1}X_s^{\varepsilon}}^{2p}ds  - \frac{K}{\eta}\mathbb{E}\int_r^t  \abs{D_r^{W^1}Y_s^{\eta}}^{2p}ds,
\end{align*}
resulting in
\begin{align*}
\mathbb{E}\left(\int_{r}^{t}\abs{D_r^{W^1}Y_s^{\eta}}^{2p}ds\right) \leq \frac{M}{K}\mathbb{E}\left(\int_{r}^{t}\abs{D_r^{W^1}X_s^{\varepsilon}}^{2p}ds\right),
\end{align*}
which concludes the proof.
\end{proof}
\begin{prop}
  \label{boundonEDW1Xs2p}
Let $p\geq 1$ be a natural number. Then, it holds that for some
constant $C >0$,
\begin{equation*}
\mathbb{E}\left( \sup_{r \leq s \leq t}
  \abs{D_r^{W^1}X_s^{\varepsilon}}^{2p} \right) \leq C \varepsilon^p.
\end{equation*}
\end{prop}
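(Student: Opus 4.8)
The plan is a two-step moment estimate based on the affine equation \eqref{geneqforDX} for $G_r(t) := D_r^{W^1}X_t^{\varepsilon}$, exploiting the fact that the only inhomogeneous inputs---the initial value $\sqrt{\varepsilon}\,\sigma(X_r^{\varepsilon},Y_r^{\eta})$ and the stochastic integrand, which carries a $\sqrt{\varepsilon}$ prefactor---are of order $\sqrt{\varepsilon}$, so that the $\varepsilon^p$ scaling should propagate. The coupling to $D_r^{W^1}Y_s^{\eta}$ in \eqref{geneqforDX} enters with an $O(1)$ (not $O(1/\eta)$) coefficient, and by Proposition \ref{propDW1YleqDW1X} its $2p$-th moment is controlled, after integration in time, by that of $G_r$; this is what closes the system.

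First step (no supremum): since $2p$ is even, $|G_r(t)|^{2p} = G_r(t)^{2p}$, so one applies It\^o's formula to $G_r(t)^{2p}$ and takes expectations, the martingale term vanishing thanks to the finite-moment bounds of \cite{IRS19} recalled above. Using the boundedness of $\partial_1 c,\partial_2 c,\partial_1\sigma,\partial_2\sigma$, Young's inequality for products on the cross terms $|G_r(s)|^{2p-1}|D_r^{W^1}Y_s^{\eta}|$ and $|G_r(s)|^{2p-2}|D_r^{W^1}Y_s^{\eta}|^2$, and the fact that the It\^o-correction terms carry a factor $\varepsilon \leq 1$, one reaches
\[
\mathbb{E}\left(G_r(t)^{2p}\right) \leq C\varepsilon^p + C\int_r^t \mathbb{E}\left(|G_r(s)|^{2p}\right)ds + C\int_r^t \mathbb{E}\left(|D_r^{W^1}Y_s^{\eta}|^{2p}\right)ds.
\]
Bounding the last integral by Proposition \ref{propDW1YleqDW1X} and then invoking Gronwall's lemma gives $\sup_{r \leq t \leq T}\mathbb{E}\left(G_r(t)^{2p}\right) \leq C\varepsilon^p$ with $C$ depending on $T$.

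Second step (supremum): returning to the integral form \eqref{geneqforDX}, take $\sup_{r \leq s \leq t}$ and raise to the power $2p$, splitting into the initial term, the Lebesgue-integral term, and the stochastic-integral term. The initial term is $\varepsilon^p\,\sigma(X_r^{\varepsilon},Y_r^{\eta})^{2p} \leq C\varepsilon^p$. H\"older's inequality in time bounds the $2p$-th power of the drift term by $C(t-r)^{2p-1}\int_r^t \left(|G_r(s)|^{2p} + |D_r^{W^1}Y_s^{\eta}|^{2p}\right)ds$, while the Burkholder--Davis--Gundy inequality followed by H\"older in time bounds the expected $2p$-th supremum of the stochastic term by $C\varepsilon^p(t-r)^{p-1}\int_r^t \mathbb{E}\left(|G_r(s)|^{2p} + |D_r^{W^1}Y_s^{\eta}|^{2p}\right)ds$. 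Taking expectations and plugging in the first-step bound $\mathbb{E}\left(|G_r(s)|^{2p}\right) \leq C\varepsilon^p$ together with Proposition \ref{propDW1YleqDW1X} (which turns $\int_r^t \mathbb{E}|D_r^{W^1}Y_s^{\eta}|^{2p}ds$ into a quantity $\leq C\varepsilon^p$), every term on the right-hand side is $\leq C\varepsilon^p$ (the BDG contribution being in fact $O(\varepsilon^{2p})$), which yields the claim.

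The only point requiring care---rather than a genuine obstacle---is the circular dependence between $D^{W^1}X$ and $D^{W^1}Y$: one must avoid bounding $D^{W^1}Y$ pointwise (which would reintroduce an uncontrolled $\eta$-dependence) and use exclusively the time-integrated estimate of Proposition \ref{propDW1YleqDW1X}, which is exactly strong enough to be absorbed into the Gronwall step. Everything else is a routine application of It\^o's formula, Young's and H\"older's inequalities, Burkholder--Davis--Gundy, and Gronwall's lemma.
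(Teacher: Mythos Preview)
Your proof is correct and uses the same ingredients as the paper---Proposition \ref{propDW1YleqDW1X} to absorb the $D_r^{W^1}Y$ contribution, together with BDG, H\"older, and Gr\"onwall---but you take a slightly more roundabout route. The paper proceeds in a single step: it takes the integral form \eqref{geneqforDX}, applies the supremum and BDG/H\"older directly, and obtains
\[
\mathbb{E}\Bigl(\sup_{r\leq s\leq t}|D_r^{W^1}X_s^{\varepsilon}|^{2p}\Bigr)
\leq C\varepsilon^p + C(1+\varepsilon^p)\Bigl[\int_r^t \mathbb{E}\Bigl(\sup_{r\leq u\leq s}|D_r^{W^1}X_u^{\varepsilon}|^{2p}\Bigr)ds + \mathbb{E}\int_r^t |D_r^{W^1}Y_s^{\eta}|^{2p}\,ds\Bigr].
\]
Since Proposition \ref{propDW1YleqDW1X} bounds the $Y$-integral by $C\,\mathbb{E}\int_r^t|D_r^{W^1}X_s^{\varepsilon}|^{2p}ds \leq C\int_r^t \mathbb{E}\bigl(\sup_{r\leq u\leq s}|D_r^{W^1}X_u^{\varepsilon}|^{2p}\bigr)ds$, Gr\"onwall applied directly to the supremum quantity finishes the proof. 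Your preliminary ``no-supremum'' It\^o step is thus unnecessary: the supremum can be carried through the Gr\"onwall argument from the outset. Both arguments are valid; the paper's is simply more economical.
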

\begin{proof}
Using \eqref{geneqforDX}, we can write
\begin{align*}
\mathbb{E}\left( \sup_{r \leq s \leq t}
  \abs{D_r^{W^1}X_s^{\varepsilon}}^{2p} \right) &\leq C\Bigg(
  \varepsilon^p + (1+\varepsilon^p)\Bigg[\int_r^t \mathbb{E}\left( \sup_{r \leq u \leq s}
  \abs{D_r^{W^1}X_u^{\varepsilon}}^{2p} \right)ds \\
  &\qquad\qquad\qquad\qquad\qquad\qquad\qquad + \mathbb{E}\left(
  \int_r^t \abs{D_r^{W^1}Y_s^{\eta}}^{2p}ds\right)\Bigg]\Bigg).
\end{align*}
Applying Proposition \ref{propDW1YleqDW1X} and applying Gr\"onwall's
lemma concludes the proof.
\end{proof}
\begin{prop}
  \label{boundonEDW1Yt2p}
Let $p\geq 1$ be a natural number. Then, it holds that for some
constant $C >0$,
\begin{equation*}
\mathbb{E}\left(\int_r^t \abs{D_r^{W^1}Y_s^{\eta}}^{2p}ds \right) \leq C \varepsilon^p.
\end{equation*}
\end{prop}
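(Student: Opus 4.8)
The plan is to combine the two previous propositions. By Proposition \ref{propDW1YleqDW1X}, we have
\[
\mathbb{E}\left(\int_r^t \abs{D_r^{W^1}Y_s^{\eta}}^{2p}ds\right) \leq C\,\mathbb{E}\left(\int_r^t \abs{D_r^{W^1}X_s^{\varepsilon}}^{2p}ds\right),
\]
so it suffices to control the right-hand side. For this I would bound the integrand pointwise by its supremum, writing
\[
\mathbb{E}\left(\int_r^t \abs{D_r^{W^1}X_s^{\varepsilon}}^{2p}ds\right) \leq (t-r)\,\mathbb{E}\left(\sup_{r\leq s\leq t}\abs{D_r^{W^1}X_s^{\varepsilon}}^{2p}\right),
\]
and then invoke Proposition \ref{boundonEDW1Xs2p}, which gives $\mathbb{E}\left(\sup_{r\leq s\leq t}\abs{D_r^{W^1}X_s^{\varepsilon}}^{2p}\right)\leq C\varepsilon^p$. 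Since $t-r\leq T$ is bounded, absorbing $T$ into the constant yields the claimed bound $\mathbb{E}\left(\int_r^t \abs{D_r^{W^1}Y_s^{\eta}}^{2p}ds\right)\leq C\varepsilon^p$.

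There is essentially no obstacle here: the statement is a direct corollary of the two preceding propositions, and the only work is threading together the integral estimate of Proposition \ref{propDW1YleqDW1X} with the supremum estimate of Proposition \ref{boundonEDW1Xs2p} via the trivial bound on a finite-length time integral. The constant $C$ will depend on $T$ (through the factor $t-r$), on $M/K$ (from Proposition \ref{propDW1YleqDW1X}), and on the constant appearing in Proposition \ref{boundonEDW1Xs2p}, all of which are finite under Assumption \ref{A:Assumptiongeneral}.
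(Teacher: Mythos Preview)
Your proposal is correct and follows essentially the same approach as the paper, which simply states that the result is a consequence of Propositions~\ref{propDW1YleqDW1X} and~\ref{boundonEDW1Xs2p}. Your explicit intermediate step of bounding the time integral by $(t-r)$ times the supremum is precisely the obvious way to link those two propositions.
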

\begin{proof}
The result is a consequence of Propositions \ref{propDW1YleqDW1X} and
\ref{boundonEDW1Xs2p}.
\end{proof}
\begin{prop}
  \label{boundonEDW1Yt2pb}
Let $p\geq 1$ be a natural number. Then, it holds that for some
constant $C >0$,
\begin{equation*}
\mathbb{E}\left( \abs{D_r^{W^1}Y_t^{\eta}}^{2p} \right) \leq C \varepsilon^p.
\end{equation*}
\end{prop}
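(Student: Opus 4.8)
The plan is to upgrade the time-integrated estimate of Proposition~\ref{boundonEDW1Yt2p} to a genuine pointwise-in-time bound by exploiting the dissipative structure of \eqref{geneqforDY} that is built into Assumption~\ref{A:Assumptiongeneral}(iv). Set $\psi(t)=\mathbb{E}\left(\abs{D_r^{W^1}Y_t^{\eta}}^{2p}\right)$. Note first that $\psi(r)=0$, since when $j=1$ the term $\mathds{1}_{\{j=2\}}\tau(X_r^{\varepsilon},Y_r^{\eta})/\sqrt{\eta}$ in \eqref{geneqforDY} vanishes; and note that $\psi$ is finite and in fact $C^1$ on $[r,T]$, because the drift integrand appearing after applying It\^{o}'s formula to $\left(D_r^{W^1}Y_t^{\eta}\right)^{2p}$ is continuous in time along the paths and uniformly integrable thanks to the moment bounds recalled in the Remark following \eqref{secondtermtocontrol} (together with Proposition~\ref{boundonEDW1Xs2p}).

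First I would apply It\^{o}'s formula to $\left(D_r^{W^1}Y_t^{\eta}\right)^{2p}$ on $[r,t]$ exactly as in the proof of Proposition~\ref{propDW1YleqDW1X}, take expectations (the $dW^2$ stochastic integral has zero mean, its integrand lying in $L^2(\Omega\times[r,T])$ again by the moment bounds of that Remark), and then differentiate the resulting identity in $t$. Applying Young's inequality for products to the cross terms $\abs{D_r^{W^1}Y_t^{\eta}}^{2p-1}\abs{D_r^{W^1}X_t^{\varepsilon}}$ and $\abs{D_r^{W^1}Y_t^{\eta}}^{2p-2}\abs{D_r^{W^1}X_t^{\varepsilon}}^2$ and invoking Assumption~\ref{A:Assumptiongeneral}(iii)--(iv) --- i.e.\ the same bookkeeping that produced the constants $M$ and $K$ in Proposition~\ref{propDW1YleqDW1X} --- yields the differential inequality
\begin{equation*}
\psi'(t)\;\leq\;\frac{M}{\eta}\,\mathbb{E}\left(\abs{D_r^{W^1}X_t^{\varepsilon}}^{2p}\right)\;-\;\frac{K}{\eta}\,\psi(t),\qquad \psi(r)=0.
\end{equation*}
Multiplying by the integrating factor $e^{Kt/\eta}$ and integrating over $[r,t]$ gives
\begin{equation*}
\psi(t)\;\leq\;\frac{M}{\eta}\int_r^t e^{-\frac{K}{\eta}(t-s)}\,\mathbb{E}\left(\abs{D_r^{W^1}X_s^{\varepsilon}}^{2p}\right)ds.
\end{equation*}
Finally I would bound $\mathbb{E}\left(\abs{D_r^{W^1}X_s^{\varepsilon}}^{2p}\right)\leq \mathbb{E}\left(\sup_{r\leq u\leq t}\abs{D_r^{W^1}X_u^{\varepsilon}}^{2p}\right)\leq C\varepsilon^p$ by Proposition~\ref{boundonEDW1Xs2p}, pull this out of the integral, and use $\frac{M}{\eta}\int_r^t e^{-\frac{K}{\eta}(t-s)}ds=\frac{M}{K}\left(1-e^{-\frac{K}{\eta}(t-r)}\right)\leq \frac{M}{K}$, which delivers $\psi(t)\leq \frac{MC}{K}\,\varepsilon^p$, as claimed.

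The only real subtlety I anticipate is explaining why the estimate does not degrade by a factor $1/\eta$: the naive route --- bounding $\mathbb{E}\left(\abs{D_r^{W^1}Y_t^{\eta}}^{2p}\right)$ by $\eta^{-1}$ times the time-integral of Proposition~\ref{boundonEDW1Yt2p}, or simply dropping the $\psi$-term above --- loses exactly such a factor. What rescues the bound is the strict negativity $-K/\eta$ of the coefficient of $\psi(t)$, coming from Assumption~\ref{A:Assumptiongeneral}(iv): the $\eta^{-1}$ prefactor of the forcing term is precisely cancelled by the $\eta/K$ produced when integrating the exponential kernel $e^{-K(t-s)/\eta}$, leaving an $O(1)$ constant. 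A secondary, purely technical point is justifying the differentiation of $\psi$ in $t$, handled by dominated convergence using the uniform moment bounds recalled earlier. The same mechanism will recur for the $W^2$-derivatives, where the forcing term additionally carries the boundary contribution $\tau(X_r^{\varepsilon},Y_r^{\eta})/\sqrt{\eta}$ and therefore produces the extra powers of $\eta$ visible in the later estimates.
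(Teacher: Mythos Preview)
Your proposal is correct and follows essentially the same approach as the paper: derive the differential inequality $\psi'(t)\leq \frac{M}{\eta}\mathbb{E}(|D_r^{W^1}X_t^{\varepsilon}|^{2p})-\frac{K}{\eta}\psi(t)$ via It\^o's formula, Young's inequality, and Assumption~\ref{A:Assumptiongeneral}, then solve it and invoke Proposition~\ref{boundonEDW1Xs2p}. Your additional commentary on the cancellation of the $1/\eta$ factor and the regularity of $\psi$ is accurate and helpful, but the core argument matches the paper's proof line by line.
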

\begin{proof}
Recall that per It\^o's formula, we have
\begin{align*}
&\mathbb{E}\left( \abs{D_r^{W^1}Y_t^{\eta}}^{2p} \right) =
                 \frac{2p}{\eta}\mathbb{E}\left(\int_r^t \left( D_r^{W^1}Y_s^{\eta} \right)^{2p-1}\left[ \partial_1 f\left(X_s^{\varepsilon},Y_s^{\eta}
  \right)D_r^{W^1}X_s^{\varepsilon} + \partial_2
  f\left(X_s^{\varepsilon},Y_s^{\eta} \right)D_r^{W^1}Y_s^{\eta}\right]ds\right)\\
  &\qquad+\frac{2p(2p-1)}{\eta}\mathbb{E}\left(\int_r^t \left( D_r^{W^1}Y_s^{\eta} \right)^{2p-2}\left[\partial_1 \tau\left(X_s^{\varepsilon},Y_s^{\eta}
  \right)D_r^{W^1}X_s^{\varepsilon} + \partial_2
  \tau\left(X_s^{\varepsilon},Y_s^{\eta}
    \right)D_r^{W^1}Y_s^{\eta}\right]^2ds\right).
\end{align*}
Differentiating this equality with respect to $t$, applying Young's inequality for products to
$\abs{D_r^{W^1}Y_s^{\eta}}^{2p-1}\abs{D_r^{W^1}X_s^{\varepsilon}}$ and
$\abs{D_r^{W^1}Y_s^{\eta}}^{2p-2}\abs{D_r^{W^1}X_s^{\varepsilon}}^2$ and using Assumption \ref{A:Assumptiongeneral} yields
\begin{align}
  \label{diffEDW1Yt}
\frac{d}{dt}\mathbb{E}\left( \abs{D_r^{W^1}Y_t^{\eta}}^{2p}
  \right) \leq \frac{M}{\eta}\mathbb{E}\left(
  \abs{D_r^{W^1}X_t^{\varepsilon}}^{2p} \right) - \frac{K}{\eta}\mathbb{E}\left( \abs{D_r^{W^1}Y_t^{\eta}}^{2p} \right).
\end{align}
Solving this differential inequality gives us
\begin{equation*}
\mathbb{E}\left( \abs{D_r^{W^1}Y_t^{\eta}}^{2p} \right) \leq
\frac{M}{K}\varepsilon^p \left( 1 - e^{-\frac{K}{\eta}(t-r)} \right)
\leq C \varepsilon^p
\end{equation*}
for some constant $c >0$.
\end{proof}
\begin{prop}
\label{boundonintDY2p}
Let $p\geq 1$ be a  natural number and $q \in \left\{ 1,2 \right\}$. Then, it holds that for some
constant $C >0$,
\begin{equation}
  \label{Eq:DY_intergalBound_2q}
\mathbb{E}\left(\left(\int_{r}^{t}|
    D_r^{W^2}Y_s^{\eta}|^{q}ds\right)^{\frac{2p}{q}}\right) \leq C \eta^{\frac{p(2-q)}{q}}+C\mathbb{E}\left(\int_{r}^{t}| D_r^{W^2}X_s^{\varepsilon}|^{2p}ds\right).
\end{equation}
\end{prop}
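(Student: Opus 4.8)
The plan is to use the decomposition \eqref{decompositionofDW2Yt}, $D_r^{W^2}Y_s^{\eta}=Q^{\eta}_{r,1}(s)+Q^{\eta}_{r,2}(s)$, and to treat the two pieces separately. By Minkowski's inequality in $L^q([r,t])$ (recall $q\in\{1,2\}$, so $q\geq 1$) followed by the elementary bound $(a+b)^{2p}\leq 2^{2p-1}(a^{2p}+b^{2p})$, the statement reduces to establishing, with constants independent of $\varepsilon,\eta$,
\[
\mathbb{E}\left(\left(\int_r^t |Q^{\eta}_{r,1}(s)|^q ds\right)^{\frac{2p}{q}}\right)\leq C\,\eta^{\frac{p(2-q)}{q}}
\qquad\text{and}\qquad
\mathbb{E}\left(\left(\int_r^t |Q^{\eta}_{r,2}(s)|^q ds\right)^{\frac{2p}{q}}\right)\leq C\,\mathbb{E}\left(\int_r^t |D_r^{W^2}X_s^{\varepsilon}|^{2p}ds\right).
\]

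\textbf{The $Q^{\eta}_{r,2}$ term.} By \eqref{Eq:SDE_Q2D2Y}, $Q^{\eta}_{r,2}$ solves an affine stochastic differential equation of exactly the same form as the equation \eqref{geneqforDY} governing $D_r^{W^1}Y$ (no ``initial condition'' term, forced by the Malliavin derivative of the slow component). Thus, repeating verbatim the It\^o formula and Young's inequality argument of the proof of Proposition \ref{propDW1YleqDW1X} with $D_r^{W^1}X$ replaced by $D_r^{W^2}X$, and using Assumption \ref{A:Assumptiongeneral}, yields $\mathbb{E}(\int_r^t |Q^{\eta}_{r,2}(s)|^{2p}ds)\leq \frac{M}{K}\,\mathbb{E}(\int_r^t |D_r^{W^2}X_s^{\varepsilon}|^{2p}ds)$. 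Since $2p\geq q$, Jensen's inequality gives $(\int_r^t |Q^{\eta}_{r,2}(s)|^q ds)^{2p/q}\leq (t-r)^{\frac{2p}{q}-1}\int_r^t |Q^{\eta}_{r,2}(s)|^{2p}ds$, and taking expectations gives the second displayed bound.

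\textbf{The $Q^{\eta}_{r,1}$ term.} This is where the power of $\eta$ is generated and is the crux of the proof. Since $Q^{\eta}_{r,1}(s)=\eta^{-1/2}Z_{r,2}(s)\,\tau(X_r^{\varepsilon},Y_r^{\eta})$ with the factor $\tau(X_r^{\varepsilon},Y_r^{\eta})$ being $\mathcal{F}_r$-measurable and independent of the integration variable $s$,
\[
\left(\int_r^t |Q^{\eta}_{r,1}(s)|^q ds\right)^{\frac{2p}{q}} = \eta^{-p}\,|\tau(X_r^{\varepsilon},Y_r^{\eta})|^{2p}\left(\int_r^t |Z_{r,2}(s)|^q ds\right)^{\frac{2p}{q}}.
\]
The key ingredient is the conditional exponential decay estimate $\mathbb{E}(|Z_{r,2}(s)|^{2p}\mid\mathcal{F}_r)\leq e^{-\frac{K}{\eta}(s-r)}$: from \eqref{expressionofZr2t} one factors $|Z_{r,2}(s)|^{2p}$ into the exponential martingale $\exp\!\big(\frac{2p}{\sqrt{\eta}}\int_r^s\partial_2\tau\,dW^2_u-\frac{(2p)^2}{2\eta}\int_r^s(\partial_2\tau)^2 du\big)$ (a genuine martingale since $\partial_2\tau$ is bounded) times the drift factor $\exp\!\big(\frac{1}{\eta}\int_r^s[2p\,\partial_2 f+p(2p-1)(\partial_2\tau)^2]du\big)$, and Assumption \ref{A:Assumptiongeneral}$(iv)$ bounds this drift factor pathwise by $e^{-\frac{K}{\eta}(s-r)}$, since $2p\,\partial_2 f+p(2p-1)(\partial_2\tau)^2\leq 2p\,\partial_2 f+2p(2p-1)(\partial_2\tau)^2\leq -K$. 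Conditioning on $\mathcal{F}_r$, writing $k=2p/q\in\N$ and expanding $(\int_r^t |Z_{r,2}(s)|^q ds)^{k}=\int_{[r,t]^{k}}\prod_{i=1}^{k}|Z_{r,2}(s_i)|^q\,ds$, the conditional generalized H\"older inequality (exponent $k$ in each slot, so that only the $qk=2p$-th conditional moment of $Z_{r,2}$ is used) together with $\int_r^t e^{-\frac{K}{k\eta}(s-r)}ds\leq \frac{k\eta}{K}$ yields $\mathbb{E}\big((\int_r^t |Z_{r,2}(s)|^q ds)^{2p/q}\mid\mathcal{F}_r\big)\leq (k\eta/K)^{k}=C\,\eta^{2p/q}$. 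Taking expectations and using the at most linear growth of $\tau$ together with the standard uniform-in-$(\varepsilon,\eta)$ moment bounds $\sup_{0\leq r\leq T}\mathbb{E}(|\tau(X_r^{\varepsilon},Y_r^{\eta})|^{2p})<\infty$ (which follow from Assumption \ref{A:Assumptiongeneral}$(iv)$; see Section \ref{S:AuxiliaryBounds}) gives $\mathbb{E}((\int_r^t |Q^{\eta}_{r,1}(s)|^q ds)^{2p/q})\leq C\,\eta^{-p}\eta^{2p/q}=C\,\eta^{\frac{p(2-q)}{q}}$.

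\textbf{Main obstacle.} The delicate point is the bookkeeping in the $Q^{\eta}_{r,1}$ estimate. A crude application of Jensen's inequality inside the $s$-integral (i.e.\ $(\int f)^{m}\leq (t-r)^{m-1}\int f^{m}$) would produce only $\eta^{1-p}$, which is insufficient once $p\geq 2$; one must instead expand the power of the time integral as a multiple integral and condition so that the exponential decay of $Z_{r,2}$ is invoked precisely at the moment order $2p$ permitted by Assumption \ref{A:Assumptiongeneral}$(iv)$, since larger moments of $Z_{r,2}$ need not decay under that assumption.
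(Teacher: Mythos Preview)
Your proof is correct and follows essentially the same approach as the paper: decompose via \eqref{decompositionofDW2Yt}, handle $Q^{\eta}_{r,2}$ by the It\^o--Young argument of Proposition~\ref{propDW1YleqDW1X} (with $D_r^{W^2}X$ in place of $D_r^{W^1}X$) followed by Jensen to pass from the $q$-norm to the $2p$-norm, and handle $Q^{\eta}_{r,1}$ by expanding the $(2p/q)$-th power of the time integral as a multiple integral and exploiting the exponential decay of the $2p$-th moment of $Z_{r,2}$. The only cosmetic difference is that the paper works with the unconditional bound of Lemma~\ref{lemmaonExpofZr2tothe2pq} and treats $\tau$ as uniformly bounded, whereas you condition on $\mathcal{F}_r$ and appeal to moment bounds on $\tau(X_r^{\varepsilon},Y_r^{\eta})$; both routes give the same estimate.
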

\begin{proof}
Using the decomposition \eqref{decompositionofDW2Yt}, one has
\begin{align}
  \label{decompositionofDW2Y}
\mathbb{E}\left(\left(\int_{r}^{t}\abs{D_r^{W^2}Y_s^{\eta}}^{q}ds\right)^{\frac{2p}{q}}\right)
&\leq C
\Bigg[\mathbb{E}\left(\left(\int_{r}^{t}\abs{Q^{\eta}_{r,1}(s)}^qds\right)^{\frac{2p}{q}}\right)
  \nonumber \\
  &\qquad\qquad\qquad\qquad + \mathbb{E}\left(\left(\int_{r}^{t}\abs{Q^{\eta}_{r,2}(s)}^qds\right)^{\frac{2p}{q}}\right) \Bigg].
\end{align}
We will estimate both these terms separately and we start with the
first one. Using the expression of $Q^{\eta}_{r,1}(t)$ given at
\eqref{defQ1} along with Assumption \ref{A:Assumptiongeneral}, we can write
\begin{align*}
\mathbb{E}\left(\left(\int_{r}^{t}\abs{Q^{\eta}_{r,1}(s)}^qds\right)^{\frac{2p}{q}}\right) &\leq  \frac{C}{\eta^{p}} \mathbb{E}\left( \left( \int_r^t Z_{r,2}(s)^q ds \right)^{\frac{2p}{q}} \right)
  \\
  &\leq  \frac{C}{\eta^{p}} \mathbb{E}\left(
    \int_r^t\cdots\int_r^t
    \prod_{i=1}^{2p/q}Z_{r,2}(s_i)^q ds_1\cdots ds_{2p/q} \right)\\
  & \leq \frac{C}{\eta^{p}}  \int_r^t\cdots\int_r^t \prod_{i=1}^{2p/q}\mathbb{E}\left(Z_{r,2}(s_i)^{2p}\right)^{q/2p} ds_1\cdots ds_{2p/q} .
\end{align*}
Using Lemma \ref{lemmaonExpofZr2tothe2pq}, we can further write
\begin{align}
  \label{Eq:Bound_D1Q}
\mathbb{E}\left(\left(\int_{r}^{t}\abs{Q^{\eta}_{r,1}(s)}^qds\right)^{\frac{2p}{q}}\right)  &\leq \frac{C}{\eta^{p}} \prod_{i=1}^{2p/q} \int_r^t
                     e^{-\frac{qK}{2p\eta}(s_i-r)}ds_i\nonumber
  \\
  &\leq C \eta^{\frac{p(2-q)}{q}}.
\end{align}
For the second term appearing on the right-hand side of
\eqref{decompositionofDW2Y}, recall that $Q^{\eta}_{r,2}(t)$ satisfies
the affine equation \eqref{Eq:SDE_Q2D2Y}. As $2p$ is an even number, we have $\abs{Q^{\eta}_{r,2}(t)}^{2p} =Q^{\eta}_{r,2}(t) ^{2p}$ and It\^o's formula yields
\begin{align*}
&\mathbb{E}\left( \abs{Q^{\eta}_{r,2}(t)}^{2p} \right) = \frac{2p}{\eta}\mathbb{E}\left(\int_r^tQ^{\eta}_{r,2}(s)^{2p-1}\left[ \partial_1 f\left(X_s^{\varepsilon},Y_s^{\eta}
  \right)D_r^{W^2}X_s^{\varepsilon} + \partial_2
  f\left(X_s^{\varepsilon},Y_s^{\eta} \right)Q^{\eta}_{r,2}(s)\right]ds\right)\\
  &\qquad+\frac{2p(2p-1)}{\eta}\mathbb{E}\left(\int_r^tQ^{\eta}_{r,2}(s)^{2p-2}\left[\partial_1 \tau\left(X_s^{\varepsilon},Y_s^{\eta}
  \right)D_r^{W^2}X_s^{\varepsilon} + \partial_2
  \tau\left(X_s^{\varepsilon},Y_s^{\eta}
    \right)Q^{\eta}_{r,2}(s)\right]^2ds\right).
\end{align*}
applying Young's inequality for products to
$\abs{Q^{\eta}_{r,2}(s)}^{2p-1}\abs{D_r^{W^2}X_s^{\varepsilon}}$ and
$\abs{Q^{\eta}_{r,2}(s)}^{2p-2}\abs{D_r^{W^2}X_s^{\varepsilon}}^2$
yields
\begin{align*}
&\mathbb{E}\left( \abs{Q^{\eta}_{r,2}(t)}^{2p} \right) \leq
                 \frac{1}{\eta}\mathbb{E}\Bigg(\int_r^t \Bigg[\abs{\partial_1
                 f\left(X_s^{\varepsilon},Y_s^{\eta} \right)} \\
  &\qquad\qquad\qquad\qquad\qquad\qquad\qquad + 2(2p-1)\abs{\partial_1 \tau\left(X_s^{\varepsilon},Y_s^{\eta}
  \right)}^2  \Bigg]\abs{D_r^{W^2}X_s^{\varepsilon}}^{2p}ds\Bigg)\\
  &\qquad+\frac{1}{\eta}\mathbb{E}\Bigg(\int_r^t\Bigg[ (2p-1)\abs{\partial_1
  f\left(X_s^{\varepsilon},Y_s^{\eta} \right)} + 2p\partial_2
    f\left(X_s^{\varepsilon},Y_s^{\eta} \right)\\
  &\qquad + (2p-1)(2p-2)
   \abs{\partial_1 \tau\left(X_s^{\varepsilon},Y_s^{\eta}
  \right)}^2 + 2p(2p-1)\abs{\partial_2 \tau\left(X_s^{\varepsilon},Y_s^{\eta}
  \right)}^2 \Bigg]Q^{\eta}_{r,2}(s)^{2p}ds\Bigg).
\end{align*}
Using Assumption \ref{A:Assumptiongeneral} then implies that
\begin{align}
  \label{ineqforEQ2}
\mathbb{E} \left( \abs{Q^{\eta}_{r,2}(t)}^{2p} \right)   &\leq
  \frac{M}{\eta}\mathbb{E} \left(\int_r^t \abs{D_r^{W^2}X_s^{\varepsilon}}^{2p}ds\right)  - \frac{K}{\eta}\mathbb{E} \left(\int_r^t  \abs{Q^{\eta}_{r,2}(s)}^{2p}ds\right),
\end{align}
which then immediately gives
\begin{align}
  \label{Eq:Bound_D2Q}
\mathbb{E} \left(\int_r^t  \abs{Q^{\eta}_{r,2}(s)}^{2p}ds\right)  \leq
  \frac{M}{K}\mathbb{E} \left(\int_r^t \abs{D_r^{W^2}X_s^{\varepsilon}}^{2p}ds\right).
\end{align}
Combining \eqref{Eq:Bound_D1Q} and \eqref{Eq:Bound_D2Q}, we finally
obtain that for all natural numbers $p \geq 1$ and $q \in \left\{ 1,2 \right\}$, \eqref{Eq:DY_intergalBound_2q} holds.
\end{proof}
\begin{prop}
  \label{boundonEDW2Xs2p}
Let $p\geq 1$ be a natural number. Then, it holds that for some
constant $C >0$,
\begin{equation*}
\mathbb{E}\left( \sup_{r \leq s \leq t}
  \abs{D_r^{W^2}X_s^{\varepsilon}}^{2p} \right) \leq C \left(\varepsilon^p
+ \eta^p\right).
\end{equation*}
\end{prop}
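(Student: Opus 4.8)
The plan is to follow the same route as in the proof of Proposition \ref{boundonEDW1Xs2p}, now applied to \eqref{geneqforDX} with $j=2$. The essential structural difference is that the indicator term $\mathds{1}_{\{j=1\}}\sqrt{\varepsilon}\sigma$ is absent, so there is no free additive term; the $\varepsilon^p+\eta^p$ on the right-hand side will be generated entirely by the coupling terms involving $D_r^{W^2}Y^{\eta}$, and the two summands $\varepsilon^p$ and $\eta^p$ must be traced to two \emph{different} applications of Proposition \ref{boundonintDY2p}.

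First I would set $g(s):=\mathbb{E}\!\left(\sup_{r\le u\le s}\abs{D_r^{W^2}X_u^{\varepsilon}}^{2p}\right)$ and, starting from \eqref{geneqforDX} with $j=2$, apply the elementary inequality $\abs{a+b+c}^{2p}\le C(\abs{a}^{2p}+\abs{b}^{2p}+\abs{c}^{2p})$, Hölder's inequality in time on the Lebesgue integral, and the Burkholder--Davis--Gundy inequality on the stochastic integral. Using the boundedness of $\partial_1 c,\partial_2 c,\partial_1\sigma,\partial_2\sigma$ from Assumption \ref{A:Assumptiongeneral}(ii), this yields, for $r\le t\le T$,
\begin{equation*}
g(t)\le C\int_r^t g(s)\,ds+C\,\mathbb{E}\!\left(\Big(\int_r^t\abs{D_r^{W^2}Y_s^{\eta}}\,ds\Big)^{2p}\right)+C\varepsilon^p\,\mathbb{E}\!\left(\Big(\int_r^t\abs{D_r^{W^2}Y_s^{\eta}}^2\,ds\Big)^{p}\right),
\end{equation*}
where the first term collects the $\partial_1 c$ drift contribution and the $\partial_1\sigma$ diffusion contribution (the latter carries an extra $\varepsilon^p\le 1$ that is absorbed into the constant), the second term is the $\partial_2 c$ drift contribution, and the third is the $\partial_2\sigma$ diffusion contribution after BDG.

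Next I would bound the last two expectations using Proposition \ref{boundonintDY2p}: with $q=1$ the second expectation is at most $C\eta^p+C\int_r^t g(s)\,ds$, and with $q=2$ the third is at most $C+C\int_r^t g(s)\,ds$, so that after multiplication by $\varepsilon^p\le 1$ it contributes $C\varepsilon^p+C\int_r^t g(s)\,ds$. Substituting these back gives the integral inequality $g(t)\le C(\varepsilon^p+\eta^p)+C\int_r^t g(s)\,ds$, and Grönwall's lemma then yields $g(t)\le C(\varepsilon^p+\eta^p)e^{C(t-r)}\le C(\varepsilon^p+\eta^p)$ for $t\le T$, which is the claimed estimate.

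There is no serious obstacle here once Proposition \ref{boundonintDY2p} is available; the one point that needs care is the bookkeeping described above, namely that the $\partial_2 c$ drift term must be estimated via the $q=1$ case of Proposition \ref{boundonintDY2p} --- this is precisely where the $\eta^p$ rate is produced --- whereas the $\partial_2\sigma$ diffusion term is estimated via the $q=2$ case, whose leading $\eta^0$ term is rendered harmless only because it is multiplied by $\varepsilon^p$. Estimating the drift term through the crude bound $\big(\int_r^t\abs{D_r^{W^2}Y_s^{\eta}}\,ds\big)^{2p}\le (t-r)^p\big(\int_r^t\abs{D_r^{W^2}Y_s^{\eta}}^2\,ds\big)^p$ and then applying the $q=2$ case would replace the $\eta^p$ by an $O(1)$ term and destroy the rate, so the split between the two applications is essential.
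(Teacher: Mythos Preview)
Your proposal is correct and follows essentially the same approach as the paper: start from \eqref{geneqforDX} with $j=2$, bound the $2p$-th moment of the supremum via BDG/Doob, separate the $\partial_2 c$ drift contribution (estimated through Proposition \ref{boundonintDY2p} with $q=1$, producing $\eta^p$) from the $\partial_2\sigma$ diffusion contribution (estimated with $q=2$, whose $O(1)$ term is compensated by the $\varepsilon^p$ prefactor), and close with Gr\"onwall. Your commentary on why the $q=1$/$q=2$ split is essential is exactly the point.
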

\begin{proof}
Using \eqref{geneqforDX} raised to the power $2p$, applying
expectation and using Doob's maximal inequality yields
\begin{align*}
\mathbb{E}\left( \sup_{r \leq s \leq t}
  \abs{D_r^{W^2}X_s^{\varepsilon}}^{2p} \right) &\leq C (1 + \varepsilon^p)\int_r^t \mathbb{E}\left( \sup_{r \leq u \leq s}
  \abs{D_r^{W^2}X_u^{\varepsilon}}^{2p} \right)ds \\
  &\quad + C\mathbb{E}\left(\left(
  \int_r^t \abs{D_r^{W^2}Y_s^{\eta}}ds\right)^{2p}\right) + C\varepsilon^p\mathbb{E}\left(\left(
  \int_r^t \abs{D_r^{W^2}Y_s^{\eta}}^2ds\right)^{p}\right).
\end{align*}
Using Proposition \ref{boundonintDY2p} twice yields
\begin{align*}
\mathbb{E}\left(   \sup_{r\leq s\leq t}\left|D_r^{W^2} X_t^{\varepsilon}\right|^{2p}\right) &\leq C (1+\varepsilon^{p}) \int_r^t \mathbb{E}\left(
\sup_{r\leq u\leq s}\left|D_r^{W^2} X_u^{\varepsilon}\right|^{2p}\right)ds+C\eta^{p}+C\varepsilon^{p},
\end{align*}
and applying
Gr\"onwall's lemma concludes the proof.
\end{proof}
\begin{prop}
\label{boundonintDY2pintermsofepsandeta}
Let $p\geq 1$ be a  natural number and $q \in \left\{ 1,2 \right\}$. Then, it holds that for some
constant $C >0$,
\begin{equation*}
\mathbb{E}\left(\left(\int_{r}^{t}\abs{D_r^{W^2}Y_s^{\eta}}^{q}ds\right)^{\frac{2p}{q}}\right) \leq C \left(\eta^{\frac{p(2-q)}{q}}+\varepsilon^{p}+\eta^{p}\right).
\end{equation*}
\end{prop}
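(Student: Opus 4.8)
The plan is to combine the two immediately preceding results, Proposition \ref{boundonintDY2p} and Proposition \ref{boundonEDW2Xs2p}, which together already contain essentially all the work. The statement we want is just a transcription of the right-hand side of \eqref{Eq:DY_intergalBound_2q} once the Malliavin derivative moment of the slow component is controlled in terms of $\varepsilon$ and $\eta$.

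First, I would invoke Proposition \ref{boundonintDY2p} directly: for any natural number $p\geq 1$ and $q\in\{1,2\}$,
\[
\mathbb{E}\left(\left(\int_{r}^{t}\abs{D_r^{W^2}Y_s^{\eta}}^{q}ds\right)^{\frac{2p}{q}}\right) \leq C \eta^{\frac{p(2-q)}{q}}+C\,\mathbb{E}\left(\int_{r}^{t}\abs{ D_r^{W^2}X_s^{\varepsilon}}^{2p}ds\right).
\]
Then, for the second term on the right-hand side, I would bound the integrand by its supremum over $[r,t]$ and use $t-r\leq T$, so that
\[
\mathbb{E}\left(\int_{r}^{t}\abs{ D_r^{W^2}X_s^{\varepsilon}}^{2p}ds\right)\leq (t-r)\,\mathbb{E}\left( \sup_{r \leq s \leq t}\abs{D_r^{W^2}X_s^{\varepsilon}}^{2p} \right)\leq T\,\mathbb{E}\left( \sup_{r \leq s \leq t}\abs{D_r^{W^2}X_s^{\varepsilon}}^{2p} \right).
\]
Applying Proposition \ref{boundonEDW2Xs2p} to the last expectation gives a bound $C(\varepsilon^p+\eta^p)$, where the constant $C$ absorbs $T$. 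Plugging this back yields
\[
\mathbb{E}\left(\left(\int_{r}^{t}\abs{D_r^{W^2}Y_s^{\eta}}^{q}ds\right)^{\frac{2p}{q}}\right) \leq C\left(\eta^{\frac{p(2-q)}{q}}+\varepsilon^{p}+\eta^{p}\right),
\]
which is exactly the claimed estimate.

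Since both ingredients are already established, there is no genuine obstacle here; the only point requiring a (routine) check is that the constant $C$ remains finite and independent of $r,t\in[0,T]$, which follows because Proposition \ref{boundonEDW2Xs2p} is uniform in $r\leq s\leq t\leq T$ and the factor $t-r$ is bounded by $T$. I would simply note that the case $q=2$ recovers $C(1+\varepsilon^p+\eta^p)$-type behavior (the $\eta^{0}$ term), while $q=1$ produces the $\eta^{p}$ term in the first summand, consistent with the two uses of Proposition \ref{boundonintDY2p} made in the proof of Proposition \ref{boundonEDW2Xs2p}.
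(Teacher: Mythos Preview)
Your proposal is correct and matches the paper's own proof, which simply says the conclusion follows from applying Proposition \ref{boundonintDY2p} together with Proposition \ref{boundonEDW2Xs2p}. The extra step you spell out (bounding the time integral by $(t-r)$ times the supremum) is exactly the routine passage the paper leaves implicit.
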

\begin{proof}
The conclusion follows from applying Proposition \ref{boundonintDY2p}
together with Proposition \ref{boundonEDW2Xs2p}.
\end{proof}
\begin{prop}
\label{boundonDY2pintermsofepsandeta}
Let $p \geq 1$ be a natural number. Then, it holds that for some
constant $C >0$,
\begin{equation*}
\mathbb{E}\left(\abs{D_r^{W^2}Y_t^{\eta}}^{2p}\right) \leq C \left(
  \frac{1}{\eta^p}e^{-\frac{K}{\eta}(t-r)} +
  \varepsilon^p + \eta^p \right).
\end{equation*}
\end{prop}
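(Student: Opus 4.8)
The plan is to exploit the decomposition \eqref{decompositionofDW2Yt}, namely $D_r^{W^2}Y_t^{\eta}=Q^{\eta}_{r,1}(t)+Q^{\eta}_{r,2}(t)$, and to estimate the two pieces separately. By convexity of $x\mapsto x^{2p}$ it suffices to bound $\mathbb{E}(|Q^{\eta}_{r,1}(t)|^{2p})$ and $\mathbb{E}(|Q^{\eta}_{r,2}(t)|^{2p})$ and add them. The point of the decomposition is that the singular $1/\sqrt{\eta}$ prefactor of the fast diffusion is isolated in $Q^{\eta}_{r,1}$, which will produce the $\eta^{-p}e^{-K(t-r)/\eta}$ contribution, while $Q^{\eta}_{r,2}$ is the coupled part driven by $D_r^{W^2}X^{\varepsilon}$ and will produce only the $\varepsilon^{p}+\eta^{p}$ contribution.

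For $Q^{\eta}_{r,1}(t)=Z_{r,2}(t)\eta^{-1/2}\tau(X_r^{\varepsilon},Y_r^{\eta})$, as given in \eqref{defQ1}, I would use the bound on $\tau$ from Assumption \ref{A:Assumptiongeneral} together with Lemma \ref{lemmaonExpofZr2tothe2pq}, which yields $\mathbb{E}(Z_{r,2}(t)^{2p})\leq Ce^{-\frac{K}{\eta}(t-r)}$ (the same estimate already used in the proof of Proposition \ref{boundonintDY2p}). This immediately gives
\[
\mathbb{E}\left(\abs{Q^{\eta}_{r,1}(t)}^{2p}\right)\leq \frac{C}{\eta^{p}}\,\mathbb{E}\left(Z_{r,2}(t)^{2p}\right)\leq \frac{C}{\eta^{p}}\,e^{-\frac{K}{\eta}(t-r)}.
\]

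For $Q^{\eta}_{r,2}(t)$, I would work from the affine stochastic differential equation \eqref{Eq:SDE_Q2D2Y} and mimic the argument of Proposition \ref{boundonEDW1Yt2pb}: apply It\^o's formula to $Q^{\eta}_{r,2}(t)^{2p}$, differentiate the resulting identity with respect to $t$, bound the drift and diffusion cross terms by Young's inequality for products applied to $\abs{Q^{\eta}_{r,2}(s)}^{2p-1}\abs{D_r^{W^2}X_s^{\varepsilon}}$ and $\abs{Q^{\eta}_{r,2}(s)}^{2p-2}\abs{D_r^{W^2}X_s^{\varepsilon}}^{2}$, and invoke Assumption \ref{A:Assumptiongeneral} parts $(iii)$--$(iv)$ to obtain the differential inequality
\[
\frac{d}{dt}\mathbb{E}\left(\abs{Q^{\eta}_{r,2}(t)}^{2p}\right)\leq \frac{M}{\eta}\mathbb{E}\left(\abs{D_r^{W^2}X_t^{\varepsilon}}^{2p}\right)-\frac{K}{\eta}\mathbb{E}\left(\abs{Q^{\eta}_{r,2}(t)}^{2p}\right).
\]
Inserting the bound $\mathbb{E}(\abs{D_r^{W^2}X_t^{\varepsilon}}^{2p})\leq C(\varepsilon^{p}+\eta^{p})$ from Proposition \ref{boundonEDW2Xs2p} and solving this linear differential inequality with zero initial value at $t=r$ gives $\mathbb{E}(\abs{Q^{\eta}_{r,2}(t)}^{2p})\leq \frac{M}{K}C(\varepsilon^{p}+\eta^{p})\bigl(1-e^{-\frac{K}{\eta}(t-r)}\bigr)\leq C(\varepsilon^{p}+\eta^{p})$. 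Adding this to the estimate for $Q^{\eta}_{r,1}$ yields the claimed bound.

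I do not expect a serious obstacle: the proposition is essentially a recombination of bounds already established for the building blocks $Q^{\eta}_{r,1}$ and $Q^{\eta}_{r,2}$ in the proof of Proposition \ref{boundonintDY2p}, specialized to a pointwise-in-$t$ rather than time-integrated estimate. The only points requiring minor care are the justification of differentiating the It\^o identity in $t$ (handled exactly as in Proposition \ref{boundonEDW1Yt2pb}) and keeping track of the exponential factor $e^{-\frac{K}{\eta}(t-r)}$ through the solution of the differential inequality so that the singular $\eta^{-p}$ prefactor in the $Q^{\eta}_{r,1}$ term is accompanied by, and integrable against, this decaying exponential.
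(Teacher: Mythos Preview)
Your proposal is correct and follows essentially the same route as the paper: decompose $D_r^{W^2}Y_t^{\eta}=Q^{\eta}_{r,1}(t)+Q^{\eta}_{r,2}(t)$, bound $\mathbb{E}(|Q^{\eta}_{r,1}(t)|^{2p})$ via Lemma~\ref{lemmaonExpofZr2tothe2pq}, and bound $\mathbb{E}(|Q^{\eta}_{r,2}(t)|^{2p})$ by deriving the differential inequality from \eqref{Eq:SDE_Q2D2Y} (exactly as in \eqref{diffEDW1Yt}/\eqref{ineqforEQ2}) and plugging in Proposition~\ref{boundonEDW2Xs2p}. The paper's proof is the same argument, stated more tersely by pointing back to \eqref{ineqforEQ2} and the procedure of \eqref{diffEDW1Yt}.
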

\begin{proof}
  Recall from the proof of Proposition \ref{boundonintDY2p} that
  $D_r^{W^2}Y_t^{\eta}=Q^{\eta}_{r,1}(t)+Q^{\eta}_{r,2}(t)$, where
  $Q^{\eta}_{r,1}(t)$ and $Q^{\eta}_{r,2}(t)$ are defined in
  \eqref{defQ1} and \eqref{defQ2}, respectively. Hence,
\begin{equation*}
\mathbb{E}\left(\abs{D_r^{W^2}Y_t^{\eta}}^{2p}\right) \leq C \left(
  \mathbb{E}\left(\abs{Q^{\eta}_{r,1}(t)}^{2p}\right) + \mathbb{E}\left(\abs{Q^{\eta}_{r,2}(t)}^{2p}\right) \right).
\end{equation*}
As $Q^{\eta}_{r,1}(t)=
Z_{r,2}(t)\frac{1}{\sqrt{\eta}}\tau\left(X_r^{\varepsilon},Y_r^{\eta}
\right)$, Assumption \ref{A:Assumptiongeneral} and Lemma
\ref{lemmaonExpofZr2tothe2pq} imply
\begin{equation*}
\mathbb{E}\left(\abs{Q^{\eta}_{r,1}(t)}^{2p}\right) \leq  \frac{C}{\eta^p}e^{-\frac{K}{\eta}(t-r)}.
\end{equation*}
Applying the same procedure as in
\eqref{diffEDW1Yt} to the inequality \eqref{ineqforEQ2} yields
\begin{equation*}
\mathbb{E}\left(\abs{Q^{\eta}_{r,2}(t)}^{2p}\right) \leq C
(\varepsilon^p + \eta^p),
\end{equation*}
which concludes the proof.
\end{proof}

\section{Second order Malliavin derivatives}\label{S:SecondOrderMalliavinDer}
\noindent In this section, we derive the necessary bounds for the
second order Malliavin derivatives. We continue in this section to
work with Assumption \ref{A:Assumptiongeneral} as in the previous
section (for the same generality and independent interest reasons). We have, for $j_1,j_2 = 1,2$,
\begin{align}
  \label{SDEforDWj1Wj2Xt}
D_{r_1,r_2}^{W^{j_1},W^{j_2}}X_t^{\varepsilon} &= \sqrt{\varepsilon}\alpha_1
                                                 \left(X_{r_1}^{\varepsilon},X_{r_2}^{\varepsilon},Y_{r_1}^{\eta},Y_{r_2}^{\eta}
                                                 \right)\nonumber \\
  &\quad + \int_{r_1 \vee r_2}^t \left[ \beta_1[c]
    \left(X_s^{\varepsilon},Y_s^{\eta}
    \right)D_{r_1,r_2}^{W^{j_1},W^{j_2}}X_s^{\varepsilon} + b_1^{j_1,j_2}[c]\left(X_s^{\varepsilon},Y_s^{\eta}
    \right) \right]ds\nonumber \\
  &\quad + \sqrt{\varepsilon}\int_{r_1 \vee r_2}^t \left[ \beta_1[\sigma]
    \left(X_s^{\varepsilon},Y_s^{\eta}
    \right)D_{r_1,r_2}^{W^{j_1},W^{j_2}}X_s^{\varepsilon} + b_1^{j_1,j_2}[\sigma]\left(X_s^{\varepsilon},Y_s^{\eta}
    \right) \right]dW_s^1,
\end{align}
where we have defined
\begin{align*}
\alpha_1
\left(X_{r_1}^{\varepsilon},X_{r_2}^{\varepsilon},Y_{r_1}^{\eta},Y_{r_2}^{\eta}
                                                 \right) &=
  \mathds{1}_{\left\{ j_1 =1 \right\}} \left[
  \partial_1 \sigma \left( X_{r_1}^{\varepsilon},Y_{r_1}^{\eta}
  \right)D_{r_2}^{W^{j_2}}X_{r_1}^{\varepsilon} + \partial_2 \sigma \left( X_{r_1}^{\varepsilon},Y_{r_1}^{\eta}
  \right)D_{r_2}^{W^{j_2}}Y_{r_1}^{\eta} \right]\\
  &\quad + \mathds{1}_{\left\{ j_2 =1 \right\}} \left[
  \partial_1 \sigma \left( X_{r_2}^{\varepsilon},Y_{r_2}^{\eta}
  \right)D_{r_1}^{W^{j_1}}X_{r_2}^{\varepsilon} + \partial_2 \sigma \left( X_{r_2}^{\varepsilon},Y_{r_2}^{\eta}
  \right)D_{r_1}^{W^{j_1}}Y_{r_2}^{\eta} \right],
\end{align*}
\begin{align*}
\beta_1[c]
    \left(X_s^{\varepsilon},Y_s^{\eta}
    \right) = \partial_1 c \left( X_{s}^{\varepsilon},Y_{s}^{\eta}
  \right),
\end{align*}
\begin{align*}
b_1^{j_1,j_2}[c]\left(X_s^{\varepsilon},Y_s^{\eta}
    \right) &= \partial_1\partial_1c \left( X_{s}^{\varepsilon},Y_{s}^{\eta}
  \right)D_{r_1}^{W^{j_1}}X_{s}^{\varepsilon}D_{r_2}^{W^{j_2}}X_{s}^{\varepsilon}
  + \partial_1\partial_2c \left( X_{s}^{\varepsilon},Y_{s}^{\eta}
              \right)D_{r_1}^{W^{j_1}}X_{s}^{\varepsilon}D_{r_2}^{W^{j_2}}Y_{s}^{\eta}\\
  &\quad +  \partial_2\partial_1c \left( X_{s}^{\varepsilon},Y_{s}^{\eta}
  \right)D_{r_1}^{W^{j_1}}Y_{s}^{\eta}D_{r_2}^{W^{j_2}}X_{s}^{\varepsilon}
    +  \partial_2\partial_2c \left( X_{s}^{\varepsilon},Y_{s}^{\eta}
  \right)D_{r_1}^{W^{j_1}}Y_{s}^{\eta}D_{r_2}^{W^{j_2}}Y_{s}^{\eta}    \\
            &\quad + \partial_2 c \left( X_{s}^{\varepsilon},Y_{s}^{\eta}\right)D_{r_1,r_2}^{W^{j_1},W^{j_2}}Y_{s}^{\eta},
\end{align*}
with $\beta_1[\sigma]$, $b_1^{j_1,j_2}[\sigma]$ defined analogously. Similarly, we have, for $j_1,j_2 = 1,2$,
\begin{align}
  \label{equationsatisfiedbyDWWY}
D_{r_1,r_2}^{W^{j_1},W^{j_2}}Y_{t}^{\eta} &= \frac{1}{\sqrt{\eta}}\alpha_2
                                                 \left(X_{r_1}^{\varepsilon},X_{r_2}^{\varepsilon},Y_{r_1}^{\eta},Y_{r_2}^{\eta}
                                                 \right)\nonumber \\
  &\quad + \frac{1}{\eta}\int_{r_1 \vee r_2}^t \left[ \beta_2[f]
    \left(X_s^{\varepsilon},Y_s^{\eta}
    \right)D_{r_1,r_2}^{W^{j_1},W^{j_2}}Y_{s}^{\eta} + b_2^{j_1,j_2}[f]\left(X_s^{\varepsilon},Y_s^{\eta}
    \right) \right]ds\nonumber\\
  &\quad + \frac{1}{\sqrt{\eta}}\int_{r_1 \vee r_2}^t \left[ \beta_2[\tau]
    \left(X_s^{\varepsilon},Y_s^{\eta}
    \right)D_{r_1,r_2}^{W^{j_1},W^{j_2}}Y_{s}^{\eta} + b_2^{j_1,j_2}[\tau]\left(X_s^{\varepsilon},Y_s^{\eta}
    \right) \right]dW_s^2,
\end{align}
where
\begin{align*}
\alpha_2
                                                 \left(X_{r_1}^{\varepsilon},X_{r_2}^{\varepsilon},Y_{r_1}^{\eta},Y_{r_2}^{\eta}
                                                 \right) &=
  \mathds{1}_{\left\{ j_1 =2 \right\}} \left[
  \partial_1 \tau \left( X_{r_1}^{\varepsilon},Y_{r_1}^{\eta}
  \right)D_{r_2}^{W^{j_2}}X_{r_1}^{\varepsilon} + \partial_2 \tau \left( X_{r_1}^{\varepsilon},Y_{r_1}^{\eta}
  \right)D_{r_2}^{W^{j_2}}Y_{r_1}^{\eta} \right]\\
  &\quad + \mathds{1}_{\left\{ j_2 =2 \right\}} \left[
  \partial_1 \tau \left( X_{r_2}^{\varepsilon},Y_{r_2}^{\eta}
  \right)D_{r_1}^{W^{j_1}}X_{r_2}^{\varepsilon} + \partial_2 \tau \left( X_{r_2}^{\varepsilon},Y_{r_2}^{\eta}
  \right)D_{r_1}^{W^{j_1}}Y_{r_2}^{\eta} \right],
\end{align*}
\begin{align*}
\beta_2
    \left(X_s^{\varepsilon},Y_s^{\eta}
    \right) = \partial_2 f \left( X_{s}^{\varepsilon},Y_{s}^{\eta}
  \right),
\end{align*}
\begin{align*}
b_2^{j_1,j_2}[f]\left(X_s^{\varepsilon},Y_s^{\eta}
    \right) &= \Bigg[ \partial_1\partial_1f \left( X_{s}^{\varepsilon},Y_{s}^{\eta}
  \right)D_{r_1}^{W^{j_1}}X_{s}^{\varepsilon}D_{r_2}^{W^{j_2}}X_{s}^{\varepsilon}
  + \partial_1\partial_2f \left( X_{s}^{\varepsilon},Y_{s}^{\eta}
              \right)D_{r_1}^{W^{j_1}}X_{s}^{\varepsilon}D_{r_2}^{W^{j_2}}Y_{s}^{\eta}\\
  &\quad +  \partial_2\partial_1f \left( X_{s}^{\varepsilon},Y_{s}^{\eta}
  \right)D_{r_1}^{W^{j_1}}Y_{s}^{\eta}D_{r_2}^{W^{j_2}}X_{s}^{\varepsilon}
    +  \partial_2\partial_2f \left( X_{s}^{\varepsilon},Y_{s}^{\eta}
  \right)D_{r_1}^{W^{j_1}}Y_{s}^{\eta}D_{r_2}^{W^{j_2}}Y_{s}^{\eta}    \\
            &\quad + \partial_1 f \left( X_{s}^{\varepsilon},Y_{s}^{\eta}\right)D_{r_1,r_2}^{W^{j_1},W^{j_2}}X_{s}^{\varepsilon}\Bigg],
\end{align*}
with $\beta_2[\sigma]$ and $b_2^{j_1,j_2}[\sigma]$ defined analogously.

 \subsection{Bound for $\mathbb{E}\left( \abs{D^{W^{1},W^{1}}_{r_1,r_2} X_t^{\varepsilon}}^{2p}\right)$}

 \begin{prop}
  \label{boundonEDW1W1Xs2p}
Let $p\geq 1$ be a natural number. Then, it holds that for some
constant $C >0$,
\begin{equation*}
\mathbb{E}\left( \abs{D^{W^{1},W^{1}}_{r_1,r_2} X_t^{\varepsilon}}^{2p}
\right) \leq C \varepsilon^{2p}.
\end{equation*}
\end{prop}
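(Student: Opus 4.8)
The plan is to analyze the affine equation \eqref{SDEforDWj1Wj2Xt} in the case $j_1=j_2=1$ directly, using Gr\"onwall's lemma after extracting moments of all the lower-order objects that appear in the inhomogeneous terms. Set $F_t = D^{W^1,W^1}_{r_1,r_2}X_t^\varepsilon$. From \eqref{SDEforDWj1Wj2Xt}, $F_t$ solves an affine SDE whose initial datum is $\sqrt{\varepsilon}\,\alpha_1$, whose drift is $\beta_1[c]\,F_s + b_1^{1,1}[c]$, and whose diffusion coefficient is $\sqrt{\varepsilon}(\beta_1[\sigma]\,F_s + b_1^{1,1}[\sigma])$. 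Since $j_1=j_2=1$, the term $\alpha_1$ consists of $\partial_i\sigma$ evaluated at the appropriate points times first Malliavin derivatives $D^{W^1}_{r_2}X^\varepsilon_{r_1}$, $D^{W^1}_{r_2}Y^\eta_{r_1}$, etc.; by Propositions \ref{boundonEDW1Xs2p} and \ref{boundonEDW1Yt2pb} each of these has $2p$-th moment bounded by $C\varepsilon^p$, so $\mathbb{E}|\sqrt{\varepsilon}\,\alpha_1|^{2p}\leq C\varepsilon^{2p}$. The crucial point is that $b_1^{1,1}[c]$ and $b_1^{1,1}[\sigma]$ only involve products $D^{W^1}_{r_1}(\cdot)\,D^{W^1}_{r_2}(\cdot)$ of first derivatives (each contributing $\varepsilon^{1/2}$ in $L^{2p}$, hence $\varepsilon$ for the product) together with the lower-order second derivative $D^{W^1,W^1}_{r_1,r_2}Y^\eta_s$; the $\sqrt{\varepsilon}$ prefactor in front of the $dW^1$ integral and the $b_1^{1,1}[\sigma]$ term is what produces the $\varepsilon^{2p}$ (rather than $\varepsilon^p$) scaling overall.

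Concretely, I would first establish the auxiliary bound $\mathbb{E}\big(\int_{r_1\vee r_2}^t |D^{W^1,W^1}_{r_1,r_2}Y^\eta_s|^{2p}\,ds\big)\leq C\varepsilon^{2p}$ (or its $\sup$-in-$t$ analogue). This is proved exactly like Proposition \ref{propDW1YleqDW1X}: apply It\^o's formula to $(D^{W^1,W^1}_{r_1,r_2}Y^\eta_t)^{2p}$ using \eqref{equationsatisfiedbyDWWY} with $j_1=j_2=2$—wait, here $j_1=j_2=1$, so $\alpha_2=0$ and the inhomogeneity $b_2^{1,1}[f]$, $b_2^{1,1}[\tau]$ contains products of first derivatives ($L^{2p}$-bounded by $\varepsilon$, from Propositions \ref{boundonEDW1Xs2p}, \ref{boundonEDW1Yt2pb}) and the term $\partial_1 f \cdot D^{W^1,W^1}_{r_1,r_2}X^\varepsilon_s$. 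Young's inequality on the cross terms together with Assumption \ref{A:Assumptiongeneral}(iii)–(iv) produces, after the usual cancellation, $\mathbb{E}(|D^{W^1,W^1}_{r_1,r_2}Y^\eta_t|^{2p}) \leq \frac{M}{\eta}\mathbb{E}\int (\text{products of first derivatives})\,ds + \frac{M}{\eta}\mathbb{E}\int |D^{W^1,W^1}_{r_1,r_2}X^\varepsilon_s|^{2p}\,ds - \frac{K}{\eta}\mathbb{E}\int|D^{W^1,W^1}_{r_1,r_2}Y^\eta_s|^{2p}\,ds$, hence $\mathbb{E}\int|D^{W^1,W^1}_{r_1,r_2}Y^\eta_s|^{2p}\,ds \leq \frac{M}{K}\big(C\varepsilon^{2p} + \mathbb{E}\int|D^{W^1,W^1}_{r_1,r_2}X^\varepsilon_s|^{2p}\,ds\big)$.

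Next, returning to \eqref{SDEforDWj1Wj2Xt} with $j_1=j_2=1$, I raise to the power $2p$, take expectations, and use the Burkholder–Davis–Gundy inequality on the $dW^1$ integral. The drift term $\int \beta_1[c] F_s\,ds$ and the stochastic-integral term $\sqrt{\varepsilon}\int\beta_1[\sigma]F_s\,dW^1_s$ feed back $\int_{r_1\vee r_2}^t \mathbb{E}|F_s|^{2p}\,ds$ with a bounded constant (boundedness of $\partial_1 c$, $\partial_1\sigma$ from Assumption \ref{A:Assumptiongeneral}(ii)); the inhomogeneous contributions are $\mathbb{E}|\sqrt{\varepsilon}\alpha_1|^{2p}\leq C\varepsilon^{2p}$, $\mathbb{E}\int|b_1^{1,1}[c]|^{2p}\,ds \leq C\varepsilon^{2p}$ (the product-of-first-derivatives pieces give $\varepsilon^{2p}$ and the $\partial_2 c\cdot D^{W^1,W^1}_{r_1,r_2}Y^\eta$ piece is absorbed using the auxiliary bound just proved), and $\varepsilon^p\,\mathbb{E}\int|b_1^{1,1}[\sigma]|^{2p}\,ds \leq C\varepsilon^p\cdot\varepsilon^{2p}\leq C\varepsilon^{2p}$. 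Substituting the auxiliary bound for $D^{W^1,W^1}Y^\eta$ (so that the $\int\mathbb{E}|F_s|^{2p}$ coming from there is also controlled, after moving it to the left or absorbing it via Gr\"onwall), we arrive at $\mathbb{E}|F_t|^{2p} \leq C\varepsilon^{2p} + C\int_{r_1\vee r_2}^t \mathbb{E}|F_s|^{2p}\,ds$, and Gr\"onwall's lemma on $[r_1\vee r_2,T]$ closes the estimate.

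The main obstacle is the bookkeeping: one must verify that \emph{every} term produced by $b_1^{1,1}[c]$, $b_1^{1,1}[\sigma]$ and $\alpha_1$ genuinely scales like $\varepsilon^{2p}$ and not merely $\varepsilon^p$—this hinges on the fact that in the $j_1=j_2=1$ case the only ``$\varepsilon^{1/2}$-deficient'' objects are $D^{W^1}_{r}X^\varepsilon$ and $D^{W^1}_{r}Y^\eta$, each of which is already $O(\varepsilon^{1/2})$ in $L^{2p}$ by Section \ref{S:FirstOrderMalliavinDer}, and they always occur either squared (two factors, giving $\varepsilon$) or accompanied by an explicit $\sqrt{\varepsilon}$ from the slow dynamics. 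A secondary subtlety is making sure the $\frac1\eta$ blow-up in the $Y$-equation does not leak into the final bound; this is handled precisely because the auxiliary integral bound for $D^{W^1,W^1}Y^\eta$ is $\eta$-free (the $\frac1\eta$ is killed by the dissipativity constant $K/\eta$ exactly as in Proposition \ref{propDW1YleqDW1X}).
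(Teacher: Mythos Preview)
Your proposal is correct and matches the paper's approach: the paper isolates your auxiliary bound as Proposition~\ref{proponEintDW1W1YslessEintDW1W1Xs}, then substitutes it into the $2p$-th moment estimate for \eqref{SDEforDWj1Wj2Xt} and closes with Gr\"onwall exactly as you describe. The one place the paper is slightly more careful is in proving that auxiliary bound: rather than applying Young's inequality directly to the products-of-first-derivatives cross terms (which would add positive contributions to the $|D^{W^1,W^1}Y|^{2p}$ coefficient beyond what Assumption~\ref{A:Assumptiongeneral}(iv) is calibrated for), it bounds those terms via H\"older as $C\varepsilon\,A_t^{(2p-1)/(2p)}$ and $C\varepsilon^{2}A_t^{(p-1)/p}$ with $A_t=\mathbb{E}\int_{r_1\vee r_2}^{t}|D^{W^1,W^1}_{r_1,r_2}Y^\eta_s|^{2p}ds$, and then solves the resulting algebraic inequality $A_t\leq C\varepsilon A_t^{(2p-1)/(2p)}+C\varepsilon^{2}A_t^{(p-1)/p}+CB_t$ by Young with tuned exponents; your direct-Young route also works provided you use the weighted form $ab\leq \delta a^{q}+C_{\delta}b^{q'}$ with $\delta$ small enough to preserve the $-K/\eta$ dissipation.
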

\begin{proof}
Recall that $D^{W^{1},W^{1}}_{r_1,r_2} X_t^{\varepsilon}$ satisfies
equation \eqref{SDEforDWj1Wj2Xt}. Raising \eqref{SDEforDWj1Wj2Xt} to
the power $2p$, taking expectation, using Doob's maximal inequality
for martingales along with Assumption \ref{A:Assumptiongeneral} as
well as Propositions \ref{boundonEDW1Xs2p} and
\ref{boundonEDW1Yt2p} gives us that, for some constant $C>0$,
\begin{align*}
\mathbb{E}\left(\abs{D^{W^{1},W^{1}}_{r_1,r_2} X_t^{\varepsilon}}^{2p}\right)&\leq C \varepsilon^{p}\left(\mathbb{E}\left(\abs{D_{r_1\wedge r_2}^{W^{1}}X_{r_1\vee r_2}^{\varepsilon}}^{2p}\right) + \mathbb{E}\left(\abs{D_{r_1\wedge r_2}^{W^{1}}Y_{r_1\vee r_2}^{\eta}}^{2p}\right)\right)\\
&\quad
                                                                                                                             +C\bigg(\varepsilon^{2p}+\varepsilon^{3p}+(1+\varepsilon^{p})\bigg(\mathbb{E}\left(\int_{r_1\vee r_2}^{t}  \abs{D^{W^{1},W^{1}}_{r_1,r_2} Y_s^{\eta}}^{2p}ds\right)\\
  &\qquad\qquad\qquad\qquad\qquad\qquad\qquad +\mathbb{E}\left( \int_{r_1\vee r_2}^{t} \abs{D^{W^{1},W^{1}}_{r_1,r_2} X_s^{\varepsilon}}^{2p}ds\right)\bigg)
\bigg).
\end{align*}
Using Propositions \ref{boundonEDW1Xs2p}, \ref{boundonEDW1Yt2p}, \ref{boundonEDW1Yt2pb} and
\ref{proponEintDW1W1YslessEintDW1W1Xs} then yields, for $\varepsilon$ small enough,
\begin{align*}
\mathbb{E}\left(\abs{D^{W^{1},W^{1}}_{r_1,r_2} X_t^{\varepsilon}}^{2p}\right)&\leq C \left[\varepsilon^{2p}+\mathbb{E}\left( \int_{r_1\vee r_2}^{t} \abs{D^{W^{1},W^{1}}_{r_1,r_2} X_s^{\varepsilon}}^{2p}ds\right)\right].
\end{align*}
Applying Gr\"onwall's lemma finally shows that, for some finite constant $C>0$,
\begin{align*}
\mathbb{E}\left(\abs{D^{W^{1},W^{1}}_{r_1,r_2} X_t^{\varepsilon}}^{2p}\right)&\leq C \varepsilon^{2p},
\end{align*}
which is the desired conclusion.
\end{proof}

 \begin{prop}
   \label{proponEintDW1W1YslessEintDW1W1Xs}
Let $0 \leq r_1,r_2 \leq t$ and $p \geq 1$. Then, it holds that for some
constant $C >0$,
\begin{align*}
 \mathbb{E} \left( \int_{r_1\vee r_2}^{t}  \abs{D^{W^{1},W^{1}}_{r_1,r_2} Y_s^{\eta}}^{2p}ds \right)
&\leq C \left[\varepsilon^{2p}+\mathbb{E} \left( \int_{r_1\vee r_2}^{t} \abs{D^{W^{1},W^{1}}_{r_1,r_2} X_s^{\varepsilon}}^{2p}ds \right)\right]
\end{align*}
\end{prop}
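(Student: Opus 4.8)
The plan is to exploit the affine structure of equation \eqref{equationsatisfiedbyDWWY} satisfied by $D_{r_1,r_2}^{W^1,W^1}Y_t^{\eta}$, following the It\^o--Young scheme used for $Q^{\eta}_{r,2}$ in the proof of Proposition \ref{boundonintDY2p}. First I would specialize \eqref{equationsatisfiedbyDWWY} to $j_1=j_2=1$: both indicators in $\alpha_2$ vanish, so the initial term is absent and $D_{r_1,r_2}^{W^1,W^1}Y_t^{\eta}$ solves an affine stochastic differential equation whose homogeneous coefficients are $\partial_2 f$ (in the drift, scaled by $1/\eta$) and $\partial_2\tau$ (in the diffusion, scaled by $1/\sqrt{\eta}$) --- exactly those appearing in \eqref{Eq:SDE_Q2D2Y} --- forced by the inhomogeneous terms $b_2^{1,1}[f]$ and $b_2^{1,1}[\tau]$, neither of which involves $D_{r_1,r_2}^{W^1,W^1}Y$. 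Note that $b_2^{1,1}[f]$ decomposes as $\partial_1 f \cdot D_{r_1,r_2}^{W^1,W^1}X^{\varepsilon} + \tilde b_f$, where $\tilde b_f$ is a linear combination (with bounded coefficients, by Assumption \ref{A:Assumptiongeneral}(ii)--(iii)) of the four products $D_{r_1}^{W^1}X^{\varepsilon}D_{r_2}^{W^1}X^{\varepsilon}$, $D_{r_1}^{W^1}X^{\varepsilon}D_{r_2}^{W^1}Y^{\eta}$, $D_{r_1}^{W^1}Y^{\eta}D_{r_2}^{W^1}X^{\varepsilon}$, $D_{r_1}^{W^1}Y^{\eta}D_{r_2}^{W^1}Y^{\eta}$ evaluated at $(X_s^{\varepsilon},Y_s^{\eta})$, and similarly $b_2^{1,1}[\tau]=\partial_1\tau\cdot D_{r_1,r_2}^{W^1,W^1}X^{\varepsilon}+\tilde b_\tau$.

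Since $2p$ is even, I would then apply It\^o's formula to $\bigl(D_{r_1,r_2}^{W^1,W^1}Y_t^{\eta}\bigr)^{2p}$; the moment bounds recalled in the Remark following \eqref{secondtermtocontrol} (via \cite[Corollary 3.5]{IRS19}) guarantee the stochastic integral is a true martingale, so its expectation vanishes. Expanding the square coming from the quadratic variation and applying Young's inequality for products exactly as in the passage between \eqref{Eq:SDE_Q2D2Y} and \eqref{ineqforEQ2}: to the cross terms involving $\partial_1 f\cdot D_{r_1,r_2}^{W^1,W^1}X^{\varepsilon}$ and $\partial_1\tau\cdot D_{r_1,r_2}^{W^1,W^1}X^{\varepsilon}$ one uses the same derivative-weighted split used there, so that these produce exactly the $(2p-1)\abs{\partial_1 f}$ and $(2p-1)(2p-2)\abs{\partial_1\tau}^2$ parts of the combination in Assumption \ref{A:Assumptiongeneral}(iv), while the cross terms involving $\tilde b_f$ and $\tilde b_\tau$ are split with a free parameter $\delta>0$. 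Collecting terms, the coefficient multiplying $\bigl|D_{r_1,r_2}^{W^1,W^1}Y_s^{\eta}\bigr|^{2p}$ is bounded by $\frac{1}{\eta}$ times the combination in Assumption \ref{A:Assumptiongeneral}(iv) plus $\frac{C\delta}{\eta}$; since Assumption \ref{A:Assumptiongeneral}(iv) gives \emph{strict} negativity $\leq -K$, choosing $\delta$ small enough makes this coefficient at most $-\frac{K}{2\eta}$, while the remaining terms are collected with a prefactor of order $\frac{1}{\eta}$ multiplying $\abs{\tilde b_f}^{2p}+\abs{\tilde b_\tau}^{2p}+\bigl|D_{r_1,r_2}^{W^1,W^1}X_s^{\varepsilon}\bigr|^{2p}$. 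Using $\mathbb{E}\bigl(\bigl(D_{r_1,r_2}^{W^1,W^1}Y_t^{\eta}\bigr)^{2p}\bigr)\geq 0$ and rearranging, just as \eqref{Eq:Bound_D2Q} is obtained from \eqref{ineqforEQ2}, I would arrive at
\[
\mathbb{E}\left(\int_{r_1\vee r_2}^{t}\bigl|D_{r_1,r_2}^{W^1,W^1}Y_s^{\eta}\bigr|^{2p}ds\right)\leq C\,\mathbb{E}\left(\int_{r_1\vee r_2}^{t}\Bigl(\abs{\tilde b_f(X_s^{\varepsilon},Y_s^{\eta})}^{2p}+\abs{\tilde b_\tau(X_s^{\varepsilon},Y_s^{\eta})}^{2p}+\bigl|D_{r_1,r_2}^{W^1,W^1}X_s^{\varepsilon}\bigr|^{2p}\Bigr)ds\right).
\]

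It then remains to estimate the right-hand side. Each of $\abs{\tilde b_f}^{2p}$ and $\abs{\tilde b_\tau}^{2p}$ is bounded by a constant times a sum of the four products $\bigl|D_{r_1}^{W^1}X_s^{\varepsilon}\bigr|^{2p}\bigl|D_{r_2}^{W^1}X_s^{\varepsilon}\bigr|^{2p}$, $\bigl|D_{r_1}^{W^1}X_s^{\varepsilon}\bigr|^{2p}\bigl|D_{r_2}^{W^1}Y_s^{\eta}\bigr|^{2p}$, $\bigl|D_{r_1}^{W^1}Y_s^{\eta}\bigr|^{2p}\bigl|D_{r_2}^{W^1}X_s^{\varepsilon}\bigr|^{2p}$ and $\bigl|D_{r_1}^{W^1}Y_s^{\eta}\bigr|^{2p}\bigl|D_{r_2}^{W^1}Y_s^{\eta}\bigr|^{2p}$. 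Applying the Cauchy--Schwarz inequality in $\Omega$ to each product and then Propositions \ref{boundonEDW1Xs2p} and \ref{boundonEDW1Yt2pb} with $p$ replaced by $2p$ (both being uniform in the base point) gives $\mathbb{E}(\cdot)\leq C\varepsilon^{2p}$ uniformly in $s$, so integration over the bounded interval contributes $C\varepsilon^{2p}$; the remaining term is kept as is and contributes exactly $\mathbb{E}\bigl(\int_{r_1\vee r_2}^{t}\bigl|D_{r_1,r_2}^{W^1,W^1}X_s^{\varepsilon}\bigr|^{2p}ds\bigr)$. Combining these yields the claimed estimate.

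I expect the only delicate point to be the Young-inequality bookkeeping in the second step: one must arrange the splits so that the surviving coefficient of $\bigl|D_{r_1,r_2}^{W^1,W^1}Y_s^{\eta}\bigr|^{2p}$ is controlled by the strictly negative combination of Assumption \ref{A:Assumptiongeneral}(iv), up to the arbitrarily small extra term $C\delta/\eta$ coming from the product forcings $\tilde b_f,\tilde b_\tau$. This, however, is a routine adaptation of the computation already carried out for $Q^{\eta}_{r,2}$ in the proof of Proposition \ref{boundonintDY2p}, so no essentially new obstacle is anticipated.
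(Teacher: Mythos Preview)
Your proposal is correct and in fact takes a cleaner route than the paper. Both arguments start identically: apply It\^o's formula to $\bigl(D_{r_1,r_2}^{W^1,W^1}Y_t^{\eta}\bigr)^{2p}$, isolate the $\partial_1 f\cdot D_{r_1,r_2}^{W^1,W^1}X^{\varepsilon}$ and $\partial_1\tau\cdot D_{r_1,r_2}^{W^1,W^1}X^{\varepsilon}$ pieces of the forcing, and use the derivative-weighted Young split so that the homogeneous coefficient becomes exactly the combination in Assumption~\ref{A:Assumptiongeneral}(iv). The divergence is in how the remaining product forcings $\tilde b_f,\tilde b_\tau$ are handled. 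The paper keeps the mixed terms $\abs{\tilde b_f}\,\abs{D^{W^1,W^1}Y}^{2p-1}$ and $\abs{\tilde b_\tau}^2\,\abs{D^{W^1,W^1}Y}^{2p-2}$ intact, applies H\"older in $(s,\omega)$ together with Propositions~\ref{boundonEDW1Xs2p} and~\ref{boundonEDW1Yt2p} to bound them by $C\varepsilon\,A_t^{(2p-1)/2p}$ and $C\varepsilon^{2}\,A_t^{(p-1)/p}$ with $A_t=\mathbb{E}\int_{r_1\vee r_2}^t\abs{D^{W^1,W^1}Y_s^{\eta}}^{2p}\,ds$, and then closes the resulting algebraic inequality $A_t\le C\bigl(\varepsilon A_t^{(2p-1)/2p}+\varepsilon^2 A_t^{(p-1)/p}+B_t\bigr)$ by a second layer of Young's inequality with carefully chosen H\"older conjugates and an auxiliary constant $D$ satisfying $\frac{2p-1}{2p}+\frac{p-1}{p}D^{p/(p-1)}<1$. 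Your approach instead applies Young with a small parameter $\delta$ directly at the It\^o level, so that the $\tilde b$ contributions to the coefficient of $\abs{D^{W^1,W^1}Y}^{2p}$ are of size $C\delta/\eta$ and can be absorbed into the strict negativity $-K/\eta$; the price $\delta^{-(2p-1)}$ is paid only on $\abs{\tilde b}^{2p}$, which you then bound by $C\varepsilon^{2p}$ via Cauchy--Schwarz and Propositions~\ref{boundonEDW1Xs2p} and~\ref{boundonEDW1Yt2pb} at exponent $2p$. This bypasses the second-layer algebraic step entirely and is arguably more transparent; the paper's route, on the other hand, showcases a self-contained closure trick for inequalities of the form $A\le C\sum_j a_j A^{\theta_j}+B$ that may be of independent interest.
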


\begin{proof}
As $2p$ is even, It\^{o}'s formula applied to
$\abs{D^{W^{1},W^{1}}_{r_1,r_2} Y_t^{\eta}}^{2p}$ and taking
expectation yields
\begin{align*}
\mathbb{E}\left(\abs{D^{W^{1},W^{1}}_{r_1,r_2} Y_t^{\eta}}^{2p}\right)
  &= \frac{1}{\eta}\mathbb{E}\bigg(\int_{r_1 \vee r_2}^t\bigg[ 2p \partial_2
    f\left( X_{s}^{\varepsilon},Y_{s}^{\eta}\right)
    \abs{D^{W^{1},W^{1}}_{r_1,r_2} Y_s^{\eta}}^{2p} \\
  &\qquad\qquad\qquad\qquad\qquad\qquad +
    2p b_2^{1,1}[f]\left( X_{s}^{\varepsilon},Y_{s}^{\eta}\right)
    \left(D^{W^{1},W^{1}}_{r_1,r_2} Y_s^{\eta}  \right)^{2p-1}\bigg]ds
\bigg)\\
  &\quad +\frac{1}{\eta}\mathbb{E}\bigg(\int_{r_1 \vee r_2}^t
    p(2p-1)\left(D^{W^{1},W^{1}}_{r_1,r_2}
    Y_s^{\eta}\right)^{2p-2}\\
  &\qquad\qquad\qquad\qquad\qquad\qquad +\bigg[ \partial_2 \tau\left(
    X_{s}^{\varepsilon},Y_{s}^{\eta}\right)D^{W^{1},W^{1}}_{r_1,r_2}
    Y_s^{\eta} + b_2^{1,1}[\tau] \bigg]^2ds \bigg)\\
  &\leq \frac{1}{\eta}\mathbb{E}\bigg(\int_{r_1 \vee r_2}^t\Big[ 2p \partial_2
    f\left( X_{s}^{\varepsilon},Y_{s}^{\eta}\right) + 2p(2p-1)\abs{\partial_2 \tau\left(
    X_{s}^{\varepsilon},Y_{s}^{\eta}\right)}^2\Big]\\
  & \qquad\qquad\qquad\qquad\qquad\qquad\qquad\qquad\qquad\qquad\qquad \abs{D^{W^{1},W^{1}}_{r_1,r_2} Y_s^{\eta}}^{2p}ds\bigg) \\
  &\quad + \frac{1}{\eta}\mathbb{E}\bigg(\int_{r_1 \vee r_2}^t
    2p \abs{b_2^{1,1}[f]\left( X_{s}^{\varepsilon},Y_{s}^{\eta}\right)}
    \abs{D^{W^{1},W^{1}}_{r_1,r_2} Y_s^{\eta}}^{2p-1}ds
\bigg)\\
  &\quad +\frac{1}{\eta}\mathbb{E}\bigg(\int_{r_1 \vee r_2}^t
    2p(2p-1)\abs{b_2^{1,1}[\tau]\left( X_{s}^{\varepsilon},Y_{s}^{\eta}\right)}^2
    \abs{D^{W^{1},W^{1}}_{r_1,r_2} Y_s^{\eta}}^{2p-2}ds
\bigg).
\end{align*}
Note that, using Young's inequality for products, we can write
\begin{align*}
\abs{D_{r_1,r_2}^{W^1,W^1}X_{s}^{\varepsilon}}\abs{D^{W^{1},W^{1}}_{r_1,r_2}
  Y_s^{\eta}}^{2p-1} \leq
  \frac{1}{2p}\abs{D_{r_1,r_2}^{W^1,W^1}X_{s}^{\varepsilon}}^{2p} + \frac{2p-1}{2p}\abs{D^{W^{1},W^{1}}_{r_1,r_2}
  Y_s^{\eta}}^{2p}
\end{align*}
and
\begin{align*}
\abs{D_{r_1,r_2}^{W^1,W^1}X_{s}^{\varepsilon}}^2\abs{D^{W^{1},W^{1}}_{r_1,r_2}
  Y_s^{\eta}}^{2p-2} \leq
  \frac{1}{p}\abs{D_{r_1,r_2}^{W^1,W^1}X_{s}^{\varepsilon}}^{2p} + \frac{2p-2}{2p}\abs{D^{W^{1},W^{1}}_{r_1,r_2}
  Y_s^{\eta}}^{2p}.
\end{align*}
As the terms $b_2^{1,1}[f]\left( X_{s}^{\varepsilon},Y_{s}^{\eta}\right)$
and $b_2^{1,1}[\tau]\left( X_{s}^{\varepsilon},Y_{s}^{\eta}\right)$
contain the terms $\partial_1 f \left(
  X_{s}^{\varepsilon},Y_{s}^{\eta}\right)D_{r_1,r_2}^{W^1,W^1}X_{s}^{\varepsilon}$
and $\partial_1 \tau \left(
  X_{s}^{\varepsilon},Y_{s}^{\eta}\right)D_{r_1,r_2}^{W^1,W^1}X_{s}^{\varepsilon}$,
respectively, we get, using the previous bounds,
\begin{align*}
\mathbb{E}\left(\abs{D^{W^{1},W^{1}}_{r_1,r_2} Y_t^{\eta}}^{2p}\right)
  &\leq \frac{1}{\eta}\mathbb{E}\bigg(\int_{r_1 \vee r_2}^t\Big[ 2p \partial_2
    f\left( X_{s}^{\varepsilon},Y_{s}^{\eta}\right) + 2p(2p-1)\abs{\partial_2 \tau\left(
    X_{s}^{\varepsilon},Y_{s}^{\eta}\right)}^2 \\
  & \quad + (2p-1)\abs{\partial_1 f\left(
    X_{s}^{\varepsilon},Y_{s}^{\eta}\right)} + 2(2p-1)(2p-2)\abs{\partial_1 \tau\left(
    X_{s}^{\varepsilon},Y_{s}^{\eta}\right)}^2\Big] \\
  & \qquad\qquad\qquad\qquad\qquad\qquad\qquad\qquad\qquad\qquad\qquad
    \abs{D^{W^{1},W^{1}}_{r_1,r_2} Y_s^{\eta}}^{2p}ds\bigg) \\
  &\quad + \frac{C}{\eta}\mathbb{E}\bigg(\int_{r_1 \vee r_2}^t
    \abs{D_{r_1}^{W^{j_1}}X_{s}^{\varepsilon}D_{r_2}^{W^{j_2}}X_{s}^{\varepsilon}}
    \abs{D^{W^{1},W^{1}}_{r_1,r_2} Y_s^{\eta}}^{2p-1}ds
    \bigg)\\
  &\quad + \frac{C}{\eta}\mathbb{E}\bigg(\int_{r_1 \vee r_2}^t
    \abs{
    D_{r_1}^{W^{j_1}}X_{s}^{\varepsilon}D_{r_2}^{W^{j_2}}Y_{s}^{\eta}}
    \abs{D^{W^{1},W^{1}}_{r_1,r_2} Y_s^{\eta}}^{2p-1}ds
    \bigg)\\
  &\quad + \frac{C}{\eta}\mathbb{E}\bigg(\int_{r_1 \vee r_2}^t
    \abs{D_{r_1}^{W^{j_1}}Y_{s}^{\eta}D_{r_2}^{W^{j_2}}X_{s}^{\varepsilon}}
    \abs{D^{W^{1},W^{1}}_{r_1,r_2} Y_s^{\eta}}^{2p-1}ds
    \bigg)\\
    &\quad + \frac{C}{\eta}\mathbb{E}\bigg(\int_{r_1 \vee r_2}^t
    \abs{D_{r_1}^{W^{j_1}}Y_{s}^{\eta}D_{r_2}^{W^{j_2}}Y_{s}^{\eta}}
    \abs{D^{W^{1},W^{1}}_{r_1,r_2} Y_s^{\eta}}^{2p-1}ds
      \bigg)\\
  &\quad + \frac{C}{\eta}\mathbb{E}\bigg(\int_{r_1 \vee r_2}^t
    \abs{D_{r_1}^{W^{j_1}}X_{s}^{\varepsilon}D_{r_2}^{W^{j_2}}X_{s}^{\varepsilon}}^2
    \abs{D^{W^{1},W^{1}}_{r_1,r_2} Y_s^{\eta}}^{2p-2}ds
    \bigg)\\
  &\quad + \frac{C}{\eta}\mathbb{E}\bigg(\int_{r_1 \vee r_2}^t
    \abs{
    D_{r_1}^{W^{j_1}}X_{s}^{\varepsilon}D_{r_2}^{W^{j_2}}Y_{s}^{\eta}}^2
    \abs{D^{W^{1},W^{1}}_{r_1,r_2} Y_s^{\eta}}^{2p-2}ds
    \bigg)\\
  &\quad + \frac{C}{\eta}\mathbb{E}\bigg(\int_{r_1 \vee r_2}^t
    \abs{D_{r_1}^{W^{j_1}}Y_{s}^{\eta}D_{r_2}^{W^{j_2}}X_{s}^{\varepsilon}}^2
    \abs{D^{W^{1},W^{1}}_{r_1,r_2} Y_s^{\eta}}^{2p-2}ds
    \bigg)\\
    &\quad + \frac{C}{\eta}\mathbb{E}\bigg(\int_{r_1 \vee r_2}^t
    \abs{D_{r_1}^{W^{j_1}}Y_{s}^{\eta}D_{r_2}^{W^{j_2}}Y_{s}^{\eta}}^2
    \abs{D^{W^{1},W^{1}}_{r_1,r_2} Y_s^{\eta}}^{2p-2}ds
      \bigg)\\
    &\quad + \frac{C}{\eta}\mathbb{E}\bigg(\int_{r_1 \vee r_2}^t
    \abs{D^{W^{1},W^{1}}_{r_1,r_2} X_s^{\varepsilon}}^{2p}ds
\bigg).
\end{align*}
Using Assumption \ref{A:Assumptiongeneral} in the first summand
finally gets us
\begin{align}
  \label{upperboundonD2W1W1YtwithDXDXD2Y}
\mathbb{E}\left(\abs{D^{W^{1},W^{1}}_{r_1,r_2} Y_t^{\eta}}^{2p}\right)
  &\leq -\frac{K}{\eta}\mathbb{E}\bigg(\int_{r_1 \vee r_2}^t\abs{D^{W^{1},W^{1}}_{r_1,r_2} Y_s^{\eta}}^{2p}ds\bigg) \nonumber\\
  &\quad + \frac{C}{\eta}\mathbb{E}\bigg(\int_{r_1 \vee r_2}^t
    \abs{D_{r_1}^{W^{j_1}}X_{s}^{\varepsilon}D_{r_2}^{W^{j_2}}X_{s}^{\varepsilon}}
    \abs{D^{W^{1},W^{1}}_{r_1,r_2} Y_s^{\eta}}^{2p-1}ds
    \bigg)\nonumber\\
  &\quad + \frac{C}{\eta}\mathbb{E}\bigg(\int_{r_1 \vee r_2}^t
    \abs{
    D_{r_1}^{W^{j_1}}X_{s}^{\varepsilon}D_{r_2}^{W^{j_2}}Y_{s}^{\eta}}
    \abs{D^{W^{1},W^{1}}_{r_1,r_2} Y_s^{\eta}}^{2p-1}ds
    \bigg)\nonumber\\
  &\quad + \frac{C}{\eta}\mathbb{E}\bigg(\int_{r_1 \vee r_2}^t
    \abs{D_{r_1}^{W^{j_1}}Y_{s}^{\eta}D_{r_2}^{W^{j_2}}X_{s}^{\varepsilon}}
    \abs{D^{W^{1},W^{1}}_{r_1,r_2} Y_s^{\eta}}^{2p-1}ds
    \bigg)\nonumber\\
    &\quad + \frac{C}{\eta}\mathbb{E}\bigg(\int_{r_1 \vee r_2}^t
    \abs{D_{r_1}^{W^{j_1}}Y_{s}^{\eta}D_{r_2}^{W^{j_2}}Y_{s}^{\eta}}
    \abs{D^{W^{1},W^{1}}_{r_1,r_2} Y_s^{\eta}}^{2p-1}ds
      \bigg)\nonumber\\
  &\quad + \frac{C}{\eta}\mathbb{E}\bigg(\int_{r_1 \vee r_2}^t
    \abs{D_{r_1}^{W^{j_1}}X_{s}^{\varepsilon}D_{r_2}^{W^{j_2}}X_{s}^{\varepsilon}}^2
    \abs{D^{W^{1},W^{1}}_{r_1,r_2} Y_s^{\eta}}^{2p-2}ds
    \bigg)\nonumber\\
  &\quad + \frac{C}{\eta}\mathbb{E}\bigg(\int_{r_1 \vee r_2}^t
    \abs{
    D_{r_1}^{W^{j_1}}X_{s}^{\varepsilon}D_{r_2}^{W^{j_2}}Y_{s}^{\eta}}^2
    \abs{D^{W^{1},W^{1}}_{r_1,r_2} Y_s^{\eta}}^{2p-2}ds
    \bigg)\nonumber\\
  &\quad + \frac{C}{\eta}\mathbb{E}\bigg(\int_{r_1 \vee r_2}^t
    \abs{D_{r_1}^{W^{j_1}}Y_{s}^{\eta}D_{r_2}^{W^{j_2}}X_{s}^{\varepsilon}}^2
    \abs{D^{W^{1},W^{1}}_{r_1,r_2} Y_s^{\eta}}^{2p-2}ds
    \bigg)\nonumber\\
    &\quad + \frac{C}{\eta}\mathbb{E}\bigg(\int_{r_1 \vee r_2}^t
    \abs{D_{r_1}^{W^{j_1}}Y_{s}^{\eta}D_{r_2}^{W^{j_2}}Y_{s}^{\eta}}^2
    \abs{D^{W^{1},W^{1}}_{r_1,r_2} Y_s^{\eta}}^{2p-2}ds
      \bigg)\nonumber\\
    &\quad + \frac{C}{\eta}\mathbb{E}\bigg(\int_{r_1 \vee r_2}^t
    \abs{D^{W^{1},W^{1}}_{r_1,r_2} X_s^{\varepsilon}}^{2p}ds
\bigg).
\end{align}
We now need to estimate each term in the expression above, aside from
the first and last one. Starting with the first of the remaining
terms, we have
 \begin{align*}
 &\mathbb{E}\left(\int_{r_1\vee r_2}^{t} \left| D^{W^{1}}_{r_1} X_s^{\varepsilon} D^{W^{1}}_{r_2} X_s^{\varepsilon} \right| \abs{D^{W^{1},W^{1}}_{r_1,r_2} Y_s^{\eta}}^{2p-1}ds\right)\\
 &\leq
 \mathbb{E}\left(\int_{r_1\vee r_2}^{t}  \left(D^{W^{1}}_{r_1} X_s^{\varepsilon} D^{W^{1}}_{r_2} X_s^{\varepsilon}\right)^{2p}ds\right)^{1/2p} \mathbb{E}\left(\int_{r_1\vee r_2}^{t}\abs{D^{W^{1},W^{1}}_{r_1,r_2} Y_s^{\eta}}^{2p}ds\right)^{\frac{2p-1}{2p}}\\
 &\leq
\mathbb{E}\left(\int_{r_1\vee r_2}^{t}  \left(D^{W^{1}}_{r_1} X_s^{\varepsilon}\right)^{4p}ds\right)^{1/4p} \mathbb{E}\left(\int_{r_1\vee r_2}^{t}\left(  D^{W^{1}}_{r_2} X_s^{\varepsilon}\right)^{4p}ds \right)^{1/4p} \mathbb{E}\left(\int_{r_1\vee r_2}^{t}\abs{D^{W^{1},W^{1}}_{r_1,r_2} Y_s^{\eta}}^{2p}ds\right)^{\frac{2p-1}{2p}}\\
 &\leq
 C \varepsilon \mathbb{E}\left(\int_{r_1\vee r_2}^{t}\abs{D^{W^{1},W^{1}}_{r_1,r_2} Y_s^{\eta}}^{2p}ds\right)^{\frac{2p-1}{2p}},
 \end{align*}
 where Proposition \ref{boundonEDW1Xs2p} was used for the latter
 bound. In a similar manner, using Propositions
 \ref{boundonEDW1Xs2p} and \ref{boundonEDW1Yt2p}, we obtain
 \begin{align*}
 &\mathbb{E}\left(\int_{r_1\vee r_2}^{t}  \left|D^{W^{1}}_{r_1} Y_s^{\eta} D^{W^{1}}_{r_2} X_s^{\varepsilon} \right| \abs{D^{W^{1},W^{1}}_{r_1,r_2} Y_s^{\eta}}^{2p-1}ds\right)\leq C
 \varepsilon \mathbb{E}\left(\int_{r_1\vee r_2}^{t}\abs{D^{W^{1},W^{1}}_{r_1,r_2} Y_s^{\eta}}^{2p}ds\right)^{\frac{2p-1}{2p}}\\
  &\mathbb{E}\left(\int_{r_1\vee r_2}^{t}  \left|D^{W^{1}}_{r_1} X_s^{\varepsilon} D^{W^{1}}_{r_2} Y_s^{\eta} \right| \abs{D^{W^{1},W^{1}}_{r_1,r_2} Y_s^{\eta}}^{2p-1}ds\right)\leq C
 \varepsilon \mathbb{E}\left(\int_{r_1\vee r_2}^{t}\abs{D^{W^{1},W^{1}}_{r_1,r_2} Y_s^{\eta}}^{2p}ds\right)^{\frac{2p-1}{2p}}\\
 &\mathbb{E}\left(\int_{r_1\vee r_2}^{t} \left| D^{W^{1}}_{r_1} Y_s^{\eta} D^{W^{1}}_{r_2} Y_s^{\eta} \right| \abs{D^{W^{1},W^{1}}_{r_1,r_2} Y_s^{\eta}}^{2p-1}ds\right)\leq C
 \varepsilon \mathbb{E}\left(\int_{r_1\vee r_2}^{t}\abs{D^{W^{1},W^{1}}_{r_1,r_2} Y_s^{\eta}}^{2p}ds\right)^{\frac{2p-1}{2p}}\\
 &\mathbb{E}\left(\int_{r_1\vee r_2}^{t}  \abs{D^{W^{1}}_{r_1} Y_s^{\eta}D^{W^{1}}_{r_2} X_s^{\varepsilon}}^{2} \abs{D^{W^{1},W^{1}}_{r_1,r_2} Y_s^{\eta}}^{2p-2}ds\right)\leq C
 \varepsilon^{2} \mathbb{E}\left(\int_{r_1\vee r_2}^{t}\abs{D^{W^{1},W^{1}}_{r_1,r_2} Y_s^{\eta}}^{2p}ds\right)^{\frac{p-1}{p}}\\
 &\mathbb{E}\left(\int_{r_1\vee r_2}^{t}  \abs{D^{W^{1}}_{r_1} X_s^{\varepsilon}D^{W^{1}}_{r_2} Y_s^{\eta}}^{2} \abs{D^{W^{1},W^{1}}_{r_1,r_2} Y_s^{\eta}}^{2p-2}ds\right)\leq C
 \varepsilon^{2} \mathbb{E}\left(\int_{r_1\vee r_2}^{t}\abs{D^{W^{1},W^{1}}_{r_1,r_2} Y_s^{\eta}}^{2p}ds\right)^{\frac{p-1}{p}}\\
 &\mathbb{E}\left(\int_{r_1\vee r_2}^{t}  \abs{D^{W^{1}}_{r_1} Y_s^{\eta}D^{W^{1}}_{r_2} Y_s^{\eta}}^{2} \abs{D^{W^{1},W^{1}}_{r_1,r_2} Y_s^{\eta}}^{2p-2}ds\right)\leq C
 \varepsilon^{2} \mathbb{E}\left(\int_{r_1\vee r_2}^{t}\abs{D^{W^{1},W^{1}}_{r_1,r_2} Y_s^{\eta}}^{2p}ds\right)^{\frac{p-1}{p}}.
 \end{align*}
Hence, using the above estimates in
\eqref{upperboundonD2W1W1YtwithDXDXD2Y} yields
\begin{align*}
\mathbb{E}\left(\abs{D^{W^{1},W^{1}}_{r_1,r_2} Y_t^{\eta}}^{2p}\right)
&\leq -\frac{K}{\eta}\mathbb{E}\left(\int_{r_1\vee r_2}^{t} \abs{D^{W^{1},W^{1}}_{r_1,r_2} Y_s^{\eta}}^{2p}ds\right)\\
 &\quad+\frac{C}{\eta} \varepsilon \mathbb{E}\left(\int_{r_1\vee r_2}^{t}  \abs{D^{W^{1},W^{1}}_{r_1,r_2} Y_s^{\eta}}^{2p}ds\right)^{\frac{2p-1}{2p}}\\
 &\quad+\frac{C}{\eta} \varepsilon^{2}\mathbb{E}\left(\int_{r_1\vee r_2}^{t}  \abs{D^{W^{1},W^{1}}_{r_1,r_2} Y_s^{\eta}}^{2p}ds\right)^{\frac{p-1}{p}}\\
 &\quad +\frac{C}{\eta} \mathbb{E}\left(\int_{r_1\vee r_2}^{t} \abs{D^{W^{1},W^{1}}_{r_1,r_2} X_s^{\varepsilon}}^{2p}ds\right).
\end{align*}
In order to solve this inequality, let us set
$A_t=\mathbb{E}\left(\int_{r_1\vee r_2}^{t}
  \abs{D^{W^{1},W^{1}}_{r_1,r_2} Y_s^{\eta}}^{2p}ds\right)$ and
$B_t=\mathbb{E}\left(\int_{r_1\vee r_2}^{t}
  \abs{D^{W^{1},W^{1}}_{r_1,r_2}
    X_s^{\varepsilon}}^{2p}ds\right)$. Using Young's inequality for
products with H\"older conjugates $p_i,q_i > 1$, $i=1,2$, one can
write, for some constants $C<\infty$ and $D<\infty$ to be chosen later,
\begin{align*}
A_t&\leq C\left(\varepsilon A_t^{\frac{2p-1}{2p}}+\varepsilon^{2}A_t^{\frac{p-1}{p}}+B_t\right)\\
&\leq \frac{1}{p_1} (C\varepsilon)^{p_1}+\frac{1}{q_1}A_t^{\frac{2p-1}{2p}q_1}+\frac{1}{p_2}(C\varepsilon^{2}D^{-1})^{p_2}+\frac{1}{q_2}D^{q_2}A_t^{\frac{p-1}{p}q_2}+C B_t\\
&= \left(\frac{1}{p_1} (C\varepsilon)^{p_1}+\frac{1}{p_2}(C\varepsilon^{2}D^{-1})^{p_2}\right)+\left(\frac{1}{q_1}A_t^{\frac{2p-1}{2p}q_1}+\frac{1}{q_2}D^{q_2}A_t^{\frac{p-1}{p}q_2}\right)+C B_t
\end{align*}
Now, we can choose the H\"older conjugates to be $(p_1,q_1)=
\left(2p,\frac{2p}{2p-1}\right)$ and
$(p_2,q_2)=\left(p,\frac{p}{p-1}\right)$, which yields
\begin{align*}
A_t
&\leq \left(\frac{1}{2p} C^{2p}+\frac{1}{p}(CD^{-1})^{p}\right)\varepsilon^{2p}+\left(\frac{2p-1}{2p}+\frac{p-1}{p}D^{\frac{p}{p-1}}\right)A_t+C B_t
\end{align*}
At this point, let us choose $D^{\frac{p}{p-1}}<\frac{1}{2(p-1)}$,
which results in
$\left(\frac{2p-1}{2p}+\frac{p-1}{p}D^{\frac{p}{p-1}}\right)<1$. Note
that $\left(\frac{2p-1}{2p}+\frac{p-1}{p}D^{\frac{p}{p-1}}\right)$
cannot become arbitrarily small, but the fact that for any finite $p
\geq 1$,
there is a $D>0$ so that
$\left(\frac{2p-1}{2p}+\frac{p-1}{p}D^{\frac{p}{p-1}}\right)<1$ is
enough for our purposes. We then obtain that, for some constant
$C>0$ that might change from line to line,
\begin{align*}
A_t
&\leq C\left(\varepsilon^{2p} +B_t\right),
\end{align*}
or alternatively
\begin{align*}
 \mathbb{E}\left(\int_{r_1\vee r_2}^{t}
  \abs{D^{W^{1},W^{1}}_{r_1,r_2} Y_s^{\eta}}^{2p}ds\right)
&\leq C \left[\varepsilon^{2p}+\mathbb{E} \left( \int_{r_1\vee r_2}^{t} \abs{D^{W^{1},W^{1}}_{r_1,r_2} X_s^{\varepsilon}}^{2p}ds \right)\right],
\end{align*}
which concludes the proof.
\end{proof}

 \subsection{Bound for $\mathbb{E}\left( \abs{D^{W^{1},W^{2}}_{r_1,r_2} X_t^{\varepsilon}}^{2p}\right)$}

\begin{prop}\label{P:BoundSecondMalliavinDW1DW2X}
Let $p\geq 1$ be a  natural number. Then, it holds that for some
constant $C >0$,
\begin{align*}
\mathbb{E}\left( \abs{D^{W^{1},W^{2}}_{r_1,r_2} X_t^{\varepsilon}}^{2p}\right)&\leq C \left[\varepsilon^{2p}+\varepsilon^{p}\eta^{p}+\left(\frac{\varepsilon}{\eta}\right)^pe^{-\frac{K}{\eta}(r_1-r_2)}\mathds{1}_{\{r_1\geq r_2\}}\right].
\end{align*}
\end{prop}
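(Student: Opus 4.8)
The plan is to follow the same three-step pattern used for Proposition~\ref{boundonEDW1W1Xs2p}. Throughout, write $\mathcal{B}=\varepsilon^{2p}+\varepsilon^{p}\eta^{p}+(\varepsilon/\eta)^{p}e^{-\frac{K}{\eta}(r_{1}-r_{2})}\mathds{1}_{\{r_{1}\ge r_{2}\}}$ for the target quantity. First I would specialize the affine equation~\eqref{SDEforDWj1Wj2Xt} to $j_{1}=1$, $j_{2}=2$; its coefficient of $D^{W^{1},W^{2}}_{r_{1},r_{2}}X^{\varepsilon}$ is $\partial_{1}c$ in the drift and $\sqrt{\varepsilon}\,\partial_{1}\sigma$ in the $W^{1}$-integral, both uniformly bounded. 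Raising to the power $2p$, applying Burkholder--Davis--Gundy and Doob to the $W^{1}$-martingale, Jensen's inequality to the Lebesgue integral \emph{keeping the $L^{1}$-in-time norm of the inhomogeneities} (the $L^{2p}$-in-time norm would be too crude, see the last paragraph), and then Gr\"onwall's lemma, the proof reduces to bounding by $C\mathcal{B}$ the three forcing contributions $\varepsilon^{p}\mathbb{E}(\abs{\alpha_{1}}^{2p})$, $\mathbb{E}((\int_{r_{1}\vee r_{2}}^{t}\abs{b_{1}^{1,2}[c]}\,ds)^{2p})$ and $\varepsilon^{p}\mathbb{E}((\int_{r_{1}\vee r_{2}}^{t}\abs{b_{1}^{1,2}[\sigma]}^{2}\,ds)^{p})$; since $\partial_{1}c,\partial_{1}\sigma$ are bounded the Gr\"onwall constant is $\varepsilon,\eta$-free.

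For the initial datum, observe that with $j_{1}=1$, $j_{2}=2$ one has $\alpha_{1}=\partial_{1}\sigma(X^{\varepsilon}_{r_{1}},Y^{\eta}_{r_{1}})D^{W^{2}}_{r_{2}}X^{\varepsilon}_{r_{1}}+\partial_{2}\sigma(X^{\varepsilon}_{r_{1}},Y^{\eta}_{r_{1}})D^{W^{2}}_{r_{2}}Y^{\eta}_{r_{1}}$, which by causality of the Malliavin derivative vanishes unless $r_{1}\ge r_{2}$. Bounding $\partial_{1}\sigma$, $\partial_{2}\sigma$ and using Propositions~\ref{boundonEDW2Xs2p} and~\ref{boundonDY2pintermsofepsandeta} evaluated at time $r_{1}$ gives $\varepsilon^{p}\mathbb{E}(\abs{\alpha_{1}}^{2p})\le C\varepsilon^{p}(\varepsilon^{p}+\eta^{p}+\eta^{-p}e^{-\frac{K}{\eta}(r_{1}-r_{2})})\mathds{1}_{\{r_{1}\ge r_{2}\}}=C\mathcal{B}$, so the initial datum alone already produces the whole bound; the remaining work is to show the inhomogeneities do not spoil it.

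Both $b_{1}^{1,2}[c]$ and $b_{1}^{1,2}[\sigma]$ split into four products of the form $\partial_{i}\partial_{j}c\cdot D^{W^{1}}_{r_{1}}\xi_{s}\,D^{W^{2}}_{r_{2}}\zeta_{s}$ with $\xi,\zeta\in\{X^{\varepsilon},Y^{\eta}\}$, plus one ``mixed'' term $\partial_{2}c\cdot D^{W^{1},W^{2}}_{r_{1},r_{2}}Y^{\eta}_{s}$ (respectively with $\sigma$). For the products I would use exactly the phenomenon emphasized in the introduction --- $D^{W^{2}}_{r}Y^{\eta}_{s}$ is pointwise of size $\eta^{-1/2}$ but has a small time integral --- so, rather than estimating it pointwise under $\int_{r_{1}\vee r_{2}}^{t}\cdot\,ds$, I would expand the $2p$-th power of the time integral into a $2p$-fold integral, decouple the time variables by the generalized H\"older inequality, insert the pointwise moment bounds of Propositions~\ref{boundonEDW1Xs2p}, \ref{boundonEDW1Yt2pb}, \ref{boundonEDW2Xs2p} and~\ref{boundonDY2pintermsofepsandeta}, and integrate each exponential $\eta^{-1/2}e^{-c(s_{i}-r_{2})/\eta}$ \emph{last}, which gains a factor $C\eta^{1/2}e^{-c(r_{1}-r_{2})_{+}/\eta}$ per variable. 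Each product then contributes at most $C(\varepsilon^{2p}+\varepsilon^{p}\eta^{p})$ (with the extra $e^{-c(r_{1}-r_{2})/\eta}$ in the regime $r_{1}>r_{2}$, absorbed into $\mathcal{B}$ after adjusting $K$), and the $\sigma$-inhomogeneity is easier thanks to its $\varepsilon^{p}$ prefactor, which also neutralizes the $L^{2}$-in-time structure there. The mixed term forces a companion estimate: running the same scheme --- Duhamel against the fundamental solution of~\eqref{equationsatisfiedbyDWWY}, whose $q$-th moments decay like $e^{-c(t-s)/\eta}$ by Assumption~\ref{A:Assumptiongeneral}(iv), then $2p$-fold expansion and H\"older --- on~\eqref{equationsatisfiedbyDWWY} with $j_{1}=1$, $j_{2}=2$, I expect
\[
\mathbb{E}\!\left(\Big(\int_{r_{1}\vee r_{2}}^{t}\abs{D^{W^{1},W^{2}}_{r_{1},r_{2}}Y^{\eta}_{s}}\,ds\Big)^{2p}\right)\le C\mathcal{B}+C\int_{r_{1}\vee r_{2}}^{t}\mathbb{E}\,\abs{D^{W^{1},W^{2}}_{r_{1},r_{2}}X^{\varepsilon}_{s}}^{2p}\,ds,
\]
the $W^{1}$-initial datum $\eta^{-1/2}\alpha_{2}$ of~\eqref{equationsatisfiedbyDWWY} (nonzero only for $r_{1}\le r_{2}$, with $\mathbb{E}\abs{\alpha_{2}}^{2p}\le C\varepsilon^{p}$ by Propositions~\ref{boundonEDW1Xs2p} and~\ref{boundonEDW1Yt2pb}) contributing $\varepsilon^{p}\eta^{p}$ because the $2p$ time integrations beat its $\eta^{-p}$ size. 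Combining with the earlier steps, $P(t):=\sup_{r_{1}\vee r_{2}\le s\le t}\mathbb{E}\abs{D^{W^{1},W^{2}}_{r_{1},r_{2}}X^{\varepsilon}_{s}}^{2p}$ satisfies $P(t)\le C\mathcal{B}+C\int_{r_{1}\vee r_{2}}^{t}P(s)\,ds$, and Gr\"onwall's lemma finishes the proof.

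The step I expect to be the main obstacle is the sharp treatment of the mixed term $\partial_{2}c\cdot D^{W^{1},W^{2}}_{r_{1},r_{2}}Y$ together with the $W^{1}$-initial datum inside the companion estimate: the naive pointwise quantity $\mathbb{E}(\int\abs{D^{W^{1},W^{2}}_{r_{1},r_{2}}Y^{\eta}_{s}}^{2p}\,ds)$ is only of order $\varepsilon^{p}\eta^{1-p}$, which for $p\ge2$ is \emph{strictly larger} than the required $\varepsilon^{p}\eta^{p}$, so one must work throughout with the $L^{1}$-in-time norm raised to the $2p$-th power and never pass through pointwise-in-time $2p$-th moments of the mixed derivative. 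A secondary, purely bookkeeping, point is that the repeated Cauchy--Schwarz/H\"older steps require control of $4p$-th, and in a few places $6p$-th, moments of the first-order derivatives, which is exactly why the estimates of Section~\ref{S:FirstOrderMalliavinDer} were proven for a general index under Assumption~\ref{A:Assumptiongeneral}.
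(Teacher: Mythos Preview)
Your plan is correct and matches the paper's proof in all essential respects: start from \eqref{SDEforDWj1Wj2Xt}, bound the initial datum via Propositions~\ref{boundonEDW2Xs2p} and~\ref{boundonDY2pintermsofepsandeta}, control the product inhomogeneities via $\bigl(\int|\cdot|^{q}ds\bigr)^{2p/q}$ rather than $\int|\cdot|^{2p}ds$, set up a companion estimate for $D^{W^{1},W^{2}}Y$ that feeds back $\int\mathbb{E}\lvert D^{W^{1},W^{2}}X\rvert^{2p}ds$, and close with Gr\"onwall. The paper packages your ``Duhamel + moment decay of the fundamental solution'' step as the explicit decomposition $D^{W^{1},W^{2}}Y=\hat{Q}_{1}+\hat{Q}_{2}+\hat{Q}_{3}$ in Proposition~\ref{proponEintDW1W2Ystotheq}, where $\hat{Q}_{1}$ carries the initial datum $\eta^{-1/2}\alpha_{2}$, $\hat{Q}_{3}$ isolates the $D^{W^{1},W^{2}}X$ forcing, and $\hat{Q}_{2}$ collects the first-order cross products; the auxiliary Propositions~\ref{expintegratedFuGudu}, \ref{expintegratedFuGudWu} and Lemma~\ref{lemmaonEVr1veer2sZr22p} formalize your ``integrate each exponential last'' heuristic. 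Likewise, the product $D^{W^{1}}Y\cdot D^{W^{2}}Y$ is handled in the paper via the splitting $D^{W^{2}}Y=Q_{1}+Q_{2}$ (Proposition~\ref{proponEintDW1YDW2Y}) rather than a raw $2p$-fold expansion, but this yields the same bound $C\varepsilon^{p}\eta^{p(2-q)/q}$ you predict.

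One small correction: your worry about ``adjusting $K$'' is unnecessary. In both approaches the only source of the term $(\varepsilon/\eta)^{p}e^{-\frac{K}{\eta}(r_{1}-r_{2})}\mathds{1}_{\{r_{1}\ge r_{2}\}}$ is the initial datum $\varepsilon^{p}\mathbb{E}\lvert D^{W^{2}}_{r_{2}}Y^{\eta}_{r_{1}}\rvert^{2p}$, and Proposition~\ref{boundonDY2pintermsofepsandeta} delivers it with the exact constant $K$; all time-integrated cross terms contribute only $C(\varepsilon^{2p}+\varepsilon^{p}\eta^{p})$, with the exponential factors in your expansion simply dominated by $1$ after integration.
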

\begin{proof}
Using \eqref{SDEforDWj1Wj2Xt}, the Burkholder-Davis-Gundy inequality, and Assumption \ref{A:Assumptiongeneral}, we can write
  \begin{align*}
&\mathbb{E}\left( \abs{D^{W^{1},W^{2}}_{r_1,r_2} X_t^{\varepsilon}}^{2p}\right)\leq C \varepsilon^{p}\left[\mathbb{E}\left(\abs{D_{ r_2}^{W^{2}}X_{r_1}^{\varepsilon}}^{2p}\right) + \mathbb{E}\left(\abs{D_{r_2}^{W^{2}}Y_{r_1}^{\eta}}^{2p}\right)\right]\mathds{1}_{\{r_1\geq r_2\}}\nonumber\\
&\qquad\qquad  +C\left(1+\varepsilon^{p}\right)\mathbb{E}\left( \int_{r_1\vee r_2}^{t}  \abs{D^{W^{1}}_{r_1} X_s^{\varepsilon}D^{W^{2}}_{r_2} X_s^{\varepsilon}}^{2p}ds\right)\nonumber\\
&\qquad\qquad  +C\mathbb{E}\left(\left|\int_{r_1\vee r_2}^{t}  \abs{D^{W^{1}}_{r_1} X_s^{\varepsilon}D^{W^{2}}_{r_2} Y_s^{\eta}}ds\right|^{2p}\right) +C\varepsilon^{p}\mathbb{E}\left(\left|\int_{r_1\vee r_2}^{t}  \abs{D^{W^{1}}_{r_1} X_s^{\varepsilon}D^{W^{2}}_{r_2} Y_s^{\eta}}^{2}ds\right|^{p}
\right)\nonumber\\
&\qquad\qquad +C\mathbb{E}\left(\left|\int_{r_1\vee r_2}^{t}  \abs{D^{W^{1}}_{r_1} Y_s^{\eta}D^{W^{2}}_{r_2} X_s^{\varepsilon}}ds\right|^{2p}\right) +C\varepsilon^{p}\mathbb{E}\left(\left|\int_{r_1\vee r_2}^{t}  \abs{D^{W^{1}}_{r_1} Y_s^{\eta}D^{W^{2}}_{r_2}  X_s^{\varepsilon}}^{2}ds\right|^{p}\right)\nonumber\\
&\qquad\qquad  +C\mathbb{E}\left(\left|\int_{r_1\vee r_2}^{t}  \abs{D^{W^{1}}_{r_1} Y_s^{\eta}D^{W^{2}}_{r_2} Y_s^{\eta}}ds\right|^{2p}\right) +C\varepsilon^{p}\mathbb{E}\left(\left|\int_{r_1\vee r_2}^{t}  \abs{D^{W^{1}}_{r_1} Y_s^{\eta}D^{W^{2}}_{r_2}  Y_s^{\eta}}^{2}ds\right|^{p}\right)\nonumber\\
&\qquad\qquad  +C\mathbb{E}\left(\left|\int_{r_1\vee r_2}^{t}  \abs{D^{W^{1},W^{2}}_{r_1,r_2} Y_s^{\eta}}ds\right|^{2p}\right)+C\varepsilon^{p}\mathbb{E}\left(\left|\int_{r_1\vee r_2}^{t}  \abs{D^{W^{1},W^{2}}_{r_1,r_2} Y_s^{\eta}}^{2}ds\right|^{p}\right)\nonumber\\
&\qquad\qquad  +C\left(1+\varepsilon^{p}\right)\mathbb{E}\left(\int_{r_1\vee r_2}^{t} \abs{D^{W^{1},W^{2}}_{r_1,r_2} X_s^{\varepsilon}}^{2p}ds\right).
  \end{align*}
Let us bound these terms individually. Propositions \ref{boundonEDW2Xs2p} and \ref{boundonDY2pintermsofepsandeta}
ensure that
\begin{align*}
\mathbb{E}\left(\abs{D_{ r_2}^{W^{2}}X_{r_1}^{\varepsilon}}^{2p}\right) + \mathbb{E}\left(\abs{D_{r_2}^{W^{2}}Y_{r_1}^{\eta}}^{2p}\right)&\leq C\left(\varepsilon^{p}+\eta^{p}+\frac{1}{\eta^p}e^{-\frac{K}{\eta}(r_1-r_2)}\right).
\end{align*}
Next, applying H\"older's inequality together with Propositions \ref{boundonEDW1Xs2p} and \ref{boundonEDW2Xs2p} yields
\begin{align*}
\mathbb{E}\left( \int_{r_1\vee r_2}^{t}  \abs{D^{W^{1}}_{r_1} X_s^{\varepsilon}D^{W^{2}}_{r_2} X_s^{\varepsilon}}^{2p}ds\right)
&\leq C\varepsilon^{p}\left( \varepsilon^{p}+\eta^{p}\right).
\end{align*}
Similarly, H\"older's inequality combined with Propositions
\ref{boundonEDW1Yt2p} and \ref{boundonEDW2Xs2p} imply that
\begin{align*}
\mathbb{E}\left(\int_{r_1\vee r_2}^{t}  \abs{D^{W^{1}}_{r_1}
  Y_s^{\eta}D^{W^{2}}_{r_2} X_s^{\varepsilon}}^{2p}ds\right)
&\leq C\varepsilon^{p}\left(\varepsilon^{p}+\eta^{p}\right).
\end{align*}
For any $q \in \left\{ 1,2 \right\}$, H\"older's inequality,
Proposition \ref{boundonEDW1Xs2p} and Proposition
\ref{boundonintDY2pintermsofepsandeta} show that
\begin{align*}
\mathbb{E}\left(\left(\int_{r_1\vee r_2}^{t}  \abs{D^{W^{1}}_{r_1} X_s^{\varepsilon}D^{W^{2}}_{r_2} Y_s^{\eta}}^{q}ds\right)^{\frac{2p}{q}}\right)
&\leq C\mathbb{E}\left(\sup_{r_1\vee r_2\leq s\leq t} \abs{
                                                                                  D^{W^{1}}_{r_1} X_s^{\varepsilon}}^{2p p_1}\right)^{\frac{1}{p_1}}\\
  &\qquad\qquad\qquad \mathbb{E}\left(\left(\int_{r_1\vee r_2}^{t}  \abs{D^{W^{2}}_{r_2} Y_s^{\eta}}^{q}ds\right)^{\frac{2pq_1}{q}}\right)^{\frac{1}{q_1}}\nonumber\\
&\leq C \varepsilon^{p}\left(\eta^{\frac{p(2-q)}{q}}+\varepsilon^{p}+\eta^{p}\right).
\end{align*}
Now, Proposition \ref{proponEintDW1YDW2Y} guarantees that, for any $q \in \left\{ 1,2
\right\}$,
\begin{equation*}
\mathbb{E}\left(\left(\int_{r_1 \vee r_2}^{t}\abs{D_{r_1}^{W^1}Y_s^{\eta}D_{r_2}^{W^2}Y_s^{\eta}}^{q}ds\right)^{\frac{2p}{q}}\right) \leq C \varepsilon^p \eta^{\frac{p(2-q)}{q}}.
\end{equation*}
Finally, Proposition \ref{proponEintDW1W2Ystotheq} implies that
\begin{equation*}
\mathbb{E}\left(\left(\int_{r_1 \vee r_2}^{t}\abs{D_{r_1,r_2}^{W^1,W^2}Y_s^{\eta}}^{q}ds\right)^{\frac{2p}{q}}\right) \leq C\varepsilon^{p}\left(\eta^{\frac{p(2-q)}{q}}+\varepsilon^{p}+\eta^{p}\right)+
C \mathbb{E}\left(\int_{r_1\vee r_2}^{t}  \abs{D^{W_1,W_2}_{r_1,r_2}X^{\varepsilon}_{s}}^{2p}ds\right).
\end{equation*}
Putting all these estimates together, we obtain
\begin{align*}
\mathbb{E}\left( \abs{D^{W^{1},W^{2}}_{r_1,r_2} X_t^{\varepsilon}}^{2p}\right)&\leq C \varepsilon^{p}\left(\varepsilon^{p}+\eta^{p}+\frac{1}{\eta^p}e^{-\frac{K}{\eta}(r_1-r_2)}\right)\mathds{1}_{\{r_1\geq r_2\}}+C\left(1+\varepsilon^{p}\right)\varepsilon^{p}(\varepsilon^{p}+\eta^{p})\\
&\quad+C\varepsilon^{p}\left(\eta^{p}+\varepsilon^{p}+\eta^{p}\right)
+C\varepsilon^{p}\varepsilon^{p}\left(1+\varepsilon^{p}+\eta^{p}\right)+C \varepsilon^{p}\left(\varepsilon^{p}+\eta^{p}+\varepsilon^{p}\eta^{p}\right)
  \\
  &\quad +C
\varepsilon^{p}\left(\varepsilon^{p}+\eta^{p}+\varepsilon^{p}\right)+C\varepsilon^{p}\left(\eta^{p}+\varepsilon^{p}+\eta^{p}\right)+
    C\varepsilon^{2p}\left(1+\varepsilon^{p}+\eta^{p}\right)\\
  &\quad +
C\left(1+\varepsilon^{p}\right) \mathbb{E}\left(\int_{r_1\vee r_2}^{t}  \abs{D^{W_1,W_2}_{r_1,r_2}X^{\varepsilon}_{s}}^{2p}ds\right)
  \\
  &\quad +C\left(1+\varepsilon^{p}\right)\mathbb{E}\left(\int_{r_1\vee r_2}^{t}  \abs{D^{W_1,W_2}_{r_1,r_2}X^{\varepsilon}_{s}}^{2p}ds\right)\\
&\leq
                                                                                                                         C\bigg[\left(\frac{\varepsilon}{\eta}\right)^{p}e^{-\frac{K}{\eta}(r_1-r_2)}\mathds{1}_{\{r_1\geq r_2\}}+\varepsilon^{2p}+\varepsilon^{p}\eta^{p}\\
  &\qquad\qquad\qquad\qquad\qquad\qquad\qquad\qquad\quad +\mathbb{E}\left(\int_{r_1\vee r_2}^{t}  \abs{D^{W_1,W_2}_{r_1,r_2}X^{\varepsilon}_{s}}^{2p}ds\right)\bigg].
\end{align*}
Gr\"onwall's lemma finally yields
\begin{align*}
\mathbb{E}\left( \abs{D^{W^{1},W^{2}}_{r_1,r_2} X_t^{\varepsilon}}^{2p}\right)&\leq C \left[\varepsilon^{2p}+\varepsilon^{p}\eta^{p}+\left(\frac{\varepsilon}{\eta}\right)^pe^{-\frac{K}{\eta}(r_1-r_2)}\mathds{1}_{\{r_1\geq r_2\}}\right].
\end{align*}
\end{proof}

\begin{prop}
  \label{proponEintDW1W2Ystotheq}
Let $p\geq 1$ be a  natural number and $q \in \left\{ 1,2 \right\}$. Then, it holds that for some
constant $C >0$,
\begin{equation*}
\mathbb{E}\left(\left(\int_{r_1 \vee r_2}^{t}\abs{D_{r_1,r_2}^{W^1,W^2}Y_s^{\eta}}^{q}ds\right)^{\frac{2p}{q}}\right) \leq C\varepsilon^{p}\left(\eta^{\frac{p(2-q)}{q}}+\varepsilon^{p}+\eta^{p}\right)+
C \mathbb{E}\left(\int_{r_1\vee r_2}^{t}  \abs{D^{W_1,W_2}_{r_1,r_2}X^{\varepsilon}_{s}}^{2p}ds\right).
\end{equation*}
\end{prop}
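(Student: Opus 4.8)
The plan is to adapt the strategy used in the proofs of Propositions \ref{boundonintDY2p} and \ref{proponEintDW1W1YslessEintDW1W1Xs}. Since $D^{W^1,W^2}_{r_1,r_2}Y^{\eta}$ solves the affine equation \eqref{equationsatisfiedbyDWWY} with $(j_1,j_2)=(1,2)$, I would decompose it, in the spirit of \eqref{decompositionofDW2Yt}, as
\[
D^{W^1,W^2}_{r_1,r_2}Y_t^{\eta}=\widetilde{Q}_1(t)+\widetilde{Q}_2(t)+\widetilde{Q}_3(t),
\]
where $\widetilde{Q}_1(t)=Z_{r_1\vee r_2,2}(t)\,\tfrac{1}{\sqrt{\eta}}\alpha_2$ carries the initial datum (here $\alpha_2=\partial_1\tau(X_{r_2}^{\varepsilon},Y_{r_2}^{\eta})\,D^{W^1}_{r_1}X_{r_2}^{\varepsilon}+\partial_2\tau(X_{r_2}^{\varepsilon},Y_{r_2}^{\eta})\,D^{W^1}_{r_1}Y_{r_2}^{\eta}$, which vanishes unless $r_2\ge r_1$); $\widetilde{Q}_2$ solves \eqref{equationsatisfiedbyDWWY} with zero initial condition and inhomogeneities reduced to the two terms $\partial_1 f\,D^{W^1,W^2}_{r_1,r_2}X^{\varepsilon}$ and $\partial_1\tau\,D^{W^1,W^2}_{r_1,r_2}X^{\varepsilon}$; and $\widetilde{Q}_3$ solves the same affine equation with zero initial condition and the remaining, purely ``product'', inhomogeneities (those built from the second derivatives of $f$ and $\tau$ times products $D^{W^1}_{r_1}(\cdot)\,D^{W^2}_{r_2}(\cdot)$). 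By linearity this splitting is exact, and it suffices to estimate $\mathbb{E}\big((\int_{r_1\vee r_2}^{t}\abs{\widetilde{Q}_k(s)}^{q}ds)^{2p/q}\big)$ for $k=1,2,3$ separately.

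For $\widetilde{Q}_1$, I would repeat the estimate of the term $Q^{\eta}_{r,1}$ from the proof of Proposition \ref{boundonintDY2p}: $\alpha_2$ is $\mathcal{F}_{r_1\vee r_2}$-measurable with $\mathbb{E}(\abs{\alpha_2}^{2p})\le C\varepsilon^{p}$ by Propositions \ref{boundonEDW1Xs2p} and \ref{boundonEDW1Yt2pb} and the boundedness of $\partial_1\tau,\partial_2\tau$, while the multidimensional H\"older argument together with Lemma \ref{lemmaonExpofZr2tothe2pq} (conditionally on $\mathcal{F}_{r_1\vee r_2}$) gives $\mathbb{E}\big((\int_{r_1\vee r_2}^{t}Z_{r_1\vee r_2,2}(s)^{q}ds)^{2p/q}\mid\mathcal{F}_{r_1\vee r_2}\big)\le C\eta^{2p/q}$; hence
\[
\mathbb{E}\Big(\Big(\int_{r_1\vee r_2}^{t}\abs{\widetilde{Q}_1(s)}^{q}ds\Big)^{\frac{2p}{q}}\Big)\le \frac{C}{\eta^{p}}\,\mathbb{E}(\abs{\alpha_2}^{2p})\,\eta^{\frac{2p}{q}}\le C\varepsilon^{p}\eta^{\frac{p(2-q)}{q}},
\]
which is the first term in the claim. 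For $\widetilde{Q}_2$, whose only inhomogeneity is proportional to $D^{W^1,W^2}_{r_1,r_2}X^{\varepsilon}$, I would apply It\^o's formula to $\abs{\widetilde{Q}_2(t)}^{2p}$ and invoke Assumption \ref{A:Assumptiongeneral} exactly as in the derivation of \eqref{Eq:Bound_D2Q} from \eqref{ineqforEQ2}, to obtain $\mathbb{E}\int_{r_1\vee r_2}^{t}\abs{\widetilde{Q}_2(s)}^{2p}ds\le C\,\mathbb{E}\int_{r_1\vee r_2}^{t}\abs{D^{W^1,W^2}_{r_1,r_2}X^{\varepsilon}_s}^{2p}ds$, and then $\mathbb{E}\big((\int\abs{\widetilde{Q}_2}^{q}ds)^{2p/q}\big)\le C\,\mathbb{E}\int\abs{D^{W^1,W^2}_{r_1,r_2}X^{\varepsilon}_s}^{2p}ds$ by Jensen's inequality $(\int\abs{\widetilde{Q}_2}^{q}ds)^{2p/q}\le T^{2p/q-1}\int\abs{\widetilde{Q}_2}^{2p}ds$.

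For $\widetilde{Q}_3$, I would use the variation-of-parameters representation $\widetilde{Q}_3(t)=Z_{r,2}(t)\big[\tfrac{1}{\eta}\int_r^{t}Z_{r,2}(s)^{-1}g(s)\,ds+\tfrac{1}{\sqrt{\eta}}\int_r^{t}Z_{r,2}(s)^{-1}h(s)\,dW^2_s\big]$ with $r=r_1\vee r_2$, where $g$ and $h$ collect the product inhomogeneities of the drift and martingale part; using $Z_{r,2}(t)/Z_{r,2}(s)=Z_{s,2}(t)$ and $\mathbb{E}(Z_{s,2}(t)^{2p})\le e^{-\frac{K}{\eta}(t-s)}$, the drift contribution to $\int_r^{t}\abs{\widetilde{Q}_3(u)}^{q}du$ is controlled by convolving the $L^1$-normalised kernel $\tfrac{1}{\eta}e^{-\frac{K}{\eta}\,\cdot}$ against $\abs{g}$ (Minkowski's integral inequality and Jensen), while the martingale contribution is handled by the Burkholder--Davis--Gundy inequality together with the same exponential decay. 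The key point is that each factor $D^{W^2}_{r_2}Y$ (or $D^{W^2}_{r_2}X$) entering $g,h$ can then be kept inside a time integral $\int\abs{\cdot}^{q'}ds$ with $q'\in\{1,2\}$ and estimated via Propositions \ref{boundonintDY2pintermsofepsandeta} and \ref{proponEintDW1YDW2Y} (combined with Propositions \ref{boundonEDW1Xs2p}, \ref{boundonEDW1Yt2p}, \ref{boundonEDW2Xs2p} for the $D^{W^1}_{r_1}$ factors), which yields $\mathbb{E}\big((\int\abs{\widetilde{Q}_3}^{q}ds)^{2p/q}\big)\le C\varepsilon^{p}(\varepsilon^{p}+\eta^{p})$. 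Summing the three contributions gives the claim.

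The step I expect to be the main obstacle is the treatment of $\widetilde{Q}_3$. A na\"{\i}ve estimate that bounds $D^{W^2}_{r_2}Y$ by its supremum, or that routes the argument through $\mathbb{E}\int\abs{D^{W^1,W^2}_{r_1,r_2}Y^{\eta}_s}^{2p}ds$, produces terms of order $\varepsilon^{p}\eta^{-(p-1)}$ or $\varepsilon^{p}\eta^{-(p-1/2)}$, which do not even remain bounded as $\eta\downarrow 0$; only by exploiting the exponential decay of $Z_{s,2}$ to keep the ``bad'' factor inside a low-order time integral does one recover the correct $\varepsilon^{p}(\varepsilon^{p}+\eta^{p})$ behaviour. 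This is precisely why the statement is formulated in terms of $\mathbb{E}\big((\int\abs{\cdot}^{q}ds)^{2p/q}\big)$ with $q\in\{1,2\}$ rather than $\mathbb{E}\int\abs{\cdot}^{2p}ds$.
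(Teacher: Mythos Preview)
Your three–term decomposition is exactly the paper's (with $\widetilde{Q}_1,\widetilde{Q}_2,\widetilde{Q}_3$ corresponding to $\hat{Q}_1,\hat{Q}_3,\hat{Q}_2$ respectively), and your treatment of the initial–condition piece $\widetilde{Q}_1$ and of the piece $\widetilde{Q}_2$ driven by $D^{W^1,W^2}_{r_1,r_2}X^{\varepsilon}$ is correct and identical to the paper's.

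The gap is in $\widetilde{Q}_3$. The plan ``apply BDG to the variation-of-parameters formula and then keep $D^{W^2}_{r_2}Y$ inside a time integral so that Propositions~\ref{boundonintDY2pintermsofepsandeta} and~\ref{proponEintDW1YDW2Y} apply'' does not go through as written. First, BDG applied to $\frac{1}{\sqrt{\eta}}Z_{r,2}(t)\int_r^t Z_{r,2}^{-1}(s)h(s)\,dW^2_s$ leaves the factor $Z_{r,2}^{-1}$ inside the quadratic variation, and $Z_{r,2}^{-1}$ grows: the exponential decay you want lives in $Z_{s,2}(t)=Z_{r,2}(t)Z_{r,2}^{-1}(s)$, which is \emph{not adapted} in $s$, so you cannot pull it through the stochastic integral. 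Second, even for the drift piece your convolution step delivers $\bigl(\int_r^t\lVert g(s)\rVert_{L^{4p}}\,ds\bigr)^{2p}$, not $\mathbb{E}\bigl(\bigl(\int_r^t\abs{g(s)}^{q'}ds\bigr)^{2p/q'}\bigr)$, so Propositions~\ref{boundonintDY2pintermsofepsandeta} and~\ref{proponEintDW1YDW2Y} are not the objects that appear; what you actually need there is the \emph{pointwise} moment bound of Proposition~\ref{boundonDY2pintermsofepsandeta}.

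The paper circumvents both issues by never separating $Z$ from the inner integral: for each product $F_uG_u$ it regards $\frac{1}{\eta}Z_{r,2}(t)\int_r^t Z_{r,2}^{-1}F_uG_u\,du$ and $\frac{1}{\sqrt{\eta}}Z_{r,2}(t)\int_r^t Z_{r,2}^{-1}F_uG_u\,dW^2_u$ as solutions of affine SDEs (It\^o product rule) and derives differential inequalities that yield bounds of the form $C\sup_{s}\mathbb{E}\abs{F_sG_s}^{2p}$ (Propositions~\ref{expintegratedFuGudu}--\ref{proponthedsintegrals}). This suffices when $F_uG_u$ involves $D^{W^2}_{r_2}X^{\varepsilon}$ or the $Q^{\eta}_{r_2,2}$ part of $D^{W^2}_{r_2}Y^{\eta}$. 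For the remaining piece $F_uG_u=D^{W^1}_{r_1}Y^{\eta}_u\,Q^{\eta}_{r_2,1}(u)$ the supremum is of order $\varepsilon^{p}\eta^{-p}$, which is too large; the paper handles this case separately with the dedicated Lemma~\ref{lemmaonEVr1veer2sZr22p}, proving $\mathbb{E}\bigl\lvert\frac{1}{\eta}Z_{r,2}(t)\int_r^t Z_{r,2}^{-1}D^{W^1}_{r_1}Y^{\eta}Q^{\eta}_{r_2,1}\,ds\bigr\rvert^{2p}\le C(\varepsilon/\eta)^p e^{-D(t-r)/\eta}$, whose decay in $t$ then produces the missing factor $\eta^{p(2-q)/q}$ after the outer $ds$-integration. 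Your identification of this term as the main obstacle is exactly right, but the mechanism you propose does not resolve it; the It\^o-product/differential-inequality route plus the further $Q_1/Q_2$ splitting is what is needed.
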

\begin{proof}
Recall that $D^{W^{1}W^{2}}_{r_1,r_{2}} Y_t^{\eta}$ satisfies Equation
\eqref{equationsatisfiedbyDWWY}, which has an affine structure, so
that we have
\begin{align}
  \label{decompin3ofDW1W2Yt}
D_{r_1,r_2}^{W^{1},W^{2}}Y_t^{\eta} &= \frac{1}{\sqrt{\eta}}Z_{r_1
  \vee
  r_2,2}(t)\left(\partial_1\tau (X_{r_1}^{\varepsilon},Y_{r_2}^{\eta})
                 D_{r_1}^{W_1}X^{\varepsilon}_{r_2}+\partial_2\tau
                 (X_{r_2}^{\varepsilon},Y_{r_2}^{\eta})
                 D_{r_1}^{W_1}Y^{\eta}_{r_2}\right)\mathds{1}_{\left\{ r_2 \geq r_1 \right\}}
\nonumber \\
  &\quad + \frac{1}{\eta}Z_{r_1
  \vee
  r_2,2}(t) \int_{r_1 \vee r_2}^t Z_{r_1
  \vee
  r_2,2}^{-1}(s) \Big[b_2^{1,2}[f]\left(X_s^{\varepsilon},Y_s^{\eta}
    \right)\nonumber\\
  &\qquad\qquad\qquad\qquad\qquad\qquad\qquad\qquad\qquad - \partial_2\tau
    \left(X_s^{\varepsilon},Y_s^{\eta}
    \right)b_2^{1,2}[\tau]\left(X_s^{\varepsilon},Y_s^{\eta}
    \right)  \Big]ds\nonumber\\
  &\quad + \frac{1}{\sqrt{\eta}}Z_{r_1
  \vee
  r_2,2}(t) \int_{r_1 \vee r_2}^t Z_{r_1
  \vee
  r_2,2}^{-1}(s)b_2^{1,2}[\tau]\left(X_s^{\varepsilon},Y_s^{\eta}
    \right)dW_s^2\nonumber\\
  &= \hat{Q}_{r_1\vee r_2,1}^{\eta}(t)+\hat{Q}_{r_1\vee r_2,2}^{\eta}(t)+\hat{Q}_{r_1\vee r_2,3}^{\eta}(t),
\end{align}
where
\begin{align*}
\hat{Q}_{r_1\vee r_2,1}^{\eta}(t)&=\frac{1}{\sqrt{\eta}}Z_{r_1
  \vee
  r_2,2}(t)\left(\partial_1\tau (X_{r_1}^{\varepsilon},Y_{r_2}^{\eta})
                                   D_{r_1}^{W_1}X^{\varepsilon}_{r_2}+\partial_2\tau
                                   (X_{r_2}^{\varepsilon},Y_{r_2}^{\eta})
                            D_{r_1}^{W_1}Y^{\eta}_{r_2}\right)\mathds{1}_{\left\{
                                   r_2 \geq r_1 \right\}},
\end{align*}
\begin{align}
  \label{structureofQhat2}
 \hat{Q}_{r_1\vee r_2,2}^{\eta}(t)&=\frac{1}{\eta}Z_{r_1
  \vee
  r_2,2}(t) \int_{r_1 \vee r_2}^t Z_{r_1
  \vee
  r_2,2}^{-1}(s) \Big[b_2^{1,2}[f]\left(X_s^{\varepsilon},Y_s^{\eta}
    \right)-(\partial_{1}f\left(X_s^{\varepsilon},Y_s^{\eta}\right)\nonumber
  \\
  &\qquad\qquad -\partial_{2}\tau\left(X_s^{\varepsilon},Y_s^{\eta}
    \right)\partial_{1}\tau\left(X_s^{\varepsilon},Y_s^{\eta}
    \right)) D_{r_1,r_2}^{W^{1},W^{2}}X^{\varepsilon}_{s} - \partial_2\tau\left(X_s^{\varepsilon},Y_s^{\eta}
    \right)
    b_2^{1,2}[\tau]\left(X_s^{\varepsilon},Y_s^{\eta}
    \right)  \Big]ds\nonumber \\
  &\quad + \frac{1}{\sqrt{\eta}}Z_{r_1
  \vee
  r_2,2}(t) \int_{r_1 \vee r_2}^t Z_{r_1
  \vee
  r_2,2}^{-1}(s)\left(b_2^{1,2}[\tau]\left(X_s^{\varepsilon},Y_s^{\eta}
    \right)-\partial_{1}f\left(X_s^{\varepsilon},Y_s^{\eta}
    \right)
    D_{r_1,r_2}^{W^{1},W^{2}}X^{\varepsilon}_{s}\right)dW_s^2
\end{align}
and
\begin{align*}
  \hat{Q}_{r_1\vee r_2,3}^{\eta}(t)&=\frac{1}{\eta}Z_{r_1
  \vee
  r_2,2}(t) \int_{r_1 \vee r_2}^t Z_{r_1
  \vee
  r_2,2}^{-1}(s) (\partial_{1}f\left(X_s^{\varepsilon},Y_s^{\eta}
                                     \right)\\
  &\qquad\qquad\qquad\qquad\qquad\qquad\qquad\qquad -\partial_{2}\tau\left(X_s^{\varepsilon},Y_s^{\eta}
    \right)\partial_{1}\tau\left(X_s^{\varepsilon},Y_s^{\eta}
    \right)) D_{r_1,r_2}^{W^{1},W^{2}}X^{\varepsilon}_{s}  ds\\
  &\quad + \frac{1}{\sqrt{\eta}}Z_{r_1
  \vee
  r_2,2}(t) \int_{r_1 \vee r_2}^t Z_{r_1
  \vee
  r_2,2}^{-1}(s)\partial_{1}f \left(X_s^{\varepsilon},Y_s^{\eta}
    \right) D_{r_1,r_2}^{W^{1},W^{2}}X^{\varepsilon}_{s}dW_s^2.
\end{align*}
We can hence write
\begin{align}
  \label{tripledecompofEintDW1W2Y}
\mathbb{E}\left(\left(\int_{r_1 \vee
  r_2}^{t}\abs{D_{r_1,r_2}^{W^1,W^2}Y_s^{\eta}}^{q}ds\right)^{\frac{2p}{q}}\right)
  \leq C \sum_{i=1}^{3}\mathbb{E}\left(\left(\int_{r_1 \vee r_2}^{t}\abs{\hat{Q}_{r_1\vee r_2,i}^{\eta}(s)}^{q}ds\right)^{\frac{2p}{q}}\right)
\end{align}
and estimate these three terms separately. For the term corresponding
to $i=1$ in \eqref{tripledecompofEintDW1W2Y}, applying Proposition \ref{propinitialconditionDY} with $W = \left(\partial_1\tau (X_{r_1}^{\varepsilon},Y_{r_2}^{\eta})
                                   D_{r_1}^{W_1}X^{\varepsilon}_{r_2}+\partial_2\tau
                                   (X_{r_2}^{\varepsilon},Y_{r_2}^{\eta})
                            D_{r_1}^{W_1}Y^{\eta}_{r_2}\right)\mathds{1}_{\left\{
                                   r_2 \geq r_1 \right\}}$ together
                               with Propositions \ref{boundonEDW1Xs2p} and \ref{boundonEDW1Yt2p}
                               immediately yields
\begin{align}
\label{finalestimateforQhat1}
\mathbb{E}\left(\left(\int_{r_1\vee r_2}^{t}  \abs{\hat{Q}_{r_1\vee r_2,1}^{\eta}(s)}^{q}ds\right)^{\frac{2p}{q}}\right)&\leq C\varepsilon^p\eta^{\frac{p(2-q)}{q}}.
\end{align}
For the term corresponding to $i=3$ in \eqref{tripledecompofEintDW1W2Y}, note that the term $\hat{Q}_{r_1\vee r_2,3}^{\eta}(t)$ satisfies a
linear stochastic differential equation of the form
\begin{align*}
\hat{Q}_{r_1\vee r_2,3}^{\eta}(t)&=\frac{1}{\eta}\int_{r_1\vee r_2}^{t}\left[\partial_{2}f(X^{\varepsilon}_{s},Y^{\eta}_{s})\hat{Q}_{r_1\vee r_2,3}^{\eta}(s)+\partial_{1}f(X^{\varepsilon}_{s},Y^{\eta}_{s}) D^{W_1,W_2}_{r_1,r_2}X^{\varepsilon}_{s}\right]ds\\
&\quad +\frac{1}{\sqrt{\eta}}\int_{r_1\vee r_2}^{t}\left[\partial_{2}\tau(X^{\varepsilon}_{s},Y^{\eta}_{s})\hat{Q}_{r_1\vee r_2,3}^{\eta}(s)+\partial_{1}\tau(X^{\varepsilon}_{s},Y^{\eta}_{s}) D^{W_1,W_2}_{r_1,r_2}X^{\varepsilon}_{s}\right]dW_{s}^{2}.
\end{align*}
As the structure of this equation is the same as the one satisfied by
$D^{W_1,W_1}_{r_1,r_2}Y^{\eta}_{t}$ (with
$D^{W_1,W_2}_{r_1,r_2}X^{\varepsilon}_{s}$ instead of
$D^{W_1,W_1}_{r_1,r_2}X^{\varepsilon}_{s}$), the same methodology used for bounding
$D^{W_1,W_1}_{r_1,r_2}Y^{\eta}_{t}$ in terms of
$D^{W_1,W_1}_{r_1,r_2}X^{\varepsilon}_{t}$ in Proposition \ref{proponEintDW1W1YslessEintDW1W1Xs} yields
\begin{align}
  \label{finalestimateforQhat3}
\mathbb{E}\left(\left(\int_{r_1\vee r_2}^{t}  \abs{\hat{Q}_{r_1\vee r_2,3}^{\eta}(s)}^{q}ds\right)^{\frac{2p}{q}}\right)&\leq C \mathbb{E}\left(\int_{r_1\vee r_2}^{t}  \abs{\hat{Q}_{r_1\vee r_2,3}^{\eta}(s)}^{2p}ds\right)\nonumber\\
&\leq C \mathbb{E}\left(\int_{r_1\vee r_2}^{t}  \abs{D^{W_1,W_2}_{r_1,r_2}X^{\varepsilon}_{s}}^{2p}ds\right).
\end{align}
For the term corresponding to $i=2$ in
\eqref{tripledecompofEintDW1W2Y}, inspecting the structure of
$\hat{Q}_{r_1\vee r_2,2}^{\eta}(t)$ given in \eqref{structureofQhat2} shows that it is enough to estimate terms of the form
\begin{align*}
&\mathbb{E}\left(\left(\int_{r_1 \vee r_2}^t
  \abs{\frac{1}{\eta}Z_{r_1
  \vee
  r_2,2}(s)\int_{r_1 \vee r_2}^s Z_{r_1
  \vee
  r_2,2}^{-1}(u)F_u G_{u} du }^q ds \right)^{\frac{2p}{q}}
                 \right)
\end{align*}
and
\begin{align*}
&\mathbb{E}\left(\left(\int_{r_1 \vee r_2}^t
  \abs{\frac{1}{\sqrt{\eta}}Z_{r_1
  \vee
  r_2,2}(s)\int_{r_1 \vee r_2}^s Z_{r_1
  \vee
  r_2,2}^{-1}(u)F_u G_{u} dW^{2}_{u} }^q ds \right)^{\frac{2p}{q}}
                 \right),
\end{align*}
where the product $F_u G_{u}$ is any of the terms $D_{r_1}^{W^{1}}X_{u}^{\varepsilon}D_{r_2}^{W^{2}}X_{u}^{\varepsilon}$,
$D_{r_1}^{W^{1}}X_{u}^{\varepsilon}D_{r_2}^{W^{2}}Y_{u}^{\eta}$,
$D_{r_1}^{W^{1}}Y_{u}^{\eta}D_{r_2}^{W^{2}}X_{u}^{\varepsilon}$ or
$D_{r_1}^{W^{1}}Y_{u}^{\eta}D_{r_2}^{W^{2}}Y_{u}^{\eta}$.
Whenever $F_u G_{u} =
D_{r_1}^{W^{1}}X_{u}^{\varepsilon}D_{r_2}^{W^{2}}X_{u}^{\varepsilon}$ and $F_u G_{u} =
D_{r_1}^{W^{1}}Y_{u}^{\eta}D_{r_2}^{W^{2}}X_{u}^{\varepsilon}$,
Propositions \ref{expintegratedFuGudu} and \ref{expintegratedFuGudWu}
respectively imply
\begin{align*}
&\mathbb{E}\left(\left(\int_{r_1 \vee r_2}^t
  \abs{\frac{1}{\eta}Z_{r_1
  \vee
  r_2,2}(s)\int_{r_1 \vee r_2}^s Z_{r_1
  \vee
  r_2,2}^{-1}(u)D_{r_1}^{W^{1}}X_{u}^{\varepsilon}D_{r_2}^{W^{2}}X_{u}^{\varepsilon} du}^q ds \right)^{\frac{2p}{q}}
                 \right)\leq C\varepsilon^{p}(\varepsilon^{p}+\eta^{p}),\nonumber\\
 &\mathbb{E}\left(\left(\int_{r_1 \vee r_2}^t
  \abs{\frac{1}{\eta}Z_{r_1
  \vee
  r_2,2}(s)\int_{r_1 \vee r_2}^s Z_{r_1
  \vee
  r_2,2}^{-1}(u)D_{r_1}^{W^{1}}Y_{u}^{\eta}D_{r_2}^{W^{2}}X_{u}^{\varepsilon} du}^q ds \right)^{\frac{2p}{q}}
                 \right)\leq C\varepsilon^{p}(\varepsilon^{p}+\eta^{p})
\end{align*}
and
\begin{align*}
&\mathbb{E}\left(\left(\int_{r_1 \vee r_2}^t
  \abs{\frac{1}{\sqrt{\eta}}Z_{r_1
  \vee
  r_2,2}(s)\int_{r_1 \vee r_2}^s Z_{r_1
  \vee
  r_2,2}^{-1}(u)D_{r_1}^{W^{1}}X_{u}^{\varepsilon}D_{r_2}^{W^{2}}X_{u}^{\varepsilon} dW^{2}_u}^q ds \right)^{\frac{2p}{q}}
                 \right)\leq C\varepsilon^{p}(\varepsilon^{p}+\eta^{p}),\nonumber\\
 &\mathbb{E}\left(\left(\int_{r_1 \vee r_2}^t
  \abs{\frac{1}{\sqrt{\eta}}Z_{r_1
  \vee
  r_2,2}(s)\int_{r_1 \vee r_2}^s Z_{r_1
  \vee
  r_2,2}^{-1}(u)D_{r_1}^{W^{1}}Y_{u}^{\eta}D_{r_2}^{W^{2}}X_{u}^{\varepsilon} dW^{2}_u}^q ds \right)^{\frac{2p}{q}}
                 \right)\leq C\varepsilon^{p}(\varepsilon^{p}+\eta^{p}).
\end{align*}
It remains to deal with the cases where the product term $F_u G_u$ is
either $F_u
G_u=D_{r_1}^{W^{1}}X_{u}^{\varepsilon}D_{r_2}^{W^{2}}Y_{u}^{\eta}$ or
$F_u G_u = D_{r_1}^{W^{1}}Y_{u}^{\eta}D_{r_2}^{W^{2}}Y_{u}^{\eta}$. It
is enough to treat the second case where
$F_u=D_{r_1}^{W^{1}}Y_{u}^{\eta}$ as the first case where
$F_u=D_{r_1}^{W^{1}}X_{u}^{\varepsilon}$ works the same and is even
simpler. To proceed, we recall the decomposition $D_r^{W^2}
Y_t^{\eta}=Q^{\eta}_{r,1}(t)+Q^{\eta}_{r,2}(t)$, so that we can write
\begin{align*}
D_{r_1}^{W^{1}}Y_{u}^{\eta}D_{r_2}^{W^{2}}Y_{u}^{\eta}&=D_{r_1}^{W^{1}}Y_{u}^{\eta}Q^{\eta}_{r_2,1}(u)+D_{r_1}^{W^{1}}Y_{u}^{\eta}Q^{\eta}_{r_2,2}(u).
\end{align*}
As was already pointed out, $Q^{\eta}_{r_2,2}(u)$ satisfies the same
equation as $D_{r_2}^{W^{1}}Y_{u}^{\eta}$, but with
$D^{W^{1}}_{r_2}X^{\varepsilon}_v$ replaced by
$D^{W^{2}}_{r_2}X^{\varepsilon}_v$. Hence, using Propositions
\ref{expintegratedFuGudu} and \ref{expintegratedFuGudWu} again, we have
\begin{align*}
&\mathbb{E}\left(\left(\int_{r_1 \vee r_2}^t
  \abs{\frac{1}{\eta}Z_{r_1
  \vee
  r_2,2}(s)\int_{r_1 \vee r_2}^s Z_{r_1
  \vee
  r_2,2}^{-1}(u)D_{r_1}^{W^{1}}Y_{u}^{\eta}Q^{\eta}_{r_2,2}(u)du}^q ds \right)^{\frac{2p}{q}}
                 \right)\leq C\varepsilon^{p}(\varepsilon^{p}+\eta^{p}),
\end{align*}
and
\begin{align*}
&\mathbb{E}\left(\left(\int_{r_1 \vee r_2}^t
  \abs{\frac{1}{\sqrt{\eta}}Z_{r_1
  \vee
  r_2,2}(s)\int_{r_1 \vee r_2}^s Z_{r_1
  \vee
  r_2,2}^{-1}(u)D_{r_1}^{W^{1}}Y_{u}^{\eta}Q^{\eta}_{r_2,2}(u)dW^{2}_u}^q ds \right)^{\frac{2p}{q}}
                 \right)\leq C\varepsilon^{p}(\varepsilon^{p}+\eta^{p}).
\end{align*}
We are now left with the case where $F_u
G_u=D_{r_1}^{W^{1}}Y_{u}^{\eta}Q_{r_2,1}^{\eta}(u)$. Define
\begin{equation*}
V_{r_{1}\vee r_{2}}(s)=\int_{r_1 \vee r_2}^s Z_{r_1
  \vee
  r_2,2}^{-1}(u)D_{r_1}^{W^{1}}Y_{u}^{\eta}Q^{\eta}_{r_2,1}(u)du.
\end{equation*}
Then, we can write
\begin{align*}
&\mathbb{E}\left(\left(\int_{r_1 \vee r_2}^t
  \abs{\frac{1}{\eta}Z_{r_1
  \vee
  r_2,2}(s)\int_{r_1 \vee r_2}^s Z_{r_1
  \vee
  r_2,2}^{-1}(u)D_{r_1}^{W^{1}}Y_{u}^{\eta}Q^{\eta}_{r_2,1}(u)du}^q ds \right)^{\frac{2p}{q}}
                 \right)\\
  &\qquad\qquad\qquad = \mathbb{E}\left(\left(\int_{r_1\vee r_2}^{t}  \abs{\frac{1}{\eta}Z_{r_1
  \vee
    r_2,2}(s)V_{r_{1}\vee r_{2}}(s)}^{q}ds\right)^{\frac{2p}{q}}\right)\\
  &\qquad\qquad\qquad \leq C \int_{r_1\vee r_2}^{t}\cdots \int_{r_1\vee r_2}^{t}\prod_{i=1}^{2p/q}\left(\mathbb{E}\left(  \abs{\frac{1}{\eta}Z_{r_1
  \vee
  r_2,2}(s_i)V_{r_{1}\vee r_{2}}(s_i)}^{2p}\right)\right)^{
\frac{q}{2p}}ds_1
\cdots ds_{2p/q}.
\end{align*}
Applying Lemma \ref{lemmaonEVr1veer2sZr22p} and integrating yields
\begin{align*}
&\mathbb{E}\left(\left(\int_{r_1 \vee r_2}^t
  \abs{\frac{1}{\eta}Z_{r_1
  \vee
  r_2,2}(s)\int_{r_1 \vee r_2}^s Z_{r_1
  \vee
  r_2,2}^{-1}(u)D_{r_1}^{W^{1}}Y_{u}^{\eta}Q^{\eta}_{r_2,1}(u)du}^q ds \right)^{\frac{2p}{q}}
                 \right) \leq C \varepsilon^p \eta^{\frac{p(2-q)}{q}}.
\end{align*}
The same analysis applied to the stochastic integral counterpart of
the above expression gives
\begin{align*}
&\mathbb{E}\left(\left(\int_{r_1 \vee r_2}^t
  \abs{\frac{1}{\sqrt{\eta}}Z_{r_1
  \vee
  r_2,2}(s)\int_{r_1 \vee r_2}^s Z_{r_1
  \vee
  r_2,2}^{-1}(u)D_{r_1}^{W^{1}}Y_{u}^{\eta}Q^{\eta}_{r_2,1}(u)dW^2_u}^q ds \right)^{\frac{2p}{q}}
                 \right) \leq C \varepsilon^p \eta^{\frac{p(2-q)}{q}}.
\end{align*}
Putting together all the estimates we just obtained, we finally get
\begin{equation}
  \label{finalestimateforQhat2}
\mathbb{E}\left(\left(\int_{r_1\vee r_2}^{t}  \abs{\hat{Q}_{r_1\vee
        r_2,2}^{\eta}(s)}^{q}ds\right)^{\frac{2p}{q}}\right) \leq C
\varepsilon^p \left(\eta^{\frac{p(2-q)}{q}} +  \varepsilon^p + \eta^p \right).
\end{equation}
Recalling the decomposition \eqref{decompin3ofDW1W2Yt} and using the estimates \eqref{finalestimateforQhat1},
\eqref{finalestimateforQhat3} and \eqref{finalestimateforQhat2} allows
us to write
\begin{align*}
\mathbb{E}\left(\left(\int_{r_1\vee r_2}^{t}  \abs{D^{W^{1}W^{2}}_{r_1,r_{2}} Y_s^{\eta}}^{q}ds\right)^{\frac{2p}{q}}\right)&\leq C\varepsilon^{p}\left(\eta^{\frac{p(2-q)}{q}}+\varepsilon^{p}+\eta^{p}\right)+
C \mathbb{E}\left(\int_{r_1\vee r_2}^{t}  \abs{D^{W_1,W_2}_{r_1,r_2}X^{\varepsilon}_{s}}^{2p}ds\right),
\end{align*}
which concludes the proof.
\end{proof}

\begin{prop}
  \label{proponEintDW1YDW2Y}
Let $p\geq 1$ be a  natural number and $q \in \left\{ 1,2 \right\}$. Then, it holds that for some
constant $C >0$,
\begin{equation*}
\mathbb{E}\left(\left(\int_{r_1 \vee r_2}^{t}\abs{D_{r_1}^{W^1}Y_s^{\eta}D_{r_2}^{W^2}Y_s^{\eta}}^{q}ds\right)^{\frac{2p}{q}}\right) \leq C \varepsilon^p \eta^{\frac{p(2-q)}{q}}.
\end{equation*}
\end{prop}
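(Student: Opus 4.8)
The plan is to use the decomposition $D_{r_2}^{W^2}Y_s^\eta = Q^\eta_{r_2,1}(s) + Q^\eta_{r_2,2}(s)$ from \eqref{decompositionofDW2Yt}, which separates the singular ``initial-condition'' term $Q^\eta_{r_2,1}(s)=Z_{r_2,2}(s)\,\eta^{-1/2}\tau(X_{r_2}^\varepsilon,Y_{r_2}^\eta)$ from the regular part $Q^\eta_{r_2,2}(s)$ solving the affine equation \eqref{Eq:SDE_Q2D2Y}. By the elementary inequality $(a+b)^{2p/q}\le 2^{2p/q}(a^{2p/q}+b^{2p/q})$,
\begin{equation*}
\mathbb{E}\left(\left(\int_{r_1\vee r_2}^t\abs{D_{r_1}^{W^1}Y_s^\eta D_{r_2}^{W^2}Y_s^\eta}^q ds\right)^{\frac{2p}{q}}\right)
\leq C\sum_{i=1}^{2}\mathbb{E}\left(\left(\int_{r_1\vee r_2}^t\abs{D_{r_1}^{W^1}Y_s^\eta}^q\abs{Q^\eta_{r_2,i}(s)}^q ds\right)^{\frac{2p}{q}}\right),
\end{equation*}
and I would estimate the two summands separately.

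For the regular part ($i=2$), since $Q^\eta_{r_2,2}$ satisfies the same type of affine SDE as $D_{r_2}^{W^1}Y$ but with source $D_{r_2}^{W^2}X^\varepsilon$, a routine application of H\"older's inequality in $(\omega,s)$ (splitting off $\int\abs{D_{r_1}^{W^1}Y_s^\eta}^{2q}ds$ from $\int\abs{Q^\eta_{r_2,2}(s)}^{2q}ds$, then using Jensen to pass to $4p$-th powers) together with Propositions \ref{boundonEDW1Yt2p}, \ref{boundonEDW2Xs2p} and the bound \eqref{Eq:Bound_D2Q}, all applied with $2p$ in place of $p$, yields a contribution of order $\varepsilon^p(\varepsilon^p+\eta^p)$, which is of the required order.

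For the singular part ($i=1$) --- the delicate point --- one must \emph{not} apply Cauchy--Schwarz in the time variable before taking the $\tfrac{2p}{q}$-th power: this would replace the $L^1_s$-mass of the spike, which is of order $\sqrt\eta$, by its $L^2_s$-mass, which is of order $1$, and cost a full factor of $\eta$. Instead I would expand $\big(\int_{r_1\vee r_2}^t g(s)\,ds\big)^{2p/q}$ with $g(s)=\abs{D_{r_1}^{W^1}Y_s^\eta}^q\abs{Q^\eta_{r_2,1}(s)}^q$ as the iterated integral $\int\!\cdots\!\int\prod_{j=1}^{2p/q}g(s_j)\,ds_1\cdots ds_{2p/q}$, bring the expectation inside, and apply the generalized H\"older inequality over the $2p/q$ time points to reduce matters to a product of factors $\big(\mathbb{E}(\abs{D_{r_1}^{W^1}Y_{s_j}^\eta}^{2p}Z_{r_2,2}(s_j)^{2p}\abs{\tau(X_{r_2}^\varepsilon,Y_{r_2}^\eta)}^{2p})\big)^{q/2p}$, carrying along the prefactor $\eta^{-p}$ that comes from $(\eta^{-1/2})^{2p}$. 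A further three-fold H\"older inside each expectation separates $\mathbb{E}(\abs{D_{r_1}^{W^1}Y_{s_j}^\eta}^{6p})^{1/3}\le C\varepsilon^p$ (Proposition \ref{boundonEDW1Yt2pb}), $\mathbb{E}(Z_{r_2,2}(s_j)^{6p})^{1/3}\le Ce^{-cK(s_j-r_2)/\eta}$ (Lemma \ref{lemmaonExpofZr2tothe2pq}), and $\mathbb{E}(\abs{\tau(X_{r_2}^\varepsilon,Y_{r_2}^\eta)}^{6p})^{1/3}\le C$ (moment bounds for $(X^\varepsilon,Y^\eta)$ together with the at-most-linear growth of $\tau$); equivalently, this step can be packaged through Proposition \ref{propinitialconditionDY}. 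Carrying out each of the $2p/q$ time integrals then produces a factor of order $\eta$ from $\int_{r_1\vee r_2}^t e^{-cK(s_j-r_2)/\eta}ds_j$, so that
\begin{equation*}
\mathbb{E}\left(\left(\int_{r_1\vee r_2}^t\abs{D_{r_1}^{W^1}Y_s^\eta}^q\abs{Q^\eta_{r_2,1}(s)}^q ds\right)^{\frac{2p}{q}}\right)
\le \eta^{-p}\,\big(C\varepsilon^{q/2}\eta\big)^{\frac{2p}{q}} = C\varepsilon^p\eta^{\frac{2p}{q}-p}=C\varepsilon^p\eta^{\frac{p(2-q)}{q}},
\end{equation*}
and summing the two contributions gives the claim.

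The main obstacle is precisely this spike term $Q^\eta_{r_2,1}$: the $\eta^{-1/2}$ blow-up has to be compensated exactly by the exponential relaxation of $Z_{r_2,2}$ at rate $\sim K/\eta$, and this compensation only becomes visible if the full integrand is kept under the $\tfrac{2p}{q}$-th power and expanded into the multi-time integral \emph{before} any H\"older step, with careful bookkeeping of how the $\varepsilon^{1/2}$-size of $D_{r_1}^{W^1}Y$ and the $e^{-cK\cdot/\eta}$ decay of $Z_{r_2,2}$ multiply across the $2p/q$ time slots.
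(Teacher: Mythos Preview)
Your proof is correct and follows the same overall decomposition as the paper, but your treatment of the singular term $Q^\eta_{r_2,1}$ takes a genuinely different route. The paper does \emph{not} separate $D_{r_1}^{W^1}Y_s^\eta$ from $Z_{r_2,2}(s)$ by H\"older; instead it invokes Lemma~\ref{lemmaonEDW1YZr22p}, which bounds $\mathbb{E}\big(\abs{D_{r_1}^{W^1}Y_t^\eta\,Z_{r_2,2}(t)}^{2p}\big)$ directly by $C\varepsilon^p e^{-D(t-r_1\vee r_2)/\eta}$ via an It\^o product formula and a differential inequality for the product. Your three-fold H\"older at level $6p$ (using Proposition~\ref{boundonEDW1Yt2pb} and Lemma~\ref{lemmaonExpofZr2tothe2pq} separately) is more elementary and avoids that auxiliary lemma altogether, at the price of requiring the moment bounds and Assumption~\ref{A:Assumptiongeneral} to hold at the higher order $3p$ rather than $p$. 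Either way the final arithmetic $\eta^{-p}\cdot(\varepsilon^{q/2}\eta)^{2p/q}=\varepsilon^p\eta^{p(2-q)/q}$ is the same.

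One small correction: your parenthetical remark that the spike step ``can be packaged through Proposition~\ref{propinitialconditionDY}'' is not accurate as stated, since that proposition requires $W$ to be $\mathcal{F}_r$-measurable, whereas $D_{r_1}^{W^1}Y_s^\eta$ varies with $s$. Your explicit multi-time expansion with H\"older is the right argument; the reference to Proposition~\ref{propinitialconditionDY} should be dropped.
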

\begin{proof}
Recall from \eqref{decompositionofDW2Yt} that
$D_{r_2}^{W^2}Y_s^{\eta}$ can be decomposed as
$D_{r_2}^{W^2}Y_s^{\eta} = Q^{\eta}_{r_2,1}(s)+Q^{\eta}_{r_2,2}(s)$,
where $Q^{\eta}_{r_2,1}(s)$ and $Q^{\eta}_{r_2,2}(s)$ are defined in
\eqref{defQ1} and \eqref{defQ2}, respectively. This allows us to write
\begin{align}
  \label{decompEintDW1YDW2Y}
\mathbb{E}\left(\left(\int_{r_1\vee r_2}^{t}  \abs{D^{W^{1}}_{r_1}
  Y_s^{\eta}D^{W^{2}}_{r_2}
  Y_s^{\eta}}^{q}ds\right)^{\frac{2p}{q}}\right)&\leq  C
                                                  \mathbb{E}\left(\left(\int_{r_1\vee
                                                  r_2}^{t}
                                                  \abs{D^{W^{1}}_{r_1}
                                                  Y_s^{\eta}Q^{\eta}_{r_2,1}(s)}^{q}ds\right)^{\frac{2p}{q}}\right)\nonumber
  \\
&\quad +C \mathbb{E}\left(\left(\int_{r_1\vee r_2}^{t}  \abs{D^{W^{1}}_{r_1} Y_s^{\eta}Q^{\eta}_{r_2,2}(s)}^{q}ds\right)^{\frac{2p}{q}}\right).
\end{align}
Let us start by estimating the second term. We know that $Q^{\eta}_{r_2,2}(s)$ satisfies Equation
\eqref{Eq:SDE_Q2D2Y}, which is the same equation satisfied by
$D^{W^{1}}_{r_2} Y_s^{\eta}$ (see Equation \eqref{geneqforDY}), only with
$D^{W^{1}}_{r_2} X_u^{\varepsilon}$ replaced by $D^{W^{2}}_{r_2}
X_u^{\varepsilon}$. The same analysis as the one performed in Proposition
\ref{propDW1YleqDW1X} and its proof will hence yield
\begin{equation*}
\mathbb{E}\left(\int_{r_1 \vee
    r_2}^{t}\abs{Q^{\eta}_{r_2,2}(s)}^{2p}ds\right) \leq
C\mathbb{E}\left(\int_{r_1 \vee r_2}^{t}\abs{D_r^{W^2}X_s^{\varepsilon}}^{2p}ds\right).
\end{equation*}
Then, H\"older's inequality and Proposition \ref{boundonEDW2Xs2p} immediately imply that
\begin{align*}
\mathbb{E}\left(\left(\int_{r_1\vee r_2}^{t}  \abs{D^{W^{1}}_{r_1}
  Y_s^{\eta}Q^{\eta}_{r_2,2}(s)}^{q}ds\right)^{\frac{2p}{q}}\right)&\leq
                                                                     C
                                                                     \varepsilon^{p}\left(\varepsilon^{p}+\eta^{p}\right).
\end{align*}
For the first term on the right-hand side of
\eqref{decompEintDW1YDW2Y}, recall that as $\tau$ is assumed to be
uniformly bounded, $\abs{Q^{\eta}_{r_2,1}(s)} \leq
\frac{C}{\sqrt{\eta}}Z_{r_2,2}(s)$. We can hence write
\begin{align*}
&\mathbb{E}\left(\left(\int_{r_1\vee r_2}^{t}  \abs{D^{W^{1}}_{r_1}
                 Y_s^{\eta}Q^{\eta}_{r_2,1}(s)}^{q}ds\right)^{\frac{2p}{q}}\right)
                 \leq C \eta^{-p}\mathbb{E}\left(\left(\int_{r_1\vee r_2}^{t}  \abs{D^{W^{1}}_{r_1} Y_s^{\eta}Z_{r_2,2}(s)}^{q}ds\right)^{\frac{2p}{q}}\right)\nonumber\\
&\qquad\qquad\qquad\qquad \leq C \eta^{-p}\int_{r_1\vee r_2}^{t}\cdots \int_{r_1\vee r_2}^{t}\mathbb{E}\left(\prod_{i=1}^{2p/q}  \abs{D^{W^{1}}_{r_1} Y_{s_i}^{\eta}Z_{r_2,2}(s_i)}^{q}\right)ds_1
\cdots ds_{2p/q}\nonumber\\
&\qquad\qquad\qquad\qquad \leq C \eta^{-p}\int_{r_1\vee r_2}^{t}\cdots \int_{r_1\vee r_2}^{t}\prod_{i=1}^{2p/q}\mathbb{E}\left(  \abs{D^{W^{1}}_{r_1} Y_{s_i}^{\eta}Z_{r_2,2}(s_i)}^{2p}\right)^{
\frac{q}{2p}}ds_1
\cdots ds_{2p/q}.
\end{align*}
Finally, Lemma \ref{lemmaonEDW1YZr22p} allows us to write
\begin{align}
\label{analogouscalcusedintheW2W2case1}
\mathbb{E}\left(\left(\int_{r_1\vee r_2}^{t}  \abs{D^{W^{1}}_{r_1}
                 Y_s^{\eta}Q^{\eta}_{r_2,1}(s)}^{q}ds\right)^{\frac{2p}{q}}\right)
                 &\leq C \eta^{-p}\varepsilon^p \left( \int_{r_1\vee
                 r_2}^{t} e^{-\frac{qD}{2p\eta}(s-r_1 \vee
                 r_2)}ds\right)^{\frac{2p}{q}}\nonumber\\
  &\leq C \varepsilon^p \eta^{\frac{p(2-q)}{q}}.
\end{align}
\end{proof}

 \subsection{Bound for $\mathbb{E}\left( \abs{D^{W^{2},W^{2}}_{r_1,r_2} X_t^{\varepsilon}}^{2p}\right)$}

 \begin{prop}\label{P:BoundSecondMalliavinDW2DW2X}
Let $p\geq 1$ be a  natural number. Then, it holds that for some
constant $C >0$,
\begin{align*}
\mathbb{E}\left( \abs{D^{W^{2},W^{2}}_{r_1,r_2} X_t^{\varepsilon}}^{2p}\right)
&\leq C \bigg[\varepsilon^{2p}+\eta^{2p}+\varepsilon^{p}\eta^{p}+
                \left(1+\left(\frac{\varepsilon}{\eta}\right)^{p}\right)e^{-\frac{K}{2\eta}(r_1
                 \vee r_2 - r_1 \wedge r_2)}\bigg].
\end{align*}
\end{prop}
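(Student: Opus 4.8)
The plan is to follow the template already used for Propositions~\ref{P:BoundSecondMalliavinDW1DW2X} and~\ref{proponEintDW1W2Ystotheq}. First I would write $D^{W^{2},W^{2}}_{r_1,r_2}X_t^{\varepsilon}$ through its affine equation~\eqref{SDEforDWj1Wj2Xt} with $j_1=j_2=2$, raise it to the power $2p$, take expectations, apply the Burkholder--Davis--Gundy inequality to the $dW^1$ stochastic integral (which carries the prefactor $\sqrt{\varepsilon}$), and bound every coefficient by means of Assumption~\ref{A:Assumptiongeneral}. This reduces the estimate to controlling: (a) the initial--condition term $\sqrt{\varepsilon}\alpha_1$, i.e.\ $\varepsilon^{p}$ times the $2p$-th moments of $D^{W^{2}}_{r_1\wedge r_2}X^{\varepsilon}_{r_1\vee r_2}$ and $D^{W^{2}}_{r_1\wedge r_2}Y^{\eta}_{r_1\vee r_2}$; (b) the integrated products of first order derivatives coming from $b_1^{2,2}[c]$ and $b_1^{2,2}[\sigma]$, namely $D^{W^{2}}_{r_1}X\,D^{W^{2}}_{r_2}X$, $D^{W^{2}}_{r_1}X\,D^{W^{2}}_{r_2}Y$, $D^{W^{2}}_{r_1}Y\,D^{W^{2}}_{r_2}X$ and $D^{W^{2}}_{r_1}Y\,D^{W^{2}}_{r_2}Y$; (c) the terms involving $D^{W^{2},W^{2}}_{r_1,r_2}Y$ (both $\int\abs{\cdot}\,ds$ to the $2p$ and $\int\abs{\cdot}^{2}\,ds$ to the $p$); and (d) the self--referential term $\int_{r_1\vee r_2}^{t}\mathbb{E}\abs{D^{W^{2},W^{2}}_{r_1,r_2}X_s^{\varepsilon}}^{2p}ds$, which is removed at the end by Gr\"onwall's lemma.

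For the individual pieces: the boundary term~(a) is handled by Propositions~\ref{boundonEDW2Xs2p} and~\ref{boundonDY2pintermsofepsandeta}, which give $\mathbb{E}\abs{D^{W^{2}}_{r_1\wedge r_2}X^{\varepsilon}_{r_1\vee r_2}}^{2p}+\mathbb{E}\abs{D^{W^{2}}_{r_1\wedge r_2}Y^{\eta}_{r_1\vee r_2}}^{2p}\leq C\big(\varepsilon^{p}+\eta^{p}+\eta^{-p}e^{-\frac{K}{\eta}(r_1\vee r_2-r_1\wedge r_2)}\big)$; after multiplication by $\varepsilon^{p}$ and using $e^{-K/\eta}\leq e^{-K/(2\eta)}$ this produces exactly the $\varepsilon^{2p}$, $\varepsilon^{p}\eta^{p}$ and $(\varepsilon/\eta)^{p}e^{-\frac{K}{2\eta}(\cdots)}$ contributions to the statement. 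The products in~(b) involving at least one $X$-derivative are controlled by H\"older's inequality together with Propositions~\ref{boundonEDW2Xs2p} and~\ref{boundonintDY2pintermsofepsandeta}, each of them contributing only $O(\varepsilon^{2p}+\varepsilon^{p}\eta^{p}+\eta^{2p})$. The terms in~(c) are treated by decomposing $D^{W^{2},W^{2}}_{r_1,r_2}Y$ through its affine structure~\eqref{equationsatisfiedbyDWWY} exactly as in the proof of Proposition~\ref{proponEintDW1W2Ystotheq} (now with both noises equal to $W^{2}$): the part whose equation mimics that of $D^{W^{2},W^{2}}X$ regenerates the self--integral absorbed by the Gr\"onwall step, the $Z$-driven bulk parts contribute $O(\varepsilon^{2p}+\varepsilon^{p}\eta^{p}+\eta^{2p})$, and the boundary part $\frac{1}{\sqrt{\eta}}Z_{r_1\vee r_2,2}(s)\alpha_2$ contributes, after integrating $\int_{r_1\vee r_2}^{t}Z_{r_1\vee r_2,2}(s)^{q}\,ds=O(\eta)$, a term of the same exponential type $e^{-\frac{K}{2\eta}(r_1\vee r_2-r_1\wedge r_2)}$ (with coefficient $1$ for $q=1$ and coefficient $(\varepsilon/\eta)^{p}$ for $q=2$).

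The main obstacle is the remaining product $D^{W^{2}}_{r_1}Y_s^{\eta}\,D^{W^{2}}_{r_2}Y_s^{\eta}$, where \emph{both} first order derivatives are with respect to the fast noise and which moreover enters the $ds$-integral with no $\sqrt{\varepsilon}$ damping. Unlike in Proposition~\ref{proponEintDW1YDW2Y}, where the factor $D^{W^{1}}_{r_1}Y$ already had size $O(\sqrt{\varepsilon})$, here I would use the decomposition $D^{W^{2}}_{r_i}Y=Q^{\eta}_{r_i,1}+Q^{\eta}_{r_i,2}$ of~\eqref{decompositionofDW2Yt} (see~\eqref{defQ1}--\eqref{defQ2}) and split into four products. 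The three products containing at least one $Q^{\eta}_{\cdot,2}$ are comparable to $D^{W^{2}}X$-type quantities and yield only $O(\varepsilon^{2p}+\varepsilon^{p}\eta^{p}+\eta^{2p})$. The genuinely new term is $Q^{\eta}_{r_1,1}(s)Q^{\eta}_{r_2,1}(s)$, which, up to the bounded factor $\tau^{2}$, equals $\eta^{-1}Z_{r_1,2}(s)Z_{r_2,2}(s)$; assuming $r_1\leq r_2$ without loss of generality, the multiplicative (flow) identity $Z_{r_1,2}(s)=Z_{r_1,2}(r_2)\,Z_{r_2,2}(s)$ gives $\int_{r_2}^{t}\eta^{-1}Z_{r_1,2}(s)Z_{r_2,2}(s)\,ds=\eta^{-1}Z_{r_1,2}(r_2)\int_{r_2}^{t}Z_{r_2,2}(s)^{2}\,ds$, and, raising to the power $2p$, applying Cauchy--Schwarz, the exponential moment bounds of Lemma~\ref{lemmaonExpofZr2tothe2pq} on $Z_{r_1,2}(r_2)$ (which yields $e^{-\frac{K}{2\eta}(r_2-r_1)}$) and the estimate $\mathbb{E}\big(\int_{r_2}^{t}Z_{r_2,2}(s)^{2}\,ds\big)^{4p}\leq C\eta^{4p}$, the powers $\eta^{-2p}$ and $\eta^{2p}$ cancel and one is left precisely with the stand--alone $e^{-\frac{K}{2\eta}(r_1\vee r_2-r_1\wedge r_2)}$ appearing in the statement. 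Collecting the estimates of~(a)--(c) into the inequality for $\mathbb{E}\abs{D^{W^{2},W^{2}}_{r_1,r_2}X_t^{\varepsilon}}^{2p}$ and applying Gr\"onwall's lemma to absorb~(d) then yields the asserted bound.
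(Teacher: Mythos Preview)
Your overall strategy is correct and coincides with the paper's: expand \eqref{SDEforDWj1Wj2Xt} with $j_1=j_2=2$, apply Burkholder--Davis--Gundy, reduce to products of first order derivatives, the second order $D^{W^2,W^2}Y$ term, and a self--referential integral removed by Gr\"onwall; the key new contribution is the $Q^{\eta}_{r_1,1}Q^{\eta}_{r_2,1}$ product, which is exactly where the exponential $e^{-\frac{K}{2\eta}(r_1\vee r_2-r_1\wedge r_2)}$ originates. Your flow--identity computation $Z_{r_1,2}(s)=Z_{r_1,2}(r_2)Z_{r_2,2}(s)$ is a clean alternative to the direct H\"older argument the paper uses in Proposition~\ref{lemmaonEintZr12Zr22q}, and gives the same bound.

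Two small corrections, neither of which changes the final estimate. First, your item~(a) does not exist: when $j_1=j_2=2$ both indicator functions in the definition of $\alpha_1$ vanish, so $\alpha_1\equiv 0$ and $D^{W^2,W^2}_{r_1,r_2}X^{\varepsilon}_{t}$ has no boundary term. The $(\varepsilon/\eta)^{p}e^{-\frac{K}{2\eta}(\cdots)}$ piece you attribute to it actually comes from the $q=2$ versions of the $Q_1Q_1$ products below. Second, in your treatment of~(c) you assert that the ``$Z$--driven bulk parts'' of the decomposition of $D^{W^2,W^2}Y$ contribute only $O(\varepsilon^{2p}+\varepsilon^{p}\eta^{p}+\eta^{2p})$. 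That is not quite right: inside $\tilde{Q}_{r_1\vee r_2,2}^{\eta}$ the inhomogeneity $b_2^{2,2}[f],\,b_2^{2,2}[\tau]$ again contains the product $D^{W^2}_{r_1}Y\,D^{W^2}_{r_2}Y$, and hence a \emph{nested} $Q_1Q_1$ term $\frac{1}{\eta}Z_{r_1\vee r_2,2}(s)\int_{r_1\vee r_2}^{s}Z_{r_1\vee r_2,2}^{-1}(u)Q^{\eta}_{r_1,1}(u)Q^{\eta}_{r_2,1}(u)\,du$ which produces another $\eta^{2p(1-q)/q}e^{-\frac{K}{2\eta}(\cdots)}$ (see the paper's Proposition~\ref{proponEintDW2W2Ystotheq}). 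This is of the same type as the exponential you already extract, so the stated bound is unaffected, but the bulk part of~(c) is not purely polynomial.
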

\begin{proof}
Using \eqref{SDEforDWj1Wj2Xt}, the Burkholder-Davis-Gundy inequality, and Assumption \ref{A:Assumptiongeneral}, we can write
  \begin{align*}
&\mathbb{E}\left( \abs{D^{W^{2},W^{2}}_{r_1,r_2} X_t^{\varepsilon}}^{2p}\right)\leq C\left(1+\varepsilon^{p}\right)\mathbb{E}\left( \int_{r_1\vee r_2}^{t}  \abs{D^{W^{2}}_{r_1} X_s^{\varepsilon}D^{W^{2}}_{r_2} X_s^{\varepsilon}}^{2p}ds\right)\nonumber\\
&\qquad\qquad  +C\mathbb{E}\left(\left|\int_{r_1\vee r_2}^{t}  \abs{D^{W^{2}}_{r_1} X_s^{\varepsilon}D^{W^{2}}_{r_2} Y_s^{\eta}}ds\right|^{2p}\right) +C\varepsilon^{p}\mathbb{E}\left(\left|\int_{r_1\vee r_2}^{t}  \abs{D^{W^{2}}_{r_1} X_s^{\varepsilon}D^{W^{2}}_{r_2} Y_s^{\eta}}^{2}ds\right|^{p}
\right)\nonumber\\
&\qquad\qquad +C\mathbb{E}\left(\left|\int_{r_1\vee r_2}^{t}  \abs{D^{W^{2}}_{r_1} Y_s^{\eta}D^{W^{2}}_{r_2} X_s^{\varepsilon}}ds\right|^{2p}\right) +C\varepsilon^{p}\mathbb{E}\left(\left|\int_{r_1\vee r_2}^{t}  \abs{D^{W^{2}}_{r_1} Y_s^{\eta}D^{W^{2}}_{r_2}  X_s^{\varepsilon}}^{2}ds\right|^{p}\right)\nonumber\\
&\qquad\qquad  +C\mathbb{E}\left(\left|\int_{r_1\vee r_2}^{t}  \abs{D^{W^{2}}_{r_1} Y_s^{\eta}D^{W^{2}}_{r_2} Y_s^{\eta}}ds\right|^{2p}\right) +C\varepsilon^{p}\mathbb{E}\left(\left|\int_{r_1\vee r_2}^{t}  \abs{D^{W^{2}}_{r_1} Y_s^{\eta}D^{W^{2}}_{r_2}  Y_s^{\eta}}^{2}ds\right|^{p}\right)\nonumber\\
&\qquad\qquad  +C\mathbb{E}\left(\left|\int_{r_1\vee r_2}^{t}  \abs{D^{W^{2},W^{2}}_{r_1,r_2} Y_s^{\eta}}ds\right|^{2p}\right)+C\varepsilon^{p}\mathbb{E}\left(\left|\int_{r_1\vee r_2}^{t}  \abs{D^{W^{2},W^{2}}_{r_1,r_2} Y_s^{\eta}}^{2}ds\right|^{p}\right)\nonumber\\
&\qquad\qquad  +C\left(1+\varepsilon^{p}\right)\mathbb{E}\left(\int_{r_1\vee r_2}^{t} \abs{D^{W^{2},W^{2}}_{r_1,r_2} X_s^{\varepsilon}}^{2p}ds\right).
  \end{align*}
Let us bound these terms individually. Applying H\"older's inequality together with Proposition \ref{boundonEDW2Xs2p} yields
\begin{align*}
\mathbb{E}\left( \int_{r_1\vee r_2}^{t}  \abs{D^{W^{2}}_{r_1} X_s^{\varepsilon}D^{W^{2}}_{r_2} X_s^{\varepsilon}}^{2p}ds\right)
&\leq C\left( \varepsilon^{2p}+\eta^{2p}\right).
\end{align*}
Similarly, for $q \in \left\{ 1,2 \right\}$, H\"older's inequality, Proposition \ref{boundonEDW2Xs2p}
and Proposition \ref{boundonintDY2pintermsofepsandeta} imply that
\begin{align*}
&\mathbb{E}\left(\left(\int_{r_1\vee r_2}^{t}  \abs{D^{W^{2}}_{r_1} X_s^{\varepsilon}D^{W^{2}}_{r_2} Y_s^{\eta}}^{q}ds\right)^{\frac{2p}{q}}\right)\\
&\qquad\qquad\qquad\qquad \leq C\mathbb{E}\left(\sup_{r_1\vee r_2\leq s\leq t} \abs{ D^{W^{2}}_{r_1} X_s^{\varepsilon}}^{2p p_1}\right)^{\frac{1}{p_1}}\mathbb{E}\left(\left(\int_{r_1\vee r_2}^{t}  \abs{D^{W^{2}}_{r_2} Y_s^{\eta}}^{q}ds\right)^{\frac{2pp_1}{q}}\right)^{\frac{1}{q_1}}\\
&\qquad\qquad\qquad\qquad \leq C \left(\varepsilon^{p}+\eta^{p}\right)\left(\eta^{\frac{p(2-q)}{q}}+\varepsilon^{p}+\eta^{p}\right).
\end{align*}
Combining the two last estimates with Propositions \ref{proponEintDW2YDW2Ytoqthento2poverq} and \ref{proponEintDW2W2Ystotheq}
gives us the global estimate
  \begin{align*}
\mathbb{E}\left( \abs{D^{W^{2},W^{2}}_{r_1,r_2} X_t^{\varepsilon}}^{2p}\right)&\leq C\left(1+\varepsilon^{p}\right)\left( \varepsilon^{2p}+\eta^{2p}\right)\nonumber\\
&\quad  +C\left(\varepsilon^{2p}+\eta^{2p}\right)+C\varepsilon^{p}\left(\varepsilon^{p}+\eta^{p}\right)\left(1+\varepsilon^{p}+\eta^{p}\right)\nonumber\\
&\quad +C\left(\varepsilon^{2p}+\eta^{2p}\right)+C\varepsilon^{p}\left(\varepsilon^{p}+\eta^{p}\right)\left(1+\varepsilon^{p}+\eta^{p}\right)\nonumber\\
&\quad +C\left(\varepsilon^{2p} + \eta^{2p} +
\eta^{p}\left(\varepsilon^{p} + \eta^{p}  \right) +
  e^{-\frac{K}{2\eta}(r_1 \vee r_2 - r_1
                                                                                                                                                      \wedge r_2)}\right)\\
    &\quad +C\varepsilon^{p}\left(\varepsilon^{2p} + \eta^{2p} +
\left(\varepsilon^{p} + \eta^{p}  \right) +
  \frac{1}{\eta^p}e^{-\frac{K}{2\eta}(r_1 \vee r_2 - r_1
    \wedge r_2)}\right)\nonumber\\
&\quad  +C \bigg( \eta^{p}\left(\varepsilon^{p}+\eta^{p}+\frac{1}{\eta^{p}}e^{-\frac{K}{\eta}(r_1\vee
                                                          r_2-r_1\wedge
                                     r_2)}  \right)\\
    &\qquad\qquad\qquad\qquad\qquad  +  e^{-\frac{K}{2\eta}(r_1
    \vee r_2 - r_1 \wedge r_2)} +\left( \varepsilon^{2p} + \eta^{2p}
    \right) \bigg)\\
    &\quad +C\varepsilon^{p}\bigg(\left(\varepsilon^{p}+\eta^{p}+\frac{1}{\eta^{p}}e^{-\frac{K}{\eta}(r_1\vee
                                                          r_2-r_1\wedge
      r_2)}  \right)\\
    &\qquad\qquad\qquad\qquad\qquad +  \frac{1}{\eta^p}e^{-\frac{K}{2\eta}(r_1
    \vee r_2 - r_1 \wedge r_2)} +\left( \varepsilon^p + \eta^p
    \right)\left(1 +  \varepsilon^p + \eta^p
    \right)\bigg)\nonumber\\
&\quad  +C\left(1+\varepsilon^{p}\right)\mathbb{E}\left(\int_{r_1\vee r_2}^{t} \abs{D^{W^{2},W^{2}}_{r_1,r_2} X_s^{\varepsilon}}^{2p}ds\right).
  \end{align*}
For $\varepsilon,\eta<1$, simplifying this bound yields
  \begin{align*}
\mathbb{E}\left( \abs{D^{W^{2},W^{2}}_{r_1,r_2} X_t^{\varepsilon}}^{2p}\right)
&\leq C\bigg[(\varepsilon^{2p}+\eta^{2p}+\varepsilon^{p}\eta^{p}) + e^{-\frac{K}{\eta}(r_1
                                                                              \vee r_2 - r_1 \wedge r_2)}\\
    &\quad +
                 \left(\frac{\varepsilon}{\eta}\right)^{p}e^{-\frac{K}{2\eta}(r_1
                 \vee r_2 - r_1 \wedge r_2)} + \mathbb{E}\left(\int_{r_1\vee r_2}^{t}  \abs{D^{W_2,W_2}_{r_1,r_2}X^{\varepsilon}_{s}}^{2p}ds\right)\bigg].
  \end{align*}
Finally, applying Gr\"onwall's lemma, we obtain
\begin{align*}
\mathbb{E}\left( \abs{D^{W^{2},W^{2}}_{r_1,r_2} X_t^{\varepsilon}}^{2p}\right)
&\leq C \bigg[\varepsilon^{2p}+\eta^{2p}+\varepsilon^{p}\eta^{p}+
                \left(1+\left(\frac{\varepsilon}{\eta}\right)^{p}\right)e^{-\frac{K}{2\eta}(r_1
                 \vee r_2 - r_1 \wedge r_2)}\bigg],
\end{align*}
as desired.
\end{proof}

\begin{prop}
  \label{proponEintDW2YDW2Ytoqthento2poverq}
Let $p\geq 1$ be a  natural number and $q \in \left\{ 1,2 \right\}$. Then, it holds that for some
constant $C >0$,
\begin{align*}
\mathbb{E}\left(\left(\int_{r_1 \vee
      r_2}^{t}\abs{D_{r_1}^{W^2}Y_s^{\eta}D_{r_2}^{W^2}Y_s^{\eta}}^{q}ds\right)^{\frac{2p}{q}}\right)
&\leq C \bigg[ \varepsilon^{2p} + \eta^{2p} +
\eta^{\frac{p(2-q)}{q}}\left(\varepsilon^{p} + \eta^{p}  \right)\\
&\qquad\qquad\qquad\qquad\qquad +
  \eta^{\frac{2p(1-q)}{q}}e^{-\frac{K}{2\eta}(r_1 \vee r_2 - r_1
    \wedge r_2)} \bigg].
\end{align*}
\end{prop}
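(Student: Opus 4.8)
The plan is to reduce everything, via the decomposition \eqref{decompositionofDW2Yt}, to pointwise-in-$s$ moment estimates on products of the building blocks $Q^{\eta}_{r,1}$ and $Q^{\eta}_{r,2}$. First I would write $D_{r_i}^{W^2}Y_s^{\eta}=Q^{\eta}_{r_i,1}(s)+Q^{\eta}_{r_i,2}(s)$ for $i=1,2$ (with the $Q^{\eta}_{r_i,j}$ as in \eqref{defQ1}--\eqref{defQ2}), expand the product $D_{r_1}^{W^2}Y_s^{\eta}D_{r_2}^{W^2}Y_s^{\eta}$ into the four terms $Q^{\eta}_{r_1,a}(s)Q^{\eta}_{r_2,b}(s)$ with $a,b\in\{1,2\}$, and apply the triangle inequality in $L^{2p/q}(\Omega)$. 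Since the integrand is symmetric in $(r_1,r_2)$, I may assume $r_1\le r_2$, so $r_1\vee r_2=r_2$ and $r_1\wedge r_2=r_1$.

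For each of the four terms I set $m=2p/q\in\mathbb{N}$, write $\bigl(\int_{r_2}^{t}g(s)\,ds\bigr)^{m}$ as an $m$-fold iterated integral, and apply the generalized H\"older inequality with all exponents equal to $m$ to bring the expectation inside; using $qm=2p$ this yields
\begin{equation*}
\mathbb{E}\Bigl(\Bigl(\int_{r_2}^{t}\abs{Q^{\eta}_{r_1,a}(s)Q^{\eta}_{r_2,b}(s)}^{q}ds\Bigr)^{m}\Bigr)\le\Bigl(\int_{r_2}^{t}\mathbb{E}\bigl(\abs{Q^{\eta}_{r_1,a}(s)Q^{\eta}_{r_2,b}(s)}^{2p}\bigr)^{q/(2p)}ds\Bigr)^{m}.
\end{equation*}
It is essential that the two factors be kept together until after this step: splitting $Q^{\eta}_{r_1,a}$ from $Q^{\eta}_{r_2,b}$ at the level of the $ds$-integral, before integrating in $\omega$, is too lossy and produces powers of $\eta$ strictly worse than those claimed. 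What remains is thus to bound $\mathbb{E}(\abs{Q^{\eta}_{r_1,a}(s)Q^{\eta}_{r_2,b}(s)}^{2p})$ for fixed $s$, and then integrate in $s$ over the bounded interval $[r_2,t]$.

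The fixed-$s$ estimates follow from Cauchy--Schwarz together with three ingredients: (i) $\abs{Q^{\eta}_{r,1}(s)}\le C\eta^{-1/2}Z_{r,2}(s)$ (as $\tau$ is bounded) and the moment bound for $Z_{r,2}$ of Lemma \ref{lemmaonExpofZr2tothe2pq}; (ii) the semigroup identity $Z_{r_1,2}(s)=Z_{r_1,2}(r_2)\,Z_{r_2,2}(s)$ for $s\ge r_2$, immediate from \eqref{expressionofZr2t}; and (iii) the bound $\mathbb{E}(\abs{Q^{\eta}_{r,2}(s)}^{2p'})\le C(\varepsilon^{p'}+\eta^{p'})$ for every $p'\ge1$, which is the part of the estimate for $D_r^{W^2}Y$ not coming from $Q^{\eta}_{r,1}$ and is obtained exactly as in the proof of Proposition \ref{boundonDY2pintermsofepsandeta} (itself using Proposition \ref{boundonEDW2Xs2p}). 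Concretely: for $(a,b)=(2,2)$, (iii) with $p'=2p$ and Cauchy--Schwarz give a fixed-$s$ bound $C(\varepsilon^{2p}+\eta^{2p})$, hence the contribution $\varepsilon^{2p}+\eta^{2p}$; for the mixed terms $(a,b)\in\{(1,2),(2,1)\}$, (i), (iii) and Cauchy--Schwarz give a fixed-$s$ bound of order $\eta^{-p}e^{-\frac{K}{2\eta}(s-r_2)}(\varepsilon^{p}+\eta^{p})$, so that integrating picks up a factor $O(\eta)$ from the exponential and, after raising to the power $m$, one obtains the contribution $\eta^{\frac{p(2-q)}{q}}(\varepsilon^{p}+\eta^{p})$; and for the doubly singular term $(a,b)=(1,1)$, using (ii) to write $Z_{r_1,2}(s)^{2p}Z_{r_2,2}(s)^{2p}=Z_{r_1,2}(r_2)^{2p}Z_{r_2,2}(s)^{4p}$ and then Cauchy--Schwarz and Lemma \ref{lemmaonExpofZr2tothe2pq} give a fixed-$s$ bound of order $\eta^{-2p}e^{-\frac{K}{2\eta}(r_2-r_1)}e^{-\frac{K}{2\eta}(s-r_2)}$, hence the contribution $\eta^{\frac{2p(1-q)}{q}}e^{-\frac{K}{2\eta}(r_2-r_1)}$ (with $K$ adjusted between lines as is customary). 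Summing the four contributions gives the stated inequality.

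I expect the main obstacle to be purely one of bookkeeping the powers of $\eta$: the order of operations (H\"older-expand in $s$ first, Cauchy--Schwarz at fixed $s$ second) is forced, and getting it backwards costs the sharpness of the bound. The only other delicate point is extracting the decay $e^{-\frac{K}{2\eta}(r_1\vee r_2-r_1\wedge r_2)}$ in the $(1,1)$ term, which is exactly what the semigroup factorization of $Z_{r,2}$ provides, the surplus factor $Z_{r_1,2}(r_2)$ carrying that decay while $Z_{r_2,2}(s)^{4p}$ is harmless after integration.
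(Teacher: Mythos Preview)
Your proposal is correct and follows essentially the same approach as the paper: decompose via \eqref{decompositionofDW2Yt} into the four products $Q^{\eta}_{r_1,a}Q^{\eta}_{r_2,b}$, expand the outer $2p/q$-power as an iterated integral with H\"older, and reduce to pointwise moment bounds that exploit the exponential decay of $Z_{r,2}$. The only minor differences are mechanical: for the $(1,1)$ term the paper packages the estimate as Proposition~\ref{lemmaonEintZr12Zr22q} (direct H\"older on $Z_{r_1,2}$ and $Z_{r_2,2}$ separately rather than your semigroup factorization), and for the mixed terms the paper appeals to the analogue of Lemma~\ref{lemmaonEDW1YZr22p} (a differential-inequality argument) rather than your Cauchy--Schwarz splitting, but both routes give the same bounds.
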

\begin{proof}
The decomposition
$D_r^{W^2}Y_t^{\eta}=Q^{\eta}_{r,1}(t)+Q^{\eta}_{r,2}(t)$ given in
\eqref{decompositionofDW2Yt} allows us to write
\begin{align}
  \label{4termdecompofEintDr1W2YDr2W2Ytotheqdstothe2poverq}
\mathbb{E}\left(\left(\int_{r_1\vee r_2}^{t}  \abs{D^{W^{2}}_{r_1} Y_s^{\eta}D^{W^{2}}_{r_2} Y_s^{\eta}}^{q}ds\right)^{\frac{2p}{q}}\right)&\leq  C \mathbb{E}\left(\left(\int_{r_1\vee r_2}^{t}  \abs{Q^{\eta}_{r_1,1}(s)Q^{\eta}_{r_2,1}(s)}^{q}ds\right)^{\frac{2p}{q}}\right)\nonumber\\
&\quad +C \mathbb{E}\left(\left(\int_{r_1\vee r_2}^{t}  \abs{Q^{\eta}_{r_1,1}(s)Q^{\eta}_{r_2,2}(s)}^{q}ds\right)^{\frac{2p}{q}}\right)\nonumber\\
&\quad+C \mathbb{E}\left(\left(\int_{r_1\vee r_2}^{t}  \abs{Q^{\eta}_{r_1,2}(s)Q^{\eta}_{r_2,1}(s)}^{q}ds\right)^{\frac{2p}{q}}\right)\nonumber\\
&\quad+C \mathbb{E}\left(\left(\int_{r_1\vee r_2}^{t}  \abs{Q^{\eta}_{r_1,2}(s)Q^{\eta}_{r_2,2}(s)}^{q}ds\right)^{\frac{2p}{q}}\right).
\end{align}
We will now estimate these four terms separately. Recall that $Q^{\eta}_{r,2}(t)$ satisfies the stochastic differential
equation \eqref{Eq:SDE_Q2D2Y}, which has the same structure as the one satisfied
by $D^{W^{1}}_{r} Y_t^{\eta}$, but with $D^{W^{1}}_{r}
X_{s}^{\varepsilon}$ replaced by $D^{W^{2}}_{r} X_{s}^{\varepsilon}$. A
calculation analogous to
\eqref{analogouscalcusedintheW2W2case1} in the proof of
Proposition \ref{proponEintDW1YDW2Y} will yield
\begin{align*}
& \mathbb{E}\left(\left(\int_{r_1\vee r_2}^{t}  \abs{Q^{\eta}_{r_1,2}(s) Q^{\eta}_{r_2,1}(s)}^{q}ds\right)^{\frac{2p}{q}}\right)\leq C \left(\varepsilon^{p}+\eta^{p}\right)\eta^{\frac{p(2-q)}{q}},
\end{align*}
\begin{align*}
& \mathbb{E}\left(\left(\int_{r_1\vee r_2}^{t}  \abs{Q^{\eta}_{r_1,1}(s) Q^{\eta}_{r_2,2}(s)}^{q}ds\right)^{\frac{2p}{q}}\right)\leq C \left(\varepsilon^{p}+\eta^{p}\right)\eta^{\frac{p(2-q)}{q}},
\end{align*}
as well as
\begin{align*}
&\mathbb{E}\left(\left(\int_{r_1\vee r_2}^{t}  \abs{Q^{\eta}_{r_1,2}(s)Q^{\eta}_{r_2,2}(s)}^{q}ds\right)^{\frac{2p}{q}}\right)
\leq C \left(\varepsilon^{2p}+\eta^{2p}\right).
\end{align*}
Hence, it remains to address the first term on the right-hand side of
\eqref{4termdecompofEintDr1W2YDr2W2Ytotheqdstothe2poverq}. For this
purpose, we recall that  $Q^{\eta}_{r,1}(t)= \frac{1}{\sqrt{\eta}}
Z_{r,2}(t)  \tau\left(X_{r}^{\varepsilon},Y_{r}^{\eta} \right)$, with
$\tau$ assumed to be uniformly bounded. Hence, we can write
\begin{align*}
\mathbb{E}\left(\left(\int_{r_1\vee r_2}^{t}  \abs{Q^{\eta}_{r_1,1}(s)Q^{\eta}_{r_2,1}(s)}^{q}ds\right)^{\frac{2p}{q}}\right) &\leq \frac{C}{\eta^{2p}} \mathbb{E}\left(\left(   \int_{r_1 \vee r_2}^t
                 Z_{r_1,2}^q(s)Z_{r_2,2}^q(s)
                                       ds \right)^{\frac{2p}{q}}\right).
\end{align*}
Using Proposition \ref{lemmaonEintZr12Zr22q} then implies that
\begin{align*}
\mathbb{E}\left(\left(\int_{r_1\vee r_2}^{t}
  \abs{Q^{\eta}_{r_1,1}(s)Q^{\eta}_{r_2,1}(s)}^{q}ds\right)^{\frac{2p}{q}}\right)
  &\leq C \eta^{\frac{2p(1-q)}{q}}e^{-\frac{K}{2\eta}(r_1 \vee r_2 - r_1
    \wedge r_2)}.
\end{align*}
Combining these four estimates concludes the proof.
\end{proof}

\begin{prop}
  \label{proponEintDW2W2Ystotheq}
Let $p\geq 1$ be a  natural number and $q \in \left\{ 1,2 \right\}$. Then, it holds that for some
constant $C >0$,
\begin{align*}
\mathbb{E}\left(\left(\int_{r_1 \vee r_2}^{t}\abs{D_{r_1,r_2}^{W^2,W^2}Y_s^{\eta}}^{q}ds\right)^{\frac{2p}{q}}\right) &\leq C\Bigg[\left(\varepsilon^{p}+\eta^{p}+\frac{1}{\eta^{p}}e^{-\frac{K}{\eta}(r_1\vee
                                                          r_2-r_1\wedge r_2)}  \right)\eta^{\frac{p(2-q)}{q}}\\
  &\quad +  \eta^{\frac{2p(1-q)}{q}}e^{-\frac{K}{2\eta}(r_1
    \vee r_2 - r_1 \wedge r_2)} +\left( \varepsilon^p + \eta^p
    \right)\left(\eta^{\frac{p(2-q)}{q}} +  \varepsilon^p + \eta^p
    \right)\\
  &\quad +  \mathbb{E}\left(\int_{r_1\vee r_2}^{t}
    \abs{D^{W_2,W_2}_{r_1,r_2}X^{\varepsilon}_{s}}^{2p}ds\right)\Bigg].
\end{align*}

\end{prop}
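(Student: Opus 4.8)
The plan is to mirror, for $j_1=j_2=2$, the argument used for $D_{r_1,r_2}^{W^1,W^2}Y_t^{\eta}$ in Proposition \ref{proponEintDW1W2Ystotheq}. Since Equation \eqref{equationsatisfiedbyDWWY} is affine, the integrating factor $Z_{r_1\vee r_2,2}$ of \eqref{expressionofZr2t} lets me write, in direct analogy with \eqref{decompin3ofDW1W2Yt},
\[
D_{r_1,r_2}^{W^2,W^2}Y_t^{\eta}=\hat{Q}^{\eta}_{r_1\vee r_2,1}(t)+\hat{Q}^{\eta}_{r_1\vee r_2,2}(t)+\hat{Q}^{\eta}_{r_1\vee r_2,3}(t),
\]
where $\hat{Q}_1$ carries the initial datum $\tfrac{1}{\sqrt\eta}Z_{r_1\vee r_2,2}(t)\,\alpha_2$; $\hat{Q}_3$ carries the part of the forcing proportional to $\partial_1 f\,D_{r_1,r_2}^{W^2,W^2}X_s^{\varepsilon}$ (and its $\partial_1\tau$ companion in the $dW^2$-integral); and $\hat{Q}_2$ collects all the remaining forcing terms, i.e.\ the products of first-order Malliavin derivatives appearing in $b_2^{2,2}[f]$, $b_2^{2,2}[\tau]$ and in the correction $\partial_2\tau\,b_2^{2,2}[\tau]$. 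By the triangle inequality (as in \eqref{tripledecompofEintDW1W2Y}) it then suffices to bound $\mathbb{E}\bigl(\bigl(\int_{r_1\vee r_2}^t\abs{\hat{Q}_i(s)}^q ds\bigr)^{2p/q}\bigr)$ for $i=1,2,3$ and add.

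\textbf{The term $\hat{Q}_1$.} First I would note that $\alpha_2$ collapses: with $j_1=j_2=2$ both indicators in the definition of $\alpha_2$ are active, but $D_{r_1}^{W^2}$ of a value at the earlier time $r_1\wedge r_2$ vanishes, so that $\alpha_2=\partial_1\tau\,D_{r_1\wedge r_2}^{W^2}X_{r_1\vee r_2}^{\varepsilon}+\partial_2\tau\,D_{r_1\wedge r_2}^{W^2}Y_{r_1\vee r_2}^{\eta}$, the derivatives of $\tau$ being evaluated at $(X_{r_1\vee r_2}^{\varepsilon},Y_{r_1\vee r_2}^{\eta})$. Applying Proposition \ref{propinitialconditionDY} with $W=\alpha_2$, together with the moment bounds of Propositions \ref{boundonEDW2Xs2p} and \ref{boundonDY2pintermsofepsandeta} (the latter supplying the singular contribution $\eta^{-p}e^{-K(r_1\vee r_2-r_1\wedge r_2)/\eta}$), gives exactly the first line of the claim, namely $C\,\eta^{p(2-q)/q}\bigl(\varepsilon^p+\eta^p+\eta^{-p}e^{-K(r_1\vee r_2-r_1\wedge r_2)/\eta}\bigr)$.

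\textbf{The term $\hat{Q}_3$.} The key observation is that $\hat{Q}_3$ solves a linear stochastic differential equation of exactly the shape satisfied by $D_{r_1,r_2}^{W^1,W^1}Y_t^{\eta}$, only with $D_{r_1,r_2}^{W^2,W^2}X_s^{\varepsilon}$ in place of $D_{r_1,r_2}^{W^1,W^1}X_s^{\varepsilon}$ and with no additional product-of-derivatives term. Re-running the It\^o/Young/Gr\"onwall argument of Proposition \ref{proponEintDW1W1YslessEintDW1W1Xs} — as was done for $\hat{Q}_3$ in the proof of Proposition \ref{proponEintDW1W2Ystotheq} — yields $\mathbb{E}\bigl(\bigl(\int\abs{\hat{Q}_3}^q ds\bigr)^{2p/q}\bigr)\le C\,\mathbb{E}\bigl(\int_{r_1\vee r_2}^t\abs{D_{r_1,r_2}^{W^2,W^2}X_s^{\varepsilon}}^{2p}ds\bigr)$, which is the last term on the right-hand side of the statement (and is eventually closed by Gr\"onwall in Proposition \ref{P:BoundSecondMalliavinDW2DW2X}).

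\textbf{The term $\hat{Q}_2$ and the main obstacle.} The bulk of the work is in $\hat{Q}_2$, whose pieces are of the form $\tfrac1\eta Z_{r_1\vee r_2,2}(s)\int_{r_1\vee r_2}^s Z_{r_1\vee r_2,2}^{-1}(u)F_uG_u\,du$ and $\tfrac1{\sqrt\eta}Z_{r_1\vee r_2,2}(s)\int_{r_1\vee r_2}^s Z_{r_1\vee r_2,2}^{-1}(u)F_uG_u\,dW_u^2$, with $F_uG_u$ one of $D_{r_1}^{W^2}X_u^{\varepsilon}D_{r_2}^{W^2}X_u^{\varepsilon}$, $D_{r_1}^{W^2}X_u^{\varepsilon}D_{r_2}^{W^2}Y_u^{\eta}$, $D_{r_1}^{W^2}Y_u^{\eta}D_{r_2}^{W^2}X_u^{\varepsilon}$, $D_{r_1}^{W^2}Y_u^{\eta}D_{r_2}^{W^2}Y_u^{\eta}$. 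For the three products carrying at least one $X$-derivative I would use Propositions \ref{expintegratedFuGudu} and \ref{expintegratedFuGudWu} (decomposing $D^{W^2}Y=Q_1+Q_2$ via \eqref{decompositionofDW2Yt} wherever a $Y$-derivative occurs, and using that $D^{W^2}X$ is now $O(\sqrt\varepsilon+\sqrt\eta)$), which produces the contributions $(\varepsilon^p+\eta^p)\eta^{p(2-q)/q}$ and $(\varepsilon^{2p}+\eta^{2p})$. For the genuinely new product $D_{r_1}^{W^2}Y_u^{\eta}D_{r_2}^{W^2}Y_u^{\eta}$ I would expand both factors: the $Q_{r_1,1}Q_{r_2,2}$, $Q_{r_1,2}Q_{r_2,1}$ and $Q_{r_1,2}Q_{r_2,2}$ pieces are treated exactly as in the proof of Proposition \ref{proponEintDW2YDW2Ytoqthento2poverq} (again giving $(\varepsilon^p+\eta^p)\eta^{p(2-q)/q}$ and $(\varepsilon^{2p}+\eta^{2p})$), while the $Q_{r_1,1}Q_{r_2,1}$ piece — now sitting \emph{inside} the stochastic convolution — is handled by the double-$Z$ estimate (Proposition \ref{lemmaonEintZr12Zr22q} with Lemma \ref{lemmaonExpofZr2tothe2pq} and the semigroup identity $Z_{r_1,2}(s)Z_{r_2,2}(s)=Z_{r_1,2}(r_2)Z_{r_2,2}(s)^2$), which is precisely where the term $\eta^{2p(1-q)/q}e^{-K(r_1\vee r_2-r_1\wedge r_2)/(2\eta)}$ originates; the $dW^2$-integral versions are controlled in the same way after Burkholder--Davis--Gundy. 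Collecting the $\hat{Q}_1$, $\hat{Q}_2$, $\hat{Q}_3$ bounds gives the statement. I expect the last step to be the hard part: tracking how the prefactors $1/\eta$ and $1/\sqrt\eta$ interact with the (possibly threefold) nesting of exponentially decaying $Z$-factors is what forces the sharp powers of $\eta$ — including the negative ones appearing for $q=2$ — and one must be vigilant that the stochastic-integral contributions are estimated via Burkholder--Davis--Gundy rather than naive $L^2$ bounds.
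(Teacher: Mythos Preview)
Your proposal is correct and follows essentially the same route as the paper: the three-part decomposition $\tilde{Q}_1+\tilde{Q}_2+\tilde{Q}_3$ (the paper uses tildes rather than hats here), the handling of $\tilde{Q}_1$ via Proposition~\ref{propinitialconditionDY} with the first-order bounds, of $\tilde{Q}_3$ via the methodology of Proposition~\ref{proponEintDW1W1YslessEintDW1W1Xs}, and of $\tilde{Q}_2$ via the $Q_1+Q_2$ expansion of each $D^{W^2}Y$ factor are all exactly what the paper does. The only minor difference is in the $Q_{r_1,1}^\eta Q_{r_2,1}^\eta$ piece inside the convolution: rather than invoking Proposition~\ref{lemmaonEintZr12Zr22q} directly, the paper uses the semigroup identity you mention to reduce $Z_{r_1\vee r_2,2}^{-1}(u)Z_{r_1,2}(u)Z_{r_2,2}(u)$ to $Z_{r_1\wedge r_2,2}(u)$, then bounds the outer $Z_{r_1\vee r_2,2}^q(s)$ integral and the inner $Z_{r_1\wedge r_2,2}(u)$ integral separately via Cauchy--Schwarz and Propositions~\ref{propinitialconditionDY} and~\ref{propintZr1minr2tosomepower}; this yields the same $\eta^{2p(1-q)/q}e^{-K(r_1\vee r_2-r_1\wedge r_2)/(2\eta)}$ contribution you identify.
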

\begin{proof}
Recall from \eqref{equationsatisfiedbyDWWY} that the process
$D^{W^{2},W^{2}}_{r_1,r_{2}} Y_t^{\eta}$ satisfies an affine
stochastic differential equation. We can hence write
\begin{align*}
D_{r_1,r_2}^{W^{2},W^{2}}Y_t^{\eta} &= \frac{1}{\sqrt{\eta}}Z_{r_1
  \vee
  r_2,2}(t)\Big(\partial_1\tau (X_{r_1\vee r_2}^{\varepsilon},Y_{r_1\vee
                 r_2}^{\eta}) D_{r_1\wedge
                 r_2}^{W_2}X^{\varepsilon}_{r_1\vee r_2}\\
  &\qquad\qquad\qquad\qquad\qquad\qquad\qquad\qquad\qquad\qquad +\partial_2\tau (X_{r_1\vee r_2}^{\varepsilon},Y_{r_1\vee r_2}^{\eta}) D_{r_1\wedge r_2}^{W_2}Y^{\eta}_{r_1\vee r_2}\Big)
\\
  &\quad + \frac{1}{\eta}Z_{r_1
  \vee
  r_2,2}(t) \int_{r_1 \vee r_2}^t Z_{r_1
  \vee
  r_2,2}^{-1}(s) \Big[b_2^{2,2}[f]\left(X_s^{\varepsilon},Y_s^{\eta}
    \right)\\
  &\qquad\qquad\qquad\qquad\qquad\qquad\qquad\qquad\qquad\qquad - \partial_2\tau
    \left(X_s^{\varepsilon},Y_s^{\eta}
    \right)b_2^{2,2}[\tau]\left(X_s^{\varepsilon},Y_s^{\eta}
    \right)  \Big]ds\\
  &\quad + \frac{1}{\sqrt{\eta}}Z_{r_1
  \vee
  r_2,2}(t) \int_{r_1 \vee r_2}^t Z_{r_1
  \vee
  r_2,2}^{-1}(s)b_2^{2,2}[\tau]\left(X_s^{\varepsilon},Y_s^{\eta}
    \right)dW_s^2\nonumber\\
  &= \tilde{Q}_{r_1\vee r_2,1}^{\eta}(t)+\tilde{Q}_{r_1\vee r_2,2}^{\eta}(t)+\tilde{Q}_{r_1\vee r_2,3}^{\eta}(t),
\end{align*}
where
\begin{align*}
\tilde{Q}_{r_1\vee r_2,1}^{\eta}(t)&=\frac{1}{\sqrt{\eta}}Z_{r_1
  \vee
  r_2,2}(t)\Big(\partial_1\tau (X_{r_1\vee r_2}^{\varepsilon},Y_{r_1\vee
                 r_2}^{\eta}) D_{r_1\wedge
                 r_2}^{W_2}X^{\varepsilon}_{r_1\vee r_2}\\
  &\qquad\qquad\qquad\qquad\qquad\qquad\qquad\qquad\qquad\qquad +\partial_2\tau (X_{r_1\vee r_2}^{\varepsilon},Y_{r_1\vee r_2}^{\eta}) D_{r_1\wedge r_2}^{W_2}Y^{\eta}_{r_1\vee r_2}\Big),
\end{align*}
\begin{align}
  \label{defQtilde2}
 \tilde{Q}_{r_1\vee r_2,2}^{\eta}(t)&=\frac{1}{\eta}Z_{r_1
  \vee
  r_2,2}(t) \int_{r_1 \vee r_2}^t Z_{r_1
  \vee
  r_2,2}^{-1}(s) \Big[b_2^{2,2}[f]\left(X_s^{\varepsilon},Y_s^{\eta}
    \right)-(\partial_{1}f\left(X_s^{\varepsilon},Y_s^{\eta}
                                      \right)\nonumber\\
  &\qquad\qquad -\partial_{2}\tau\left(X_s^{\varepsilon},Y_s^{\eta}
    \right)\partial_{1}\tau\left(X_s^{\varepsilon},Y_s^{\eta}
    \right)) D_{r_1,r_2}^{W^{2},W^{2}}X^{\varepsilon}_{s} - \partial_2\tau\left(X_s^{\varepsilon},Y_s^{\eta}
    \right)
    b_2^{2,2}[\tau]\left(X_s^{\varepsilon},Y_s^{\eta}
    \right)  \Big]ds\nonumber\\
  &\quad + \frac{1}{\sqrt{\eta}}Z_{r_1
  \vee
  r_2,2}(t) \int_{r_1 \vee r_2}^t Z_{r_1
  \vee
  r_2,2}^{-1}(s)\Big(b_2^{2,2}[\tau]\left(X_s^{\varepsilon},Y_s^{\eta}
    \right)\nonumber\\
  &\qquad\qquad\qquad\qquad\qquad\qquad\qquad\qquad\qquad\qquad -\partial_{1}f\left(X_s^{\varepsilon},Y_s^{\eta}
    \Big)
    D_{r_1,r_2}^{W^{2},W^{2}}X^{\varepsilon}_{s}\right)dW_s^2
\end{align}
and
\begin{align*}
\tilde{Q}_{r_1\vee r_2,3}^{\eta}(t)&=\frac{1}{\eta}Z_{r_1
  \vee
  r_2,2}(t) \int_{r_1 \vee r_2}^t Z_{r_1
  \vee
  r_2,2}^{-1}(s) (\partial_{1}f\left(X_s^{\varepsilon},Y_s^{\eta}
                                     \right)\\
  &\qquad\qquad\qquad\qquad\qquad\qquad\qquad\qquad -\partial_{2}\tau\left(X_s^{\varepsilon},Y_s^{\eta}
    \right)\partial_{1}\tau\left(X_s^{\varepsilon},Y_s^{\eta}
    \right)) D_{r_1,r_2}^{W^{2},W^{2}}X^{\varepsilon}_{s}  ds\nonumber\\
  &\quad + \frac{1}{\sqrt{\eta}}Z_{r_1
  \vee
  r_2,2}(t) \int_{r_1 \vee r_2}^t Z_{r_1
  \vee
  r_2,2}^{-1}(s)\partial_{1}f \left(X_s^{\varepsilon},Y_s^{\eta}
    \right) D_{r_1,r_2}^{W^{2},W^{2}}X^{\varepsilon}_{s}dW_s^2.
\end{align*}
We can hence write
\begin{align}
  \label{tripledecompofEintDW2W2Y}
\mathbb{E}\left(\left(\int_{r_1 \vee
  r_2}^{t}\abs{D_{r_1,r_2}^{W^2,W^2}Y_s^{\eta}}^{q}ds\right)^{\frac{2p}{q}}\right)
  \leq C \sum_{i=1}^{3}\mathbb{E}\left(\left(\int_{r_1 \vee r_2}^{t}\abs{\tilde{Q}_{r_1\vee r_2,i}^{\eta}(s)}^{q}ds\right)^{\frac{2p}{q}}\right)
\end{align}
and estimate these three terms separately. For the term corresponding
to $i=1$ in \eqref{tripledecompofEintDW2W2Y}, we can apply Proposition \ref{propinitialconditionDY} together with Propositions \ref{boundonEDW2Xs2p} and
\ref{boundonDY2pintermsofepsandeta} in order to get
\begin{align}
  \label{Eq:Bound_hatQ1_W2W2}
\mathbb{E}\left(\left(\int_{r_1\vee r_2}^{t}  \abs{\tilde{Q}_{r_1\vee r_2,1}^{\eta}(s)}^{q}ds\right)^{\frac{2p}{q}}\right)&\leq C\left(\varepsilon^{p}+\eta^{p}+\frac{1}{\eta^{p}}e^{-\frac{K}{\eta}(r_1\vee r_2-r_1\wedge r_2)}  \right)\eta^{\frac{p(2-q)}{q}}.
\end{align}
For the term corresponding
to $i=3$ in \eqref{tripledecompofEintDW2W2Y}, note that $\tilde{Q}_{r_1\vee r_2,3}^{\eta}(t)$ satisfies a
stochastic differential equation of the form
\begin{align*}
\tilde{Q}_{r_1\vee r_2,3}^{\eta}(t)&=\frac{1}{\eta}\int_{r_1\vee r_2}^{t}\left[\partial_{2}f(X^{\varepsilon}_{s},Y^{\eta}_{s})\tilde{Q}_{r_1\vee r_2,3}^{\eta}(s)+\partial_{1}f(X^{\varepsilon}_{s},Y^{\eta}_{s}) D^{W_2,W_2}_{r_1,r_2}X^{\varepsilon}_{s}\right]ds\nonumber\\
&\quad +\frac{1}{\sqrt{\eta}}\int_{r_1\vee r_2}^{t}\left[\partial_{2}\tau(X^{\varepsilon}_{s},Y^{\eta}_{s})\tilde{Q}_{r_1\vee r_2,3}^{\eta}(s)+\partial_{1}\tau(X^{\varepsilon}_{s},Y^{\eta}_{s}) D^{W_2,W_2}_{r_1,r_2}X^{\varepsilon}_{s}\right]dW_{s}^{2}.
\end{align*}
The same methodology that was used for bounding
$D^{W_1,W_1}_{r_1,r_2}Y^{\eta}_{t}$ in terms of
$D^{W_1,W_1}_{r_1,r_2}X^{\varepsilon}_{t}$ in Proposition \ref{proponEintDW1W1YslessEintDW1W1Xs} yields
\begin{align}
  \label{Eq:Bound_hatQ3_W2W2}
\mathbb{E}\left(\left(\int_{r_1\vee r_2}^{t}  \abs{\tilde{Q}_{r_1\vee r_2,3}^{\eta}(s)}^{q}ds\right)^{\frac{2p}{q}}\right)&\leq C \mathbb{E}\left(\int_{r_1\vee r_2}^{t}  \abs{\tilde{Q}_{r_1\vee r_2,3}^{\eta}(s)}^{2p}ds\right)\nonumber\\
&\leq C \mathbb{E}\left(\int_{r_1\vee r_2}^{t}  \abs{D^{W_2,W_2}_{r_1,r_2}X^{\varepsilon}_{s}}^{2p}ds\right).
\end{align}
For the term corresponding to $i=2$ in
\eqref{tripledecompofEintDW2W2Y}, inspecting the structure of
$\tilde{Q}_{r_1\vee r_2,2}^{\eta}(t)$ given in \eqref{defQtilde2}
shows that it is enough to estimate terms of the form
\begin{align*}
&\mathbb{E}\left(\left(\int_{r_1 \vee r_2}^t
  \abs{\frac{1}{\eta}Z_{r_1
  \vee
  r_2,2}(s)\int_{r_1 \vee r_2}^s Z_{r_1
  \vee
  r_2,2}^{-1}(u)F_u G_{u} du }^q ds \right)^{\frac{2p}{q}}
                 \right)
\end{align*}
and
\begin{align*}
&\mathbb{E}\left(\left(\int_{r_1 \vee r_2}^t
  \abs{\frac{1}{\sqrt{\eta}}Z_{r_1
  \vee
  r_2,2}(s)\int_{r_1 \vee r_2}^s Z_{r_1
  \vee
  r_2,2}^{-1}(u)F_u G_{u} dW^{2}_{u} }^q ds \right)^{\frac{2p}{q}}
                 \right),
\end{align*}
where the product $F_u G_{u}$ is any of the terms $D_{r_1}^{W^{2}}X_{u}^{\varepsilon}D_{r_2}^{W^{2}}X_{u}^{\varepsilon}$,
$D_{r_1}^{W^{2}}X_{u}^{\varepsilon}D_{r_2}^{W^{2}}Y_{u}^{\eta}$,
$D_{r_1}^{W^{2}}Y_{u}^{\eta}D_{r_2}^{W^{2}}X_{u}^{\varepsilon}$ or
$D_{r_1}^{W^{2}}Y_{u}^{\eta}D_{r_2}^{W^{2}}Y_{u}^{\eta}$. The analysis
in the cases where $F_u G_u$ is equal to either $D_{r_1}^{W^{2}}X_{u}^{\varepsilon}D_{r_2}^{W^{2}}X_{u}^{\varepsilon}$,
$D_{r_1}^{W^{2}}X_{u}^{\varepsilon}D_{r_2}^{W^{2}}Y_{u}^{\eta}$ or $D_{r_1}^{W^{2}}Y_{u}^{\eta}D_{r_2}^{W^{2}}X_{u}^{\varepsilon}$ is the
same as in Proposition \ref{proponEintDW1W2Ystotheq} and its proof and
yields
\begin{align*}
&\mathbb{E}\left(\left(\int_{r_1 \vee r_2}^t
  \abs{\frac{1}{\eta}Z_{r_1
  \vee
  r_2,2}(s)\int_{r_1 \vee r_2}^s Z_{r_1
  \vee
  r_2,2}^{-1}(u)D_{r_1}^{W^{2}}X_{u}^{\varepsilon}D_{r_2}^{W^{2}}X_{u}^{\varepsilon} du}^q ds \right)^{\frac{2p}{q}}
                 \right)\leq C \left(\varepsilon^{2p}+\eta^{2p}\right),\\
&\mathbb{E}\left(\left(\int_{r_1 \vee r_2}^t
  \abs{\frac{1}{\sqrt{\eta}}Z_{r_1
  \vee
  r_2,2}(s)\int_{r_1 \vee r_2}^s Z_{r_1
  \vee
  r_2,2}^{-1}(u)D_{r_1}^{W^{2}}X_{u}^{\varepsilon}D_{r_2}^{W^{2}}X_{u}^{\varepsilon} dW^{2}_u}^q ds \right)^{\frac{2p}{q}}
                 \right)\leq C\left(\varepsilon^{2p}+\eta^{2p}\right)
\end{align*}
for the case where $F_u G_u =
D_{r_1}^{W^{2}}X_{u}^{\varepsilon}D_{r_2}^{W^{2}}X_{u}^{\varepsilon}$,
\begin{align*}
&\mathbb{E}\left(\left(\int_{r_1 \vee r_2}^t
  \abs{\frac{1}{\eta}Z_{r_1
  \vee
  r_2,2}(s)\int_{r_1 \vee r_2}^s Z_{r_1
  \vee
  r_2,2}^{-1}(u)D_{r_1}^{W^{2}}X_{u}^{\varepsilon}D_{r_2}^{W^{2}}Y_{u}^{\eta}du}^q ds \right)^{\frac{2p}{q}}
                 \right)\\
  & \qquad\qquad\qquad\qquad\qquad\qquad\qquad\qquad\qquad\qquad\qquad\qquad \leq C \left( \varepsilon^p + \eta^p \right)\left(\eta^{\frac{p(2-q)}{q}} +  \varepsilon^p + \eta^p \right)\\
&\mathbb{E}\left(\left(\int_{r_1 \vee r_2}^t
  \abs{\frac{1}{\sqrt{\eta}}Z_{r_1
  \vee
  r_2,2}(s)\int_{r_1 \vee r_2}^s Z_{r_1
  \vee
  r_2,2}^{-1}(u)D_{r_1}^{W^{2}}X_{u}^{\varepsilon}D_{r_2}^{W^{2}}Y_{u}^{\eta}dW^2_u}^q ds \right)^{\frac{2p}{q}}
                 \right)\\
  & \qquad\qquad\qquad\qquad\qquad\qquad\qquad\qquad\qquad\qquad\qquad\qquad \leq C \left( \varepsilon^p + \eta^p \right)\left(\eta^{\frac{p(2-q)}{q}} +  \varepsilon^p + \eta^p \right)
\end{align*}
for the case where $F_u G_u = D_{r_1}^{W^{2}}X_{u}^{\varepsilon}D_{r_2}^{W^{2}}Y_{u}^{\eta}$, and
\begin{align*}
&\mathbb{E}\left(\left(\int_{r_1 \vee r_2}^t
  \abs{\frac{1}{\eta}Z_{r_1
  \vee
  r_2,2}(s)\int_{r_1 \vee r_2}^s Z_{r_1
  \vee
  r_2,2}^{-1}(u)D_{r_1}^{W^{2}}Y_{u}^{\eta}D_{r_2}^{W^{2}}X_{u}^{\varepsilon}du}^q ds \right)^{\frac{2p}{q}}
                 \right)\\
  & \qquad\qquad\qquad\qquad\qquad\qquad\qquad\qquad\qquad\qquad\qquad\qquad \leq C \left( \varepsilon^p + \eta^p \right)\left(\eta^{\frac{p(2-q)}{q}} +  \varepsilon^p + \eta^p \right)\\
&\mathbb{E}\left(\left(\int_{r_1 \vee r_2}^t
  \abs{\frac{1}{\sqrt{\eta}}Z_{r_1
  \vee
  r_2,2}(s)\int_{r_1 \vee r_2}^s Z_{r_1
  \vee
  r_2,2}^{-1}(u)D_{r_1}^{W^{2}}Y_{u}^{\eta}D_{r_2}^{W^{2}}X_{u}^{\varepsilon}dW^2_u}^q ds \right)^{\frac{2p}{q}}
                 \right)\\
  & \qquad\qquad\qquad\qquad\qquad\qquad\qquad\qquad\qquad\qquad\qquad\qquad \leq C \left( \varepsilon^p + \eta^p \right)\left(\eta^{\frac{p(2-q)}{q}} +  \varepsilon^p + \eta^p \right)
\end{align*}
for the case where $F_u G_u = D_{r_1}^{W^{2}}Y_{u}^{\eta}D_{r_2}^{W^{2}}X_{u}^{\varepsilon}$.
It remains to examine the case where $F_uG_u=D_{r_1}^{W^{2}}Y_{u}^{\eta}D_{r_2}^{W^{2}}Y_{u}^{\eta}$.
We recall that we have the decomposition \eqref{decompositionofDW2Yt} given by
$D_r^{W^2} Y_u^{\eta}=Q^{\eta}_{r,1}(u)+Q^{\eta}_{r,2}(u)$. Hence, we write
\begin{align*}
D_{r_1}^{W^{2}}Y_{u}^{\eta}D_{r_2}^{W^{2}}Y_{u}^{\eta}&=Q^{\eta}_{r_1,1}(u)Q^{\eta}_{r_2,1}(u)
+Q^{\eta}_{r_1,1}(u)Q^{\eta}_{r_2,2}(u)+Q^{\eta}_{r_1,2}(u)Q^{\eta}_{r_2,1}(u)+
Q^{\eta}_{r_1,2}(u)Q^{\eta}_{r_2,2}(u).
\end{align*}
As pointed out in \eqref{Eq:SDE_Q2D2Y}, $Q^{\eta}_{r,2}(t)$ satisfies
a stochastic differential equation which has the same structure as the one satisfied
by $D^{W^{1}}_{r} Y_t^{\eta}$, but with
$D_{r}^{W^{1}}X_{s}^{\varepsilon}$ replaced by
$D_{r}^{W^{2}}X_{s}^{\varepsilon}$, so that the same analysis as in
Proposition \ref{proponEintDW1W2Ystotheq} and its proof gives us
\begin{align*}
&\mathbb{E}\left(\left(\int_{r_1 \vee r_2}^t
  \abs{\frac{1}{\eta}Z_{r_1
  \vee
  r_2,2}(s)\int_{r_1 \vee r_2}^s Z_{r_1
  \vee
  r_2,2}^{-1}(u)Q^{\eta}_{r_1,2}(u)Q^{\eta}_{r_2,2}(u)du}^q ds \right)^{\frac{2p}{q}}
                 \right)\leq C \left(\varepsilon^{2p}+\eta^{2p}\right),\\
&\mathbb{E}\left(\left(\int_{r_1 \vee r_2}^t
  \abs{\frac{1}{\sqrt{\eta}}Z_{r_1
  \vee
  r_2,2}(s)\int_{r_1 \vee r_2}^s Z_{r_1
  \vee
  r_2,2}^{-1}(u)Q^{\eta}_{r_1,2}(u)Q^{\eta}_{r_2,2}(u) dW^{2}_u}^q ds \right)^{\frac{2p}{q}}
                 \right)\leq C\left(\varepsilon^{2p}+\eta^{2p}\right)
\end{align*}
for the case where $F_u G_u =
Q^{\eta}_{r_1,2}(u)Q^{\eta}_{r_2,2}(u)$, and
\begin{align*}
&\mathbb{E}\left(\left(\int_{r_1 \vee r_2}^t
  \abs{\frac{1}{\eta}Z_{r_1
  \vee
  r_2,2}(s)\int_{r_1 \vee r_2}^s Z_{r_1
  \vee
  r_2,2}^{-1}(u)Q^{\eta}_{r_1,1}(u)Q^{\eta}_{r_2,2}(u)du}^q ds \right)^{\frac{2p}{q}}
                 \right)\leq C(\varepsilon^{p}+\eta^{p})\eta^{\frac{p(2-q)}{q}},\\
&\mathbb{E}\left(\left(\int_{r_1 \vee r_2}^t
  \abs{\frac{1}{\sqrt{\eta}}Z_{r_1
  \vee
  r_2,2}(s)\int_{r_1 \vee r_2}^s Z_{r_1
  \vee
  r_2,2}^{-1}(u)Q^{\eta}_{r_1,1}(u)Q^{\eta}_{r_2,2}(u) dW^{2}_u}^q ds \right)^{\frac{2p}{q}}
                 \right)\leq C(\varepsilon^{p}+\eta^{p})\eta^{\frac{p(2-q)}{q}}
\end{align*}
for the cases where $F_u G_u =
Q^{\eta}_{r_1,1}(u)Q^{\eta}_{r_2,2}(u)$ and $F_u G_u =
Q^{\eta}_{r_1,2}(u)Q^{\eta}_{r_2,1}(u)$. Hence, the only new case that
we still have to handle is the case where $F_u
G_u=Q^{\eta}_{r_1,1}(u)Q^{\eta}_{r_2,1}(u)$. Recall that
$Q^{\eta}_{r,1}(t)= \frac{1}{\sqrt{\eta}} Z_{r,2}(t)
\tau\left(X_{r}^{\varepsilon},Y_{r}^{\eta} \right)$ with $\tau$ assumed
to be uniformly bounded, so that
\begin{align*}
&\mathbb{E}\left(\left(\int_{r_1 \vee r_2}^t
  \abs{\frac{1}{\eta}Z_{r_1
  \vee
  r_2,2}(s)\int_{r_1 \vee r_2}^s Z_{r_1
  \vee
  r_2,2}^{-1}(u)Q^{\eta}_{r_1,1}(u)Q^{\eta}_{r_2,1}(u)du}^q ds \right)^{\frac{2p}{q}}
                 \right)\nonumber\\
  & \qquad\qquad\qquad\qquad \leq \frac{1}{\eta^{4p}} \mathbb{E}\left(\left(   \int_{r_1 \vee r_2}^t
                Z_{r_1 \vee r_2,2}^q(s)\abs{ \int_{r_1 \vee r_2}^s
    Z_{r_1 \wedge r_2,2}(u) du}^q
                 ds \right)^{\frac{2p}{q}}\right)\\
    & \qquad\qquad\qquad\qquad \leq \frac{1}{\eta^{4p}} \mathbb{E}\left(\abs{\int_{r_1 \vee r_2}^t
                Z_{r_1 \vee r_2,2}^q(s)ds}^{\frac{2p}{q}}\abs{ \int_{r_1 \vee r_2}^t
    Z_{r_1 \wedge r_2,2}(s) ds}^{2p}
                  \right)\\
   & \qquad\qquad\qquad\qquad \leq \frac{1}{\eta^{4p}} \mathbb{E}\left(\abs{\int_{r_1 \vee r_2}^t
                Z_{r_1 \vee r_2,2}^q(s)ds}^{\frac{4p}{q}}\right)^{1/2}\mathbb{E}\left(\abs{ \int_{r_1 \vee r_2}^t
    Z_{r_1 \wedge r_2,2}(s) ds}^{4p}
                  \right)^{1/2}\\
   & \qquad\qquad\qquad\qquad \leq C \eta^{\frac{2p(1 -q)}{q}}e^{-\frac{K}{2\eta}(r_1
    \vee r_2 - r_1 \wedge r_2)},
\end{align*}
where the last bound was obtained by applying Propositions
\ref{propinitialconditionDY} and
\ref{propintZr1minr2tosomepower}. The same arguments (with the
addition of the Burkholder-Davis-Gundy inequality) also yield
\begin{align*}
&\mathbb{E}\left(\left(\int_{r_1 \vee r_2}^t
  \abs{\frac{1}{\sqrt{\eta}}Z_{r_1
  \vee
  r_2,2}(s)\int_{r_1 \vee r_2}^s Z_{r_1
  \vee
  r_2,2}^{-1}(u)Q^{\eta}_{r_1,1}(u)Q^{\eta}_{r_2,1}(u)du}^q ds \right)^{\frac{2p}{q}}
                 \right)\nonumber\\
   &
     \qquad\qquad\qquad\qquad\qquad\qquad\qquad\qquad\qquad\qquad\qquad\qquad\qquad \leq C \eta^{\frac{2p(1 -q)}{q}}e^{-\frac{K}{2\eta}(r_1
    \vee r_2 - r_1 \wedge r_2)}.
\end{align*}
Combining the previous estimates, we have established
\begin{align}
  \label{Eq:Bound_hatQ2_W2W2}
\mathbb{E}\left(\left(\int_{r_1\vee r_2}^{t}  \abs{\tilde{Q}_{r_1\vee
  r_2,2}^{\eta}(s)}^{q}ds\right)^{\frac{2p}{q}}\right)&\leq C \bigg[   \eta^{\frac{2p(1-q)}{q}}e^{-\frac{K}{2\eta}(r_1
    \vee r_2 - r_1 \wedge r_2)}\nonumber\\
    &\qquad\qquad\qquad\qquad\qquad +\left( \varepsilon^p + \eta^p \right)\left(\eta^{\frac{p(2-q)}{q}} +  \varepsilon^p + \eta^p \right)\bigg].
\end{align}
Finally putting together the estimates \eqref{Eq:Bound_hatQ1_W2W2},
\eqref{Eq:Bound_hatQ3_W2W2} and \eqref{Eq:Bound_hatQ2_W2W2}, we can write
\begin{align*}
\mathbb{E}\left(\left(\int_{r_1 \vee r_2}^{t}\abs{D_{r_1,r_2}^{W^2,W^2}Y_s^{\eta}}^{q}ds\right)^{\frac{2p}{q}}\right) &\leq C\Bigg[\left(\varepsilon^{p}+\eta^{p}+\frac{1}{\eta^{p}}e^{-\frac{K}{\eta}(r_1\vee
                                                          r_2-r_1\wedge r_2)}  \right)\eta^{\frac{p(2-q)}{q}}\\
  &\quad +  \eta^{\frac{2p(1-q)}{q}}e^{-\frac{K}{2\eta}(r_1
    \vee r_2 - r_1 \wedge r_2)} +\left( \varepsilon^p + \eta^p
    \right)\left(\eta^{\frac{p(2-q)}{q}} +  \varepsilon^p + \eta^p
    \right)\\
  &\quad +  \mathbb{E}\left(\int_{r_1\vee r_2}^{t}
    \abs{D^{W_2,W_2}_{r_1,r_2}X^{\varepsilon}_{s}}^{2p}ds\right)\Bigg],
\end{align*}
which concludes the proof.
\end{proof}

\section{Ancillary results}\label{S:AuxiliaryBounds}
We continue in this section to
work with Assumption \ref{A:Assumptiongeneral} as in the previous
section (for the same generality and independent interest reasons).
\begin{lemma}
  \label{lemmaonExpofZr2tothe2pq}
Let $0 \leq r \leq t \leq 1$ and $p \geq 1$. Under Assumption \ref{A:Assumptiongeneral}, it
holds that
\begin{equation*}
\mathbb{E}\left(\abs{Z_{r,2}(t)}^{2p} \right) \leq e^{-\frac{K}{\eta}(t-r)}.
\end{equation*}
\end{lemma}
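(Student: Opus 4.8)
The plan is to rewrite $|Z_{r,2}(t)|^{2p}$ as the product of a Dol\'eans--Dade exponential (a nonnegative supermartingale starting at $1$) and a purely ``drift'' factor that is bounded pathwise by $e^{-K(t-r)/\eta}$, and then take expectations. Concretely, raising the defining formula \eqref{expressionofZr2t} to the power $2p$ gives
\[
|Z_{r,2}(t)|^{2p} = \exp\left( \frac{2p}{\eta}\int_r^t \partial_2 f\left(X_s^{\varepsilon},Y_s^{\eta}\right)ds + \frac{2p}{\sqrt{\eta}}\int_r^t \partial_2 \tau\left(X_s^{\varepsilon},Y_s^{\eta}\right)dW_s^2 - \frac{p}{\eta}\int_r^t \partial_2 \tau\left(X_s^{\varepsilon},Y_s^{\eta}\right)^2 ds \right).
\]
I would then isolate the stochastic exponential $\mathcal{E}_t := \exp\!\left( \tfrac{2p}{\sqrt{\eta}}\int_r^t \partial_2 \tau \, dW_s^2 - \tfrac{2p^2}{\eta}\int_r^t \partial_2 \tau^2 \, ds \right)$ of the continuous local martingale $\tfrac{2p}{\sqrt{\eta}}\int_r^{\cdot} \partial_2 \tau(X_s^{\varepsilon},Y_s^{\eta})\,dW_s^2$, completing the square to write
\[
|Z_{r,2}(t)|^{2p} = \mathcal{E}_t \cdot \exp\left( \frac{1}{\eta}\int_r^t \left[ 2p\,\partial_2 f\left(X_s^{\varepsilon},Y_s^{\eta}\right) + p(2p-1)\,\partial_2 \tau\left(X_s^{\varepsilon},Y_s^{\eta}\right)^2 \right] ds \right).
\]

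Next I would bound the drift factor pathwise. Since $p \geq 1$ we have $p(2p-1) \leq 2p(2p-1)$ and $\partial_2\tau^2 \geq 0$, so $2p\,\partial_2 f + p(2p-1)\partial_2\tau^2 \leq 2p\,\partial_2 f + 2p(2p-1)\partial_2\tau^2$; dropping the nonnegative terms $(2p-1)|\partial_1 f|$ and $(2p-1)(2p-2)|\partial_1 \tau|^2$ from the combination appearing in Assumption \ref{A:Assumptiongeneral}(iv) shows that this last expression is $\leq -K$ uniformly in $(x,y)$. Hence the drift factor is at most $e^{-K(t-r)/\eta}$ almost surely, giving $|Z_{r,2}(t)|^{2p} \leq \mathcal{E}_t\, e^{-K(t-r)/\eta}$ a.s. Finally, because $\partial_2 \tau$ is bounded by Assumption \ref{A:Assumptiongeneral}(iii), Novikov's criterion applies (or one may simply use that $\mathcal{E}$ is a nonnegative local martingale with $\mathcal{E}_r = 1$, hence a supermartingale), so $\mathbb{E}(\mathcal{E}_t) \leq 1$; taking expectations yields $\mathbb{E}(|Z_{r,2}(t)|^{2p}) \leq e^{-K(t-r)/\eta}$, as claimed.

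There is little genuine difficulty here: the one point demanding care is the algebraic bookkeeping in the second display --- correctly obtaining the coefficient $p(2p-1)$ of $\partial_2\tau^2$ after completing the square --- together with the observation that this coefficient is dominated by the coefficient $2p(2p-1)$ appearing in Assumption \ref{A:Assumptiongeneral}(iv), which is precisely why that assumption is stated with the factor $2p(2p-1)$ rather than $p(2p-1)$.
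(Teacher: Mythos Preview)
Your proof is correct and follows essentially the same approach as the paper: write $|Z_{r,2}(t)|^{2p}$ as a Dol\'eans--Dade exponential times a drift factor, bound the drift integrand $2p\,\partial_2 f + p(2p-1)\,\partial_2\tau^2$ by $-K$ via Assumption~\ref{A:Assumptiongeneral}(iv), and use that the stochastic exponential has expectation at most $1$. Your write-up is in fact slightly more explicit than the paper's in justifying both why $p(2p-1)\leq 2p(2p-1)$ permits the use of the assumption and why the exponential martingale is genuine (via Novikov, since $\partial_2\tau$ is bounded).
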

\begin{proof}
Using the expression for $Z_{r,2}(t)$ given in \eqref{expressionofZr2t}, one can write
\begin{align*}
\mathbb{E}\left(\abs{Z_{r,2}(t)}^{2p} \right) &= \mathbb{E}\left( e^{\frac{2p}{\eta}\int_r^t \partial_2 f\left(X_s^{\varepsilon},Y_s^{\eta}
  \right)ds+ \frac{2p}{\sqrt{\eta}} \int_r^t \partial_2 \tau\left(X_s^{\varepsilon},Y_s^{\eta}
  \right)dW_s^2 -\frac{p}{\eta}\int_r^t \partial_2 \tau\left(X_s^{\varepsilon},Y_s^{\eta}
  \right)^2 ds}\right) \\
  &= \mathbb{E}\bigg( e^{\frac{2p}{\sqrt{\eta}}\int_r^t  \partial_2 \tau\left(X_s^{\varepsilon},Y_s^{\eta}
  \right)dW_s^2 -\frac{2p^2}{\eta}\int_r^t \partial_2 \tau\left(X_s^{\varepsilon},Y_s^{\eta}
    \right)^2ds}\\
  &\qquad\qquad\qquad\qquad\qquad\qquad e^{\frac{1}{\eta}\int_r^t \left[ 2p \partial_2f\left(X_s^{\varepsilon},Y_s^{\eta}
  \right)+  p \left( 2p - 1 \right)\partial_2 \tau\left(X_s^{\varepsilon},Y_s^{\eta}
    \right)^2 \right]ds}\bigg).
\end{align*}
Using Assumption \ref{A:Assumptiongeneral}, we have that
$\sup_{r\leq s \leq t}\left\{ 2p \partial_2f\left(X_s^{\varepsilon},Y_s^{\eta}
  \right)+  p \left( 2p - 1 \right)\partial_2 \tau\left(X_s^{\varepsilon},Y_s^{\eta}
    \right)^2  \right\} \leq -K <0$, so that
\begin{align*}
\mathbb{E}\left(\abs{Z_{r,2}(t)}^{2p} \right)  &\leq  e^{-\frac{K}{\eta}(t-r)}\mathbb{E}\left(e^{\frac{2p}{\sqrt{\eta}}\int_r^t  \partial_2 \tau\left(X_s^{\varepsilon},Y_s^{\eta}
  \right)dW_s^2 -\frac{2p^2}{\eta}\int_r^t \partial_2 \tau\left(X_s^{\varepsilon},Y_s^{\eta}
    \right)^2ds}\right)\\
  &= e^{-\frac{K}{\eta}(t-r)}
\end{align*}
as $\left\{ e^{\frac{2p}{\sqrt{\eta}}\int_r^t  \partial_2 \tau\left(X_s^{\varepsilon},Y_s^{\eta}
  \right)dW_s^2 -\frac{2p^2}{\eta}\int_r^t \partial_2 \tau\left(X_s^{\varepsilon},Y_s^{\eta}
    \right)^2ds} \colon r \leq t \leq 1 \right\}$ is a martingale.
\end{proof}

\begin{prop}
  \label{propinitialconditionDYnointegration}
Let $r \leq t \leq 1$, $p \geq 1$ be an integer and $\left\{ Z_{r,2}(t)\colon r \leq t \leq 1
\right\}$ be the stochastic process defined by
\eqref{expressionofZr2t}. For any $\mathcal{F}_r$-measurable random
variable $W$ in $L^{2p} \left( \Omega \right)$, one has
\begin{align*}
\mathbb{E}\left(\abs{\frac{1}{\sqrt{\eta}}Z_{r,2}(t)W}^{2p}
  \right)\leq \mathbb{E}\left(
  \abs{\frac{1}{\sqrt{\eta}}W}^{2p}
                 \right)e^{-\frac{K}{\eta}(t-r)}.
\end{align*}
\end{prop}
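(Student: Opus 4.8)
The plan is to condition on $\mathcal{F}_r$ and reduce to the (conditional version of the) estimate already established in Lemma \ref{lemmaonExpofZr2tothe2pq}. Since $W$ is $\mathcal{F}_r$-measurable, the tower property gives
\begin{equation*}
\mathbb{E}\left(\abs{\frac{1}{\sqrt{\eta}}Z_{r,2}(t)W}^{2p}\right) = \frac{1}{\eta^p}\mathbb{E}\left(|W|^{2p}\,\mathbb{E}\left(|Z_{r,2}(t)|^{2p}\mid\mathcal{F}_r\right)\right),
\end{equation*}
so it suffices to show $\mathbb{E}\left(|Z_{r,2}(t)|^{2p}\mid\mathcal{F}_r\right) \leq e^{-\frac{K}{\eta}(t-r)}$ almost surely, i.e. the conditional analogue of Lemma \ref{lemmaonExpofZr2tothe2pq}.

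To obtain this, I would repeat the factorization used in the proof of Lemma \ref{lemmaonExpofZr2tothe2pq}: using the expression \eqref{expressionofZr2t}, write
\begin{equation*}
|Z_{r,2}(t)|^{2p} = \mathcal{E}_{r}^{t}\cdot \exp\left(\frac{1}{\eta}\int_r^t\left[2p\,\partial_2 f + p(2p-1)(\partial_2\tau)^2\right]\left(X_s^{\varepsilon},Y_s^{\eta}\right)ds\right),
\end{equation*}
where
\begin{equation*}
\mathcal{E}_{r}^{t} = \exp\left(\frac{2p}{\sqrt{\eta}}\int_r^t\partial_2\tau\left(X_s^{\varepsilon},Y_s^{\eta}\right)dW_s^2 - \frac{2p^2}{\eta}\int_r^t\partial_2\tau\left(X_s^{\varepsilon},Y_s^{\eta}\right)^2 ds\right)
\end{equation*}
is the Dol\'eans--Dade exponential normalized to equal $1$ at time $r$. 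By Assumption \ref{A:Assumptiongeneral} (parts (iii)--(iv)), one has $\sup_{r\leq s\leq t}\left\{2p\,\partial_2 f + p(2p-1)(\partial_2\tau)^2\right\}\left(X_s^{\varepsilon},Y_s^{\eta}\right)\leq -K$, so the drift factor is bounded above by $e^{-\frac{K}{\eta}(t-r)}$ pathwise; meanwhile $\{\mathcal{E}_{r}^{s}\colon r\leq s\leq 1\}$ is an $(\mathcal{F}_s)$-martingale (boundedness of $\partial_2\tau$ gives Novikov's condition). Taking conditional expectations and using the martingale property from time $r$ onwards yields $\mathbb{E}\left(\mathcal{E}_{r}^{t}\mid\mathcal{F}_r\right) = \mathcal{E}_{r}^{r} = 1$, hence
\begin{equation*}
\mathbb{E}\left(|Z_{r,2}(t)|^{2p}\mid\mathcal{F}_r\right) \leq e^{-\frac{K}{\eta}(t-r)}\,\mathbb{E}\left(\mathcal{E}_{r}^{t}\mid\mathcal{F}_r\right) = e^{-\frac{K}{\eta}(t-r)}.
\end{equation*}

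Plugging this back into the tower-property identity above gives
\begin{equation*}
\mathbb{E}\left(\abs{\frac{1}{\sqrt{\eta}}Z_{r,2}(t)W}^{2p}\right) \leq \frac{1}{\eta^p}\,\mathbb{E}\left(|W|^{2p}\right)e^{-\frac{K}{\eta}(t-r)} = \mathbb{E}\left(\abs{\frac{1}{\sqrt{\eta}}W}^{2p}\right)e^{-\frac{K}{\eta}(t-r)},
\end{equation*}
which is the desired conclusion. The only step requiring mild care is the conditional martingale identity $\mathbb{E}\left(\mathcal{E}_{r}^{t}\mid\mathcal{F}_r\right)=1$ (as opposed to merely $\mathbb{E}\left(\mathcal{E}_{r}^{t}\right)=1$ used in Lemma \ref{lemmaonExpofZr2tothe2pq}); this is immediate once one observes that $\mathcal{E}_{r}^{\cdot}$ is a martingale on $[r,1]$ started from $1$. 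Everything else is a verbatim repetition of the computation in Lemma \ref{lemmaonExpofZr2tothe2pq}, now carried out under the conditional expectation $\mathbb{E}(\cdot\mid\mathcal{F}_r)$.
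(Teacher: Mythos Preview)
Your proof is correct and takes a genuinely different route from the paper. The paper derives the bound by writing the SDE satisfied by $\frac{1}{\sqrt{\eta}}Z_{r,2}(t)W$ via the It\^o product formula, applying It\^o's formula to $x\mapsto x^{2p}$, taking expectations, differentiating in $t$, and solving the resulting differential inequality $\frac{d}{dt}\mathbb{E}(|\cdot|^{2p})\leq -\frac{K}{\eta}\mathbb{E}(|\cdot|^{2p})$. Your argument is more direct: you exploit the $\mathcal{F}_r$-measurability of $W$ to factor it out via the tower property, reducing the claim to the conditional estimate $\mathbb{E}(|Z_{r,2}(t)|^{2p}\mid\mathcal{F}_r)\leq e^{-\frac{K}{\eta}(t-r)}$, which follows from the same pathwise factorization and martingale property already used in Lemma~\ref{lemmaonExpofZr2tothe2pq}, now observed at the conditional level. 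Your approach makes transparent that the proposition is essentially a corollary of Lemma~\ref{lemmaonExpofZr2tothe2pq}; the paper's approach, while more laborious here, has the virtue of being the same differential-inequality template used uniformly throughout Sections~\ref{S:FirstOrderMalliavinDer}--\ref{S:AuxiliaryBounds} for quantities where no such direct factorization is available.
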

\begin{proof}
Using the It\^o product formula and the stochastic differential
equation \eqref{sdeforZr2} satisfied by $Z_{r,2}(t)$, we can write
\begin{align*}
\frac{1}{\sqrt{\eta}}Z_{r,2}(t)W &= \frac{1}{\sqrt{\eta}}W +
  \frac{1}{\eta}\int_r^t \frac{1}{\sqrt{\eta}}Z_{r,2}(s)W \partial_2f\left(X_s^{\varepsilon},Y_s^{\eta}
                                   \right)ds  \\
  &\quad +
  \frac{1}{\sqrt{\eta}}\int_r^t \frac{1}{\sqrt{\eta}}Z_{r,2}(s)W
  \partial_2 \tau\left(X_s^{\varepsilon},Y_s^{\eta}
    \right)dW_s^2.
\end{align*}
Hence, applying the It\^o formula with the function $x \mapsto x^{2p}$
and taking expectation yields
\begin{align*}
&\mathbb{E}\left(
  \abs{\frac{1}{\sqrt{\eta}}Z_{r,2}(t)W}^{2p}
  \right) = \mathbb{E}\left(\abs{\frac{1}{\sqrt{\eta}}W}^{2p}
 \right) \\
  &\qquad\quad +
  \frac{1}{\eta}\mathbb{E}\left( \int_r^t \abs{\frac{1}{\sqrt{\eta}}Z_{r,2}(s)W}^{2p}\left[ 2p \partial_2f\left(X_s^{\varepsilon},Y_s^{\eta}
                                   \right) + p(2p-1)\abs{\partial_2\tau\left(X_s^{\varepsilon},Y_s^{\eta}
                                   \right)}^2 \right] ds \right).
\end{align*}
Differentiating this equality with respect to $t$ yields
\begin{align*}
& \frac{d}{dt}\mathbb{E}\left(
  \abs{\frac{1}{\sqrt{\eta}}Z_{r,2}(t)W}^{2p}
                 \right) \\
  &\qquad\qquad = \frac{1}{\eta}\mathbb{E}\left( \abs{\frac{1}{\sqrt{\eta}}Z_{r,2}(t)W}^{2p}\left[ 2p \partial_2f\left(X_t^{\varepsilon},Y_t^{\eta}
                                   \right) + p(2p-1)\partial_2\tau\left(X_t^{\varepsilon},Y_t^{\eta}
                                   \right)^2 \right]  \right).
\end{align*}
Using Assumption
\ref{A:Assumptiongeneral}, we get the differential inequality
\begin{align*}
\frac{d}{dt}\mathbb{E}\left(
  \abs{\frac{1}{\sqrt{\eta}}Z_{r,2}(s)W}^{2p}
                 \right) \leq -\frac{K}{\eta}\mathbb{E}\left( \abs{
            \frac{1}{\sqrt{\eta}}Z_{r,2}(t)W}^{2p} \right),
\end{align*}
which once solved yields
\begin{align*}
\mathbb{E}\left(
  \abs{\frac{1}{\sqrt{\eta}}Z_{r,2}(t)W}^{2p}
                 \right) \leq \mathbb{E}\left(
  \abs{\frac{1}{\sqrt{\eta}}W}^{2p}
                 \right)e^{-\frac{K}{\eta}(t-r)}.
\end{align*}
\end{proof}

\begin{prop}
  \label{propinitialconditionDY}
Let $r \leq t \leq 1$, $p \geq 1$ be an integer, $q \in \left\{ 1,2 \right\}$ and $\left\{ Z_{r,2}(t)\colon r \leq t \leq 1
\right\}$ be the stochastic process defined by
\eqref{expressionofZr2t}. For any $\mathcal{F}_r$-measurable random
variable $W$ in $L^{2p} \left( \Omega \right)$, one has
\begin{align*}
\mathbb{E}\left(\left(\int_r^t \abs{\frac{1}{\sqrt{\eta}}Z_{r,2}(s)W}^q
  ds\right)^{\frac{2p}{q}} \right)\leq \eta^{\frac{p(2-q)}{q}}\mathbb{E}\left( \abs{W}^{2p} \right) .
\end{align*}
\end{prop}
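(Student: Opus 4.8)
The plan is to reduce the statement to Proposition \ref{propinitialconditionDYnointegration} by expanding the outer power as an iterated integral. Set $n=2p/q$; since $q\in\{1,2\}$ and $p$ is a positive integer, $n$ is a positive integer ($n=2p$ if $q=1$ and $n=p$ if $q=2$). Writing $f(s)=\abs{\frac{1}{\sqrt{\eta}}Z_{r,2}(s)W}\ge 0$, Tonelli's theorem gives
\[
\mathbb{E}\left(\left(\int_r^t f(s)^q\,ds\right)^{n}\right)=\int_{[r,t]^{n}}\mathbb{E}\left(\prod_{i=1}^{n}f(s_i)^q\right)ds_1\cdots ds_{n}.
\]

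Next I would apply the generalized Hölder inequality with all $n$ exponents equal to $n$ (so that $\sum_{i=1}^n 1/n=1$), obtaining
\[
\mathbb{E}\left(\prod_{i=1}^{n}f(s_i)^q\right)\le\prod_{i=1}^{n}\left(\mathbb{E}\left(f(s_i)^{qn}\right)\right)^{1/n}=\prod_{i=1}^{n}\left(\mathbb{E}\left(f(s_i)^{2p}\right)\right)^{1/n},
\]
using $qn=2p$. Now Proposition \ref{propinitialconditionDYnointegration}, applied to each factor with the same $\mathcal{F}_r$-measurable $W$, yields $\mathbb{E}(f(s_i)^{2p})\le\eta^{-p}\mathbb{E}(\abs{W}^{2p})e^{-\frac{K}{\eta}(s_i-r)}$, hence
\[
\mathbb{E}\left(\prod_{i=1}^{n}f(s_i)^q\right)\le\eta^{-p}\mathbb{E}(\abs{W}^{2p})\prod_{i=1}^{n}e^{-\frac{K}{n\eta}(s_i-r)}.
\]

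Finally I would integrate in $s_1,\dots,s_n$, using $\int_r^t e^{-\frac{K}{n\eta}(s-r)}\,ds\le\frac{n\eta}{K}$, to get
\[
\mathbb{E}\left(\left(\int_r^t f(s)^q\,ds\right)^{n}\right)\le\eta^{-p}\mathbb{E}(\abs{W}^{2p})\left(\frac{n\eta}{K}\right)^{n}=\frac{n^{n}}{K^{n}}\,\eta^{\,n-p}\,\mathbb{E}(\abs{W}^{2p}),
\]
and since $n-p=\frac{2p}{q}-p=\frac{p(2-q)}{q}$ this is the claimed bound (up to the dimensional constant $(n/K)^n$ depending only on $p,q,K$, which is absorbed into the implicit constants used in the applications of this proposition). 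There is no genuine obstacle here: the only points to watch are the correct choice $n=2p/q$ and the bookkeeping of the powers of $\eta$, as all the analytic content is already carried by Proposition \ref{propinitialconditionDYnointegration}, which itself follows from Assumption \ref{A:Assumptiongeneral} via a Grönwall argument on $\frac{d}{dt}\mathbb{E}(\abs{\tfrac{1}{\sqrt{\eta}}Z_{r,2}(t)W}^{2p})$.
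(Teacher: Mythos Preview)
Your argument is correct and essentially identical to the paper's own proof: expand the power as an iterated integral, apply the generalized H\"older inequality with equal exponents $2p/q$, invoke Proposition~\ref{propinitialconditionDYnointegration} on each factor, and integrate the resulting exponentials. Your observation about the leftover constant $(n/K)^n$ is apt; the paper's displayed inequality omits it, but the proof there produces the same constant and it is indeed harmless in all applications.
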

\begin{proof}
Note that
\begin{align*}
\mathbb{E}\left(\left(\int_r^t \abs{\frac{1}{\sqrt{\eta}}Z_{r,2}(s)W}^q
  ds\right)^{\frac{2p}{q}} \right) = \int_r^t \cdots \int_r^t \mathbb{E}\left(
  \prod_{i=1}^{2p/q} \abs{\frac{1}{\sqrt{\eta}}Z_{r,2}(s_i)W}^q
  \right)ds_1\cdots ds_{2p/q},
\end{align*}
so that by repeated applications of H\"older's inequality and
Proposition \ref{propinitialconditionDYnointegration},
\begin{align*}
\mathbb{E}\left(\left(\int_r^t \abs{\frac{1}{\sqrt{\eta}}Z_{r,2}(s)W}^q
  ds\right)^{\frac{2p}{q}} \right) &= \int_r^t \cdots \int_r^t \prod_{i=1}^{2p/q} \mathbb{E}\left(
   \abs{\frac{1}{\sqrt{\eta}}Z_{r,2}(s_i)W}^{2p}
  \right)^{q/2p}ds_1\cdots ds_{2p/q}\\
  &\leq \frac{1}{\eta^{p}}\mathbb{E}\left( \abs{W}^{2p} \right) \int_r^t \cdots \int_r^t \prod_{i=1}^{2p/q} e^{-\frac{qK}{2p
    \eta}(s_i - r)}ds_1\cdots ds_{2p/q}\\
  & = \frac{1}{\eta^{p}}\mathbb{E}\left( \abs{W}^{2p} \right) \left( \int_r^t e^{-\frac{qK}{2p
    \eta}(s - r)} ds \right)^{\frac{2p}{q}}\\
  & \leq \eta^{\frac{p(2-q)}{q}}\mathbb{E}\left( \abs{W}^{2p} \right).
\end{align*}
\end{proof}

\begin{prop}
  \label{propintZr1minr2tosomepower}
Let $r_1,r_2 \leq t \leq 1$, $p \geq 1$ be a natural number, $q\in \left\{ 1,2 \right\}$, and $\left\{ Z_{r,2}(t)\colon r \leq t \leq 1
\right\}$ be the stochastic process defined by
\eqref{expressionofZr2t}. Then, one has
\begin{align*}
\mathbb{E}\left(\abs{\int_{r_1 \vee r_2}^t Z_{r_1 \wedge r_2,2}(s)^q
  ds}^{\frac{2p}{q}} \right)\leq C \eta^{\frac{2p}{q}} e^{-\frac{K}{
    \eta}(r_1 \vee r_2 - r_1 \wedge r_2)}.
\end{align*}
\end{prop}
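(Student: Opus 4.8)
The plan is to follow the same pattern as the proof of Proposition~\ref{propinitialconditionDY}, taking advantage of two facts: the integrand $Z_{r_1\wedge r_2,2}(s)^q$ is nonnegative (so the outer absolute value is vacuous), and Lemma~\ref{lemmaonExpofZr2tothe2pq} provides exponential decay measured from the \emph{first} time index, which here is $r_1\wedge r_2$ rather than the lower limit of integration $r_1\vee r_2$.

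First I would expand the power: since $q\in\{1,2\}$ and $p\geq1$, the number $2p/q$ is a positive integer, so
\[
\left(\int_{r_1\vee r_2}^{t} Z_{r_1\wedge r_2,2}(s)^{q}\,ds\right)^{\frac{2p}{q}}
= \int_{r_1\vee r_2}^{t}\!\!\cdots\!\!\int_{r_1\vee r_2}^{t}\prod_{i=1}^{2p/q} Z_{r_1\wedge r_2,2}(s_i)^{q}\,ds_1\cdots ds_{2p/q}.
\]
Taking expectations, moving it inside the (finite) iterated integral by Tonelli, and applying the generalized H\"older inequality with all $2p/q$ conjugate exponents equal to $2p/q$ gives
\[
\mathbb{E}\left(\prod_{i=1}^{2p/q} Z_{r_1\wedge r_2,2}(s_i)^{q}\right)
\leq \prod_{i=1}^{2p/q}\mathbb{E}\left(Z_{r_1\wedge r_2,2}(s_i)^{2p}\right)^{q/2p}
\leq \prod_{i=1}^{2p/q} e^{-\frac{qK}{2p\eta}(s_i-r_1\wedge r_2)},
\]
where the last inequality is Lemma~\ref{lemmaonExpofZr2tothe2pq} applied with lower index $r_1\wedge r_2$. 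Re-assembling the iterated integral then yields
\[
\mathbb{E}\left(\abs{\int_{r_1\vee r_2}^{t} Z_{r_1\wedge r_2,2}(s)^{q}\,ds}^{\frac{2p}{q}}\right)
\leq \left(\int_{r_1\vee r_2}^{t} e^{-\frac{qK}{2p\eta}(s-r_1\wedge r_2)}\,ds\right)^{\frac{2p}{q}}.
\]

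Second, I would split the exponent in the remaining single integral as $s-r_1\wedge r_2=(r_1\vee r_2-r_1\wedge r_2)+(s-r_1\vee r_2)$, pull out the factor $e^{-\frac{qK}{2p\eta}(r_1\vee r_2-r_1\wedge r_2)}$, and bound $\int_{r_1\vee r_2}^{t} e^{-\frac{qK}{2p\eta}(s-r_1\vee r_2)}\,ds\leq \frac{2p\eta}{qK}$. Raising the resulting product to the power $2p/q$ converts $e^{-\frac{qK}{2p\eta}(\cdot)}$ into $e^{-\frac{K}{\eta}(\cdot)}$ — this is exact because $\frac{qK}{2p\eta}\cdot\frac{2p}{q}=\frac{K}{\eta}$ — and converts the constant into $\left(\frac{2p\eta}{qK}\right)^{2p/q}=C\,\eta^{2p/q}$, giving precisely the claimed bound $C\,\eta^{2p/q}e^{-\frac{K}{\eta}(r_1\vee r_2-r_1\wedge r_2)}$.

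The one place where care is needed — and hence the main (mild) obstacle — is that a naive power-mean/Jensen estimate of the form $\left(\int g\right)^{m}\leq(b-a)^{m-1}\int g^{m}$ would only produce the factor $\eta^{1}$ rather than $\eta^{2p/q}$, which is insufficient when $2p/q>1$; one really has to expand into the $2p/q$-fold integral and apply H\"older factor by factor to recover the correct power of $\eta$. Likewise one must keep track of the fact that Lemma~\ref{lemmaonExpofZr2tothe2pq} measures decay from $r_1\wedge r_2$, not from the lower limit $r_1\vee r_2$, so that the extra decay over $[r_1\wedge r_2,r_1\vee r_2]$ must survive the application of H\"older and the subsequent raising to the power $2p/q$; the arithmetic above confirms it does. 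All remaining steps are routine (Tonelli, H\"older, and $\int_0^\infty e^{-au}\,du=1/a$).
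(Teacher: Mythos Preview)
Your proposal is correct and follows essentially the same approach as the paper: expand the $2p/q$-th power as an iterated integral, apply H\"older factorwise, invoke Lemma~\ref{lemmaonExpofZr2tothe2pq} with base point $r_1\wedge r_2$, and then evaluate the remaining single integral to extract both the $\eta^{2p/q}$ factor and the exponential decay in $r_1\vee r_2 - r_1\wedge r_2$. Your explicit splitting $s-r_1\wedge r_2=(r_1\vee r_2-r_1\wedge r_2)+(s-r_1\vee r_2)$ is just a slightly more detailed version of the paper's final computation.
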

\begin{proof}
Note that
\begin{align*}
\mathbb{E}\left(\abs{\int_{r_1 \vee r_2}^t Z_{r_1 \wedge r_2,2}(s)^q
  ds}^{\frac{2p}{q}} \right) = \int_{r_1 \vee r_2}^t \cdots \int_{r_1 \vee r_2}^t\mathbb{E}\left(
  \prod_{i=1}^{2p/q} Z_{r_1 \wedge r_2,2}(s_i)^q
  \right)ds_1\cdots ds_{2p/q},
\end{align*}
so that by repeated applications of H\"older's inequality and
Lemma \ref{lemmaonExpofZr2tothe2pq},
\begin{align*}
\mathbb{E}\left(\abs{\int_{r_1 \vee r_2}^t Z_{r_1 \wedge r_2,2}(s)^q
  ds}^{\frac{2p}{q}} \right) &= \int_{r_1 \vee r_2}^t \cdots \int_{r_1 \vee r_2}^t\prod_{i=1}^{2p/q}\mathbb{E}\left(
   Z_{r_1 \wedge r_2,2}(s_i)^{2p}
  \right)^{\frac{q}{2p}}ds_1\cdots ds_{2p/q}\\
  &\leq  \int_{r_1 \vee r_2}^t \cdots \int_{r_1 \vee r_2}^t \prod_{i=1}^{2p/q} e^{-\frac{qK}{
    2p\eta}(s_i - r_1 \wedge r_2)}ds_1\cdots ds_{2p/q}\\
  & =  \left( \int_{r_1 \vee r_2}^t e^{-\frac{qK}{2p
    \eta}(s - r_1 \wedge r_2)} ds \right)^{\frac{2p}{q}}\\
  & \leq C \eta^{\frac{2p}{q}} e^{-\frac{K}{
    \eta}(r_1 \vee r_2 - r_1 \wedge r_2)}.
\end{align*}
\end{proof}

\begin{prop}
  \label{lemmaonEintZr12Zr22q}
Let $p\geq 1$ be a  natural number and let $q \in \left\{ 1,2 \right\}$. Then, for any $r_1 \vee r_2 \leq t
\leq 1$, one has, for some constant $C>0$,
\begin{equation*}
\mathbb{E}\left(\left(\int_{r_1 \vee r_2}^t
    \abs{Z_{r_1,2}(s)Z_{r_2,2}(s)}^{q}ds\right)^{\frac{2p}{q}}\right)
\leq C \eta^{\frac{2p}{q}}e^{-\frac{K}{2\eta}(r_1\vee r_2 - r_1
    \wedge r_2)}.
\end{equation*}
\end{prop}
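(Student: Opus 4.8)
The plan is to reduce the estimate to the single-time decay bound of Lemma~\ref{lemmaonExpofZr2tothe2pq} by exploiting the flow (semigroup) property of the exponential process $Z_{\cdot,2}$. Since the integrand $\abs{Z_{r_1,2}(s)Z_{r_2,2}(s)}^{q}$ is symmetric under exchanging $r_1$ and $r_2$, I would assume without loss of generality that $r_1\le r_2$, so that $r_1\wedge r_2=r_1$ and $r_1\vee r_2=r_2$. Reading off the exponent in \eqref{expressionofZr2t} and using that each of the Lebesgue and It\^o integrals appearing there is additive in the interval of integration, one obtains the multiplicative identity $Z_{r_1,2}(s)=Z_{r_1,2}(r_2)\,Z_{r_2,2}(s)$ for every $s\in[r_2,t]$. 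Hence on $[r_2,t]$ we have $Z_{r_1,2}(s)Z_{r_2,2}(s)=Z_{r_1,2}(r_2)\,Z_{r_2,2}(s)^2$, which gives
\[
\left(\int_{r_1\vee r_2}^{t}\abs{Z_{r_1,2}(s)Z_{r_2,2}(s)}^{q}\,ds\right)^{\frac{2p}{q}}=\abs{Z_{r_1,2}(r_2)}^{2p}\left(\int_{r_2}^{t}\abs{Z_{r_2,2}(s)}^{2q}\,ds\right)^{\frac{2p}{q}}.
\]

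Next, I would take expectations and use the Cauchy--Schwarz inequality to decouple the two factors,
\[
\mathbb{E}\left(\left(\int_{r_1\vee r_2}^{t}\abs{Z_{r_1,2}(s)Z_{r_2,2}(s)}^{q}ds\right)^{\frac{2p}{q}}\right)\le \mathbb{E}\left(\abs{Z_{r_1,2}(r_2)}^{4p}\right)^{1/2}\mathbb{E}\left(\left(\int_{r_2}^{t}\abs{Z_{r_2,2}(s)}^{2q}ds\right)^{\frac{4p}{q}}\right)^{1/2}.
\]
For the first factor, Lemma~\ref{lemmaonExpofZr2tothe2pq} applied with its parameter $p$ replaced by $2p$ yields $\mathbb{E}\left(\abs{Z_{r_1,2}(r_2)}^{4p}\right)\le e^{-\frac{K}{\eta}(r_2-r_1)}$, so that this factor is at most $e^{-\frac{K}{2\eta}(r_2-r_1)}$. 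For the second factor, since $4p/q$ is a positive integer (as $q\in\{1,2\}$), I would expand the power as a $(4p/q)$-fold iterated time integral, apply H\"older's inequality to the product inside the expectation, and then use Lemma~\ref{lemmaonExpofZr2tothe2pq} with parameter $4p$ on each factor; this produces $\mathbb{E}\left(\prod_{i}\abs{Z_{r_2,2}(s_i)}^{2q}\right)\le\prod_i e^{-\frac{qK}{4p\eta}(s_i-r_2)}$, and integrating gives the bound $\left(\int_{r_2}^{t}e^{-\frac{qK}{4p\eta}(s-r_2)}ds\right)^{4p/q}\le C\eta^{4p/q}$, whose square root contributes $C\eta^{2p/q}$.

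Combining the two estimates yields $C\eta^{2p/q}e^{-\frac{K}{2\eta}(r_2-r_1)}=C\eta^{2p/q}e^{-\frac{K}{2\eta}(r_1\vee r_2-r_1\wedge r_2)}$, which is the asserted bound. I do not anticipate a genuine obstacle: the argument is essentially a clean application of the flow identity together with Lemma~\ref{lemmaonExpofZr2tothe2pq}, and the only points requiring some attention are the verification of the multiplicative identity from the explicit form \eqref{expressionofZr2t}, the $\mathcal{F}_{r_2}$-measurability of $Z_{r_1,2}(r_2)$ (needed to legitimize pulling it out of the $s$-integral before applying Cauchy--Schwarz), and the bookkeeping of exponents — namely that $2p/q$ and $4p/q$ are integers when $q\in\{1,2\}$, and that the moment of order $8p$ fed into Lemma~\ref{lemmaonExpofZr2tothe2pq} corresponds to its parameter value $4p$.
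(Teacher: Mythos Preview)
Your argument is correct and reaches the same bound as the paper, but by a genuinely different route. The paper never invokes the flow identity $Z_{r_1,2}(s)=Z_{r_1,2}(r_2)Z_{r_2,2}(s)$; instead it expands the $\tfrac{2p}{q}$-th power directly as an iterated integral, applies H\"older twice at each time slice to obtain $\mathbb{E}(Z_{r_1,2}^{4p}(s))^{q/4p}\mathbb{E}(Z_{r_2,2}^{4p}(s))^{q/4p}\le e^{-\frac{qK}{4p\eta}(2s-r_1-r_2)}$, and then integrates in $s$, so that the exponential factor $e^{-\frac{K}{2\eta}(r_1\vee r_2-r_1\wedge r_2)}$ emerges only after evaluating the time integral at its lower limit. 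Your factorisation isolates that decay up front as the moment of $Z_{r_1,2}(r_2)$, which is arguably more transparent; the price is that after Cauchy--Schwarz you need Lemma~\ref{lemmaonExpofZr2tothe2pq} at order $8p$ rather than $4p$, which is harmless here. One small remark: the $\mathcal{F}_{r_2}$-measurability of $Z_{r_1,2}(r_2)$ is not actually what justifies pulling it out of the $s$-integral---that step only uses that it is constant in $s$---and since you apply Cauchy--Schwarz rather than conditioning, measurability plays no role in your argument.
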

\begin{proof}
We can write, using H\"older's inequality,
\begin{align*}
\mathbb{E}\left(\left(\int_{r_1 \vee r_2}^t
    Z_{r_1,2}^q(s)Z_{r_2,2}^q(s)ds\right)^{\frac{2p}{q}}\right)
  &= \int_{r_1 \vee r_2}^t\cdots \int_{r_1 \vee r_2}^t \mathbb{E}\left(
  \prod_{i=1}^{2p/q}
    Z_{r_1,2}^q(s_i)Z_{r_2,2}^q(s_i)\right)ds_1\cdots ds_{2p/q}\\
  &\leq \int_{r_1 \vee r_2}^t\cdots \int_{r_1 \vee r_2}^t\prod_{i=1}^{2p/q} \mathbb{E}\left(
    Z_{r_1,2}^{2p}(s_i)Z_{r_2,2}^{2p}(s_i)\right)^{\frac{q}{2p}}ds_1\cdots
    ds_{2p/q}\\
  &\leq \left( \int_{r_1 \vee r_2}^t \mathbb{E}\left(
    Z_{r_1,2}^{4p}(s)\right)^{\frac{q}{4p}}\mathbb{E}\left(
    Z_{r_2,2}^{4p}(s)\right)^{\frac{q}{4p}}ds \right)^{\frac{2p}{q}}.
\end{align*}
An application of Lemma \ref{lemmaonExpofZr2tothe2pq} yields
\begin{align*}
\mathbb{E}\left(\left(\int_{r_1 \vee r_2}^t
    Z_{r_1,2}^q(s)Z_{r_2,2}^q(s)ds\right)^{\frac{2p}{q}}\right)
  &\leq \left( \int_{r_1 \vee r_2}^t e^{-\frac{qK}{4p\eta}(2s - r_1 -
    r_2)}ds \right)^{\frac{2p}{q}}\\
  &\leq C \eta^{\frac{2p}{q}}e^{-\frac{K}{2\eta}(2r_1\vee r_2 - r_1 -
    r_2)}\\
  &= C \eta^{\frac{2p}{q}}e^{-\frac{K}{2\eta}(r_1\vee r_2 - r_1
    \wedge r_2)}.
\end{align*}
\end{proof}

\begin{lemma}
  \label{lemmaonEDW1YZr22p}
Let $p\geq 1$ be a  natural number. Then, for any $r_1 \vee r_2 \leq t
\leq 1$, there exists a positive constants $C,D >0$ such that
\begin{equation*}
\mathbb{E}\left(  \abs{D^{W^{1}}_{r_1} Y_{t}^{\eta}Z_{r_2,2}(t)}^{2p}\right) \leq C\varepsilon^{p} e^{-\frac{D}{\eta}(t-r_1\vee r_2)}.
\end{equation*}
\end{lemma}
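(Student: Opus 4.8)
The plan is to reduce the joint moment bound to the two separate moment estimates already at our disposal, namely the uniform $\varepsilon$-decay of the moments of $D^{W^1}_{r_1}Y^\eta_t$ from Proposition \ref{boundonEDW1Yt2pb} and the exponential $\eta$-decay of the moments of $Z_{r_2,2}(t)$ from Lemma \ref{lemmaonExpofZr2tothe2pq}. First I would apply the Cauchy--Schwarz inequality to split
\begin{equation*}
\mathbb{E}\left(\abs{D^{W^1}_{r_1}Y^\eta_t\,Z_{r_2,2}(t)}^{2p}\right)\le \mathbb{E}\left(\abs{D^{W^1}_{r_1}Y^\eta_t}^{4p}\right)^{1/2}\,\mathbb{E}\left(\abs{Z_{r_2,2}(t)}^{4p}\right)^{1/2}.
\end{equation*}

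Next I would invoke Proposition \ref{boundonEDW1Yt2pb}, applied with its parameter equal to $2p$, to get $\mathbb{E}(\abs{D^{W^1}_{r_1}Y^\eta_t}^{4p})^{1/2}\le C\varepsilon^{p}$, and Lemma \ref{lemmaonExpofZr2tothe2pq}, again with its parameter equal to $2p$, to get $\mathbb{E}(\abs{Z_{r_2,2}(t)}^{4p})^{1/2}\le e^{-\frac{K}{2\eta}(t-r_2)}$. Multiplying the two estimates yields $\mathbb{E}(\abs{D^{W^1}_{r_1}Y^\eta_t\,Z_{r_2,2}(t)}^{2p})\le C\varepsilon^{p}\,e^{-\frac{K}{2\eta}(t-r_2)}$. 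Finally I would re-anchor the exponent at $r_1\vee r_2$: since $r_2\le r_1\vee r_2\le t$ one has $t-r_2\ge t-(r_1\vee r_2)\ge 0$, so $e^{-\frac{K}{2\eta}(t-r_2)}\le e^{-\frac{K}{2\eta}(t-(r_1\vee r_2))}$, which gives the claim with $D=K/2$ and the same constant $C$.

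The argument is short and uses nothing beyond the two cited estimates; the only point requiring a little care is bookkeeping, namely that the Cauchy--Schwarz split forces one to use Proposition \ref{boundonEDW1Yt2pb} and Lemma \ref{lemmaonExpofZr2tothe2pq} at moment order $4p$ (equivalently, to read Assumption \ref{A:Assumptiongeneral} with parameter $2p$), and that the exponential decay must be pinned at $r_1\vee r_2$ rather than at $r_2$. If one prefers to avoid strengthening the assumption, an alternative route is to observe that $P_t:=D^{W^1}_{r_1}Y^\eta_t\,Z_{r_2,2}(t)$ satisfies, by the It\^o product formula applied to \eqref{geneqforDY} (with $j=1$) and \eqref{expressionofZr2t}, an affine equation whose homogeneous part is governed by $2\partial_2 f+(\partial_2\tau)^2$ in the drift and $2\partial_2\tau$ in the diffusion and whose forcing term is a multiple of $D^{W^1}_{r_1}X^\varepsilon_t\,Z_{r_2,2}(t)$; one then applies It\^o to $\abs{P_t}^{2p}$, uses Young's inequality on the forcing exactly as in the proof of Proposition \ref{proponEintDW1W1YslessEintDW1W1Xs}, bounds the forcing moments by the same Cauchy--Schwarz-plus-cited-bounds step (now only at order $2p$ on $X^\varepsilon$), and concludes by Gr\"onwall's lemma. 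I expect this moment-order bookkeeping to be the only genuine obstacle in the proof.
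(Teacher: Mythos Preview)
Your Cauchy--Schwarz argument is correct and considerably shorter than the paper's proof. The paper proceeds exactly along your ``alternative route'': it sets $\Lambda_t = D^{W^1}_{r_1}Y^\eta_t\,Z_{r_2,2}(t)$, derives an affine SDE for $\Lambda_t$ via the It\^o product formula, applies It\^o to $|\Lambda_t|^{2p}$, uses Young's inequality to obtain a differential inequality of the form
\[
\frac{d}{dt}\mathbb{E}|\Lambda_t|^{2p}\le -\frac{K_0}{\eta}\,\mathbb{E}|\Lambda_t|^{2p}+\frac{C}{\eta}\,\mathbb{E}\bigl(|D^{W^1}_{r_1}X^\varepsilon_t\, Z_{r_2,2}(t)|^{2p}\bigr),
\]
and then solves it. Your direct splitting bypasses this machinery at the cost of a factor-of-two loss in the exponential rate ($D=K/2$ versus the paper's $D=K_0\wedge K/q_1$), which is immaterial for how the lemma is used downstream.

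One small correction to your bookkeeping remark: the alternative route does \emph{not} avoid the higher-moment issue. To bound the forcing term $\mathbb{E}(|D^{W^1}_{r_1}X^\varepsilon_s\, Z_{r_2,2}(s)|^{2p})$ one still needs to separate the two factors, and the paper does this via H\"older with conjugate $q_1>1$, so it too requires moments of $Z_{r_2,2}$ at order $2pq_1>2p$ (and of $D^{W^1}X^\varepsilon$ at order $2pp_1>2p$). Both approaches therefore implicitly rely on Assumption~\ref{A:Assumptiongeneral} at a parameter strictly larger than~$p$; this is not a defect peculiar to your argument.
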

\begin{proof}
Recall that $D^{W^{1}}_{r_1} Y_{t}^{\eta}$ satisfies Equation
\eqref{geneqforDY} and note that per It\^o's formula, $Z_{r_2,2}(t)$ satisfies the affine
equation
\begin{equation}
\label{sdeforZr2}
Z_{r_2,2}(t) = 1 + \frac{1}{\eta}\int_{r_2}^tZ_{r_2,2}(s)\partial_2f\left(X_s^{\varepsilon},Y_s^{\eta}
  \right)ds + \frac{1}{\sqrt{\eta}}\int_{r_2}^tZ_{r_2,2}(s)\partial_2\tau\left(X_s^{\varepsilon},Y_s^{\eta}
  \right)dW_s^2.
\end{equation}
Using It\^o's product formula hence yields
\begin{align*}
D^{W^{1}}_{r_1} Y_{t}^{\eta}Z_{r_2,2}(t)&=D^{W^{1}}_{r_1}
                                          Y_{r_2}^{\eta}
                                          \mathds{1}_{\left\{ r_2 \geq
                                          r_1\right\}} \\
&\quad +\frac{1}{\eta}\int_{r_1\vee r_2}^{t}\Big[\Big(2\partial_2
                                                            f\left(X_s^{\varepsilon},Y_s^{\eta}\right)+\partial_2\tau\left(X_s^{\varepsilon},Y_s^{\eta}\right)^{2}\Big)D^{W^{1}}_{r_1} Y_{s}^{\eta}Z_{r_2,2}(s)\\
  &\qquad\qquad +\left(\partial_1 f\left(X_s^{\varepsilon},Y_s^{\eta}\right)+\partial_1\tau\left(X_s^{\varepsilon},Y_s^{\eta}\right) \partial_2\tau\left(X_s^{\varepsilon},Y_s^{\eta}\right)\right)D^{W^{1}}_{r_1} X_{s}^{\varepsilon}Z_{r_2,2}(s)\bigg]ds\\
&\quad +\frac{1}{\sqrt\eta}\int_{r_1\vee r_2}^{t}\Big[2\partial_2
                                                                                                                                                              \tau\left(X_s^{\varepsilon},Y_s^{\eta}\right) D^{W^{1}}_{r_1} Y_{s}^{\eta}Z_{r_2,2}(s)\\
  &\qquad\qquad\qquad\qquad\qquad\qquad\qquad\qquad  +\partial_1\tau\left(X_s^{\varepsilon},Y_s^{\eta}\right) D^{W^{1}}_{r_1} X_{s}^{\varepsilon}Z_{r_2,2}(s)\Big]dW^{2}_{s}.
\end{align*}
Let us set $\Lambda_{t}=D^{W^{1}}_{r_1}
Y_{t}^{\eta}Z_{r_2,2}(t)$. Then, It\^o's formula yields
\begin{align*}
\mathbb{E}\left(\abs{\Lambda_t}^{2p}\right)&=\mathbb{E}\left(\abs{D^{W^{1}}_{r_1}
                                          Y_{r_2}^{\eta}}^{2p}\right)
                                          \mathds{1}_{\left\{ r_2 \geq
                                          r_1\right\}}\\
&\quad +\frac{2p}{\eta}\int_{r_1\vee r_2}^{t}\mathbb{E}\left(\left[2\partial_2 f\left(X_s^{\varepsilon},Y_s^{\eta}\right)+(4p-2)\partial_2\tau\left(X_s^{\varepsilon},Y_s^{\eta}\right)^{2}\right]\abs{\Lambda_{s}}^{2p}\right)ds\\
&\quad +\frac{2p}{\eta}\int_{r_1\vee
                                                                                                    r_2}^{t}\mathbb{E}\Big([\partial_1
                                                                                                                                                                                       f\left(X_s^{\varepsilon},Y_s^{\eta}\right)\\
  &\qquad\qquad\qquad\qquad +(4p-2)\partial_1\tau\left(X_s^{\varepsilon},Y_s^{\eta}\right) \partial_2\tau\left(X_s^{\varepsilon},Y_s^{\eta}\right)]D^{W^{1}}_{r_1} X_{s}^{\varepsilon}Z_{r_2,2}(s)\Lambda_{s}^{2p-1}\Big)ds\\
&\quad +\frac{2p}{\eta}\int_{r_1\vee r_2}^{t}\mathbb{E}\left(\frac{2p-1}{2}\partial_1\tau\left(X_s^{\varepsilon},Y_s^{\eta}\right)^{2} \abs{D^{W^{1}}_{r_1} X_{s}^{\varepsilon}Z_{r_2,2}(s)}^{2}\Lambda_s^{2p-2}\right)ds.
\end{align*}
We can now differentiate this equality with respect to $t$ and apply
Young's inequality for products in order to get, for some constants
$D_1,D_2>0$ to be chosen later,
\begin{align*}
\frac{d}{dt}\mathbb{E}\left( \abs{\Lambda_t}^{2p} \right)
&\leq\frac{2p}{\eta}\mathbb{E}\left(\left[2\partial_2 f\left(X_t^{\varepsilon},Y_t^{\eta}\right)+(4p-2)\partial_2\tau\left(X_t^{\varepsilon},Y_t^{\eta}\right)^{2}\right]\abs{\Lambda_{t}}^{2p}\right)\nonumber\\
&\quad +\frac{D_{1}^{2p}}{\eta}\mathbb{E}\left( \abs{\left(\partial_1
f\left(X_t^{\varepsilon},Y_t^{\eta}\right)+(4p-2)\partial_1\tau\left(X_t^{\varepsilon},Y_t^{\eta}\right) \partial_2\tau\left(X_t^{\varepsilon},Y_t^{\eta}\right)\right)D^{W^{1}}_{r_1} X_{t}^{\varepsilon}Z_{r_2,2}(t)}^{2p}\right)\\
  &\quad +
    \frac{2p-1}{\eta}D_{1}^{-\frac{2p}{2p-1}}\mathbb{E}\left(\abs{\Lambda_{t}}^{2p}\right)
    +
    \frac{2p-1}{\eta}D_{2}^{p}\mathbb{E}\left( \abs{\partial_1\tau\left(X_t^{\varepsilon},Y_t^{\eta}\right)
    D^{W^{1}}_{r_1} X_{t}^{\varepsilon}Z_{r_2,2}(t)}^{2p}\right)\\
  &\quad +\frac{2p-2}{\eta}D_{2}^{-\frac{p}{p-1}}\mathbb{E}\left( \abs{\Lambda_t}^{2p} \right).
\end{align*}
Using Assumption \ref{A:Assumptiongeneral}, we get
\begin{align*}
\frac{d}{dt}\mathbb{E}\left( \abs{\Lambda_t}^{2p} \right)
  &\leq\frac{2p}{\eta}\left(-K+\frac{2p-1}{2p}D_{1}^{-\frac{2p}{2p-1}}+\frac{p-1}{p}D_{2}^{-\frac{p}{p-1}}\right)\mathbb{E}\left(\abs{\Lambda_t}^{2p}\right)\\
  &\quad +\frac{C}{\eta}\left( D_1^{2p} + (2p-1)D_2^p \right)\mathbb{E}\left( \abs{D^{W^{1}}_{r_1} X_{t}^{\varepsilon}Z_{r_2,2}(t)}^{2p}\right).
\end{align*}
We can now pick the constants $D_1,D_2$ to be sufficiently large so
that
\begin{equation*}
2p\left(-K+\frac{2p-1}{2p}D_{1}^{-\frac{2p}{2p-1}}+\frac{p-1}{p}D_{2}^{-\frac{p}{p-1}}\right)\leq
-K_0<0
\end{equation*}
for some constant $K_0 >0$. We hence get that for some constant $C>0$,
\begin{align*}
\frac{d}{dt}\mathbb{E}\left( \abs{\Lambda_t}^{2p} \right)
&\leq-\frac{ K_0}{\eta}\mathbb{E}\left( \abs{\Lambda_t}^{2p} \right)
+\frac{C}{\eta}\mathbb{E}\left( \abs{D^{W^{1}}_{r_1} X_{t}^{\varepsilon}Z_{r_2,2}(t)}^{2p}\right).
\end{align*}
Solving this differential inequality and using Propositions \ref{boundonEDW1Xs2p} and
\ref{boundonEDW1Yt2pb} yields
\begin{align}
  \label{solvingdiffineqforLambdat}
\mathbb{E}\left( \abs{\Lambda_t}^{2p} \right)&\leq e^{-\frac{K_0}{\eta}(t-r_1\vee
                                               r_2)} \mathbb{E}\left(
                                               \abs{D^{W^{1}}_{r_1}
                                               Y^{\eta}_{r_2}}^{2p}\right) \mathds{1}_{\left\{ r_2
                                               \geq r_1\right\}}\nonumber\\
  &\quad +\frac{C}{\eta}e^{-\frac{K_0}{\eta}t}\int_{r_1\vee r_2}^{t}e^{\frac{K_0}{\eta}s}\mathbb{E}\left( \abs{D^{W^{1}}_{r_1} X_{s}^{\varepsilon}Z_{r_2,2}(s)}^{2p}\right)ds\nonumber\\
&\leq  C \varepsilon^p e^{-\frac{K_0}{\eta}(t-r_1\vee r_2)}
                                                                                                                                      \mathds{1}_{\left\{ r_2   \geq  r_1\right\}}\nonumber\\
  &\quad +\varepsilon^{p}\frac{C}{\eta}e^{-\frac{K_0}{\eta}t}\int_{r_1\vee
                                                                                                                                                                                        r_2}^{t}e^{\frac{K_0}{\eta}s}\mathbb{E}\left(
    \abs{Z_{r_2,2}(s)}^{2p q_1}\right)^{\frac{1}{q_1}}ds.
\end{align}
Lemma \ref{lemmaonExpofZr2tothe2pq} allows us to continue by writing
\begin{align*}
\mathbb{E}\left( \abs{\Lambda_t}^{2p} \right)&\leq C\varepsilon^{p}  \left[e^{-\frac{K_0}{\eta}(t-r_1\vee r_2)}  \mathds{1}_{\left\{ r_2\geq r_1\right\}}+\frac{1}{\eta}e^{-\frac{K_0}{\eta}t}\int_{r_1\vee r_2}^{t}e^{\frac{K_0}{\eta}s}e^{-\frac{K}{q_1\eta}(s-r_2)}ds\right]\nonumber\\
&=  C\varepsilon^{p} \left[e^{-\frac{K_0}{\eta}(t-r_1\vee r_2)}  \mathds{1}_{\left\{ r_2\geq r_1\right\}}+\frac{1}{\eta}e^{-\frac{K_0}{\eta}t}e^{\frac{K}{q_1\eta}r_2}\int_{r_1\vee r_2}^{t}e^{\frac{K_0-\frac{K}{q_1}}{\eta}s}ds\right]\nonumber\\
&\leq  C\varepsilon^{p} \left[e^{-\frac{K_0}{\eta}(t-r_1\vee r_2)}  \mathds{1}_{\left\{ r_2\geq r_1\right\}}+e^{-\frac{K_0}{\eta}t}e^{\frac{K}{q_1\eta}r_2}\left(e^{\frac{K_0-\frac{K}{q_1}}{\eta}t}-e^{\frac{K_0-\frac{K}{q_1}}{\eta}r_1\vee r_2}\right)\right]\nonumber\\
&\leq  C\varepsilon^{p} \left[e^{-\frac{K_0}{\eta}(t-r_1\vee r_2)}  \mathds{1}_{\left\{ r_2\geq r_1\right\}}+e^{-\frac{K}{q_1\eta}(t-r_{2})}\right]\nonumber\\
&\leq C\varepsilon^{p} e^{-\frac{K_0\wedge \frac{K}{q_1}}{\eta}(t-r_1\vee r_2)},
\end{align*}
which concludes the proof.
\end{proof}

\begin{lemma}
  \label{lemmaonEVr1veer2sZr22p}
For any $r_1 \vee r_2 \leq t \leq 1$, define
\begin{equation*}
V_{r_1 \vee r_2}(t) = \int_{r_1 \vee r_2}^t Z_{r_1 \vee r_2,2}(s)D_{r_1}^{W^1}Y_s^{\eta}Q_{r_2,1}^{\eta}(s)ds,
\end{equation*}
where $Q_{r_2,1}^{\eta}(s)$ is given by \eqref{defQ1}. Let $p\geq 1$ be a  natural number. Then, for any $r_1 \vee r_2 \leq t
\leq 1$, there exists a positive constants $C,D >0$ such that
\begin{equation*}
\mathbb{E}\left(  \abs{\frac{1}{\eta}Z_{r_2,2}(t)V_{r_1 \vee r_2}(t)}^{2p}\right)
\leq C \left( \frac{\varepsilon}{\eta} \right)^p e^{-\frac{D}{\eta}(t
  - r_1 \vee r_2)}.
\end{equation*}
\end{lemma}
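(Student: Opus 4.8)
The plan is to recognize the quantity to be estimated as the solution of an affine stochastic differential equation of the same type already met for $D_r^{W^1}Y_t^\eta$ in \eqref{geneqforDY} and for $Q_{r,2}^\eta$ in \eqref{Eq:SDE_Q2D2Y}, and then to run the Itô / Young's inequality / Grönwall dissipativity argument used in the proofs of Proposition \ref{propDW1YleqDW1X}, Proposition \ref{proponEintDW1W1YslessEintDW1W1Xs} and Lemma \ref{lemmaonEDW1YZr22p}. Precisely, let $\Theta_t$ be the process started from $\Theta_{r_1\vee r_2}=0$ and solving
\begin{align*}
\Theta_t &= \frac{1}{\eta}\int_{r_1\vee r_2}^t\left[\partial_2 f\left(X_s^\varepsilon,Y_s^\eta\right)\Theta_s + D_{r_1}^{W^1}Y_s^\eta\,Q_{r_2,1}^\eta(s)\right]ds \\
&\quad + \frac{1}{\sqrt\eta}\int_{r_1\vee r_2}^t\partial_2\tau\left(X_s^\varepsilon,Y_s^\eta\right)\Theta_s\,dW_s^2 .
\end{align*}
Using It\^o's product formula together with \eqref{expressionofZr2t}--\eqref{sdeforZr2} (with $r_2$ replaced by $r_1\vee r_2$) and the bounded variation of $s\mapsto Z_{r_1\vee r_2,2}^{-1}(s)D_{r_1}^{W^1}Y_s^\eta Q_{r_2,1}^\eta(s)$, one checks by Duhamel's principle that $\Theta_t=\frac{1}{\eta}Z_{r_1\vee r_2,2}(t)V_{r_1\vee r_2}(t)$, i.e.\ $\Theta_t$ is exactly the quantity whose $2p$-th moment is needed in the proof of Proposition \ref{proponEintDW1W2Ystotheq}.

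Since $2p$ is even, I would apply It\^o's formula to $\abs{\Theta_t}^{2p}=\Theta_t^{2p}$, take expectations so that the martingale term vanishes, and differentiate the resulting identity in $t$. Young's inequality for products applied to $\abs{\Theta_s}^{2p-1}\abs{D_{r_1}^{W^1}Y_s^\eta Q_{r_2,1}^\eta(s)}$ splits the forcing contribution into an $\abs{\Theta_s}^{2p}$ term and an $\abs{D_{r_1}^{W^1}Y_s^\eta Q_{r_2,1}^\eta(s)}^{2p}$ term; collecting all $\abs{\Theta_s}^{2p}$ terms and invoking the dissipativity Assumption \ref{A:Assumptiongeneral}(iv) (with exponent $2p$, exactly as in Lemma \ref{lemmaonEDW1YZr22p}) produces constants $K_0>0$ and $C<\infty$ with
\begin{equation*}
\frac{d}{dt}\mathbb{E}\left(\abs{\Theta_t}^{2p}\right)\leq -\frac{K_0}{\eta}\mathbb{E}\left(\abs{\Theta_t}^{2p}\right)+\frac{C}{\eta}\mathbb{E}\left(\abs{D_{r_1}^{W^1}Y_t^\eta\,Q_{r_2,1}^\eta(t)}^{2p}\right).
\end{equation*}
For the forcing moment, recall from \eqref{defQ1} that $Q_{r_2,1}^\eta(t)=\frac{1}{\sqrt\eta}Z_{r_2,2}(t)\tau(X_{r_2}^\varepsilon,Y_{r_2}^\eta)$ with $\tau$ uniformly bounded by Assumption \ref{A:Assumptiongeneral}, so $\abs{D_{r_1}^{W^1}Y_t^\eta Q_{r_2,1}^\eta(t)}^{2p}\leq C\eta^{-p}\abs{D_{r_1}^{W^1}Y_t^\eta Z_{r_2,2}(t)}^{2p}$, and Lemma \ref{lemmaonEDW1YZr22p} then gives $\mathbb{E}(\abs{D_{r_1}^{W^1}Y_t^\eta Q_{r_2,1}^\eta(t)}^{2p})\leq C(\varepsilon/\eta)^p e^{-\frac{D_1}{\eta}(t-r_1\vee r_2)}$ for some $D_1>0$.

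It then remains to integrate the differential inequality with zero initial value: using the integrating factor $e^{K_0 t/\eta}$ one obtains
\begin{equation*}
\mathbb{E}\left(\abs{\Theta_t}^{2p}\right)\leq \frac{C}{\eta}\left(\frac{\varepsilon}{\eta}\right)^p e^{-\frac{K_0}{\eta}t}e^{\frac{D_1}{\eta}(r_1\vee r_2)}\int_{r_1\vee r_2}^t e^{\frac{K_0-D_1}{\eta}s}\,ds ,
\end{equation*}
and evaluating this elementary integral (distinguishing the signs of $K_0-D_1$, and, when $K_0=D_1$, using $xe^{-x}\leq C_\delta e^{-(1-\delta)x}$ to pass to any rate strictly below $K_0$) yields
\begin{equation*}
\mathbb{E}\left(\abs{\tfrac{1}{\eta}Z_{r_1\vee r_2,2}(t)V_{r_1\vee r_2}(t)}^{2p}\right)\leq C\left(\frac{\varepsilon}{\eta}\right)^p e^{-\frac{D}{\eta}(t-r_1\vee r_2)}
\end{equation*}
with $D=\min(K_0,D_1)$ (any $D<K_0$ in the borderline case), which is the claim. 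The only point requiring genuine care — the recurring one in these sections — is verifying that the dissipative margin furnished by Assumption \ref{A:Assumptiongeneral}(iv) strictly dominates the $\abs{\Theta_s}^{2p}$ remainder produced by Young's inequality, so that a positive $K_0$ survives; once that is in hand, the competition of the two exponential rates $K_0$ and $D_1$ in the final ODE integration is routine.
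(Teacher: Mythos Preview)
Your proposal is correct and follows essentially the same route as the paper. Both arguments recognize $\Theta_t=\frac{1}{\eta}Z_{r_1\vee r_2,2}(t)V_{r_1\vee r_2}(t)$ as the solution of an affine SDE with forcing $D_{r_1}^{W^1}Y_s^\eta\,Q_{r_2,1}^\eta(s)$, derive the same differential inequality via It\^o's formula, Young's inequality and the dissipativity of Assumption~\ref{A:Assumptiongeneral}(iv), and then integrate against the exponential factor. The only cosmetic difference is in how the forcing moment is bounded: you invoke Lemma~\ref{lemmaonEDW1YZr22p} directly to get $\mathbb{E}\big(|D_{r_1}^{W^1}Y_t^\eta\,Q_{r_2,1}^\eta(t)|^{2p}\big)\leq C(\varepsilon/\eta)^p e^{-D_1(t-r_1\vee r_2)/\eta}$, whereas the paper splits the product by H\"older, bounding $D_{r_1}^{W^1}Y$ via Proposition~\ref{boundonEDW1Yt2pb} and $Z_{r_2,2}$ via Lemma~\ref{lemmaonExpofZr2tothe2pq}; the outcome is the same.
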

\begin{proof}
Defining $\Lambda_{r,2}(t) = \frac{1}{\eta}Z_{r,2}(t)V_r(t)$ and
following the same line of arguments as in the proof of Lemma
\ref{lemmaonEDW1YZr22p} (with $\Lambda_{r,2}(t)$ instead of
$\Lambda_t$) yields, for some constant $K_0 >0$,
\begin{align*}
\frac{d}{dt}\mathbb{E}\left( \abs{\Lambda_{r,2}(t)}^{2p} \right)
&\leq-\frac{ K_0}{\eta}\mathbb{E}\left( \abs{\Lambda_{r,2}(t)}^{2p} \right)
+\frac{C}{\eta}\mathbb{E}\left( \abs{D_{r_1}^{W^1}Y_t^{\eta}Q_{r_2,1}^{\eta}(t)}^{2p}\right).
\end{align*}
Solving this differential inequality in the same manner as was done in
\eqref{solvingdiffineqforLambdat} in the proof of Lemma
\ref{lemmaonEDW1YZr22p} get us
\begin{align*}
\mathbb{E}\left(\abs{\Lambda_{r_1\vee r_2,2}(t)}^{2p}\right)&\leq \frac{C}{\eta}e^{-\frac{K_0}{\eta}t}\int_{r_1\vee r_2}^{t}e^{\frac{K_0}{\eta}s}\mathbb{E}\left( \abs{D_{r_1}^{W^{1}}Y_{s}^{\eta}Q^{\eta}_{r_2,1}(s)}^{2p}\right)ds\nonumber\\
&\leq  C\left( \frac{\varepsilon}{\eta} \right)^p \frac{1}{\eta}e^{-\frac{K_0}{\eta}t}\int_{r_1\vee r_2}^{t}e^{\frac{K_0}{\eta}s}\mathbb{E}\left(\abs{Z_{r_2,2}(s)}^{2p q_1}\right)^{\frac{1}{q_1}}ds,
\end{align*}
where we have used H\"older's inequality, the fact that $Q^{\eta}_{r_2,1}(s)=\frac{1}{\sqrt{\eta}} Z_{r_2,2}(s)
                   \tau\left(X_{r_2}^{\varepsilon},Y_{r_2}^{\eta}
                   \right)$, and Proposition \ref{boundonEDW1Yt2p}. Finally, using Lemma \ref{lemmaonExpofZr2tothe2pq} and integrating concludes the proof.
                 \end{proof}

                 \begin{prop}
\label{expintegratedFuGudu}
Let $r_1,r_2 \leq t \leq 1$, $p \geq 1$ be an integer, $q \in
\left\{ 1,2 \right\}$, and $\left\{ Z_{r,2}(t)\colon r \leq t \leq 1
\right\}$ be the stochastic process defined by
\eqref{expressionofZr2t}. For any
stochastic processes $\left\{ F_t\colon r_1 \wedge r_2 \leq t \leq 1
\right\}$, $\left\{ G_t\colon r_1 \wedge r_2 \leq t \leq 1
\right\}$ in $L^{4p} \left( \Omega \right)$, one has
\begin{align*}
&\mathbb{E}\left(\left(\int_{r_1 \vee r_2}^t
  \abs{\frac{1}{\eta}Z_{r_1
  \vee
  r_2,2}(s)\int_{r_1 \vee r_2}^s Z_{r_1
  \vee
  r_2,2}^{-1}(u)F_uG_udu}^q ds \right)^{\frac{2p}{q}}
                 \right)\\
  & \qquad\qquad\qquad\qquad\qquad\qquad\qquad\qquad\qquad \leq  C \sup_{r_1\vee r_2 \leq s \leq t}\mathbb{E}\left(
  \abs{F_s}^{4p}
    \right)^{1/2}\mathbb{E}\left(
  \abs{G_s}^{4p}
    \right)^{1/2}.
\end{align*}
\end{prop}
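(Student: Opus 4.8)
The plan is to recognize the integrand as the solution of an affine stochastic differential equation and then run the same Gr\"onwall-type argument that was used for $Q^{\eta}_{r,2}$ in the proof of Proposition \ref{boundonintDY2p}. Write $r = r_1 \vee r_2$ and set
\[
\Xi_s = \frac{1}{\eta}Z_{r,2}(s)\int_r^s Z_{r,2}^{-1}(u)F_uG_u\,du, \qquad r \leq s \leq t.
\]
By the It\^o product formula together with the linear equation \eqref{sdeforZr2} satisfied by $Z_{r,2}$ (and since $s\mapsto \int_r^s Z_{r,2}^{-1}(u)F_uG_u\,du$ has finite variation, killing the cross-variation term), the process $\Xi$ solves
\[
\Xi_s = \frac{1}{\eta}\int_r^s \left[\partial_2 f\left(X_u^{\varepsilon},Y_u^{\eta}\right)\Xi_u + F_uG_u\right]du + \frac{1}{\sqrt{\eta}}\int_r^s \partial_2\tau\left(X_u^{\varepsilon},Y_u^{\eta}\right)\Xi_u\,dW_u^2,
\]
with initial condition $\Xi_r = 0$. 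Thus $\Xi$ has exactly the affine structure of \eqref{Eq:SDE_Q2D2Y}, only with the inhomogeneous term $\partial_1 f\,D_r^{W^2}X^{\varepsilon}$ (and its $\tau$-counterpart) replaced by the single source $F_uG_u$ appearing in the drift.

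Next I would apply It\^o's formula to $\abs{\Xi_t}^{2p} = \Xi_t^{2p}$ (recall $2p$ is even), take expectations, and differentiate in $t$ as in the derivation of \eqref{diffEDW1Yt}, obtaining
\[
\frac{d}{dt}\mathbb{E}\left(\abs{\Xi_t}^{2p}\right) = \frac{1}{\eta}\mathbb{E}\left(\left[2p\,\partial_2 f + p(2p-1)(\partial_2\tau)^2\right]\abs{\Xi_t}^{2p}\right) + \frac{2p}{\eta}\mathbb{E}\left(\Xi_t^{2p-1}F_tG_t\right).
\]
I would then apply Young's inequality for products to $\abs{\Xi_t}^{2p-1}\abs{F_tG_t}$ with H\"older conjugates $\left(\frac{2p}{2p-1},2p\right)$ and a small weight $\lambda>0$, so that the $\abs{\Xi_t}^{2p}$ term picks up an extra factor $(2p-1)\lambda^{2p/(2p-1)}$. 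Choosing $\lambda$ small enough and using that Assumption \ref{A:Assumptiongeneral}(iv) forces $2p\,\partial_2 f + p(2p-1)(\partial_2\tau)^2 \leq 2p\,\partial_2 f + 2p(2p-1)(\partial_2\tau)^2 \leq -K$, this yields a differential inequality of the form
\[
\frac{d}{dt}\mathbb{E}\left(\abs{\Xi_t}^{2p}\right) \leq -\frac{K}{2\eta}\mathbb{E}\left(\abs{\Xi_t}^{2p}\right) + \frac{C}{\eta}\mathbb{E}\left(\abs{F_tG_t}^{2p}\right).
\]
Solving it with the zero initial condition, applying Cauchy--Schwarz to $\mathbb{E}\left(\abs{F_tG_t}^{2p}\right) \leq \mathbb{E}\left(\abs{F_t}^{4p}\right)^{1/2}\mathbb{E}\left(\abs{G_t}^{4p}\right)^{1/2}$, bounding by the supremum over $s\in[r,t]$, and using $\frac{1}{\eta}e^{-\frac{K}{2\eta}t}\int_r^t e^{\frac{K}{2\eta}s}\,ds \leq \frac{2}{K}$ produces the pointwise-in-time bound
\[
\sup_{r \leq s \leq t}\mathbb{E}\left(\abs{\Xi_s}^{2p}\right) \leq C\sup_{r \leq s \leq t}\mathbb{E}\left(\abs{F_s}^{4p}\right)^{1/2}\mathbb{E}\left(\abs{G_s}^{4p}\right)^{1/2}.
\]

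Finally, to recover the quantity actually appearing in the statement, I would argue exactly as in Proposition \ref{propinitialconditionDY}: expand $\left(\int_r^t\abs{\Xi_s}^q ds\right)^{2p/q}$ as an iterated integral over $[r,t]^{2p/q}$, apply H\"older's inequality in each variable to get
\[
\mathbb{E}\left(\left(\int_r^t \abs{\Xi_s}^q\,ds\right)^{\frac{2p}{q}}\right) \leq \left(\int_r^t \mathbb{E}\left(\abs{\Xi_s}^{2p}\right)^{q/2p}ds\right)^{\frac{2p}{q}},
\]
and then insert the pointwise bound above together with $t \leq 1$ (so that $(t-r)^{2p/q}\leq 1$), which is precisely the asserted estimate. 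I expect the only genuinely delicate point to be the bookkeeping in the Young's inequality step, namely checking that the amount of coercivity "spent" on the cross term is strictly less than $K$ so that the coefficient of $\mathbb{E}(\abs{\Xi_t}^{2p})$ stays negative uniformly in $\eta$; everything else is a direct transcription of arguments already carried out earlier in the paper.
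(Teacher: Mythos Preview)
Your proof is correct and follows essentially the same route as the paper: the paper isolates the pointwise estimate $\mathbb{E}\bigl(|\Xi_t|^{2p}\bigr)\leq C\sup_s\mathbb{E}\bigl(|F_sG_s|^{2p}\bigr)$ as a separate result (Proposition \ref{proponthedsintegrals}), proved via the very same It\^o product formula, It\^o's formula for $x\mapsto x^{2p}$, Young's inequality, and solving the resulting differential inequality, and then feeds that into the time integral exactly as you do. The only cosmetic difference is that in the last step the paper uses Jensen's inequality $\bigl(\int|\Xi|^q\bigr)^{2p/q}\leq C\int|\Xi|^{2p}$ followed by Fubini rather than your iterated-integral plus H\"older expansion, but these are equivalent here.
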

\begin{proof}
We have
\begin{align*}
&\mathbb{E}\left(\left(\int_{r_1 \vee r_2}^t
  \abs{\frac{1}{\eta}Z_{r_1
  \vee
  r_2,2}(s)\int_{r_1 \vee r_2}^s Z_{r_1
  \vee
  r_2,2}^{-1}(u)F_uG_udu}^q ds \right)^{\frac{2p}{q}}
                 \right) \\
  &\qquad\qquad\qquad\qquad \leq C\mathbb{E}\left(\int_{r_1 \vee r_2}^t
  \abs{\frac{1}{\sqrt{\eta}}Z_{r_1
  \vee
  r_2,2}(s)\int_{r_1 \vee r_2}^s Z_{r_1
  \vee
  r_2,2}^{-1}(u)F_uG_udu}^{2p} ds \right).
\end{align*}
Using Proposition \ref{proponthedsintegrals} yields
\begin{align*}
&\mathbb{E}\left(\int_{r_1 \vee r_2}^t
  \abs{\frac{1}{\eta}Z_{r_1
  \vee
  r_2,2}(s)\int_{r_1 \vee r_2}^s Z_{r_1
  \vee
  r_2,2}^{-1}(u)F_uG_udu}^{2p} ds
                 \right) \\
  &\qquad\qquad\qquad\qquad = \int_{r_1 \vee r_2}^t
  \mathbb{E}\left(\abs{\frac{1}{\eta}Z_{r_1
  \vee
  r_2,2}(s)\int_{r_1 \vee r_2}^s Z_{r_1
  \vee
  r_2,2}^{-1}(u)F_uG_udu}^{2p}
                 \right)ds \\
  &\qquad\qquad\qquad\qquad \leq C \sup_{r_1\vee r_2 \leq s \leq t}\mathbb{E}\left(
  \abs{F_sG_s}^{2p}
    \right)\\
  &\qquad\qquad\qquad\qquad \leq C \sup_{r_1\vee r_2 \leq s \leq t}\mathbb{E}\left(
  \abs{F_s}^{4p}
    \right)^{1/2}\mathbb{E}\left(
  \abs{G_s}^{4p}
    \right)^{1/2}.
\end{align*}
\end{proof}

\begin{prop}
\label{expintegratedFuGudWu}
Let $r_1,r_2 \leq t \leq 1$, $p \geq 1$ be an integer, $q \in
\left\{ 1,2 \right\}$, and $\left\{ Z_{r,2}(t)\colon r \leq t \leq 1
\right\}$ be the stochastic process defined by
\eqref{expressionofZr2t}. For any
stochastic processes $\left\{ F_t\colon r_1 \wedge r_2 \leq t \leq 1
\right\}$, $\left\{ G_t\colon r_1 \wedge r_2 \leq t \leq 1
\right\}$ in $L^{4p} \left( \Omega \right)$, one has
\begin{align*}
&\mathbb{E}\left(\left(\int_{r_1 \vee r_2}^t
  \abs{\frac{1}{\sqrt{\eta}}Z_{r_1
  \vee
  r_2,2}(s)\int_{r_1 \vee r_2}^s Z_{r_1
  \vee
  r_2,2}^{-1}(u)F_uG_udW_u^2}^q ds \right)^{\frac{2p}{q}}
                 \right)\\
  & \qquad\qquad\qquad\qquad\qquad\qquad\qquad\qquad\qquad \leq  C \sup_{r_1\vee r_2 \leq s \leq t}\mathbb{E}\left(
  \abs{F_s}^{4p}
    \right)^{1/2}\mathbb{E}\left(
  \abs{G_s}^{4p}
    \right)^{1/2}.
\end{align*}
\end{prop}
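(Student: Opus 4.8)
The plan is to mirror the proof of Proposition~\ref{expintegratedFuGudu}, replacing the Lebesgue integral by a stochastic integral in the inner term. Write
\[
A_s = \abs{\frac{1}{\sqrt{\eta}}Z_{r_1 \vee r_2,2}(s)\int_{r_1 \vee r_2}^s Z_{r_1 \vee r_2,2}^{-1}(u)F_uG_u\,dW_u^2}.
\]
Since $q \in \left\{ 1,2 \right\}$ and $[r_1 \vee r_2, t] \subseteq [0,1]$, Jensen's inequality gives $\left( \int_{r_1 \vee r_2}^t A_s^q\,ds \right)^{2p/q} \leq C\int_{r_1 \vee r_2}^t A_s^{2p}\,ds$ exactly as in the proof of Proposition~\ref{expintegratedFuGudu}. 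Taking expectations and applying Tonelli's theorem, the claim reduces to the pointwise bound $\sup_{r_1 \vee r_2 \leq s \leq t}\mathbb{E}\left( A_s^{2p} \right) \leq C\sup_{r_1 \vee r_2 \leq s \leq t}\mathbb{E}\left( \abs{F_s}^{4p} \right)^{1/2}\mathbb{E}\left( \abs{G_s}^{4p} \right)^{1/2}$.

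The next step is to identify the inner process. Setting $N(s) = \frac{1}{\sqrt{\eta}}Z_{r_1 \vee r_2,2}(s)\int_{r_1 \vee r_2}^s Z_{r_1 \vee r_2,2}^{-1}(u)F_uG_u\,dW_u^2$, the It\^o product formula applied together with the equation \eqref{sdeforZr2} satisfied by $Z_{r_1 \vee r_2,2}$ shows that $N$ solves the affine stochastic differential equation
\[
N(s) = \frac{1}{\eta}\int_{r_1 \vee r_2}^s \left[ \partial_2 f\,N(u) + \partial_2\tau\,F_uG_u \right]du + \frac{1}{\sqrt{\eta}}\int_{r_1 \vee r_2}^s \left[ \partial_2\tau\,N(u) + F_uG_u \right]dW_u^2,
\]
(arguments $(X_u^{\varepsilon},Y_u^{\eta})$ suppressed), with $N(r_1 \vee r_2)=0$. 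This has exactly the structure of \eqref{Eq:SDE_Q2D2Y} with $D_{r}^{W^2}X_u^{\varepsilon}$ replaced by the forcing $F_uG_u$, except that the coefficients $\partial_2\tau$ and $1$ now play the roles played by $\partial_1 f$ and $\partial_1\tau$ there.

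I would then estimate $\mathbb{E}\left( N(s)^{2p} \right)$ as $Q^{\eta}_{r,2}$ is estimated in the proof of Proposition~\ref{boundonintDY2p}: apply It\^o's formula to $N(s)^{2p}$ (using that $2p$ is even), take expectations, differentiate in $s$, and bound the cross-terms $\abs{F_sG_s}\abs{N(s)}^{2p-1}$ and $\abs{F_sG_s}^2\abs{N(s)}^{2p-2}$ by Young's inequality for products with free constants to be chosen later, as in the proof of Lemma~\ref{lemmaonEDW1YZr22p}. The combination multiplying $\mathbb{E}\left( N(s)^{2p} \right)$ then equals $\tfrac1\eta\left[ 2p\partial_2 f + p(2p-1)\abs{\partial_2\tau}^2 + \delta' \right]$, where $\delta'$ can be made arbitrarily small by taking the Young constants large; discarding the nonnegative $\partial_1$-terms, Assumption~\ref{A:Assumptiongeneral}(iv) bounds $2p\partial_2 f + p(2p-1)\abs{\partial_2\tau}^2$ by $-K$, so for the Young constants large enough one obtains, for some $K_0>0$,
\[
\frac{d}{ds}\mathbb{E}\left( N(s)^{2p} \right) \leq -\frac{K_0}{\eta}\mathbb{E}\left( N(s)^{2p} \right) + \frac{C}{\eta}\sup_{u}\mathbb{E}\left( \abs{F_uG_u}^{2p} \right).
\]
Solving this differential inequality with $N(r_1\vee r_2)=0$ yields $\mathbb{E}\left( N(s)^{2p} \right) \leq \frac{C}{K_0}\sup_u\mathbb{E}\left( \abs{F_uG_u}^{2p} \right)$, the factor $1/\eta$ in the forcing being exactly cancelled by the $\eta$ produced on integrating $e^{-K_0(s-u)/\eta}$; a final Cauchy--Schwarz step $\mathbb{E}\left( \abs{F_uG_u}^{2p} \right) \leq \mathbb{E}\left( \abs{F_u}^{4p} \right)^{1/2}\mathbb{E}\left( \abs{G_u}^{4p} \right)^{1/2}$ then gives the reduced bound and closes the proof.

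The main obstacle is precisely this last estimate on $\mathbb{E}\left( N(s)^{2p} \right)$. Note that $N$ is \emph{not} a martingale in $s$ — in the representation $N(s) = \frac{1}{\sqrt{\eta}}\int_{r_1 \vee r_2}^s Z_{u,2}(s)F_uG_u\,dW_u^2$ the integrand $Z_{u,2}(s)$ depends on $s$ — so a direct Burkholder--Davis--Gundy bound on the stochastic integral is unavailable, and one genuinely needs the affine-equation detour. The delicate point within that detour is the calibration of the Young-inequality constants: because the forcing structure here does not match the precise coefficients of Assumption~\ref{A:Assumptiongeneral}(iv) (as it does in Proposition~\ref{boundonintDY2p}), one must choose those constants large, which is harmless only because they feed into the $\varepsilon,\eta$-independent forcing coefficient and not into the dissipative rate $-K_0/\eta$ — this is what makes the final bound free of $\varepsilon$ and $\eta$.
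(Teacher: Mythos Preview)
Your approach is correct and is essentially the same as the paper's: the paper also reduces to the $2p$-th moment via Jensen, then bounds $\mathbb{E}\bigl(N(s)^{2p}\bigr)$ by the It\^o product formula, It\^o's formula for $x\mapsto x^{2p}$, Young's inequality, and the resulting differential inequality. The only structural difference is that the paper has extracted this core pointwise estimate into a separate result (Proposition~\ref{proponthedWintegrals}) and simply cites it, whereas you rederive it inline; your explicit handling of the Young constants to absorb the mismatch with Assumption~\ref{A:Assumptiongeneral}(iv) is in fact slightly more careful than the paper's terse appeal to the assumption.
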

\begin{proof}
We have
\begin{align*}
&\mathbb{E}\left(\left(\int_{r_1 \vee r_2}^t
  \abs{\frac{1}{\sqrt{\eta}}Z_{r_1
  \vee
  r_2,2}(s)\int_{r_1 \vee r_2}^s Z_{r_1
  \vee
  r_2,2}^{-1}(u)F_uG_udW_u^2}^q ds \right)^{\frac{2p}{q}}
                 \right) \\
  &\qquad\qquad\qquad\qquad \leq C\mathbb{E}\left(\int_{r_1 \vee r_2}^t
  \abs{\frac{1}{\sqrt{\eta}}Z_{r_1
  \vee
  r_2,2}(s)\int_{r_1 \vee r_2}^s Z_{r_1
  \vee
  r_2,2}^{-1}(u)F_uG_udW_u^2}^{2p} ds \right).
\end{align*}
Using Proposition \ref{proponthedWintegrals} yields
\begin{align*}
&\mathbb{E}\left(\int_{r_1 \vee r_2}^t
  \abs{\frac{1}{\sqrt{\eta}}Z_{r_1
  \vee
  r_2,2}(s)\int_{r_1 \vee r_2}^s Z_{r_1
  \vee
  r_2,2}^{-1}(u)F_uG_udW_u^2}^{2p} ds
                 \right) \\
  &\qquad\qquad\qquad\qquad = \int_{r_1 \vee r_2}^t
  \mathbb{E}\left(\abs{\frac{1}{\sqrt{\eta}}Z_{r_1
  \vee
  r_2,2}(s)\int_{r_1 \vee r_2}^s Z_{r_1
  \vee
  r_2,2}^{-1}(u)F_uG_udW_u^2}^{2p}
                 \right)ds \\
  &\qquad\qquad\qquad\qquad \leq C \sup_{r_1\vee r_2 \leq s \leq t}\mathbb{E}\left(
  \abs{F_sG_s}^{2p}
    \right)\\
  &\qquad\qquad\qquad\qquad \leq C \sup_{r_1\vee r_2 \leq s \leq t}\mathbb{E}\left(
  \abs{F_s}^{4p}
    \right)^{1/2}\mathbb{E}\left(
  \abs{G_s}^{4p}
    \right)^{1/2}.
\end{align*}
\end{proof}

\begin{prop}
\label{proponthedWintegrals}
Let $r \leq t \leq 1$, $p \geq 1$ be an integer and $\left\{ Z_{r,2}(t)\colon r \leq t \leq 1
\right\}$ be the stochastic process defined by \eqref{expressionofZr2t}. For any
stochastic process $\left\{ A_t\colon r \leq t \leq 1
\right\}$ in $L^{2p} \left( \Omega \right)$, one has
\begin{align*}
\mathbb{E}\left(
  \abs{\frac{1}{\sqrt{\eta}}Z_{r,2}(t)\int_r^tZ_{r,2}^{-1}(s)A_sdW_s^2}^{2p}
  \right)\leq C \sup_{r\leq s \leq t}\mathbb{E}\left(
  \abs{A_s}^{2p}
  \right).
\end{align*}
\end{prop}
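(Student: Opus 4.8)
The plan is to observe that the random variable to be estimated solves an affine stochastic differential equation with a strongly dissipative drift of order $1/\eta$, exactly as for the processes $Q^{\eta}_{r,2}(t)$, $\hat{Q}_{r_1\vee r_2,3}^{\eta}(t)$ and $\tilde{Q}_{r_1\vee r_2,3}^{\eta}(t)$ analyzed above. Set $\Theta_t=\frac{1}{\sqrt{\eta}}Z_{r,2}(t)\int_r^tZ_{r,2}^{-1}(s)A_s\,dW_s^2$ and write $M_t=\int_r^tZ_{r,2}^{-1}(s)A_s\,dW_s^2$, so that $M_r=0$ and $dM_t=Z_{r,2}^{-1}(t)A_t\,dW_t^2$. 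Combining the affine equation \eqref{sdeforZr2} satisfied by $Z_{r,2}$ with It\^o's product formula, the cross-variation term contributes $\frac{1}{\sqrt{\eta}}\partial_2\tau\left(X_t^{\varepsilon},Y_t^{\eta}\right)A_t\,dt$ while $Z_{r,2}(t)\,dM_t=A_t\,dW_t^2$, so that $\Theta_r=0$ and
\begin{align*}
d\Theta_t&=\frac{1}{\eta}\left[\partial_2 f\left(X_t^{\varepsilon},Y_t^{\eta}\right)\Theta_t+\partial_2\tau\left(X_t^{\varepsilon},Y_t^{\eta}\right)A_t\right]dt\\
&\quad+\frac{1}{\sqrt{\eta}}\left[\partial_2\tau\left(X_t^{\varepsilon},Y_t^{\eta}\right)\Theta_t+A_t\right]dW_t^2.
\end{align*}
This is precisely the type of affine equation encountered throughout Sections \ref{S:FirstOrderMalliavinDer}--\ref{S:SecondOrderMalliavinDer}, now with forcing term $A_t$ in place of a Malliavin derivative of $X^{\varepsilon}$; compare \eqref{Eq:SDE_Q2D2Y}.

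The rest proceeds exactly as in the proof of Proposition \ref{proponEintDW1W1YslessEintDW1W1Xs} and of Lemma \ref{lemmaonEDW1YZr22p}. Since $2p$ is even, applying It\^o's formula to $\Theta_t^{2p}$, taking expectations and differentiating in $t$ gives
\begin{align*}
\frac{d}{dt}\mathbb{E}\left(\Theta_t^{2p}\right)&=\frac{1}{\eta}\mathbb{E}\left(\left[2p\,\partial_2 f\left(X_t^{\varepsilon},Y_t^{\eta}\right)+p(2p-1)\abs{\partial_2\tau\left(X_t^{\varepsilon},Y_t^{\eta}\right)}^2\right]\Theta_t^{2p}\right)\\
&\quad+\frac{4p^2}{\eta}\mathbb{E}\left(\partial_2\tau\left(X_t^{\varepsilon},Y_t^{\eta}\right)A_t\,\Theta_t^{2p-1}\right)+\frac{p(2p-1)}{\eta}\mathbb{E}\left(A_t^2\,\Theta_t^{2p-2}\right).
\end{align*}
By Assumption \ref{A:Assumptiongeneral}$(iv)$ (dropping the non-negative $\partial_1$-terms and using $p(2p-1)\le 2p(2p-1)$), the bracket in the first term is bounded above by $-K$, while $\abs{\partial_2\tau}$ is bounded by Assumption \ref{A:Assumptiongeneral}$(iii)$. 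Applying Young's inequality for products to $\abs{\partial_2\tau\,A_t}\abs{\Theta_t}^{2p-1}$ and to $\abs{A_t}^2\abs{\Theta_t}^{2p-2}$, with the scaling parameters chosen so that the $\Theta_t$-side carries a total coefficient strictly smaller than $K$, yields
\begin{align*}
\frac{d}{dt}\mathbb{E}\left(\Theta_t^{2p}\right)\le-\frac{K_0}{\eta}\mathbb{E}\left(\Theta_t^{2p}\right)+\frac{C}{\eta}\sup_{r\le s\le t}\mathbb{E}\left(\abs{A_s}^{2p}\right)
\end{align*}
for some constants $K_0>0$ and $C<\infty$ depending only on $p$ and on the bound $M$ of Assumption \ref{A:Assumptiongeneral}$(iii)$.

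Integrating this differential inequality with the initial condition $\Theta_r=0$ gives
\begin{align*}
\mathbb{E}\left(\Theta_t^{2p}\right)\le\frac{C}{\eta}\sup_{r\le s\le t}\mathbb{E}\left(\abs{A_s}^{2p}\right)\int_r^te^{-\frac{K_0}{\eta}(t-s)}ds\le\frac{C}{K_0}\sup_{r\le s\le t}\mathbb{E}\left(\abs{A_s}^{2p}\right),
\end{align*}
which is the asserted bound. The one genuinely delicate point is this final cancellation: the factor $1/\eta$ in front of the forcing term is exactly compensated by the factor of order $\eta$ produced by integrating the dissipative exponential, so the resulting constant does not blow up as $\eta\downarrow 0$; equivalently, the normalization $1/\sqrt{\eta}$ together with the $Z_{r,2}(t)Z_{r,2}^{-1}(s)$ combination is calibrated precisely so that $\Theta_t$ inherits the $-K/\eta$ dissipation of the fast dynamics. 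To make the It\^o computation rigorous for a general process $A\in L^{2p}(\Omega)$ one localizes by stopping times (or invokes existence and uniqueness of a strong solution of the above affine equation with finite $2p$-th moments, which holds as soon as $\sup_{r\le s\le t}\mathbb{E}\left(\abs{A_s}^{2p}\right)<\infty$) before passing to the limit in the inequality above.
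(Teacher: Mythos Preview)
Your proof is correct and follows essentially the same route as the paper: both derive the affine SDE satisfied by $\Theta_t=\frac{1}{\sqrt{\eta}}Z_{r,2}(t)\int_r^tZ_{r,2}^{-1}(s)A_s\,dW_s^2$ via the It\^o product formula, apply It\^o's formula to $\Theta_t^{2p}$, use Young's inequality together with the dissipativity from Assumption~\ref{A:Assumptiongeneral}$(iv)$, and integrate the resulting differential inequality. Your use of scaling parameters in Young's inequality (yielding $K_0<K$) mirrors the treatment in Lemma~\ref{lemmaonEDW1YZr22p} and is a slightly more careful bookkeeping of the same step.
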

\begin{proof}
Recall that $Z_{r,2}(t)$ satisfies the stochastic differential equation
\eqref{sdeforZr2}. For any $r \leq t \leq 1$, let us write
\begin{equation*}
U_r(t) = \int_r^tZ_{r,2}^{-1}(s)A_sdW_s^2.
\end{equation*}
Then, the It\^o product formula yields
\begin{align*}
\frac{1}{\sqrt{\eta}}Z_{r,2}(t)U_r(t) &= \frac{1}{\sqrt{\eta}}\int_r^tZ_{r,2}(s)dU_r(s) +
  \frac{1}{\sqrt{\eta}}\int_r^tU_r(s)dZ_{r,2}(s) + \frac{1}{\sqrt{\eta}}\int_r^t d \left\langle Z_{r,2},U_r
  \right\rangle_s\\
  &= \frac{1}{\sqrt{\eta}}\int_r^t \left( A_s + \frac{1}{\sqrt{\eta}}Z_{r,2}(s)U_r(s)\partial_2\tau\left(X_s^{\varepsilon},Y_s^{\eta}
    \right) \right)dW_s^2\\
  &\quad + \frac{1}{\eta}\int_r^t \left( A_s\partial_2\tau\left(X_s^{\varepsilon},Y_s^{\eta}
    \right) + \frac{1}{\sqrt{\eta}}Z_{r,2}(s)U_r(s)\partial_2f\left(X_s^{\varepsilon},Y_s^{\eta}
    \right) \right)ds
\end{align*}
Hence, applying the It\^o formula with the function $x \mapsto x^{2p}$
and taking expectation yields
\begin{align*}
\mathbb{E}\left(
  \abs{\frac{1}{\sqrt{\eta}}Z_{r,2}(t)U_r(t)}^{2p}
  \right) &= \frac{1}{\eta}\mathbb{E}\left( \int_r^t 2p \left(\frac{1}{\sqrt{\eta}}Z_{r,2}(s)U_r(s)\right)^{2p}\partial_2f\left(X_s^{\varepsilon},Y_s^{\eta}
            \right)ds \right)\\
  &\quad + \frac{1}{\eta}\mathbb{E}\left( \int_r^t 2p \left(\frac{1}{\sqrt{\eta}}Z_{r,2}(s)U_r(s)\right)^{2p-1}\partial_2\tau\left(X_s^{\varepsilon},Y_s^{\eta}
            \right)A_sds \right)\\
  &\quad + \frac{1}{\eta}\mathbb{E}\Bigg( \int_r^t p(2p-1)
    \left(\frac{1}{\sqrt{\eta}}Z_{r,2}(s)U_r(s)\right)^{2p-2} \\
  &\qquad\qquad\qquad\qquad\qquad \left[
    A_s + \frac{1}{\sqrt{\eta}}Z_{r,2}(s)U_r(s)\partial_2\tau\left(X_s^{\varepsilon},Y_s^{\eta}
  \right) \right]^2ds \Bigg).
\end{align*}
By differentiating this equality with respect to $t$, one can write
\begin{align*}
\frac{d}{dt}\mathbb{E}\left(
  \abs{\frac{1}{\sqrt{\eta}}Z_{r,2}(t)U_r(t)}^{2p}
  \right) &= \frac{2p}{\eta}\mathbb{E}\left( \left(\frac{1}{\sqrt{\eta}}Z_{r,2}(t)U_r(t)\right)^{2p}\partial_2f\left(X_t^{\varepsilon},Y_t^{\eta}
            \right) \right)\\
  &\quad + \frac{2p}{\eta}\mathbb{E}\left(  \left(\frac{1}{\sqrt{\eta}}Z_{r,2}(t)U_r(t)\right)^{2p-1}\partial_2\tau\left(X_t^{\varepsilon},Y_t^{\eta}
            \right)A_t \right)\\
  &\quad + \frac{p(2p-1)}{\eta}\mathbb{E}\Bigg(
    \left(\frac{1}{\sqrt{\eta}}Z_{r,2}(t)U_r(t)\right)^{2p-2} \\
  &\qquad\qquad\qquad\qquad\qquad \left[
    A_t + \frac{1}{\sqrt{\eta}}Z_{r,2}(t)U_r(t)\partial_2\tau\left(X_t^{\varepsilon},Y_t^{\eta}
  \right) \right]^2 \Bigg).
\end{align*}
Using Young's inequality for products yields
\begin{align*}
\frac{d}{dt}\mathbb{E}\left(
  \abs{\frac{1}{\sqrt{\eta}}Z_{r,2}(t)U_r(t)}^{2p}
                 \right) &\leq \frac{1}{\eta}\mathbb{E}\Bigg(
            \abs{\frac{1}{\sqrt{\eta}}Z_{r,2}(t)U_r(t)}^{2p}\Big[  2p\partial_2f\left(X_t^{\varepsilon},Y_t^{\eta}
            \right) + (2p-1)\abs{\partial_2\tau\left(X_t^{\varepsilon},Y_t^{\eta}
                           \right)}\\
  &\qquad\qquad\qquad\qquad + 2p(2p-1)\abs{\partial_2\tau\left(X_t^{\varepsilon},Y_t^{\eta}
            \right)}^2 + (2p-1)(2p-2) \Big] \Bigg)\\
  &\quad +  \frac{1}{\eta}\mathbb{E}\Bigg(
    \abs{A_t}^{2p} \left[ \abs{\partial_2\tau\left(X_t^{\varepsilon},Y_t^{\eta}
            \right)} + 2(2p-1) \right] \Bigg).
\end{align*}
Using Assumption
\ref{A:Assumptiongeneral}, we finally get
\begin{align*}
\frac{d}{dt}\mathbb{E}\left(
  \abs{\frac{1}{\sqrt{\eta}}Z_{r,2}(t)U_r(t)}^{2p}
  \right)  & \leq
    -\frac{K}{\eta}\mathbb{E}\left(\abs{\frac{1}{\sqrt{\eta}}Z_{r,2}(t)U_r(t)}^{2p}
    \right)  +  \frac{M}{\eta} \mathbb{E}\left(\abs{A_t}^{2p}\right).
\end{align*}
Solving this differential inequality yields
\begin{align*}
\mathbb{E}\left(
  \abs{\frac{1}{\sqrt{\eta}}Z_{r,2}(t)U_r(t)}^{2p}
  \right) \leq \frac{M}{K}\sup_{r \leq s \leq t}\mathbb{E}\left(\abs{A_s}^{2p}\right).
\end{align*}
\end{proof}

\begin{prop}
  \label{proponthedsintegrals}
Let $r \leq t \leq 1$, $p \geq 1$ be an integer and $\left\{ Z_{r,2}(t)\colon r \leq t \leq 1
\right\}$ be the stochastic process defined by \eqref{expressionofZr2t}. For any
stochastic process $\left\{ B_t\colon r \leq t \leq 1
\right\}$ in $L^{2p} \left( \Omega \right)$, one has
\begin{align*}
\mathbb{E}\left(
  \abs{\frac{1}{\eta}Z_{r,2}(t)\int_r^tZ_{r,2}^{-1}(s)B_sds}^{2p}
  \right)\leq C \sup_{r\leq s \leq t}\mathbb{E}\left(
  \abs{B_s}^{2p}
  \right).
\end{align*}
\end{prop}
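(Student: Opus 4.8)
The plan is to reproduce the argument of Proposition~\ref{proponthedWintegrals}, with the one simplification that the inner integral is now driven by $ds$ rather than $dW_s^2$, so that the relevant auxiliary process has finite variation and contributes no quadratic covariation. Fix $r \leq t \leq 1$ and set $U_r(s) = \int_r^s Z_{r,2}^{-1}(u)B_u\,du$, so that $dU_r(s) = Z_{r,2}^{-1}(s)B_s\,ds$ and $U_r(r) = 0$. Writing $\Xi_s = \frac{1}{\eta}Z_{r,2}(s)U_r(s)$ and using the It\^o product formula together with the stochastic differential equation \eqref{sdeforZr2} satisfied by $Z_{r,2}$ (the finite variation of $U_r$ ensuring that $\langle Z_{r,2},U_r\rangle$ vanishes), one checks that $\Xi$ solves the affine equation
\begin{equation*}
d\Xi_s = \frac{1}{\eta}\left( B_s + \partial_2 f\left( X_s^{\varepsilon},Y_s^{\eta}\right)\Xi_s \right)ds + \frac{1}{\sqrt{\eta}}\partial_2\tau\left( X_s^{\varepsilon},Y_s^{\eta}\right)\Xi_s\,dW_s^2, \qquad \Xi_r = 0.
\end{equation*}
Applying It\^o's formula to $s \mapsto \abs{\Xi_s}^{2p} = \Xi_s^{2p}$ (using that $2p$ is even), taking expectations so that the stochastic integral drops out, and differentiating in $t$ gives
\begin{equation*}
\frac{d}{dt}\mathbb{E}\left( \abs{\Xi_t}^{2p}\right) = \frac{1}{\eta}\mathbb{E}\left( \abs{\Xi_t}^{2p}\left[ 2p\,\partial_2 f\left( X_t^{\varepsilon},Y_t^{\eta}\right) + p(2p-1)\partial_2\tau\left( X_t^{\varepsilon},Y_t^{\eta}\right)^2 \right]\right) + \frac{2p}{\eta}\mathbb{E}\left( \Xi_t^{2p-1}B_t\right).
\end{equation*}

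For the first expectation, discarding the nonnegative $\partial_1 f$ and $\partial_1\tau$ contributions in Assumption~\ref{A:Assumptiongeneral}$(iv)$ yields $2p\,\partial_2 f + p(2p-1)(\partial_2\tau)^2 \leq 2p\,\partial_2 f + 2p(2p-1)(\partial_2\tau)^2 \leq -K$, exactly as in the proof of Lemma~\ref{lemmaonExpofZr2tothe2pq}. For the cross term I would apply Young's inequality for products with a free parameter $\lambda > 0$, namely $\abs{\Xi_t}^{2p-1}\abs{B_t} \leq \frac{2p-1}{2p}\lambda\abs{\Xi_t}^{2p} + \frac{1}{2p}\lambda^{-(2p-1)}\abs{B_t}^{2p}$, and then choose $\lambda$ small enough, e.g. $(2p-1)\lambda = K/2$, so that the $\abs{\Xi_t}^{2p}$ contribution does not destroy the dissipativity inherited from the first term. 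This produces the differential inequality
\begin{equation*}
\frac{d}{dt}\mathbb{E}\left( \abs{\Xi_t}^{2p}\right) \leq -\frac{K}{2\eta}\mathbb{E}\left( \abs{\Xi_t}^{2p}\right) + \frac{C}{\eta}\sup_{r \leq s \leq t}\mathbb{E}\left( \abs{B_s}^{2p}\right),
\end{equation*}
and solving it with the initial condition $\Xi_r = 0$ gives $\mathbb{E}(\abs{\Xi_t}^{2p}) \leq \frac{2C}{K}\bigl( 1 - e^{-\frac{K}{2\eta}(t-r)}\bigr)\sup_{r\leq s\leq t}\mathbb{E}(\abs{B_s}^{2p}) \leq C\sup_{r\leq s\leq t}\mathbb{E}(\abs{B_s}^{2p})$, which is the desired bound. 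Note that this is the natural answer: the prefactor $\frac{1}{\eta}$ precisely compensates the $O(\eta)$ arising from integrating the exponential kernel $e^{-K(t-s)/\eta}$.

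The step most deserving of care is not a real obstacle but a bookkeeping matter: one must justify that the stochastic integral in the It\^o expansion is a genuine martingale (so that taking expectations is legitimate) and that differentiation under the expectation is valid. Both follow from the standing hypothesis that $B \in L^{2p}(\Omega)$ uniformly on $[r,t]$ together with the exponential moment bounds on $Z_{r,2}$ provided by Lemma~\ref{lemmaonExpofZr2tothe2pq}; alternatively, one can first run the estimate along a localizing sequence of stopping times and pass to the limit by Fatou's lemma. Everything else is a verbatim copy of the scheme used for Proposition~\ref{proponthedWintegrals}, with the additional simplification, already noted, that $U_r$ has finite variation and hence contributes no quadratic covariation.
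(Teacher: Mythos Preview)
Your proof is correct and follows essentially the same approach as the paper's: define the product process $\Xi_t = \frac{1}{\eta}Z_{r,2}(t)\int_r^t Z_{r,2}^{-1}(s)B_s\,ds$, derive its affine SDE via the It\^o product formula, apply It\^o's formula to $\Xi_t^{2p}$, and use Assumption~\ref{A:Assumptiongeneral} together with Young's inequality to obtain a dissipative differential inequality that integrates to the claimed bound. Your explicit introduction of the free parameter $\lambda$ in Young's inequality is in fact slightly more careful than the paper's version, which applies the standard Young inequality and then invokes the assumption somewhat loosely.
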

\begin{proof}
Recall that $Z_{r,2}(t)$ satisfies the stochastic differential equation
\eqref{sdeforZr2}. For any $r \leq t \leq 1$, let us write
\begin{equation*}
V_r(t) = \int_r^tZ_{r,2}^{-1}(s)B_sds.
\end{equation*}
Then, the It\^o product formula yields
\begin{align*}
\frac{1}{\eta}Z_{r,2}(t)V_r(t) &= \frac{1}{\eta}\int_r^tZ_{r,2}(s)dV_r(s) +
  \frac{1}{\eta}\int_r^tV_r(s)dZ_{r,2}(s) \\
  &= \frac{1}{\eta}\int_r^t \left(  B_sds +
   \frac{1}{\eta}Z_{r,2}(s)V_r(s) \partial_2f\left(X_s^{\varepsilon},Y_s^{\eta}
    \right)\right)ds\\
  &\quad +\frac{1}{\sqrt{\eta}}\int_r^t \frac{1}{\eta}Z_{r,2}(s)V_r(s) \partial_2\tau\left(X_s^{\varepsilon},Y_s^{\eta}
    \right)dW_s^2.
\end{align*}
Hence, applying the It\^o formula with the function $x \mapsto x^{2p}$
and taking expectation yields
\begin{align*}
\mathbb{E}\left(
  \abs{\frac{1}{\eta}Z_{r,2}(t)V_r(t)}^{2p}
  \right) &= \frac{1}{\eta}\mathbb{E}\left(\int_r^t 2p \left(
            \frac{1}{\eta}Z_{r,2}(s)V_r(s) \right)^{2p-1}B_sds\right)\\
  &\quad + \frac{1}{\eta}\mathbb{E}\left(\int_r^t 2p \left(
            \frac{1}{\eta}Z_{r,2}(s)V_r(s) \right)^{2p}\partial_2f\left(X_s^{\varepsilon},Y_s^{\eta}
    \right)ds\right) \\
  &\quad + \frac{1}{\eta}\mathbb{E}\left(\int_r^t p(2p-1) \left(
            \frac{1}{\eta}Z_{r,2}(s)V_r(s) \right)^{2p}\abs{\partial_2\tau\left(X_s^{\varepsilon},Y_s^{\eta}
    \right)}^2ds\right).
\end{align*}
By differentiating this equality with respect to $t$, one can write
\begin{align*}
\frac{d}{dt}\mathbb{E}\left(
  \abs{\frac{1}{\eta}Z_{r,2}(t)V_r(t)}^{2p}
  \right) &= \frac{2p}{\eta} \mathbb{E}\left( \left(
            \frac{1}{\eta}Z_{r,2}(t)V_r(t) \right)^{2p-1}B_t\right)\\
  &\quad + \frac{2p}{\eta} \mathbb{E}\left( \left(
            \frac{1}{\eta}Z_{r,2}(t)V_r(t) \right)^{2p}\partial_2f\left(X_t^{\varepsilon},Y_t^{\eta}
    \right)\right) \\
  &\quad + \frac{p(2p-1)}{\eta} \mathbb{E}\left( \left(
            \frac{1}{\eta}Z_{r,2}(t)V_r(t) \right)^{2p}\abs{\partial_2\tau\left(X_t^{\varepsilon},Y_t^{\eta}
    \right)}^2\right).
\end{align*}
Using Young's inequality for products yields
\begin{align*}
\frac{d}{dt}\mathbb{E}\left(
  \abs{\frac{1}{\eta}Z_{r,2}(t)V_r(t)}^{2p}
                 \right) &\leq \frac{1}{\eta}\mathbb{E}\Bigg(
            \abs{\frac{1}{\eta}Z_{r,2}(t)V_r(t)}^{2p}\Big[  2p\partial_2f\left(X_t^{\varepsilon},Y_t^{\eta}
            \right) + p(2p-1)\abs{\partial_2\tau\left(X_t^{\varepsilon},Y_t^{\eta}
                           \right)}^2\\
  &\qquad\qquad\qquad\qquad\qquad\qquad\qquad\qquad + (2p-1) \Big] \Bigg) +  \frac{1}{\eta}\mathbb{E}\Bigg(
    \abs{B_t}^{2p} \Bigg).
\end{align*}
Using Assumption
\ref{A:Assumptiongeneral}, we finally get
\begin{align*}
\frac{d}{dt}\mathbb{E}\left(
  \abs{\frac{1}{\eta}Z_{r,2}(t)V_r(t)}^{2p}
  \right)  & \leq
    -\frac{K}{\eta}\mathbb{E}\left(\abs{\frac{1}{\eta}Z_{r,2}(t)V_r(t)}^{2p}
    \right)  +  \frac{1}{\eta} \mathbb{E}\left(\abs{B_t}^{2p}\right).
\end{align*}
Solving this differential inequality yields
\begin{align*}
\mathbb{E}\left(
  \abs{\frac{1}{\eta}Z_{r,2}(t)V_r(t)}^{2p}
  \right) \leq \frac{1}{K}\sup_{r \leq s \leq t}\mathbb{E}\left(\abs{B_s}^{2p}\right).
\end{align*}
\end{proof}

\begin{lemma}\label{L:BoundIntergalTermFinalSecondMalliavinTerm}
 Let $1\ll k<\infty$ and $0<T<\infty$ be given. Then, there exists a constant $C<\infty$ that may depend on $T$, but is independent of $k$, such that for $k$ large enough the bound holds
\begin{align}
\int_{[0,T]^4} e^{-k(u\vee v - u \wedge v)} e^{-k(u\vee w - u \wedge w)}e^{-k(s\vee v - s \wedge v)}e^{-k(s\vee w - s \wedge w)}dw ds dv du&\leq C\left(k^{-3}+k^{-2}e^{-2k}\right)\nonumber
\end{align}
\end{lemma}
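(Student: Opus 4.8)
The plan is to collapse the fourfold integral to a one--dimensional estimate. The starting point is that $u\vee v-u\wedge v=|u-v|$, so the integrand equals $e^{-k|u-v|}e^{-k|u-w|}e^{-k|s-v|}e^{-k|s-w|}$. Grouping the two factors that involve $v$ and the two that involve $w$, and performing first the $w$-- and then the $v$--integral (each of which gives the same quantity), I would rewrite
\[
\int_{[0,T]^4}e^{-k(u\vee v-u\wedge v)}e^{-k(u\vee w-u\wedge w)}e^{-k(s\vee v-s\wedge v)}e^{-k(s\vee w-s\wedge w)}\,dw\,ds\,dv\,du=\int_{[0,T]^2}g(u,s)^2\,du\,ds,
\]
where $g(u,s):=\int_0^T e^{-k|u-x|}e^{-k|s-x|}\,dx$. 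Thus the whole problem reduces to bounding $g$.

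The key step is the pointwise bound $g(u,s)\leq\bigl(k^{-1}+|u-s|\bigr)e^{-k|u-s|}$. To prove it, assume $u\leq s$ and split the $x$--integral at $u$ and at $s$: on $[0,u]$ the integrand is $e^{-k(u+s)}e^{2kx}$ and integrates to at most $\tfrac{1}{2k}e^{-k(s-u)}$; on $[u,s]$ it equals the constant $e^{-k(s-u)}$ and contributes $(s-u)e^{-k(s-u)}$; on $[s,T]$ it is $e^{k(u+s)}e^{-2kx}$ and again integrates to at most $\tfrac{1}{2k}e^{-k(s-u)}$. Adding the three pieces gives the claim.

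Finally I would insert this bound and reduce the double integral to a single one via $\int_{[0,T]^2}h(|u-s|)\,du\,ds=2\int_0^T(T-r)h(r)\,dr\leq 2T\int_0^\infty h(r)\,dr$, applied to $h(r)=(k^{-1}+r)^2e^{-2kr}$. The rescaling $w=2kr$ turns $\int_0^\infty(k^{-1}+r)^2e^{-2kr}\,dr$ into $\tfrac{1}{2k^3}\int_0^\infty(1+w/2)^2e^{-w}\,dw=\tfrac{5}{4}k^{-3}$, so the fourfold integral is at most $\tfrac{5T}{2}k^{-3}$. Since $k^{-3}\leq k^{-3}+k^{-2}e^{-2k}$, this yields the stated estimate (in fact the sharper bound $Ck^{-3}$, valid for every $k>0$).

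The argument is elementary and I do not expect a genuine obstacle; the only point that needs care is to keep the exponential weight $e^{-k|u-s|}$ attached to the term $(s-u)e^{-k(s-u)}$ rather than bounding that product by $\tfrac{1}{ek}$ prematurely. Retaining the weight is precisely what upgrades the final rate from $k^{-2}$ to $k^{-3}$, which is the order needed when the lemma is applied with $k=K/(8\eta)$ in the proof of Lemma \ref{L:SecondDerivativeMomentBound}.
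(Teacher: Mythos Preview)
Your proof is correct and in fact slightly sharper than what is stated, since you obtain a bound of order $k^{-3}$ alone. Your route, however, is genuinely different from the paper's. The paper exploits the symmetry of the integrand under permutations of $(u,v,s,w)$: it fixes one of the $24$ orderings, say $u<v<s<w$, observes that on this simplex the sum $|u-v|+|u-w|+|s-v|+|s-w|$ collapses to $2(w-u)$, and then carries out the resulting iterated integral explicitly to get $C\bigl(k^{-3}+k^{-2}e^{-2kT}\bigr)$, finishing by symmetry. Your argument instead uses the separable structure in $v$ and $w$ to rewrite the fourfold integral as $\int_{[0,T]^2} g(u,s)^2\,du\,ds$ with $g(u,s)=\int_0^T e^{-k|u-x|-k|s-x|}\,dx$, then obtains the pointwise bound $g(u,s)\le (k^{-1}+|u-s|)e^{-k|u-s|}$ by splitting at $u$ and $s$, and reduces everything to a single one--dimensional integral. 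The paper's computation is more direct but requires tracking boundary terms through several layers of integration; your factorisation is cleaner, avoids the casework over orderings, and makes the mechanism producing the $k^{-3}$ rate transparent (it is the product of three factors of $\eta\sim k^{-1}$ coming from the two outer pieces of $g$ and the final $\int e^{-2kr}\,dr$). Either approach feeds equally well into the proof of Lemma~\ref{L:SecondDerivativeMomentBound}.
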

\begin{proof}

There are $4!=24$ orderings of the elements in $\{u,v,s,w\}$. We will examine a representative one and we note that all others yield the same bound by symmetry. Let us set $A=\{(u,v,s,w)\in[0,T]^{4}: 0\leq u<v<s<w\leq T\}$. We have
\begin{align}
&\int_{A} e^{-k(u\vee v - u \wedge v)} e^{-k(u\vee w - u \wedge w)}e^{-k(s\vee v - s \wedge v)}e^{-k(s\vee w - s \wedge w)}dw ds dv du=\int_{A}e^{-2k(w-u)}dw ds dv du\nonumber\\
&\qquad=\int_{0}^{T}\int_{u}^{T}\int_{v}^{T}\int_{s}^{T}e^{-2k(w-u)} dw ds dv du\nonumber\\
&\qquad=\int_{0}^{T}\int_{u}^{T}\int_{v}^{T}\frac{1}{2k}\left(e^{-2k(s-u)}- e^{-2k(T-u)}\right) ds dv du\nonumber\\
&\qquad=\frac{1}{2k}\int_{0}^{T}\int_{u}^{T}\left(\frac{1}{2k}\left(e^{-2k(v-u)}-e^{-2k(T-u)}\right)-(T-v) e^{-2k(T-u)}\right) dv du\nonumber\\
&\qquad=\frac{1}{2k}\int_{0}^{T}\left(\frac{1}{4k^{2}}\left(1-e^{-2k(T-u)}\right)-\frac{1}{2k}(T-u)e^{-2k(T-u)}-\left(\frac{T^{2}}{2}-Tu+\frac{u^{2}}{2}\right)e^{-2k(T-u)}\right) du\nonumber\\
&\qquad\leq C\left(k^{-3}+k^{-2}e^{-2kT}\right).\nonumber
\end{align}
\end{proof}

\bibliographystyle{plain}

\begin{thebibliography}{NPR09}

\bibitem[BLP78]{BLP}A. Bensoussan, J.L. Lions, G. Papanicolaou, \textit{Asymptotic
Analysis for Periodic Structures}, Vol 5, Studies in Mathematics and its
Applications, North-Holland Publishing Co., Amsterdam, 1978.


\bibitem[BGS21]{BGS21}
Solesne Bourguin, Siragan Gailus and Konstantinos Spiliopoulos, \emph{Discrete-time inference for slow-fast systems driven by fractional Brownian motion},  SIAM Journal on Multiscale Modeling and Simulation, Vol. 19, No. 3, (2021), pp. 1333--1366.

\bibitem[CDGOS22]{OttobreUiT2022}
D. Crisan, P.Dobson, B. Goddard, M. Ottobre and I. Souttar, \textit{Poisson Equations with locally-Lipschitz coefficients and Uniform in Time Averaging for Stochastic Differential Equations via Strong Exponential Stability}, arXiv: 2206.06776, (2022).

\bibitem[DS12]{DupuisSpiliopoulos}P. Dupuis and K. Spiliopoulos, \emph{Large deviations
for multiscale problems via weak convergence methods}, Stochastic
Processes and their Applications, Vol. 122, (2012), pp. 1947-1987.



\bibitem[F78]{Freidlin1978}
M.I. Freidlin, \emph{The Averaging Principle and Theorems on Large Deviations}, Russian Mathematical Surveys, Vol. 33, No. 5, (1978), pp. 117-176.

\bibitem[FS99] {FS}M.I. Freidlin, R. Sowers, \emph{A comparison of homogenization and large
deviations, with applications to wavefront propagation}, Stochastic
Process and Their Applications, Vol. 82, Issue 1, (1999), pp. 23--52.

\bibitem[GS17]{GS17}
Siragan Gailus and Konstantinos Spiliopoulos, \emph{Statistical Inference for Perturbed Multiscale Dynamical Systems}, Stochastic Processes and their Applications , Volume 127, Issue 2, (2017), pp. 419-448.

\bibitem[G03]{Guillin}
A. Guillin, \emph{Averaging principle of SDE with small diffusion: moderate deviations},  Annals of Probability, Vol. 31, No. 1, (2003), pp. 413-443.


\bibitem[IRS19] {IRS19}
P. Imkeller and G. dos Reis and W. Salkeld, \emph{Differentiability of
{SDE}s with drifts of super-linear growth},
Electron. J. Probab., Vol. 24, (2019), paper No. 3.

\bibitem[LS90]{LiptserPaper}
Robert Liptser and Jordan Stoyanov, \emph{Stochastic version of the averaging principle for diffusion type processes}, Stochastics Stochastics Rep., Stochastics and Stochastics Reports, Vol. 32, No. 3-4, (1990), pp. 145--163.


\bibitem[NPR09]{nourdin_second_2009}
Ivan Nourdin, Giovanni Peccati, and Gesine Reinert, \emph{Second order
  {Poincaré} inequalities and {CLTs} on {Wiener} space}, J. Funct. Anal.
  \textbf{257} (2009), no.~2, 593--609. \MR{2527030}

\bibitem[Nua06]{nualart_malliavin_2006}
David Nualart, \emph{The {Malliavin} calculus and related topics}, second ed.,
  Probability and its {Applications} ({New} {York}), Springer-Verlag, Berlin,
  2006.

\bibitem[PV01]{PardouxVeretennikov1}E. Pardoux, A.Yu. Veretennikov, \emph{On Poisson
equation and diffusion approximation I}, Annals of Probability, Vol.
29, No. 3, (2001), pp. 1061-1085.


\bibitem[PV03]{PardouxVeretennikov2}E. Pardoux, A.Yu. Veretennikov, \emph{On Poisson
equation and diffusion approximation 2}, Annals of Probability, Vol.
31, No. 3, (2003), pp. 1166-1192.

\bibitem[RX21a]{RocknerAveraging2021a} M. R\"{o}ckner, L. Xie, \emph{Averaging principle and normal deviations for multiscale stochastic systems}, Communications in Mathematical Physics, 383, (2021), pp. 1889-1937.

\bibitem[RX21b]{RocknerAveraging2021b} M. R\"{o}ckner, L. Xie, \emph{Diffusion approximation for fully coupled stochastic differential equations}, The Annals of Probability, Vol. 49, No. 3, (2021), pp. 1205-1236.

\bibitem[S14]{Spiliopoulos_CLT_Multiscale_2014}
Konstantinos Spiliopoulos, \emph{Fluctuation analysis and short time asymptotics for multiple scales diffusion processes},  Stochastics and Dynamics, Vol. 14, No.3, (2014), pp. 1350026
\end{thebibliography}

\end{document}